\newcommand{\alitem}[1]{&\parbox{.85 \linewidth}{#1}}
\title[Lagrangian sections]{Lagrangian sections on mirrors\\
of toric Calabi--Yau 3-folds}
\author[K.~Chan]{Kwokwai Chan}
\address{
Department of Mathematics,
The Chinese University of Hong Kong,
Shatin,
Hong Kong
}
\email{kwchan@math.cuhk.edu.hk}
\author[D.~Pomerleano]{Daniel Pomerleano}
\address{
Imperial College London,
180 Queen's Gate,
London SW7 2AZ,
UK
}
\email{daniel.pomerleano@gmail.com}
\author[K.~Ueda]{Kazushi Ueda}
\address{
Graduate School of Mathematical Sciences,
The University of Tokyo,
3-8-1 Komaba,
Meguro-ku,
Tokyo 153-8914,
Japan
}
\email{kazushi@ms.u-tokyo.ac.jp}
\date{}
\begin{document}

\begin{abstract}
We construct Lagrangian sections
of a Lagrangian torus fibration
on a 3-dimensional conic bundle,
which are SYZ dual to holomorphic line bundles over the mirror toric Calabi--Yau 3-fold.
We then demonstrate a ring isomorphism between the wrapped Floer cohomology of the zero-section
and the regular functions on the mirror toric Calabi--Yau 3-fold.
Furthermore, we show that in the case when the Calabi--Yau 3-fold is affine space,
the zero section generates the wrapped Fukaya category of the mirror conic bundle.
This allows us to complete the proof of one direction of homological mirror symmetry
for toric Calabi--Yau orbifold quotients of the form $\mathbb{C}^3/\Gv$.
We finish by describing some elementary applications of our computations to symplectic topology. 
\end{abstract}

\maketitle


\section{Introduction}

The goal of this paper is to provide evidence for and connect
homological mirror symmetry
\cite{MR1403918}
and the Strominger--Yau--Zaslow (SYZ) conjecture
\cite{MR1429831}
in the context of \emph{3-dimensional conic bundles} of the form
\begin{align}
 Y = \lc (w_1,w_2,u,v) \in (\bCx)^2 \times \bC^2 \relmid
  h(w_1,w_2) = uv \rc.
\end{align}
Mirror symmetry for such varieties goes back at least to
\cite{0005247}.
For simplicity, in this paper we will assume
that the Newton polytope $\Delta$ of $h(w_1,w_2)$ is full-dimensional and that the Laurent polynomial $h(w_1, w_2)$ is general among those whose Newton polytope is $\Delta$.
The mirror partner $\Yv$ is an open subvariety
of a 3-dimensional toric variety
whose fan is the cone over a triangulation
of the Newton polytope of the Laurent polynomial $h(w_1,w_2)$.
These examples have been a fertile testing ground
for mathematical thinking on mirror symmetry,
a pioneering example being \cite{MR1862802, MR1831820}.

These conic bundles have the distinguishing feature
that they are among the few examples of Calabi--Yau threefolds
which admit explicit Lagrangian torus fibrations.
An intriguing feature of these examples is that
the Lagrangian fibrations are only piecewise smooth and
have codimension one discriminant locus,
and thus exhibit important features of the general case.
There have therefore been a number of recent papers
focusing on these examples
through the lens of the SYZ conjecture.
The essential idea for constructing such fibrations appears in
\cite{MR1865243, MR1882328, MR1821145},
and the construction has been carried out in
\cite{MR2201567, MR2487600, MR3502098}.
The details of this construction with some modifications are presented
in \pref{sc:LTF}.

The manifolds $Y$ are affine varieties and
therefore have natural symplectic forms and Liouville structures.
On the other hand, in algebro-geometric terms,
$Y$ is the \emph{Kaliman modification} of $(\bCx)^2 \times \bC$
along the hypersurface $Z$ of $(\bCx)^2$
defined by $h(w_1,w_2)=0$
(see e.g. \cite{9801075}).
When viewed from this perspective,
there is an alternative symplectic form
used in \cite{MR3502098},
which arises by viewing $Y$ as an open submanifold
of the symplectic blow-up
$\Ybar$ of $(\bCx)^2 \times \bC$
along the codimension two subvariety $Z \times 0$.
This form is convenient
for the purposes of the SYZ conjecture,
since one can construct a Lagrangian torus fibration
whose behavior is easy to understand.
We modify their construction
by degenerating the hypersurface $Z$ to a certain tropical localization
introduced in \cite{MR2079993, MR2240909}. This modification plays an important role
throughout the rest of the paper as we explain below.

The starting point for our work is the observation
that the SYZ fibration comes equipped with
certain natural \emph{base-admissible Lagrangian sections}.
A related construction of Lagrangian sections has been carried out
independently by Gross and Matessi in \cite{1503.03816},
though we place an emphasis on the asymptotics of these sections
since these are important for Floer cohomology.
In \pref{sc:ALS}, we prove the following theorem:

\begin{theorem} \label{th:main1}
For every holomorphic line bundle $\cF$ on the SYZ mirror $\Yv$,
there is a base-admissible Lagrangian section $L_\cF$ in $Y$
whose SYZ transform is $\cF$.
Furthermore, such Lagrangian submanifolds are unique up to Hamiltonian isotopy.
\end{theorem}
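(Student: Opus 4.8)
The plan is to work throughout with the Lagrangian torus fibration $\mu\colon Y\to B$ constructed in \pref{sc:LTF} and with the SYZ duality between Lagrangian sections and holomorphic line bundles over its smooth locus. Over the complement $B_0\subset B$ of the codimension-one discriminant, $\mu$ is a smooth Lagrangian $T^3$-bundle carrying an integral affine structure; a Lagrangian section is the graph of a closed one-form on $B_0$, and its SYZ transform is the line bundle whose transition functions over the integral-affine charts of $B_0$ are obtained by exponentiating the periods of that one-form. On the other side, $\Yv$ is reconstructed from the affine structure on $B_0$, and a holomorphic line bundle on it is rigid and amounts to a choice of integral bending covector $\delta_w$ across each wall $w$ of $B_0$, subject to the compatibility condition --- around each codimension-two stratum of the discriminant --- that makes the $\delta_w$ the bendings of a single globally defined function. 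Under this dictionary, constructing $L_\cF$ reduces to a patching problem: produce a continuous, piecewise-smooth function $\varphi$ on $B_0$, affine-linear on each chamber outside a compact subset of $B$, whose gradient jumps by exactly $\delta_w$ across each wall $w$; then $L_\cF$ is the graph of $d\varphi$.

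First I would solve this patching problem over $B_0$. Fixing a chamber $C_0$ with $\varphi|_{C_0}\equiv 0$ and propagating the values outward across walls according to the bending prescription, one obtains a well-defined $\varphi$ precisely because the compatibility condition on $\cF$ says that the bending accumulated around any loop of chambers vanishes; and since a line bundle on the ambient toric variety is recorded by a piecewise-linear support function, $\varphi$ may be taken affine-linear on each chamber outside a compact set, so that $d\varphi$ is locally constant near infinity with the asymptotic slopes dictated by $\cF$. That last property is exactly what is needed for the graph of $d\varphi$ to be base-admissible with respect to the Liouville structure on $Y$.

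The main obstacle, and the step for which the tropical degeneration of the hypersurface $Z$ built into the fibration of \pref{sc:LTF} is essential, is to show that the chamberwise graphs of $d\varphi$ glue to a single \emph{continuous} Lagrangian section across the codimension-one discriminant. Because the integral affine structure is only piecewise smooth along the walls, the graph of $d\varphi$ matches up across a wall only after the wall-crossing change of trivialization, and this change degenerates along the discriminant; one must verify that the bending covectors $\delta_w$ are compatible with these degenerations. After the tropical localization the walls are polyhedral and a neighbourhood of each stratum of the discriminant carries an explicit local model --- a product of a lower-dimensional pair-of-pants-type piece with a linear factor --- so the verification reduces to a finite, model-by-model computation; the point to watch is that the base-admissibility imposed at infinity does not over-determine $\varphi$, leaving enough freedom to arrange the match at every wall. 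Granting this, the construction produces $L_\cF$, and the identification of its SYZ transform with $\cF$ holds by construction.

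For uniqueness, let $L$ and $L'$ be base-admissible sections whose SYZ transforms are both isomorphic to $\cF$. Over $B_0$ they are graphs of closed one-forms $\alpha$ and $\alpha'$, and an isomorphism of the associated line bundles says precisely that $\alpha-\alpha'$ has zero bending across every wall, so its chamberwise primitives glue to a single global function $f$ on $B_0$. Since $L$ and $L'$ carry the toric asymptotics dictated by $\cF$, the one-form $\alpha-\alpha'$ vanishes near infinity, so $f$ is locally constant outside a compact set, in particular bounded; this also disposes of the one delicate point, namely that a priori $\alpha-\alpha'$ could be a closed form with nontrivial periods defining a holomorphically trivial bundle, which base-admissibility excludes. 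The time-one flow of the Hamiltonian vector field of a proper admissible extension of $f$ to $Y$ is then a Hamiltonian isotopy carrying $L$ to $L'$, and one checks that it preserves base-admissibility, which completes the argument.
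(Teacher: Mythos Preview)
Your approach is in a genuinely different language from the paper's and, more importantly, has a gap at the level of what ``base-admissible'' means. In the paper, a base-admissible Lagrangian section is not an arbitrary Lagrangian section of the $3$-dimensional fibration $\pi_B\colon Y\to B$: by \pref{df:ad_Lag} it must be of product form $L=\Lbar\times\bR^{>0}$ with $\Lbar$ a Lagrangian section of $\Log\colon\NCx\to N_\bR$ that is disjoint from the tubular neighbourhood $U_Z$ of the discriminant curve $Z$ and Legendrian at infinity. Your construction of the graph of $d\varphi$ for a piecewise-affine $\varphi$ on $B_0$ does not produce something of this shape, and the step you flag as delicate --- gluing across the discriminant --- is exactly where the product structure and the avoidance of $U_Z$ matter. ``A finite model-by-model computation'' is not a substitute for showing that the section misses the singular fibres entirely.

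The paper's route is to reduce to two dimensions. It introduces, for each $\bsalpha\in A$, the sign-flipped hypersurface $Z^\bsalpha$ and defines \emph{framed} Lagrangian sections $\Lbar$ of $\Log$ whose restriction over each chamber $\cQ_\bsalpha$ is a tropical Lagrangian section with boundary in $Z^\bsalpha$ in the sense of Abouzaid. The point is that $Z^\bsalpha$ has the same amoeba as $Z$ but is disjoint from it (\pref{lem:amoebas_coincide}, \pref{lm:Uloc}), so $\Lbar$ automatically avoids $U_Z$. Hamiltonian isotopy classes of such sections are then parametrized by tuples $(n_\sigma)_{\sigma\in\cP^{(2)}}\in N^{\cP^{(2)}}$ satisfying the edge condition \eqref{eq:comp_edge}; Legendrian-at-infinity is arranged afterwards (\pref{pr:Legendrian}). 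Finally, the SYZ transform is defined with a gauge correction coming from the nontrivial gluing \eqref{eq:gluing}, and its degree on each toric curve is computed directly in terms of the $(n_\sigma)$, which simultaneously gives surjectivity onto $\Pic(\Yv)$ and uniqueness up to Hamiltonian isotopy. What your affine-structure sketch is missing, then, is precisely this $2$-dimensional reduction and the $Z^\bsalpha$-boundary mechanism that forces the section to miss $U_Z$; without it the ``gluing'' step is not just technical but the whole content of the theorem.
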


For the purposes of mirror symmetry
for non-compact manifolds such as $Y$,
it is essential to consider the {\em wrapped Floer cohomology}
of non-compact Lagrangian submanifolds.
The foundational theory of wrapped Fukaya categories
on Liouville domains has been developed
in \cite{MR2602848}.
However, wrapped Floer cohomology is very difficult to calculate directly
and there are only a few computations in the literature.
One of the main objectives of this paper is to prove the following theorem:

\begin{theorem} \label{th:main0}
Let $L_0$ denote the section of the fibration
which is SYZ dual to the structure sheaf $\cO_\Yv$.
Then the wrapped Floer cohomology
$\HW^*(L_0)$ of $L_0$ is concentrated in degree 0, and
there exists an isomorphism
\begin{align} \label{eq:main0}
 \Mir^{L_0} \colon \HW^0(L_0) \simto H^0(\cO_\Yv)
\end{align}
of $\bC$-algebras.
\end{theorem}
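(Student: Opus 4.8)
The strategy is to compute $\HW^*(L_0)$ by a combination of a geometric model for the wrapped Floer complex and a comparison with the known structure of $H^0(\mathcal{O}_{\Yv})$. Let me think about what $H^0(\mathcal{O}_{\Yv})$ looks like. Since $\Yv$ is an open subvariety of a toric 3-fold whose fan is the cone over a triangulation of $\Delta$, the regular functions on $\Yv$ form a semigroup ring — roughly, functions corresponding to lattice points in some cone or polytope, with multiplication given by addition in the lattice. The mirror $Y$ is the conic bundle $\{h(w_1,w_2) = uv\}$, and $L_0$ is the zero-section of the SYZ fibration.

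First I would set up a geometric/combinatorial model for the wrapped complex $\mathrm{CW}^*(L_0)$. The natural approach: wrapping $L_0$ around the fibration produces a family of intersection points with $L_0$ indexed by some combinatorial data. Because the SYZ fibration on $Y$ (via the symplectic form from the blow-up $\Ybar$, and with the tropicalized hypersurface $Z$) has an explicit piecewise-linear base, one expects the generators of $\mathrm{CW}^*(L_0)$ — i.e. the time-$1$ Hamiltonian chords from $L_0$ to itself for a suitable wrapping Hamiltonian — to be indexed precisely by the lattice points parametrizing $H^0(\mathcal{O}_{\Yv})$. This is the key structural input: build a Hamiltonian whose chords see exactly the monomials on the mirror. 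The degree computation — showing every generator sits in degree $0$ — should follow from a Maslov-index / Conley--Zehnder computation using the fact that $L_0$ is a section (graph of a closed one-form) and the chords lie over contractible regions of the base; the section property is what forces the index to vanish, and this is presumably where the base-admissibility from Theorem \ref{th:main1} is used.

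Next I would identify the product. On the $\HW$ side, the product counts rigid pseudoholomorphic triangles with boundary on $L_0$; on the mirror side, it is multiplication of monomials, i.e. addition in the lattice semigroup. The plan is to show that for each pair of generators $x_\alpha, x_\beta$ there is a \emph{unique} rigid holomorphic triangle contributing to $x_\alpha \cdot x_\beta$, and that its output is $x_{\alpha+\beta}$ (up to sign/coefficient), with no other contributions. The cleanest route is a degeneration or tropical-limit argument: in the tropically degenerated model the conic bundle becomes a union of simple pieces (affine space $\times \bCx$-type charts glued along the tropical hypersurface), the fibration restricts to standard toric-type fibrations on the pieces, and the holomorphic triangles localize to explicit model triangles in $(\bCx)^k \times \bC^l$ whose count is $1$. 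One must also rule out disk bubbling and check that the wrapped differential vanishes (consistent with $\HW^*$ concentrated in degree $0$), which again reduces to the local models. Assembling these local counts gives a chain-level map $\Mir^{L_0}$ that is an isomorphism of algebras; associativity and unitality are automatic once the generators and product are matched with the semigroup ring.

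The main obstacle I anticipate is the holomorphic curve count for the product, i.e. proving that exactly one rigid triangle contributes and correctly computing its sign/orientation so that the structure constants match those of $H^0(\mathcal{O}_{\Yv})$ on the nose. The combinatorics of the generators and the degree vanishing should be relatively robust given an explicit Hamiltonian adapted to the piecewise-linear base, but controlling moduli of triangles in the glued-up tropical model — ensuring transversality, no sphere or disk bubbling at the corners where pieces meet, and a clean gluing/degeneration statement identifying the count with a product on the mirror — is the technical heart. A secondary difficulty is making precise the passage between the blow-up symplectic form with the tropicalized hypersurface and the intrinsic Liouville structure on the affine variety $Y$, so that the $\HW$ computed is the "correct" one; I would handle this by a deformation/invariance argument showing the wrapped invariants are unchanged.
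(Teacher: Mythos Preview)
Your overall architecture is right, and your secondary difficulty (passing between the blow-up form used for the SYZ fibration and the intrinsic Liouville structure on the affine $Y$) is exactly what the paper isolates as Theorem~\ref{th:main3}. But there is a genuine gap in your plan for the product.

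You propose to show that $x_\alpha \cdot x_\beta$ receives a \emph{unique} triangle contribution with output $x_{\alpha+\beta}$, so that $\HW^0(L_0)$ becomes a semigroup ring under the obvious basis. This is false on both sides. On the mirror side, the natural basis of $H^0(\cO_{\Yv})$ matching Hamiltonian chords is $\{p^i\chi_{-\bn,\ell_1(\bn)}\}_{(\bn,i)\in N\times\bZ}$, and its multiplication is
\[
p^i\chi_{-\bn,\ell_1(\bn)}\cdot p^{i'}\chi_{-\bn',\ell_1(\bn')}
=\sum_{j=0}^{\ell_2(\bn,\bn')}\binom{\ell_2(\bn,\bn')}{j}\,p^{\,i+i'+j}\chi_{-\bn-\bn',\ell_1(\bn+\bn')},
\]
with $\ell_2(\bn,\bn')=\ell_1(\bn)+\ell_1(\bn')-\ell_1(\bn+\bn')$; see \pref{eq:product1}. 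The binomial coefficients come from $\chi_{\bszero,\ell_2}=(1+p)^{\ell_2}$, which is where the complement of the anticanonical divisor enters. On the Floer side, the corresponding phenomenon is that Floer triangles in $Y$ intersect the exceptional divisor $E$ of the Kaliman modification, and each intersection shifts the output chord by one unit in the $\bZ$-direction. There are $\binom{\ell_2(\bn,\bn')}{j}$ rigid triangles with $y\cdot E=j$, not a single one; this is the content of \pref{lm:enum}.

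The mechanism you are missing is therefore the deformation of the product relative to $E$. The paper's route is: (i) observe that $L_0$ and all chords live away from $E$, so the Floer theory in $Y$ looks like that in $(\bCx)^2\times\bCx$ at the level of generators; (ii) show that any Floer triangle has fixed total intersection $\jmax=\ell_2(\bn,\bn')$ with the total transform $\Ebar=E\cup F$; (iii) push triangles down along the blow-up $p\colon\Ybar\to\NCx\times\bC$ and count pairs $(y_1,y_2)$ with $y_1\colon\Sigma\to\NCx$ meeting $Z$ at prescribed points and $y_2\colon\Sigma\to\bC$ vanishing at $j$ of them. The count in step (iii) is done by a degeneration-of-domain argument reducing to $\SH^*(\bCx)$ and $\HW^*$ of a cotangent fibre. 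Your tropical-localization intuition is not wrong for setting up step (i), but it will not by itself see the deformation; you need to track intersections with $E$ explicitly.
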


The wrapped Floer cohomology ring on the left hand side of \eqref{eq:main0}
comes with a natural basis
given by Hamiltonian chords.
As suggested by 
Tyurin and emphasized by Gross, Hacking, Keel and Siebert
\cite{MR3415066,1204.1991},
the images of these Hamiltonian chords
on the right hand side of \eqref{eq:main0} give generalizations of
theta functions on Abelian varieties.
The proof of \pref{th:main0},
which is a direct consequence of \pref{th:main2} and \pref{th:main3} below,
is a higher dimensional analogue of \cite{MR3265556}.
However, the approach taken in this paper is slightly different.
While Pascaleff's argument exploits a certain TQFT structure
for Lefschetz fibrations due to Seidel,
our argument is based upon a study of how Floer theory behaves
under Kaliman modification.
Though this birational geometry viewpoint governs our approach,
we do not study this problem in maximal generality,
but limit ourselves to this series of examples.

The first step in our argument is to construct
a suitable family of Hamiltonians
which are well behaved with respect to the conic bundle structure.
The essential technical ingredient for wrapped Floer theory
is the existence of suitable $C^0$-estimates for solutions to Floer's equation.
However, base-admissible Lagrangian sections do not naturally fit
into the setup of \cite{MR2602848} and, for this reason,
we adapt their theory of wrapped Floer cohomology
to the above choice of symplectic forms
and base-admissible Lagrangian sections.

In our setup, there are now two directions in which Floer curves can escape to infinity,
namely,  in the ``base direction'' and in the ``fiber direction'' of the conic fibration.
We show that, outside of a compact set in the base,
the Hamiltonian flow on the total space
projects nicely to the Hamiltonian flow on the base.
It follows that there is a maximum principle
for solutions to Floer's equation
with boundary on admissible Lagrangians,
which prevents curves
from escaping to infinity in the ``base direction''.
On the other hand,
it is more difficult to prevent curves from escaping to infinity
in the ``fiber direction'' of the conic fibration.
In fact, curves can indeed escape to infinity.
However, we show in \pref{lm:Gcompactness}
that they must break along \emph{divisor chords},
which are Hamiltonian chords living completely inside the divisor at infinity.
There is a natural auxiliary grading \emph{relative}
to the exceptional divisor $E$.
The key observation is that, with respect to this grading,
the grading of the divisor Hamiltonian chords becomes arbitrarily large
as $m$ gets larger.
By restricting to generators for the Floer complex
whose relative grading lies in a certain range,
we are therefore able to exclude this breaking and
obtain the compactness needed to define wrapped Floer cohomology.

We now turn to the computational aspect of our paper.
The zero-section $L_0$ is a base-admissible Lagrangian
and so it makes sense to consider its adapted wrapped Floer cohomology ring
$\HWad^*(L_0)$.
In \pref{sc:HW}, we prove the following theorem:

\begin{theorem} \label{th:main2}
The adapted wrapped Floer cohomology $\HWad^*(L_0)$
is concentrated in degree 0, and
there exists an isomorphism
\begin{align} \label{eq:main2}
 \Mirad^{L_0} \colon \HWad^0(L_0) \simto H^0(\cO_\Yv)
\end{align}
of $\bC$-algebras.
\end{theorem}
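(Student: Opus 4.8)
The plan is to compute $\HWad^*(L_0)$ as the direct limit of the ordinary Floer cohomology groups $HF^*(L_0;H_m)$ of the adapted Hamiltonians $H_m$ built in the previous section, and then to read off the $\bC$-algebra structure from the triangle product $\mu^2$. First I would enumerate the time-one Hamiltonian chords of $H_m$ with both endpoints on $L_0$ which survive the truncation by relative grading forced by \pref{lm:Gcompactness}. Since $H_m$ is a sum of a term quadratic in the base moment coordinates and a term controlled along the conic fibres, such a chord is determined by its winding in the Lagrangian torus fibres, hence by a lattice point; checking which lattice points are admissible, one finds that the surviving chords are in natural bijection with those elements $\theta_p$, $\deg p\le m$, of a distinguished theta-function basis $\{\theta_p\}$ of $H^0(\cO_\Yv)$, and that these exhaust the basis as $m\to\infty$. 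I would then define $\Mirad^{L_0}$ on generators by $\theta_p\mapsto$ the corresponding regular function.

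Next I would show that every surviving chord has degree (Maslov index) $0$. Since $L_0$ is a graded Lagrangian section and the surviving chords remain in the region where the conic fibration is smooth and $H_m$ splits as a base term plus a fibre term, the index splits as a vanishing base contribution plus a vanishing fibre-torus contribution; the chords that would carry nonzero degree are precisely the divisor chords lying in $E$, which are exactly the generators removed by the relative-grading truncation (and that truncation produces a genuine complex, as arranged earlier). Hence the Floer complex of $H_m$ is concentrated in degree $0$, its differential vanishes, $HF^*(L_0;H_m)$ is concentrated in degree $0$ with basis $\{\theta_p : \deg p\le m\}$, and the same is true of the limit $\HWad^*(L_0)$.

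The substance of the theorem is the identification of the product. The ring structure on $\HWad^*(L_0)$ is induced by the triangle product, which at the chain level counts rigid $J$-holomorphic triangles $u$ with boundary on Hamiltonian push-offs of $L_0$, with inputs $\theta_{p_1},\theta_{p_2}$ and output $\theta_q$. I would argue as follows. The base maximum principle keeps the base projection of $u$ inside a fixed compact set, while \pref{lm:Gcompactness} together with the relative-grading truncation rules out breaking along divisor chords; hence the relevant moduli spaces are compact. Projecting $u$ to the base $(\bCx)^2$ of the conic bundle exhibits it as the completion of a holomorphic polygon in a toric compactification of $(\bCx)^2$, and a toric/tropical analysis then shows that a rigid triangle occurs only when $q=p_1+p_2$ and the underlying lattice configuration is compatible with the triangulation of $\Delta$ defining $\Yv$; matching this combinatorial data with the toric description of $\Yv$ identifies the structure constants of the triangle product with those of $H^0(\cO_\Yv)$, the relations beyond those of a polynomial ring coming from triangles that wrap into the conic fibre. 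Regularity of these moduli spaces and the fact that the signed count equals $1$ I would verify by an explicit model computation near the relevant fibre after degenerating $Z$ to the tropical localization, or equivalently by transporting the exactly soluble computation on $(\bCx)^2\times\bC$ through the Kaliman modification that produces $Y$.

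Finally, $\Mirad^{L_0}$ is a $\bC$-linear bijection carrying $\{\theta_p\}$ to the distinguished basis of $H^0(\cO_\Yv)$, it preserves the degree-$0$ grading, it sends the unit ($\theta_0$, the class of the constant chord) to $1$, and by the previous paragraph it matches products of basis elements; being $\bC$-linear and multiplicative on a basis, it is therefore an isomorphism of $\bC$-algebras. I expect the main obstacle to be the triangle count of the third paragraph: establishing transversality and the correct signs for the moduli of triangles, and in particular accounting for the fibre-wrapping triangles responsible for the relations in $H^0(\cO_\Yv)$, which I would resolve through the tropical degeneration together with the comparison with $(\bCx)^2\times\bC$.
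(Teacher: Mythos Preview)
Your outline is sound in its skeleton---enumerate chords, show they sit in degree $0$, compute the triangle product, match with the coordinate ring---and you correctly flag the triangle count as the crux. But the proposal underestimates what that count looks like and does not supply the mechanism that produces it.

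First, the indexing. Chords are parametrised not by a single lattice but by pairs $(\bn,i)\in N\times\bZ$: the class $\bn$ records the winding in the base torus, and $i$ records the winding in the conic fibre. The mirror map is $p_{\bn,i}\mapsto p^{i}\chi_{-\bn,\ell_1(\bn)}$, and the product on $H^0(\cO_{\Yv})$ is not ``polynomial with relations'' but the explicit expansion
\[
p^{i}\chi_{-\bn,\ell_1(\bn)}\cdot p^{i'}\chi_{-\bn',\ell_1(\bn')}
=\sum_{j=0}^{\ell_2(\bn,\bn')}\binom{\ell_2(\bn,\bn')}{j}\,p^{i+i'+j}\chi_{-\bn-\bn',\ell_1(\bn+\bn')}.
\]
So the output of $\frakm_2(p_{\bn,i},p_{\bn',i'})$ is not a single generator $\theta_{p_1+p_2}$ but a sum over $j=0,\ldots,\ell_2(\bn,\bn')$ with binomial coefficients. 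Your sentence ``a rigid triangle occurs only when $q=p_1+p_2$'' and ``the signed count equals $1$'' miss this.

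Second, and more seriously, the mechanism. The paper does not get the structure constants from a tropical count in the base $(\bCx)^2$. Projecting to the base only fixes $\bn+\bn'$ and the intersection number $y\cdot\Ebar=\ell_2(\bn,\bn')$; it cannot distinguish the different fibre outputs $i+i'+j$. The actual argument runs as follows. One pushes a triangle $y$ in $Y$ down along the blow-up $p:\Ybar\to\NCx\times\bC$; conversely a triangle in $\NCx\times\bC$ lifts by proper transform. Under this correspondence, $y\cdot E=j$ becomes the incidence condition that the pushed-down curve meets $Z\times 0$ at exactly $j$ marked points. For split almost complex structures the curve in $\NCx\times\bC$ is a pair $(y_1,y_2)$ with $y_1:\Sigma\to\NCx$ and $y_2:\Sigma\to\bC$; the base triangle $y_1$ is unique and meets $Z$ in $\ell_2$ points, and the incidence condition forces $y_2^{-1}(0)$ to be a chosen $j$-element subset of $y_1^{-1}(Z)$. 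The binomial coefficient is the number of such subsets; a separate degeneration-of-domain argument (gluing caps in $\bC$ and using $\SH^0(\bCx)\cong\bC[\alpha^{\pm1}]$) shows that for each choice the signed count of $y_2$ is $+1$. Transversality for split $J$ is checked by hand (automatic regularity in the line-bundle direction via the zero-counting lemma for $\bar\partial$ on line bundles).

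Your last paragraph gestures at ``transporting the computation on $(\bCx)^2\times\bC$ through the Kaliman modification'', which is the right neighbourhood of ideas, but the proposal never names the proper-transform/incidence correspondence, the role of $y\cdot E$ as the grading shift $j$, or the subset-choice origin of $\binom{\ell_2}{j}$. Without those, the triangle paragraph is a hope rather than an argument.
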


One case where wrapped Floer cohomology can sometimes be directly computed
is in manifolds which are products of lower-dimensional manifolds
and where the Lagrangians and Hamiltonians split
according to the product structures.
An important advantage of base-admissible Lagrangians is that
they live away from the exceptional locus
and hence can be regarded as Lagrangian submanifolds
in $(\bCx)^2 \times \bCx$.
When viewed in this way,
these Lagrangians respect the product structure.
Moreover, the admissible Hamiltonians we use
interplay nicely with this product structure,
and hence the Floer theory of base-admissible Lagrangians
is amenable to direct calculation.

In the case of $L_0$,
we find that all Hamiltonian chords lie away from the exceptional locus
and moreover that as a vector space,
the Floer cohomologies agree
when regarded as living in $(\bCx)^2 \times \bCx$ or $Y$.
However, the product structure on Floer cohomology is deformed.
This deformation can be formalized
in terms of the \emph{relative Fukaya category}
of Seidel and Sheridan \cite{MR3364859,MR3294958}.
One slightly novel feature is that in typical situations,
one works relative to a compactifying divisor at infinity,
while here we work relative to the exceptional divisor $E$.
In this case, one computes the deformed ring structure directly
by exploiting the correspondence between holomorphic curves
in $(\bCx)^2 \times \bC$
with incidence conditions relative to the submanifold $Z \times 0$
and holomorphic curves in $Y$
(one direction of this correspondence is given by projection
and the other direction is given by proper transform).
The relevant enumerative calculation is then done in \pref{lm:enum}
based upon a simple degeneration-of-domain argument.

It is not difficult to see that the Lagrangian $L_0$ is also admissible
in the sense of \cite{MR2602848}
when $Y$ is equipped with its natural finite-type convex symplectic structure
(for a definition, see \cite[Definition 2.2]{MR2497314}).
In \pref{sc:Comp},
we prove the following comparison theorem
between the two types of Floer theories:

\begin{theorem} \label{th:main3}
There exists an isomorphism
\begin{align} \label{eq:main3}
 \phi_{\HW} \colon \HW^*(L_0) \simto \HWad^* (L_0)
\end{align}
of graded $\bC$-algebras.
\end{theorem}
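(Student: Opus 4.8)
The plan is to show that for the zero-section $L_0$ the two theories coincide because the modifications introduced in \pref{sc:HW} --- made to accommodate base-admissible Lagrangians and the conic-bundle description of infinity --- are invisible to $L_0$. The inputs are two facts already in hand: $L_0$ is admissible in the sense of \cite{MR2602848} for the natural finite-type convex structure on $Y$ and base-admissible for the adapted structure, and, as in the proof of \pref{th:main2}, all Hamiltonian chords of $L_0$ together with all relevant bounded-relative-degree Floer curves avoid the exceptional divisor $E$. Since the chords avoid $E$, they live in a region on which the two symplectic structures can be identified, and I would choose cofinal sequences $(H_k)$ and $(H_k^{\mathrm{ad}})$, the first linear at infinity for the convex structure and the second adapted to the conic bundle, compatible on that region, and then construct continuation morphisms $\HW^*(L_0) \to \HWad^*(L_0)$ and $\HWad^*(L_0) \to \HW^*(L_0)$ by counting solutions of Floer's equation for monotone homotopies of Floer data interpolating between the two systems. (Alternatively one can package the same data as a deformation through convex-at-infinity structures along which $L_0$ stays admissible, and invoke deformation invariance of wrapped Floer cohomology.)

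The core of the argument is the $C^0$-estimate for these homotopies, which controls the two modes of escape to infinity. In the base direction one uses the maximum principle established earlier: outside a compact subset of the base the Hamiltonian flow of each $H_k$, of each $H_k^{\mathrm{ad}}$, and of each interpolating Hamiltonian projects to the base flow, so no solution escapes in the base direction. In the fiber direction a solution could a priori break off a component escaping into $E$ and carrying a divisor chord; this is excluded exactly as in \pref{sc:HW}, by working with the auxiliary grading relative to $E$ and restricting to generators whose relative grading lies in the prescribed range, together with the observation that the homotopy can be localized away from $E$ and that the relevant $L_0$-solutions do not meet $E$. With these estimates in place the two continuation morphisms are defined, and their composition is the continuation morphism of the concatenated homotopy, which is homotopic rel endpoints to the constant homotopy; hence the composition is chain-homotopic to the identity and $\phi_{\HW}$ is an isomorphism of graded $\bC$-vector spaces. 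It is degree-preserving because on both sides the grading is induced by one fixed grading datum on $L_0$ together with the Maslov indices of the shared chords.

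It remains to see that $\phi_{\HW}$ is multiplicative. Both products are defined by the same pair-of-pants construction --- counting solutions of Floer's equation over a thrice-punctured sphere with the Abouzaid--Seidel rescaling of slopes at the two inputs and the output --- and a continuation morphism intertwines such products up to chain homotopy by the standard argument of interpolating the Floer data over the moduli space of pairs of pants. The $C^0$-estimates of the previous paragraph apply verbatim to this enlarged moduli problem, again because the $L_0$-solutions of bounded relative degree avoid $E$, so $\phi_{\HW}$ is an isomorphism of graded $\bC$-algebras.

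The step I expect to be the main obstacle is precisely this $C^0$-estimate for the interpolating Floer data: one must reconcile the contact boundary of $Y$ with its natural convex structure against the ``base-plus-$E$'' picture of infinity in the adapted setting and, in particular, rule out the divisor-chord breaking that the relative grading was introduced to handle. What keeps this manageable rather than delicate is the special feature of $L_0$ --- that its chords and its low-relative-degree holomorphic curves all avoid $E$ --- which lets the entire interpolation be carried out away from $E$, where the two symplectic structures may be taken to coincide.
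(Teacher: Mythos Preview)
Your proposal identifies the right geometric inputs (chords and curves for $L_0$ avoid $E$, the base/fiber maximum principles) but there is a genuine gap at exactly the point you flag as the ``main obstacle'': you never specify a class of interpolating Hamiltonians for which the $C^0$-estimates hold, and in fact no such direct interpolation is available. A Liouville-admissible Hamiltonian is linear in the Liouville coordinate $r_Y$ of the domain $Y_R$ outside a compact set, whereas a fibration-admissible Hamiltonian is split as $\Hba + \Hv$ with prescribed base and fiber slopes. These asymptotic shapes are incompatible: a homotopy $H_s$ joining them cannot remain Liouville-admissible for all $s$ (so the Abouzaid--Seidel convexity argument does not apply), nor can it remain fibration-admissible (so the base/fiber estimates of \pref{sc:aWFT} do not apply). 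The observation that everything ``avoids $E$'' is orthogonal to this; escape in the fiber direction toward $u \to \infty$, or in the base direction, is governed by the asymptotic form of $H_s$, not by proximity to $E$. Your parenthetical alternative, deformation invariance of wrapped Floer cohomology through convex-at-infinity structures, also does not apply: the adapted theory is not a Liouville theory, so there is no deformation of Liouville domains to invoke.

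The paper circumvents this by an action-filtration argument modeled on McLean's comparison theorem for Lefschetz fibrations. One first constructs the Liouville domain $Y_R$ carefully (rounding corners of a region defined by base and fiber constraints, and deforming the primitive so that $\Ebar$ is preserved by the Liouville flow). One then replaces each Liouville-admissible $H_m$ by a Hamiltonian $H_m^c$ which agrees with $H_m$ on a large sublevel set but is \emph{constant} further out; the point is that $H_m^c$ now makes sense as a function on $Y$ itself (after shrinking $\mu_0$), not just on the abstract completion of $Y_R$. The extra chords introduced by the flattening all have strictly positive action, so the action-truncated quotient complex $\CF^*(L_0;H_m^c)_p$ recovers $\CF^*(L_0;H_m)$. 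One then builds a hybrid Hamiltonian $G_{h,m} = H_m^c + G_A$, where $G_A$ is a shifted fibration-admissible Hamiltonian supported far out; this $G_{h,m}$ is fibration-admissible, and an explicit action estimate shows its chords with $r_Y \ge 2$ still have positive action. Sandwiching $G_{h,m}$ between two genuine fibration-admissible Hamiltonians $G_m$ and $G'_m$ gives continuation maps
\[
\HFad^*(L_0;G_m) \to \HFad^*(L_0;G_{h,m})_p \to \HFad^*(L_0;G'_m)
\]
whose composite is an isomorphism in the direct limit, forcing each map to be one. The key idea you are missing is this passage through constant-outside Hamiltonians and action truncation; without it there is no common ambient setting in which both theories, and a homotopy between them, simultaneously admit compactness.
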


The proof of this theorem is a modification of \cite[Theorem 5.5]{MR2497314},
which applies to Lefschetz fibrations.
While his arguments do not generalize
to symplectic fibrations over higher dimensional bases,
certain simplifying features of our situation allow us
to adapt his arguments in a straightforward way.

We use this result to calculate the zero-th symplectic cohomology $\SH^0(Y)$ of $Y$.
Abouzaid \cite{MR2737980} has introduced a map
\begin{align} \label{eq:CO_L0}
 \CO \colon \SH^0(Y) \to \HW^0(L_0).
\end{align}
In \pref{sc:hms}, we prove the following:

\begin{theorem}
The map \pref{eq:CO_L0}
is an isomorphism.
\end{theorem}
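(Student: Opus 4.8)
The plan is to show that the closed--open map $\CO \colon \SH^0(Y) \to \HW^0(L_0)$ is an isomorphism by combining the computation of the right-hand side from \pref{th:main0} with an independent computation of $\SH^0(Y)$ and a naturality argument. First I would identify both sides with $H^0(\cO_\Yv)$: the target is handled by the isomorphism $\Mir^{L_0}$ of \pref{th:main0}, so the problem reduces to constructing a ring map on $\SH^0(Y)$ compatible with $\CO$ and $\Mir^{L_0}$. Since $Y$ is the Kaliman modification of $(\bCx)^2 \times \bC$ along $Z$, and the base-admissible Hamiltonians used throughout the paper interact well with the conic bundle structure, I expect that the same degeneration/product techniques used to prove \pref{th:main2} compute the symplectic cochain complex: generators of $\SH^*$ are Hamiltonian orbits, and the ones contributing in degree $0$ should again be indexed by lattice points in the relevant polytopes, i.e. by monomials in $H^0(\cO_\Yv)$. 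This produces a candidate isomorphism $\Mir^Y \colon \SH^0(Y) \simto H^0(\cO_\Yv)$.

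The key structural step is then to prove the commutativity of the triangle relating $\CO$, $\Mir^Y$, and $\Mir^{L_0}$. Here I would argue geometrically: the closed--open map is defined by counting half-cylinders with one boundary component on $L_0$ and an interior puncture asymptotic to a Hamiltonian orbit, and under the explicit correspondence (projection in one direction, proper transform in the other) between curves in $Y$ and curves in $(\bCx)^2 \times \bC$ with incidence conditions along $Z \times 0$, these moduli spaces degenerate in the same way as the ones computing the product structures in \pref{lm:enum}. Matching the two enumerative answers shows $\Mir^{L_0} \circ \CO = \Mir^Y$ on the nose, at least after verifying that the chosen Hamiltonians on $Y$ restrict compatibly to $L_0$; one can arrange this using the flexibility in the construction of the wrapping Hamiltonians. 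Combined with the fact that both $\Mir^Y$ and $\Mir^{L_0}$ are isomorphisms, this forces $\CO$ to be an isomorphism. An alternative, softer route would be to invoke a generation criterion: once \pref{th:main3} and the results of \pref{sc:hms} establish that $L_0$ split-generates the wrapped Fukaya category $\Wrap(Y)$ in the affine-space case, Abouzaid's criterion gives that $\CO$ is an isomorphism directly; but since the present statement is asserted in the generality of this series of examples (not only $\bC^3$), I would prefer the direct enumerative argument.

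The main obstacle I anticipate is controlling the $C^0$-behavior of the Floer solutions defining $\CO$ in the ``fiber direction'' of the conic bundle, exactly as in the discussion preceding \pref{th:main2}: half-cylinders can a priori escape to infinity along the exceptional divisor $E$, and one must show that the relevant configurations either break along divisor chords of large relative grading (hence are excluded in the degree range of interest) or stay in a compact region. This should follow from the maximum principle in the base direction together with the relative grading argument already developed for \pref{lm:Gcompactness}, but adapting it to the moduli spaces with an interior puncture (rather than only strips with Lagrangian boundary) requires some care. A secondary technical point is the comparison between the adapted and the standard symplectic cochain complexes on $Y$ — the analogue of \pref{th:main3} for $\SH^*$ rather than $\HW^*(L_0)$ — which I would handle by the same interpolation-of-Hamiltonians argument modeled on \cite[Theorem 5.5]{MR2497314}. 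Once these compactness and comparison issues are in place, the remaining verifications are the routine enumerative matchings, and the theorem follows.
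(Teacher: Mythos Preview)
Your proposal has a genuine gap in the ``direct enumerative'' route. The reason \pref{th:main2} goes through cleanly is that $\CF^*(L_0;H_m)$ is concentrated in degree $0$, so the differential vanishes for grading reasons and one only needs to compute products. This fails for the closed string side: the Hamiltonian orbits of an admissible $H_m$ come in Morse--Bott families $\cF_{(\bn,i)}$ (tori, products of tori with intervals, etc.), and after perturbation each family contributes generators in a range of degrees given by Morse indices on the family. So $\CF^*(Y;H_m)$ is \emph{not} concentrated in degree $0$, and there is an honest differential to control before you can even say what $\SH^0(Y)$ is as a vector space. Your statement that ``the ones contributing in degree $0$ should again be indexed by lattice points'' is exactly what has to be proven, and the degeneration/proper-transform techniques of \pref{lm:enum} do not by themselves compute this differential.

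The paper's argument is structured around this difficulty rather than attempting a direct computation of $\SH^0$. Injectivity of $\CO$ is established first (\pref{lm:injective}) by a local ``low energy'' comparison with the torus $(\bCx)^3$, which reduces to the classical computation there. Surjectivity is the subtle part: the paper exhibits enough \emph{closed} degree-$0$ cochains $\alpha^0_{(\bn,i)}$ in the symplectic cochain complex (\pref{lm:closedelements}), and the key input is a cancellation forced by the BV operator $\Delta$ together with Pascaleff's trick of choosing the perturbation time-independent on the interior so that $\Delta$ annihilates the interior Morse generators. One then uses the ring-map property of $\CO$ and the regular-class computation (\pref{lm:regular}, \pref{rm:multsurj}) to fill in the remaining classes. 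None of these ingredients appear in your outline; the compactness issues you flag are real but secondary to the differential computation.

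Finally, your alternative via Abouzaid's criterion is circular in this paper: the generation result \pref{th:generationL0} is only proven for the $\bC^3$ case \emph{and} its proof already consumes the identification $\SH^0(Y_R)\cong H^0(\cO_\Yv)$ obtained from \pref{th:OCisomorphism}, so you cannot invoke it here.
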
 

The argument employed here is inspired by an argument of Pascaleff
\cite{1304.5298}.
Namely, it is not difficult to attain that the map is injective which again follows essentially from the fact that the symplectic fibration is trivial away from the discriminant locus.
This enables us to compute the ``local'' or ``low energy'' terms
in the map $\CO$ by reduction to the case of $(\bCx)^3$.
Proving surjectivity is more delicate
and rests upon making partial computations of the differential
in symplectic cohomology.
This relies on two recent ingredients:
a local computation of the BV-operator in \cite{1405.2084}
and a trick in \cite{1304.5298}
which enables us to rule out higher energy terms in the differential of a certain
``primitive'' cochain.

Finally, we apply all of these calculations to give a proof of homological mirror symmetry in the simplest case
when our polynomial $h(w_1,w_2)$ is $1+w_1+w_2$
(this corresponds to the case
when the mirror $\Yv$ is
$\Spec \lb \bC[x,y,z][(xyz-1)^{-1}] \rb$).
In a dual picture to the one above,
Abouzaid has introduced a map
\begin{align}
 \OC \colon \HH_3(\HW^*(L_0)) \to \SH^0(Y).
\end{align}
In our case,
the machinery behind this map is vastly simplified
because all of our chain complexes lie in degree zero.
Abouzaid showed that if the unit $[1]$ is hit by the map $\OC$,
then $L_0$ split-generates the wrapped Fukaya category $\cW(Y)$.
It follows from what we have proven so far
that the image of $\OC$ must be a principal ideal $(f) \in SH^0(Y)$.
Although it is difficult to compute the entire open string map directly,
the surjectivity of the map therefore follows formally
if we can show that
there are non-zero elements $f_1$, $f_2$, and $f_3$
in the image of $\OC$
which have no common divisors other than units.
This again uses the triviality of the symplectic fibration
away from the discriminant locus to make enough partial computations of the map
to produce these elements.
Combining all of these results, we obtain the following:

\begin{theorem} \label{th:hms}
$L_0$ generates the wrapped Fukaya category of $Y$.
In particular, there is an equivalence
\begin{align}
 \psi \colon D^b \cW(Y) \simto D^b  \coh \Yv
\end{align}
of enhanced triangulated categories
sending $L_0$ to $\cO_{\Yv}$.
\end{theorem}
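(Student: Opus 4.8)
The plan is to deduce \pref{th:hms} from Abouzaid's split-generation criterion and then to package the Floer computations of the previous sections into the stated equivalence. Recall that, for a Liouville manifold $Y$ of complex dimension $3$, the criterion says that $L_0$ split-generates $\cW(Y)$ as soon as the unit $1 \in \SH^0(Y)$ lies in the image of the open--closed map $\OC \colon \HH_3(\HW^*(L_0)) \to \SH^0(Y)$. So essentially the whole argument reduces to proving $1 \in \operatorname{im}\OC$; the subsequent passage to an equivalence is formal, since by \pref{th:main0} the endomorphism algebra $\HW^*(L_0)$ is concentrated in degree $0$ and equals the coordinate ring of $\Yv$.

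First I would identify $\operatorname{im}\OC$ as a principal ideal. Set $R := H^0(\cO_\Yv)$, which for $h = 1+w_1+w_2$ is $\bC[x,y,z][(xyz-1)^{-1}]$, the coordinate ring of the smooth affine Calabi--Yau threefold $\Yv$; by \pref{th:main0} we have $\HW^*(L_0) \cong R$ as $\bC$-algebras in degree $0$. Being an honest commutative algebra in a single degree, $\HW^*(L_0)$ is formal, so $\HH_\ast(\HW^*(L_0)) = \HH_\ast(R)$, and the Hochschild--Kostant--Rosenberg theorem identifies $\HH_3(R)$ with $\Omega^3_{\Yv/\bC} = \omega_\Yv$, which is free of rank one over $R$ since $\Yv$ is Calabi--Yau; fix a generator $\Omega$ (a holomorphic volume form). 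The closed--open map $\CO \colon \SH^0(Y) \to \HW^0(L_0)$, which is an isomorphism by the unnumbered theorem above and is multiplicative, lets us identify $\SH^0(Y)$ with $R$; under this identification $\OC$ becomes an $R$-linear map $\HH_3(R) \to R$ (the open--closed map is linear over $\SH^\ast$ via $\CO$), so its image is the principal ideal $(f)$ with $f := \OC(\Omega)$. Hence $1 \in \operatorname{im}\OC$ if and only if $f$ is a unit of $R$.

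The heart of the matter — and the step I expect to be the main obstacle — is to show $f$ is a unit by making enough of $\operatorname{im}\OC$ explicit. Since $\bC[x,y,z]$ is a unique factorization domain and $xyz-1$ is irreducible, $R$ is a unique factorization domain whose units are the scalar multiples of integer powers of $xyz-1$, and $x,y,z$ share no common divisor other than a unit (a common divisor lies in $\bC[x,y,z]$ and divides each of $x,y,z$, hence is constant). So it suffices to exhibit $f_1,f_2,f_3 \in \operatorname{im}\OC$ whose greatest common divisor in $R$ is a unit — ideally unit multiples of $x,y,z$ themselves — because then $f \mid \gcd(f_1,f_2,f_3)$ forces $f$ to be a unit. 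To produce these I would, following \cite{1304.5298}, exploit that the conic fibration is trivial away from the discriminant locus: $L_0$ and the admissible Hamiltonians live away from the exceptional divisor, so over that region $Y$ restricts to a Liouville subdomain of $(\bCx)^2\times\bCx \cong (\bCx)^3$, whose symplectic cohomology is $\bC[x^{\pm1},y^{\pm1},z^{\pm1}]$ and whose open--closed map is an isomorphism sending a volume form to a monomial. The restriction maps intertwine the two open--closed maps, which pins down the lowest-energy (``local'') part of $\OC$ applied to $\Omega$ and to its products with the Hamiltonian chords corresponding to $x,y,z$; I would then use the local computation of the BV operator from \cite{1405.2084} together with the ``primitive cochain'' trick of \cite{1304.5298} to show that the residual higher-energy terms either vanish or do not affect divisibility. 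This step is delicate precisely because $\OC$ cannot be computed outright — everything rests on the triviality away from the discriminant to extract leading terms and on the vanishing of the higher-energy corrections.

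Finally I would upgrade split-generation to the equivalence. The $A_\infty$ functor from $\cW(Y)$ to $\HW^*(L_0)$-modules sending an object to its module of morphisms out of $L_0$ is cohomologically full and faithful on the thick subcategory generated by $L_0$ and, by split-generation, identifies the idempotent completion of $D^b\cW(Y)$ with $\operatorname{Perf}(\HW^*(L_0))$. By \pref{th:main0} and formality $\HW^*(L_0)=R$, so this is $\operatorname{Perf}(R)$; as $\Yv$ is smooth affine, $\operatorname{Perf}(R) = D^b\coh\Yv$, a category that is already idempotent complete and, since every finitely generated $R$-module has a finite free resolution, is generated — not merely split-generated — by $\cO_\Yv$. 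Pulling this back, $L_0$ generates $D^b\cW(Y)$ and we obtain an enhanced equivalence $\psi \colon D^b\cW(Y) \simto D^b\coh\Yv$ with $\psi(L_0) = \cO_\Yv$, under which the Hamiltonian chords forming the theta-function basis of $\HW^0(L_0)$ correspond to the regular functions on $\Yv$. Apart from the explicit partial computation of $\OC$, everything here is routine — Abouzaid's criterion, the principality claim (Hochschild--Kostant--Rosenberg together with \pref{th:main0} and the $\CO$-isomorphism), and the final homological algebra over the regular ring $R$ — so the real work, and the main obstacle, is the explicit partial computation of the open--closed map described above.
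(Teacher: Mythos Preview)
Your strategy matches the paper's: Abouzaid's criterion, the observation that $\operatorname{im}\OC$ is a principal ideal via HKR and the $\CO$-isomorphism, and then the exhibition of coprime elements in the image by reduction to $(\bCx)^3$. Two points of divergence are worth noting. First, the BV-operator computation from \cite{1405.2084} and Pascaleff's primitive-cochain trick from \cite{1304.5298} are not used in the paper to control higher-energy terms in $\OC$; they are used earlier, in the proof that $\CO$ is surjective (which you correctly take as input). The paper's partial computation of $\OC$ instead passes through the \emph{adapted} Floer theory and uses a barrier argument to confine the relevant curves entirely to a product region, so that the count literally agrees with the $(\bCx)^3$ count---there are no residual corrections to rule out. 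Second, the elements the paper actually produces are (the classes corresponding to) $(xy)^5$, $(yz)^5$, $(xz)^5$ rather than $x,y,z$; these arise from explicit HKR cycles built out of chords in a single chamber, which is what makes the barrier argument go through. Either triple has trivial gcd in $R$, so the conclusion is the same.
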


To push this result somewhat further, we note that the behavior of (wrapped) Fukaya categories under finite covers is well-understood.
To be more precise,
let $N_0$ be a finite index subgroup of the fundamental group
$N$ of $Y$.
Note that $N$ is a free abelian group of rank 2.
Denote the quotient group by $G=N/N_0$ and
let $Y_{N_0}$ be the corresponding finite cover
(which is itself a conic bundle).
The set of connected components of the pull-back of $L_0$
by the covering map $\varpi \colon Y_{N_0} \to Y$
is a torsor over $G$,
which we identify with $G$
by choosing a base point;
$
 \varpi^{-1}(L_0) = \bigoplus_{g \in G} L_g.
$
The group $\Gv \coloneqq \Hom(G, \bCx)$
acts naturally on $\Yv$, and the quotient stack
is denoted by $\ld \Yv / \Gv \rd$.
We write the one-dimensional representation of $\Gv$
associated with an element $g \in G$
as $V_g$.
\pref{th:hms} admits the following generalization to finite covers:

\begin{corollary} \label{cr:hms_McKay}
There is an equivalence
\begin{align} \label{eq:hms_McKay}
 \psi_{N_{0}} \colon D^b \cW(Y_{N_0})
  \simto D^b \coh \ld \Yv/\Gv \rd
\end{align}
of enhanced triangulated categories
sending $L_g$ to $\cO_{\Yv} \otimes V_g$.
\end{corollary}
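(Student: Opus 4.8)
The plan is to deduce Corollary~\ref{cr:hms_McKay} from \pref{th:hms} by a descent argument, using the well-understood compatibility of wrapped Fukaya categories with finite covers together with the compatibility of $D^b\coh$ with the $\Gv$-quotient. First I would recall the covering-space setup on the symplectic side: since $N=\pi_1(Y)\cong\bZ^2$ is abelian, the finite-index subgroup $N_0$ is normal, so $\varpi\colon Y_{N_0}\to Y$ is a Galois cover with deck group $G=N/N_0$. The group $G$ acts on $Y_{N_0}$ by symplectomorphisms preserving the Liouville structure and the conic-bundle data, hence acts on $\cW(Y_{N_0})$; the sublevel of $G$-equivariant/invariant objects recovers $\cW(Y)$, and more precisely one has an equivalence $\cW(Y_{N_0})^{hG}\simeq\cW(Y)$ — or dually, $\cW(Y_{N_0})\simeq \cW(Y)\rtimes G$ as the category of $G$-equivariant modules, with $\varpi^*L_0=\bigoplus_{g\in G}L_g$ carrying its natural equivariant structure. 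This is the Fukaya-categorical analogue of the statement that pulling back under a finite cover corresponds to forgetting part of a group action; I would cite the relevant references for behavior of (wrapped) Fukaya categories under finite covers and orbifold quotients, and note that the Liouville/admissibility structures behave well because $\varpi$ is a finite unramified cover.

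Next I would treat the mirror side. The dual group $\Gv=\Hom(G,\bCx)$ acts on $\Yv$ (algebraically, compatibly with the toric structure), and by standard descent for quasicoherent sheaves along a finite quotient, $D^b\coh\ld\Yv/\Gv\rd$ is equivalent to the derived category of $\Gv$-equivariant coherent sheaves on $\Yv$; since $\Gv$ is finite (hence linearly reductive) and abelian, this in turn is $D^b\coh\Yv \rtimes \Gv$ in the analogous semidirect-product / crossed-product sense, with the irreducible representations $\{V_g\}_{g\in G}$ indexing the simple summands of the regular representation. The key point is that Pontryagin duality $G\leftrightarrow\Gv$ matches the $G$-action on $\cW(Y_{N_0})$ with the $\Gv$-equivariance on $D^b\coh\Yv$: a $G$-action and a $\Gv$-coaction are interchanged under taking crossed products, so $\cW(Y)\rtimes G$ corresponds to $(\coh\Yv)^{\Gv}$ under the equivalence of \pref{th:hms}.

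The argument then assembles as follows. Start from the equivalence $\psi\colon D^b\cW(Y)\simto D^b\coh\Yv$ of \pref{th:hms} sending $L_0\mapsto\cO_\Yv$. This equivalence is canonical enough to be $\bZ^2$-equivariant for the natural actions (the $N$-action on $Y$ by deck-like translations in the fibration base is mirror to the torus/translation action on $\Yv$ through which $\Gv$ acts once we pass to $G$), so it descends: apply the crossed-product construction with respect to $G$ on the left and $\Gv$ on the right to obtain
\begin{align}
 \psi_{N_0}\colon D^b\cW(Y)\rtimes G \simto \lb D^b\coh\Yv\rb \rtimes \Gv,
\end{align}
and then identify the left side with $D^b\cW(Y_{N_0})$ and the right side with $D^b\coh\ld\Yv/\Gv\rd$ via the two descent equivalences above. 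Tracking the object $L_0$ with its $G$-equivariant structure through $\psi$ and through the identifications, the summand $L_g$ of $\varpi^{-1}(L_0)$ corresponds to $\cO_\Yv\otimes V_g$, giving exactly \eqref{eq:hms_McKay}.

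The main obstacle I anticipate is not the formal descent, which is by now standard, but rather verifying that the equivalence $\psi$ of \pref{th:hms} is genuinely equivariant for the relevant $G$ (equivalently $\bZ^2$) actions on the two sides in a way strong enough to pass to crossed products — one needs the action to lift to the dg/$A_\infty$ level coherently, not merely up to isomorphism of functors. In our situation this is manageable because $\psi$ is built from the generation result together with the explicit computation $\HW^*(L_0)\cong H^0(\cO_\Yv)$, and the deck action permutes the generators $L_g$ in a controlled way compatible with the monomial basis of regular functions; so the plan is to observe that the $G$-action is already visible on the algebra level (it permutes Hamiltonian chords / theta functions according to the $\Gv$-weight decomposition) and invoke functoriality of the bar/Koszul-dual construction producing $\psi$. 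A secondary point to check is that base-admissibility and the adapted wrapped theory are preserved under the finite cover $Y_{N_0}\to Y$, so that $\cW(Y_{N_0})$ is set up with the same conventions as in \pref{sc:Comp}; since the cover is finite and unramified and the conic-bundle structure pulls back, the Hamiltonians, maximum principles, and $C^0$-estimates of the earlier sections transfer verbatim.
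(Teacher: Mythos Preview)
Your descent strategy can be made to work, but the paper takes a more direct route that sidesteps the $A_\infty$-equivariance issue you correctly flag as the main obstacle. Rather than lifting $\psi$ to an equivariant equivalence and passing to crossed products, the paper argues directly with generators: Abouzaid's generation criterion is compatible with finite covers (cited as a special case of \cite[Corollary~1.4]{1201.5880v3}), so $\{L_g\}_{g\in G}$ generates $\cW(Y_{N_0})$; on the mirror side $\{\cO_\Yv\otimes V_g\}$ generates $D^b\coh\ld\Yv/\Gv\rd$; and the endomorphism algebra of the direct sum on each side is identified with the crossed product $H^0(\cO_\Yv)\rtimes\Gv$. Since this algebra is concentrated in degree zero and hence intrinsically formal, matching the endomorphism algebras immediately yields the desired equivalence, with $L_g\mapsto\cO_\Yv\otimes V_g$ by construction. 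Your approach buys a more conceptual picture but at the cost of verifying coherent equivariance of $\psi$; note also a small confusion in your setup --- there is no $N$- or $G$-action on $Y$ itself (only on $Y_{N_0}$, by deck transformations), so the phrase ``$\bZ^2$-equivariant for the natural actions'' needs reformulation. What acts on $\cW(Y)$ is $\Gv\subset\Hom(N,\bCx)$, via tensoring objects with flat line bundles, and it is this $\Gv$-action one would match with the $\Gv$-action on $\coh\Yv$; once straightened out, no Pontryagin swap is needed.
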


This computation represents to our knowledge
the first complete computation appearing in the literature
of the wrapped Fukaya category of a six-dimensional Liouville domain,
which is neither a product of lower dimensional manifolds nor a cotangent bundle.
This has applications to symplectic topology,
an elementary example being \pref{pr:embeddings}.
In a different direction,
the abelian McKay correspondence implies homological mirror symmetry
for divisor complements of toric Calabi--Yau varieties
which are related by a variation of GIT quotients
to orbifold quotients $\ld \bC^3/\Gv \rd$.

Now assume that $G$ is sufficiently `big'
in the sense that the crepant resolution of $\bC^3/\Gv$
has a compact divisor. Let
$
 \cF_0 \lb Y_{N_0} \rb
$
be the full subcategory of
$
 \cW \lb Y_{N_0} \rb
$
consisting of Lagrangian spheres, and
$
 \coh_0 \ld \Yv / \Gv \rd
$
be the full subcategory of
$
 \coh \ld \Yv / \Gv \rd
$
consisting of sheaves supported at the origin.

\begin{corollary} \label{cr:compact_embedding}
The equivalence \pref{eq:hms_McKay} induces
a fully faithful functor
\begin{align} \label{eq:compact_embedding}
 D^b \cF_0(Y_{N_0}) \hookrightarrow D^b \coh_0 \Yv / \Gv.
\end{align}
\end{corollary}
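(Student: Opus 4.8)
The plan is to deduce Corollary~\ref{cr:compact_embedding} from the equivalence \eqref{eq:hms_McKay} by checking that it matches up the two subcategories in question. The point is that $\cF_0(Y_{N_0})$ and $\coh_0\ld \Yv/\Gv\rd$ are each \emph{intrinsically characterized} inside the ambient categories, so it suffices to see that $\psi_{N_0}$ carries one characterization to the other. First I would recall that a Lagrangian sphere $S$ in the Liouville domain $Y_{N_0}$ is compact, hence its wrapped Floer cohomology coincides with the ordinary (unwrapped) one; in particular $S$ defines a \emph{proper} object of $\cW(Y_{N_0})$, i.e. $\bigoplus_k \HW^k(S,L)$ is finite-dimensional for every $L$. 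Conversely, since $Y_{N_0}$ is a conic bundle of the type studied above and $G$ is big enough that the crepant resolution has a compact divisor, the only compact exact Lagrangians that can carry the relevant bounding/grading data and be objects of $\cW$ are (isotopic to) these Lagrangian spheres sitting over the compact part of the SYZ base; thus $\cF_0(Y_{N_0})$ is the full subcategory of proper objects of $\cW(Y_{N_0})$, at least after passing to $D^b$. On the mirror side, $\coh_0\ld\Yv/\Gv\rd$ is likewise the subcategory of objects of $\coh\ld\Yv/\Gv\rd$ that are proper over the base, i.e. set-theoretically supported at the origin; equivalently these are exactly the objects $E$ for which $\Ext^*(E,F)$ is finite-dimensional for all $F$, or which are torsion with respect to the pullback of the affinization map.

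Granting these two descriptions, the proof is short. Since $\psi_{N_0}$ in \eqref{eq:hms_McKay} is an equivalence of triangulated categories, it preserves the property ``$\Hom^*(-,-)$ is finite-dimensional against every object'', hence identifies the subcategory of proper objects on the two sides. Therefore $\psi_{N_0}$ restricts to a fully faithful functor
\begin{align}
 D^b \cF_0(Y_{N_0}) \hookrightarrow D^b \coh_0 \Yv/\Gv,
\end{align}
which is the assertion of \eqref{eq:compact_embedding}. (I would phrase it as fully faithful rather than an equivalence because a priori not every sheaf supported at the origin need be realized by a Lagrangian \emph{sphere} — for instance higher-rank or non-reduced sheaves at the origin correspond to iterated cones of spheres, which lie in $D^b\cF_0$ but not in $\cF_0$ itself — so the target of the functor is the derived category, and one only claims the functor lands there and is fully faithful.)

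The main obstacle is the geometric input on the Floer side: verifying that every object of $\cF_0(Y_{N_0})$ is proper, and, in the other direction, that $D^b\cF_0(Y_{N_0})$ maps \emph{into} (not just to something equivalent to a subcategory of) $D^b\coh_0$. The first is essentially automatic from compactness of Lagrangian spheres, given that wrapped and unwrapped Floer cohomology agree for compact Lagrangians in a Liouville domain; the subtlety is making sure no non-compact object sneaks into $\cF_0$, which is true by definition since $\cF_0$ was \emph{defined} as the subcategory of Lagrangian spheres. The second requires knowing that a Lagrangian sphere in $Y_{N_0}$ corresponds under $\psi_{N_0}$ to a complex with cohomology supported at the origin; this follows from the hypothesis that $G$ is big, so that the compact toric divisor in the crepant resolution of $\bC^3/\Gv$ — equivalently, the compact divisor of $\Yv$ (or its stacky resolution) — is the mirror of the compact part of the SYZ base fibration over which the Lagrangian spheres live. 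Concretely, one argues that such a sphere is supported (in the sense of Floer theory with the exceptional/boundary divisor) over a bounded region of the base, dual to a sheaf whose support avoids the directions of the affinization $\Yv \to \Spec H^0(\cO_\Yv)$, i.e. a sheaf supported over the unique torus-fixed point. I expect this step to be where essentially all the content lies; once it is in place the corollary is a formal consequence of \pref{th:hms}, \pref{cr:hms_McKay}, and the stability of properness under triangulated equivalences.
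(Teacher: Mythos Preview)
Your overall structure is sound: full faithfulness is automatic from the equivalence $\psi_{N_0}$, and the only thing to check is that $\psi_{N_0}$ carries a Lagrangian sphere to an object of $D^b\coh_0\ld\Yv/\Gv\rd$. But the way you attempt this has a genuine gap.

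Your claim that $\coh_0\ld\Yv/\Gv\rd$ coincides with the proper objects of $\coh\ld\Yv/\Gv\rd$ is false. Since $\Yv$ is affine, the proper objects of $D^b\coh\ld\Yv/\Gv\rd$ are exactly the complexes with finite-length cohomology, i.e.\ those supported at finitely many closed points of the coarse space $\Yv/\Gv$. There are plenty of such points other than the origin: a skyscraper at a free $\Gv$-orbit in $\Yv$ is proper but not in $\coh_0$. So matching proper objects across $\psi_{N_0}$ only gets you that $\psi_{N_0}(S)$ is supported at a single closed point of $\Yv/\Gv$, not that this point is the origin. Your fallback SYZ argument (``the sphere lives over the compact part of the base, hence maps to something over the fixed point'') is exactly the step that carries all the content, and as written it is heuristic rather than a proof.

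The paper closes this gap differently, by an $\Ext$-algebra argument modelled on Proposition~\ref{pr:embeddings}. One first observes, as you do, that $\psi_{N_0}(S)$ is proper with $\Ext^0\cong\bC$, hence supported at a single closed point of $\Yv/\Gv$. If that point were \emph{not} the origin, it would be a regular point of the quotient, and the argument of Proposition~\ref{pr:embeddings} (applied in an \'etale neighbourhood where the stack is smooth of dimension three) forces $\psi_{N_0}(S)\cong\cO_y$ and hence $\Ext^*(\psi_{N_0}(S),\psi_{N_0}(S))\cong H^*(T^3;\bC)$. But $S$ is a sphere, so $\HF^*(S,S)\cong H^*(S^3;\bC)$, a contradiction. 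Thus the support must be the origin. This is where the hypothesis that $G$ is ``big'' is used implicitly: it guarantees there actually are Lagrangian spheres to talk about, but the pinning-down of the support is purely algebraic via the self-$\Ext$ computation, not via SYZ geometry.
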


It seems likely that the above embedding \pref{eq:compact_embedding}
is inverse to the embedding 
\begin{align} \label{eq:Seidel_embedding}
 D^b \coh_0 \Yv / \Gv \hookrightarrow D^b \cF(Y_{N_0}),
\end{align}
given by combining \cite{MR2651908} and \cite{MR2415388}.
To prove this, one would have to compare symplectic forms used in this paper
with that in \cite{MR2651908};
we do not go into that problem here.
Another interesting problem would be to study the meaning of Bridgeland stability conditions
on the essential image of \pref{eq:compact_embedding}
in terms of symplectic or K\"ahler geometry of $Y_{N_0}$
(see \cite{MR2373143} for the foundation of Bridgeland stability conditions,
\cite{MR2238896, MR2852118} for the space of Bridgeland stability conditions
on some toric Calabi--Yau manifolds,
and \cite{MR1957663,MR3354954} for a conjectural relation
with the Fukaya category).
Nevertheless, the existence of this embedding already has a nice application:

\begin{proposition} \label{pr:disjoin1}
Let $S_1, \cdots, S_r$ be a collection of Lagrangian spheres in $Y_{N_0}$
which are pairwise disjoint.
Then $r \leq |\Gv|$.
\end{proposition}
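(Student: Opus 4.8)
The plan is to leverage \pref{cr:compact_embedding}, which gives a fully faithful embedding $D^b \cF_0(Y_{N_0}) \hookrightarrow D^b \coh_0 \ld \Yv/\Gv \rd$. Each Lagrangian sphere $S_i$ defines an object of $\cF_0(Y_{N_0})$, and because the embedding is fully faithful, it suffices to bound the number of objects in $D^b \coh_0 \ld \Yv/\Gv \rd$ whose pairwise morphism spaces vanish in all degrees (that is, an ``exceptional-type'' orthogonal collection, except we only need $\operatorname{Hom}^*(S_i, S_j) = 0$ for $i \neq j$). First I would observe that since the $S_i$ are pairwise disjoint Lagrangians, their Floer cohomologies $HF^*(S_i, S_j)$ vanish for $i \neq j$; transporting through the equivalence \pref{eq:hms_McKay} and \pref{cr:compact_embedding}, the images $\cE_i \coloneqq \psi_{N_0}(S_i) \in D^b \coh_0 \ld \Yv/\Gv \rd$ satisfy $\operatorname{Ext}^*(\cE_i, \cE_j) = 0$ for all $i \neq j$.

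The next step is a purely algebro-geometric counting statement: in $D^b \coh_0 \ld \Yv/\Gv \rd$ — the bounded derived category of coherent sheaves on the stack $\ld \Yv/\Gv \rd$ supported at the origin — any collection of nonzero objects that is pairwise $\operatorname{Ext}$-orthogonal in all degrees has at most $|\Gv|$ members. I would prove this by passing to the Grothendieck group, or rather to a numerical/Euler-form argument: the Euler pairing $\chi(\cE, \cF) = \sum_k (-1)^k \dim \operatorname{Ext}^k(\cE, \cF)$ descends to $K_0$ of the category of sheaves supported at the origin, and this $K_0$ (tensored with $\bQ$, say) has rank equal to the number of irreducible $\Gv$-representations, namely $|\Gv|$ since $\Gv$ is abelian — the simple objects supported at the origin are the skyscraper $\cO_0 \otimes V_g$ for $g$ ranging over (the character group of) $\Gv$. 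Pairwise $\operatorname{Ext}$-orthogonality forces $\chi(\cE_i, \cE_j) = 0$ for $i \neq j$, so the classes $[\cE_i]$ span a subspace on which the Euler form is ``block-diagonal''; combined with the fact that each $\cE_i \neq 0$ has nonzero class (which I would need to check — e.g. via $\chi(\cE_i, \cO_0)$ or by noting that an object supported at the origin with trivial $K$-class would have to be zero after devissage, since the heart is finite length), one concludes the $[\cE_i]$ are linearly independent in a space of dimension $|\Gv|$, hence $r \le |\Gv|$.

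I expect the main obstacle to be the step showing that the classes $[\cE_i]$ are linearly independent rather than merely nonzero — $\operatorname{Ext}$-orthogonality gives vanishing of the \emph{off-diagonal} Euler numbers $\chi(\cE_i,\cE_j)$, but the Euler form on $D^b \coh_0$ of a Calabi--Yau threefold stack is graded-antisymmetric (by Serre duality, $\chi(\cE,\cF) = -\chi(\cF,\cE)$ for objects with proper support in dimension $3$), so it is badly degenerate and the ``diagonal'' entries $\chi(\cE_i,\cE_i)$ also vanish. This means a naive Gram-matrix argument fails. The fix is to work not with the Euler form but directly in $K_0\big(\coh_0 \ld\Yv/\Gv\rd\big)$: this group is free of rank $|\Gv|$ on the classes $[\cO_0 \otimes V_g]$, and I claim pairwise $\operatorname{Ext}$-orthogonality of \emph{objects supported at a point} forces the classes $[\cE_i]$ to be linearly independent there. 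The cleanest route is to compute, for each $i$, the class $[\cE_i] = \sum_g n_{i,g} [\cO_0 \otimes V_g]$ and to use a \emph{rank-type} invariant: write $\cE_i$ as an iterated extension of shifts of the simples $\cO_0 \otimes V_g$, and observe that the total multiplicity vector $(n_{i,g})_g \in \bZ^{|\Gv|}$ (taken with signs from the shifts, or better, its reduction recording just the support on the character lattice) must be such that distinct $\cE_i$ with all $\operatorname{Ext}^*$ vanishing have multiplicity vectors spanning distinct lines — this ultimately reduces to the statement that the poset of simple summands forces orthogonality to ``cost'' a dimension, which I would make precise by a filtration/long-exact-sequence induction on the length of the $\cE_i$. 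Once this linear-independence is in hand, $r \le \operatorname{rank} K_0\big(\coh_0 \ld\Yv/\Gv\rd\big) = |\Gv|$ is immediate.
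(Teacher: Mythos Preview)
Your starting point is correct and matches the paper: transport the $S_i$ through the embedding of \pref{cr:compact_embedding} to obtain pairwise $\operatorname{Ext}^*$-orthogonal nonzero objects $\cE_i \in D^b \coh_0 \ld \Yv/\Gv \rd$, and then bound $r$ by the rank $|\Gv|$ of $K_0\big(\coh_0 \ld \Yv/\Gv \rd\big)$. The gap is exactly where you locate it yourself: you never actually prove that the classes $[\cE_i]$ are linearly independent. You correctly observe that the Euler form is useless here (Serre duality on a CY $3$-fold forces $\chi(\cE_i,\cE_j)=-\chi(\cE_j,\cE_i)$, so both the diagonal and, by orthogonality, the off-diagonal entries vanish). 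Your proposed fix---``multiplicity vectors'' and a ``filtration/long-exact-sequence induction on the length''---is not a proof. Nothing about $\operatorname{Ext}^*$-orthogonality of complexes in a finite-length CY$3$ heart forces their $K$-classes to be linearly independent; indeed there is no a priori obstruction to a spherical object having vanishing $K$-class, and your dévissage sketch never produces a nonzero invariant separating the $\cE_i$.

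The paper supplies precisely the missing structure. It proves the stronger \pref{pr:disjoin2} (from which \pref{pr:disjoin1} is immediate, since any subspace of $K_0$ has dimension at most $|\Gv|$) by invoking a \emph{dilating $\bGm$-action} on $D^b \coh_0 \ld \Yv/\Gv \rd$, coming from a one-parameter subgroup of the torus acting on $\bC^3$ with weight one on the holomorphic volume form. This is Seidel's machinery: a dilation breaks the Calabi--Yau symmetry of the Euler pairing and yields a refined ``Mukai'' pairing on $K$-theory for which spherical objects have \emph{nonzero} self-pairing. With that in hand, pairwise $\operatorname{Ext}^*$-orthogonality makes the $[\cE_i]$ span an isotropic subspace on which the pairing is diagonal with nonzero diagonal entries, hence the classes are linearly independent. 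That nonzero self-pairing is the one idea your argument lacks, and without some such extra input the linear-independence step cannot be completed.
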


In fact,
we give a slightly stronger result in \pref{pr:disjoin2}.
Finally,
it is worth mentioning that although we have focused
on the case of conic fibrations over $(\bCx)^2$,
our methods apply to the somewhat simpler case of
(fiber products of) conic bundles over $\bCx$ as well.
In particular,
they enable us to prove that the collections of Lagrangians
studied in \cite{MR3164868} and \cite{MR3439224}
generate the relevant wrapped Fukaya categories.
The latter case is especially interesting,
as this gives rise to examples,
different from those given in \cite{MR3349833, MR3416110},
of symplectic 6-manifolds
whose space of Bridgeland stability conditions
on Fukaya categories
is reasonably well understood
\cite{MR2425708,MR2448280,MR2629510}.

Returning to the case of a general toric Calabi--Yau manifold,
there are two natural ways in which one might try to extend our results
to a homological mirror symmetry statement. The first is to establish a version of homological mirror symmetry
between a suitable Fukaya categories
generated by base-admissible Lagrangians
and categories of coherent sheaves on the mirror $\Yv$.
Base admissible Lagrangians $L$ naturally fiber
over Lagrangians $\Lbar$ in $(\bCx)^2$.
The calculation of $A_\infty$-operations can likely be reduced
using the approach of \pref{sc:HW}
to the enumeration of Floer polygons with boundary on $\Lbar$.
It seems reasonable to hope that the latter polygons can be dealt with
using techniques similar to those in
\cite{MR2529936}.

Let $\bfD$ be the union of the toric divisors in $\Yv$.
A different direction of research begins with the observation that
we have a Bousfield localization sequence
\begin{align}
 0
  \to \QC_{\bfD}(\Yv)
  \to \QC(\Yv)
  \to \QC(\Yv \setminus \bfD)
  \to 0,
\end{align}
where $\QC(\Yv)$ is the unbounded derived category
of quasi-coherent sheaves on $\Yv$
and $\QC_\bfD(\Yv)$ is the full subcategory
consisting of objects
whose cohomologies are supported on $\bfD$.
Since $D^b \coh_{\bfD} \Yv$ generates $\QC_{\bfD}(\Yv)$ and
$\Yv \setminus \bfD$ is affine,
the bounded derived category $D^b \coh \Yv$
is split-generated by $\cO_\Yv$
and $D^b \coh_{\bfD} \Yv$.

This paper describes the subcategory of the wrapped Fukaya category
generated by $L_0$.
Furthermore,
SYZ mirror symmetry predicts that
there is a full-subcategory of this wrapped Fukaya category
consisting of Liouville-admissible Lagrangians
which is equivalent to $D^b \coh_\bfD(\Yv)$.
These Lagrangians are again constructed
using ideas from \cite{MR2529936},
and a similar construction has already appeared in \cite{1503.03816}.

For example,
when $\Yv$ is the total space of the canonical line bundle
over a toric Fano manifold,
we expect to have a collection of objects consisting of $L_0$
and compact Lagrangian spheres
which split-generate the wrapped Fukaya category.
We expect that the results in \cite{MR2529936}
together with an analysis similar to that in \cite{MR2651908}
would give a direct approach
to studying the Floer cohomology of these Lagrangian spheres.
Combining this with the results of this paper then gives
an approach to studying the wrapped Fukaya categories for general $h(w_1,w_2)$.

\emph{Acknowledgement}:
We thank Diego Matessi
for sending a draft of \cite{1503.03816} and useful discussions.
The work of K.~C.~described in this paper was substantially supported
by a grant from the Research Grants Council of the
Hong Kong Special Administrative Region, China
(Project No. CUHK400213).
D.~P.~was supported by Kavli IPMU, EPSRC, and Imperial College.
He would also like to thank Mohammed Abouzaid, Denis Auroux, and Mauricio Romo for helpful discussions concerning the construction of base-admissible Hamiltonians and Jonny Evans for a discussion about symplectic blowups.
K.~U.~was supported by JSPS KAKENHI Grant Numbers
24740043, 15KT0105, 16K13743 and 16H03930.

\section{Lagrangian torus fibrations}
 \label{sc:LTF}

\subsection{Tropical hypersurface}

Let $N = \bZ^2$ be a free abelian group of rank 2, and
$M = \Hom(N, \bZ)$ be the dual group.
A \emph{convex lattice polygon} in $M_\bR = M \otimes \bR$ is
the convex hull of a finite subset of $M$. We assume that all convex lattice polygons in this paper are full-dimensional.
Let $\Delta$ be a convex lattice polygon in $M_\bR$ and
$A = \Delta \cap M$ be the set of lattice points in $\Delta$.
A function $\nu \colon A \to \bR$ defines
a piecewise-linear function
$
 \nubar \colon \Delta \to \bR
$
by the condition that
\begin{align}
 \Conv \{ (\bsalpha, u) \in A \times \bR \mid u \ge \nu(\bsalpha) \}
  = \{ (\bm, u) \in \Delta \times \bR \mid u \ge \nubar(\bsalpha) \}.
\end{align}
The set $\cP_\nu$
consisting of maximal domains of linearity of $\nubar$
and their faces
is a polyhedral decomposition of $\Delta$.
A polyhedral decomposition $\cP$ of $\Delta$
is \emph{coherent} (or \emph{regular})
if there is a function $\nu \colon A \to \bZ$
such that $\cP = \cP_\nu$.
It is a \emph{triangulation}
if all the maximal-dimensional faces are triangles.
A coherent triangulation is \emph{unimodular}
if every triangle can be mapped to the standard simplex
(i.e., the convex hull of $\{ (0,0), (1,0), (0,1) \}$)
by the action of $\GL_2(\bZ) \ltimes \bZ^2$.

Let $\cP$ be a unimodular coherent triangulation of $\Delta$.
A function $\nu \colon A \to \bR$ is \emph{adapted} to $\cP$
if $\cP = \cP_\nu$.
Given a function $\nu \colon A \to \bR$
and an element $(c_\bsalpha)_{\bsalpha \in A} \in \bC^A$,
a \emph{patchworking polynomial} is defined by
\begin{align} \label{eq:ht}
 h_t(\bw)
  &= \sum_{\bsalpha \in A} c_\bsalpha t^{-\nu(\bsalpha)} \bw^\bsalpha
   \in \bC[t^{\pm 1}][M].
\end{align}
For a positive real number $t \in \bR^{> 0}$,
a hypersurface
$
 Z_t
$
of
$
 \NCx
  \coloneqq N_\bR / N
  \cong \Spec \bC[M]
$
is defined by
\begin{align}
 Z_t = \{ \bw \in \NCx \mid h_t(\bw) = 0 \}.
\end{align}
We assume that $Z_t$ is connected. The image
\begin{align} \label{eq:Pit}
 \Pi_t \coloneqq \Log(Z_t)
\end{align}
of $Z_t$
by the logarithmic map
\begin{align} \label{eq:Logt}
 \Log \colon \NCx \to N_\bR, \quad
 \bw = (w_1,w_2) \mapsto \frac{1}{\log |t|} \lb \log|w_1|, \log|w_2| \rb
\end{align}
is called the \emph{amoeba} of $Z_t$. The \emph{tropical polynomial}
associated with $h_t(\bw)$ is the piecewise-linear map
defined by
\begin{align} \label{eq:Lnu}
 L_\nu \colon N_\bR \to \bR, \quad
 \bn \mapsto \max \lc \bsalpha(\bn) - \nu(\bsalpha) \relmid \bsalpha \in A \rc.
\end{align}
The \emph{tropical hypersurface}
(or the \emph{tropical curve})
associated with $L_\nu$ is defined as the locus
$\Pi_\infty \subset N_\bR$
where $L_\nu$ is not differentiable. The polyhedral decomposition of $N_\bR$
defined by the tropical hypersurface $\Pi_\infty$
is dual to the triangulation $\cP$ of $\Delta$.
In particular,
the connected components of the complement of $\Pi_\infty$
can naturally be labeled by $\cP^{(0)} = A$;
\begin{align} \label{eq:C_alpha}
 N_\bR \setminus \Pi_\infty
  = \coprod_{\bsalpha \in A} C_{\bsalpha, \infty}.
\end{align}
The amoeba $\Pi_t$ converges to $\Pi_\infty$
in the Hausdorff topology
as $t$ goes to infinity
\cite{MR2079993, Rullgard_PAC}.

We next introduce some terminology and notation
which will be used throughout the rest of this paper. A \emph{leg} is a one-dimensional polyhedral subcomplex
of the tropical hypersurface $\Pi_\infty$
which is non-compact. In what follows, we will use the variables $r_i$ to denote $\log|w_i|$. Let $\{ \Pi_i \}_{i=1}^\ell$ be the set of legs
of the tropical curve $\Pi_\infty$.
For each leg $\Pi_i$,
we write the endpoints of the edge in $\cP$
dual to $\Pi_i$ as
$
 \bsalpha_i = (\bsalpha_{i,1}, \bsalpha_{i,2}),
 \bsbeta_i = (\bsbeta_{i,1}, \bsbeta_{i,2})
 \in A,
$
so that $\Pi_i$ is defined by
\begin{align} \label{eq:Pii}
 \lc
\begin{aligned}
 \br_{\bsalpha_i-\bsbeta_i}
  &\coloneqq (\alpha_{i,1}-\beta_{i,1}) r_1 + (\alpha_{i,2}-\beta_{i,2}) r_2
  = \nu(\bsalpha_i)-\nu(\bsbeta_i), \\
 \br_{(\bsalpha_i-\bsbeta_i)^{\bot}}
  &\coloneqq (\beta_{i,2}-\alpha_{i,2}) r_1 + (\alpha_{i,1}-\beta_{i,1}) r_2
  \ge a_i,
\end{aligned}
 \right.
\end{align}
for some $a_i \in \bR$. The maximal polyhedral subcomplex of $\Pi_\infty$
which is compact
will be denoted by $\Pi_{\infty,c}$. Define $c_i, c_i', c_i'' \in \bQ$ by
\begin{align} \label{eq:ci}
 c_i = \frac{1}{| \bsalpha_i - \bsbeta_i |},
  \quad
 c_i' = \frac{{\bsalpha_i} \cdot ({\bsalpha_i}-{\bsbeta_i})}{|{\bsalpha_i}-{\bsbeta_i}|^2},
  \quad
 c_i'' = \frac{{\bsalpha_i} \cdot ({\bsalpha_i}-{\bsbeta_i})^\bot}{|{\bsalpha_i}-{\bsbeta_i}|^2},
\end{align}
so that
\begin{align}
 \abs{\br}^2 &\coloneqq r_1^2 + r_2^2 =
  c_i^2 \lb \lb \br_{{\bsalpha_i}-{\bsbeta_i}} \rb^2
   + \lb \br_{({\bsalpha_i}-{\bsbeta_i})^\bot} \rb^2 \rb, \\
 \bsalpha_i &= c_i'({\bsalpha_i}-{\bsbeta_i}) + c_i''({\bsalpha_i}-{\bsbeta_i})^\bot.
\end{align}
Then we have
\begin{align}
 |\bw^{\bsalpha_i}| = |\bw^{{\bsalpha_i}-{\bsbeta_i}}|^{c_i'} |\bw^{({\bsalpha_i}-{\bsbeta_i})^\bot}|^{c_i''} \\
 d \theta_{\bsalpha_i} = c_i' d \theta_{{\bsalpha_i}-{\bsbeta_i}} + c_i'' d \theta_{({\bsalpha_i}-{\bsbeta_i})^\bot}
\end{align}

\begin{example}
A prototypical example is
\begin{align}
 h_t(\bw) = w_1 + w_2 + \frac{1}{w_1 w_2} + t^\varepsilon
\end{align}
for a small positive real number $\varepsilon$.
The corresponding tropical polynomial is given by
\begin{align}
 L(\bn) = \max \{ n_1, n_2, -n_1-n_2, \varepsilon \},
\end{align}
and the tropical curve $\Pi_\infty$ is shown in \pref{fg:trop_curve1}.
The set
\begin{align}
 A = \{ (0,0), (1,0), (0,1), (-1,-1) \}
\end{align}
consists of four elements,
corresponding to four connected components of $N_\bR \setminus \Pi_\infty$.
The tropical curve $\Pi_\infty$ has three legs.

\begin{figure}
\begin{minipage}{.49 \linewidth}
\centering
\input{triangulation1.pst}
\caption{A triangulation}
\label{fg:triangulation1}
\end{minipage}
\begin{minipage}{.49 \linewidth}
\centering
\input{trop_curve1.pst}
\caption{A tropical curve}
\label{fg:trop_curve1}
\end{minipage}
\end{figure}

\end{example}

\subsection{Tropical localization}

Following \cite[Section 4]{MR2240909},
we set $C_{\bsalpha, t} \coloneqq (\log t) \cdot C_\bsalpha$ for each $\bsalpha \in A$,
and choose $\phi_\bsalpha \colon N_\bR \to \bR$ such that
\begin{itemize}
 \item
$
 d(\bn, C_{\bsalpha, t})
  \coloneqq \displaystyle{\min_{\bn' \in C_{\bsalpha,t}}} \| \bn - \bn' \|
  \le (\varepsilon \log t) / 2
$
if and only if $\phi_\bsalpha(\bn)=0$,
 \item
$
 d(\bn, C_{\bsalpha, t}) \ge \varepsilon \log t
$
if and only if $\phi_\bsalpha(\bn)=1$, and
 \item
$
 \left| \dfrac{\partial \phi_\bsalpha}{\partial n_1} \right|
  + \left| \dfrac{\partial \phi_\bsalpha}{\partial n_2} \right|
 < \dfrac{4}{\varepsilon \log t}
$
\end{itemize}
for a small positive real number $\varepsilon$.
For an element $(c_\bsalpha)_{\bsalpha \in A} \in \bC^A$,
define a family $\{ h_{t,s} \}$ of maps
$\NCx \to \bC$ by
\begin{align} \label{eq:hts}
 h_{t,s}(\bw)
  &= \sum_{\bsalpha \in A} c_\bsalpha t^{-\nu(\bsalpha)}
   (1 - s \phi_\bsalpha(\bw)) \bw^\bsalpha,
\end{align}
where we write $\phi_\bsalpha(\bw) \coloneqq \phi_\bsalpha(\text{Log}_t(\bw))$
by abuse of notation.
For a face $\tau \in \cP$, define
\begin{align} \label{eq:Otau}
 O_\tau = \{ \bn \in N_\bR \mid
  (\phi_\bsalpha(\bn) \ne 1 \text{ for } \forall \bsalpha \in \tau)
   \text{ and }
  (\phi_\bsalpha(\bn) = 1 \text{ for } \forall \bsalpha \not \in \tau)
  \}.
\end{align}
Then $O_\tau$ is contained in an $\varepsilon$-neighborhood
of the face of $\Pi_\infty$ dual to $\tau$,
and one has
\begin{align}
 N_\bR = \coprod_{\tau \in \cP} O_\tau.
\end{align}
The set
\begin{align} \label{eq:Zts}
 Z_{t,s} = \{ \bw \in \NCx \mid h_{t,s}(\bw)=0 \}
\end{align}
is a symplectic hypersurface in $\NCx$
for a sufficiently large $t$
\cite[Proposition 4.2]{MR2240909},
and
the pairs $\lb \NCx, Z_{t,s} \rb$ for all $s \in [0,1]$
are symplectomorphic to each other
\cite[Proposition 4.9]{MR2240909} which for $t$ sufficiently large can be taken $C^k$-small and to be supported inside of a tubular neighborhood of $Z_{t,0}$ of small norm in the Kahler metric \cite[Appendix A]{MR2529936}.

The \emph{tropical localization} of $Z_t$ is defined by
\begin{align}
 Z &\coloneqq Z_{t,1}.
\end{align}
We set
$(c_\bsalpha)_{\bsalpha \in A} = \bsone \coloneqq (1, \ldots, 1) \in \bC^A$
and
\begin{align} \label{eq:ht1}
 h(\bw)
  \coloneqq h_{t,s}(\bw) \big|_{s=1, (c_{\bsalpha_i})_{{\bsalpha_i} \in A} = \bsone}
  = \sum_{{\bsalpha_i} \in A} t^{-\nu({\bsalpha_i})}(1-\phi_{\bsalpha_i}(\bw))\bw^{\bsalpha_i}.
\end{align}
The hypersurface $Z$ is localized in the following sense:
\begin{itemize}
 \item
Over $O_\tau$ where $\tau$ is dual to an edge of $\Pi_\infty$
(i.e., when $\tau \in \mathcal{P}^{(1)}$),
all but two terms of $h$ vanish,
and hence $Z$ is defined by
\begin{align}
 h(\bw)
 = t^{-\nu(\bsalpha)} \left( 1 - \phi_{\bsalpha}(\bw) \right) \bw^{\bsalpha}
     + t^{-\nu(\bsbeta)} \left( 1 - \phi_{\bsbeta}(\bw) \right) \bw^{\bsbeta}
 = 0,
\end{align}
where $\bsalpha, \bsbeta \in A$ are the endpoints of $\tau$.
 \item
Over $O_\sigma$ where $\sigma$ is dual to a vertex of $\Pi_\infty$
(i.e., when $\sigma \in \mathcal{P}^{(2)}$),
all but three terms of $h$ vanish,
whence $Z$ is defined by
\begin{align}
 h(\bw) &=
  t^{-\nu(\bsalpha_0)} \left( 1 - \phi_{\bsalpha_0}(\bw) \right) \bw^{\bsalpha_0}
 + t^{-\nu(\bsalpha_1)} \left( 1 - \phi_{\bsalpha_1}(\bw) \right) \bw^{\bsalpha_1} \nonumber \\
 &\hspace{60mm} + t^{-\nu(\bsalpha_1)} \left( 1 - \phi_{\bsalpha_1}(\bw) \right) \bw^{\bsalpha_2}
 \\
 &= 0,
\end{align}
where $\bsalpha_0, \bsalpha_1, \bsalpha_2 \in A$ are the vertices of $\sigma$.
\end{itemize}

It follows that the amoeba
$\Piloc = \Log(Z)$
of the tropical localization $Z$
agrees with the tropical hypersurface $\Pi_\infty$
away from the $\varepsilon$-neighborhood
of the 0-skeleton $\Pi_\infty^{(0)}$. An important property for us is that
each connected component of the complement of a compact set
in $Z$ is defined by a single \emph{algebraic} equation
\begin{align}\label{eq:eqn_Zloc_leg}
 t^{-\nu(\bsalpha)} \bw^\bsalpha+t^{-\nu(\bsbeta)} \bw^\bsbeta = 0,
\end{align}
and fibers over a subset of a leg of the tropical hypersurface $\Pi_\infty$.
In a slight abuse of terminology,
we will refer to this portion of the curve $Z$
as a \emph{leg} of $Z$.

For a sufficiently large number $R_0$ and a relatively small number
$\varepsilon_{\mathrm{n}} \ll R_0$,
we may assume that the union
\begin{align}
 U_{\Pi} \coloneqq U_{\Pi_c} \cup \bigcup_{i=1}^\ell U_{\Pi_i}
\end{align}
of
\begin{align}
 U_{\Pi_c} \coloneqq \lc \br \in N_\bR \relmid
  \abs{\br} < R_0 \rc
\end{align}
and
\begin{align}
 U_{\Pi_i}
  &= \lc \br \in N_\bR \relmid
   \abs{\br_{\bsalpha_i-\bsbeta_i}}<\varepsilon_{\mathrm{n}}
    \text{ and }
   \br_{(\bsalpha_i-\bsbeta_i)^\bot} \ge a_i + \varepsilon_{\mathrm{n}}
   \rc
\end{align}
is a neighborhood of $\Piloc$. We assume that $\varepsilon_{\mathrm{n}}$ is large enough so that it contains the line $\br_{\bsalpha_i-\bsbeta_i}-(\nu(\bsalpha_i)-\nu(\bsbeta_i))=0$, and that if we have non-compact parallel legs $\Pi_i$, we will assume that all of the corresponding neighborhoods $U_{\Pi_{i}}$ are the same.  
We next choose a neighborhood $U_{Z}$ of $Z$ in $\NCx$ and set
\begin{align}
 U_{Z_i} \coloneqq U_Z \cap \Log^{-1}(U_{\Pi_i}), \quad
 U_{Z_c} \coloneqq U_Z \cap \Log^{-1}(U_{\Pi_c}).
\end{align}

We require:
\begin{itemize}
 \item
$\Log(U_{Z}) \subset U_{\Pi}$,
 \item
$\abs{\dfrac{h(\bw)}{w^{\bsalpha}}} < \varepsilon_{\mathrm{h}}$ for some small constant $\varepsilon_{\mathrm{h}}>0$ and any $\bw \in U_{Z_i}$,
\item
$
 h= t^{-\nu(\bsalpha)} \bw^\bsalpha+t^{-\nu(\bsbeta)} \bw^\bsbeta
$
for any $\bw \in U_{Z_i}$,
\item
the set
$\{ \bsalpha \in A \mid \phi_\bsalpha(\bw) \ne 1 \}$
consists of either two or three elements
for any $\bw \in U_{Z}$,
and
\item
$U_{Z}$ does not intersect
$
 N_{\bR^{>0}} \coloneqq
  \lc (w_1, w_2) \in (\bR^{>0})^2 \rc \subset \NCx$.
\end{itemize}

We write the natural projections as
\begin{align} \label{eq:pi12}
 \pr_1 \colon \NCx \times \bC \to \NCx, \quad
 \pr_2 \colon \NCx \times \bC \to \bC.
\end{align}
We choose a tubular neighborhood
$U_{Z \times 0}$ of $Z \times 0$ in $\NCx \times \bC$
such that
\begin{itemize}
 \item
$\pr_1(U_{Z \times 0}) \subset U_Z$ and
 \item
$
 \frac{\abs{u}^2}{|\bw^{({\bsalpha_i}-{\bsbeta_i})^\bot}|^{2c_i''}} + \abs{t^{-\nu(\bsalpha)} + t^{-\nu(\bsbeta)} \bw^{\bsbeta-\bsalpha}}^2
  < \varepsilon_{\mathrm{h}}^2
$
if
$
 (\bw, u) \in U_{Z \times 0}
$
and
$
 \Log(\bw) \in U_{\Pi_i}.
$
\end{itemize}
We set
\begin{align} \label{eq:Ui}
 U_i \coloneqq \lc (\bw, u) \in U_{Z \times 0} \relmid
  \Log(\bw) \in U_{\Pi_i} \rc.
\end{align}
We fix an almost complex structure $J_{\NCx}$ on $\NCx$,
which is adapted to $Z$ in the following sense:
\begin{definition} \label{df:ad_Z}
An $\omega_{\NCx}$ compatible almost complex structure
$J_{\NCx}$ on $\NCx$ is said to be
\emph{adapted to $Z$}
if
\begin{itemize}
 \item
$J_{\NCx}$ agrees with the standard complex structure
$J_{\NCx,\std}$ of $\NCx$
outside the inverse image by $\Log$
of a small neighborhood of the origin
in $N_\bR$, and
 \item
the function $h(\bw)$ is $J_{\NCx}$-holomorphic in $U_{Z}$.
\end{itemize}
\end{definition}

\subsection{Symplectic blow-up}
 \label{sc:bu}

We follow \cite{MR3502098} in this subsection;
see also \cite{MR2487600} for a closely related construction.
As a smooth manifold, the blow-up
\begin{align} \label{eq:blow-up}
 p \colon \Ybarloc \coloneqq \Bl_{Z \times 0}(\NCx \times \bC)
  \to \NCx \times \bC
\end{align}
of $\NCx \times \bC$
along the symplectic submanifold $Z \times 0$
is given by
\begin{align}
 \Ybarloc
  &\coloneqq \lc \lb \bw, u, [v_0:v_1] \rb \in \NCx \times \bC \times \bP^1
   \relmid u v_1 = h(\bw) v_0 \rc.
\end{align}
The compositions of the structure morphism \eqref{eq:blow-up}
and the projections \eqref{eq:pi12}
will be denoted by
\begin{align} \label{eq:pibar}
 \pibar_{\NCx} \coloneqq \pr_1 \circ p \colon \Ybarloc \to \NCx, \quad
 \pibar_{\bC} \coloneqq \pr_2 \circ p \colon \Ybarloc \to \bC.
\end{align}
The exceptional set is given by
\begin{align}
 E \coloneqq p^{-1}(Z \times 0)
  = \lc \lb \bw, u, [v_0:v_1] \rb \in \NCx \times \bC \times \bP^1
   \relmid h(\bw) = u = 0 \rc,
\end{align}
which forms a $\bP^1$-bundle
$
 p|_E \colon E \to Z \times 0
$
over $Z \times 0$.
The total transform of the divisor
$Z \times \bC \subset \NCx \times \bC$
is given by
\begin{align} \label{eq:Ebar}
 \Ebar
  \coloneqq \pibar_{\NCx}^{-1}(Z)
  = E \cup F
  \subset \Ybarloc,
\end{align}
where $F$ is the strict transform of $Z \times \bC$.
There is an $\bS^1$-action on $\Ybarloc$ defined by
\begin{align} \label{eq:S^1-action}
 (\bw, u, [v_0:v_1]) \mapsto
  \lb \bw, e^{\sqrt{-1} \theta} u, \ld v_0:e^{-\sqrt{-1} \theta} v_1 \rd \rb.
\end{align}
Let
\begin{align} \label{eq:D}
 D
  \coloneqq \lc \lb \bw, u, [v_0:v_1] \rb \in \Ybarloc \relmid v_0 = 0 \rc
  \cong \NCx
\end{align}
be the strict transform of the divisor
\begin{align}
 \Dbar \coloneqq \{ (\bw, u ) \in \NCx \times \bC \mid u = 0 \},
\end{align}
and write its complement as
\begin{align}
 Y \coloneqq \Ybarloc \setminus D
  \cong \lc (\bw, u, v) \in \NCx \times \bC^2 \relmid h(\bw) = u v \rc,
\end{align}
where $v = v_1/v_0$.
The restrictions of \eqref{eq:pibar} to $Y$ will be denoted by
\begin{align} \label{eq:pi}
 \pi_{\NCx} \coloneqq \pibar_{\NCx}|_{Y} \colon Y \to \NCx, \quad
 \pi_{\bC} \coloneqq \pibar_{\bC}|_{Y} \colon Y \to \bC.
\end{align}
We also write
\begin{align}
 \pibar_{N_\bR} \coloneqq \Log \circ \pibar_{\NCx} \colon \Ybarloc \to N_\bR, \quad
 \pi_{N_\bR} \coloneqq \Log \circ \pi_{\NCx} \colon Y \to N_\bR.
\end{align}
A tubular neighborhood of $D$ in $\Ybarloc$
is given by
\begin{align} \label{eq:UD}
 U_D = \lc \lb \bw, u, [v_0:1] \rb \in \NCx \times \bC \times \bP^1
    \relmid
  u = h(\bw) v_0, \ |v_0| < \delta \rc
\end{align}
for a small positive number $\delta$.
We identify $U_D$ with $\NCx \times \bD_\delta$ by the map
\begin{align} \label{eq:UD2}
\begin{array}{ccc}
 U_D & \to & \NCx \times \bD_\delta \\
 \vin & & \vin \\
 (\bw, u, [v_0:1]) & \mapsto & (\bw, v_0)
\end{array}
\end{align}
where
$
 \bD_\delta \coloneqq \{ v_0 \in \bC \mid |v_0| < \delta \}
$
is an open disk of radius $\delta$.
The projection will be denoted by
\begin{align}
 \pi_{\bD_\delta} \colon
  U_D \cong \NCx \times \bD_\delta
   \to \bD_\delta.
\end{align}
For a function $f$ on an almost complex manifold,
we set
\begin{align}
 d^c f \coloneqq d f \circ J,
\end{align}
so that
$
 - d d^c f = 2 \sqrt{-1} \partial \partialbar f
$
whenever $J$ is integrable
(or more generally if $f$ is pulled back from an integrable complex manifold
along a $J$-holomorphic map).

We consider the two-form
\begin{align} \label{eq:symp_form}
 \omega_\epsilon
  &\coloneqq p^*\lb \omega_{\NCx \times \bC}
   - \frac{\epsilon}{4\pi}
    d d^c\lb
     \chi(\bw,u) \log \lb |u|^2 +|h(\bw)|^2 \rb \rb \rb
\end{align}
on $\Ybarloc \setminus p^*(Z \times 0)$
for a sufficiently small $\epsilon$,
where
\begin{align}
 \omega_{\NCx \times \bC}
  &= \frac{\sqrt{-1}}{2}
   \lb
    du \wedge d \ubar
    + \frac{dw_1}{w_1} \wedge \frac{d\wbar_1}{\wbar_1}
    + \frac{dw_2}{w_2} \wedge \frac{d\wbar_2}{\wbar_2}
   \rb
\end{align}
is the standard symplectic form on $\NCx \times \bC$, and
the function
$
 \chi \colon \NCx \times \bC \to [0,1]
$
is a smooth $\bS^1$-invariant cut-off function
supported on the tubular neighborhood
$U_{Z \times 0}$ of $Z \times 0$ and
satisfying $\chi \equiv 1$
in a smaller tubular neighborhood $U'_{Z \times 0}$
of $Z \times 0$.
We require that
\begin{align} \label{eq:chiG1}
 \chi|_{U_i} = \chi_i \circ G_i
\end{align}
for a function $\chi_i \colon \bR^{\ge 0} \to [0,1]$,
where
\begin{align} \label{eq:chiG2}
 G_i \colon U_i \to \bR^{\ge 0}, \quad
 (\bw, u) \mapsto
\frac{\abs{u}^2}{|\bw^{({\bsalpha_i}-{\bsbeta_i})^\bot}|^{2c_i''}} + \abs{t^{-\nu(\bsalpha_i)}+t^{-\nu(\bsbeta_i)} \bw^{\bsbeta_i-\bsalpha_i}}^2.
\end{align}
For clarity, we emphasize that
in \pref{eq:symp_form} the operator $d^c$ is defined
with respect to the almost complex structure $J_{\NCx}$
adapted to $Z$ in the sense of \pref{df:ad_Z}.
A crucial feature for us is that in the neighborhoods $U_i$,
this form is actually invariant under the $\bT^2$-action
which preserves the monomial $\bw^{\bsbeta_i-\bsalpha_i}$.
%
This 2-form extends to a 2-form on $\Ybarloc$,
which we write as $\omega_\epsilon$ again by abuse of notation,
since
we may rewrite \eqref{eq:symp_form} as
\begin{align} \label{eq:symp_form2}
 \omega_\epsilon
  &= p^*\omega_{\NCx \times \bC}
   + \frac{\sqrt{-1} \epsilon}{2\pi}
    \partial \partialbar \lb \log \lb |v_0|^2+|v_1|^2 \rb \rb
\end{align}
when $\chi(\bw, u) = 1$.


\begin{proposition} \label{pr:symp_form}
The two-form $\omega_\epsilon$ is a symplectic form
for sufficiently small $\epsilon$.
\end{proposition}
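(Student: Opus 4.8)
The plan is to verify the two conditions defining a symplectic form---closedness and nondegeneracy---separately, following the classical recipe for a K\"ahler form on the blow-up of a symplectic manifold along a symplectic submanifold (cf.\ \cite{MR3502098}); the genuine content is nondegeneracy, and the only delicate point there is uniformity of the estimates over the non-compact manifold $\Ybarloc$. Closedness is essentially formal. On $\Ybarloc \setminus E$, with $E = p^{-1}(Z \times 0)$, the form $\omega_\epsilon$ is by \eqref{eq:symp_form} the $p$-pullback of $\omega_{\NCx \times \bC} - \frac{\epsilon}{4\pi} d d^c(\chi \log(|u|^2 + |h(\bw)|^2))$, and each summand is closed---$\omega_{\NCx \times \bC}$ is symplectic, and $d d^c f = d(d^c f)$ is closed for any function, with no integrability required---so the pullback is closed. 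On a neighborhood of $E$, where $\chi \equiv 1$, formula \eqref{eq:symp_form} agrees with \eqref{eq:symp_form2}, which presents $\omega_\epsilon$ as $p^* \omega_{\NCx \times \bC}$ plus $\epsilon$ times the pullback of the Fubini--Study form from the $\bP^1$-factor, manifestly a smooth closed $2$-form on all of $\Ybarloc$. Hence $\omega_\epsilon$ extends smoothly and is closed; the one nontrivial input is that the two expressions agree on the overlap, which is exactly what the hypothesis ``$J_{\NCx}$ adapted to $Z$'' (\pref{df:ad_Z}) guarantees, since it makes the difference of the two potentials in \eqref{eq:symp_form} and \eqref{eq:symp_form2} pluriharmonic for the pulled-back structure.

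For nondegeneracy I would split $\Ybarloc$ into the locus away from $E$ and a neighborhood of $E$. Away from $E$ the map $p$ is a diffeomorphism and $\chi \log(|u|^2 + |h(\bw)|^2)$ is a fixed smooth function, so there $\omega_\epsilon$ is an $O(\epsilon)$-perturbation of the symplectic form $\omega_{\NCx \times \bC}$ and hence nondegenerate once $\epsilon$ is small. Near $E$ I would work with \eqref{eq:symp_form2}: at a point $q_0 \in E$ the differential $dp$ has kernel exactly the tangent line to the $\bP^1$-fiber of $p|_E \colon E \to Z \times 0$, and $p^* \omega_{\NCx \times \bC}$ restricts to a nondegenerate form on any complement of this line, because $\omega_{\NCx \times \bC}$ is nondegenerate on the image of $dp$---a complex subspace for the compatible almost complex structure $J_{\NCx} \oplus J_\bC$, hence symplectic---while the Fubini--Study term restricts to a strictly positive form on that $\bP^1$-fiber line. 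The linear-algebra fact that, on a $6$-dimensional vector space split as $H \oplus K$ with $\dim H = 4$, a $2$-form of the shape $\alpha \oplus 0$ with $\alpha$ nondegenerate on $H$ acquires nonzero top power upon adding $\epsilon$ times any $2$-form whose restriction to $K$ is nondegenerate, then yields pointwise nondegeneracy of $\omega_\epsilon$ near $E$ for $\epsilon$ small.

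The main obstacle---and the reason the construction of \pref{sc:LTF} is arranged as it is---is uniformity: $Z$, hence $\Ybarloc$, is non-compact along the legs, so a priori the threshold ``$\epsilon$ small enough'' in both estimates above could degenerate as one runs to infinity along a leg, where both the degeneration of $p^* \omega_{\NCx \times \bC}$ along $E$ and the size of the correction term must be controlled. To handle this I would invoke the arrangement $\chi|_{U_i} = \chi_i \circ G_i$ with $G_i$ the explicit function \eqref{eq:chiG2}, which renders $\omega_\epsilon$ invariant, in each leg neighborhood $U_i$, under the torus action preserving the monomial $\bw^{\bsbeta_i - \bsalpha_i}$, as well as under translation along the leg; modulo this equivariance all the pointwise estimates reduce to a compact core---the part of $\Ybarloc$ lying over the compact subcomplex $\Pi_{\infty,c}$ together with one transverse slice of each leg---on which a single $\epsilon > 0$ works and therefore works everywhere by invariance. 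I expect the effort to lie entirely in organizing this reduction and the attendant bookkeeping, not in any new geometric input; with it in place, the argument is the standard symplectic blow-up computation of \cite{MR3502098}.
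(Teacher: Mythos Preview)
Your outline is correct in spirit but differs from the paper's argument in two ways, one of which conceals a gap.

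On the region $\{\chi \equiv 1\}$ (which contains a neighborhood of $E$) the paper bypasses your linear-algebra computation: by \eqref{eq:symp_form2}, $\omega_\epsilon$ is there the restriction to $\Ybarloc$ of the form $p^*\omega_{\NCx \times \bC} + \frac{\sqrt{-1}\epsilon}{2\pi}\partial\partialbar\log(|v_0|^2+|v_1|^2)$ on $\NCx \times \bC \times \bP^1$, which is compatible with the product almost complex structure; and $\Ybarloc$ is an almost complex submanifold of the latter precisely because $h$ is $J_{\NCx}$-holomorphic in $U_{Z}$. The restriction of a compatible symplectic form to an almost complex submanifold is symplectic for \emph{every} $\epsilon > 0$, so no smallness and no uniformity estimate is needed in this region. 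Your top-power argument is also valid pointwise, but it requires $\epsilon$ small and therefore drags the near-$E$ region back into the uniformity discussion unnecessarily.

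On $\{\chi \ne 1\}$ your proposed reduction to a compact transverse slice via translation invariance along each leg does not work as stated: $\omega_\epsilon$ is \emph{not} translation invariant there. Under the natural combined action $\br_{(\bsalpha_i-\bsbeta_i)^\bot} \mapsto \br_{(\bsalpha_i-\bsbeta_i)^\bot} + \sigma$, $u \mapsto e^{c_i'' \sigma} u$ (which does preserve $G_i$ and hence $\chi$), one has $|u|^2 + |h(\bw)|^2 \mapsto e^{2c_i''\sigma}\bigl(|u|^2+|h(\bw)|^2\bigr)$, so $\chi \log(|u|^2+|h|^2)$ shifts by $2c_i''\sigma \cdot \chi$ and $\omega_\epsilon$ acquires an unbounded multiple of $dd^c\chi$; under pure translation with $u$ held fixed, even $G_i$ fails to be preserved. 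What the paper does instead is verify directly that on $\{\chi \ne 1\}$ the forms $d\log(|u|^2+|h|^2)$, $d^c\log(|u|^2+|h|^2)$, and $dd^c\log(|u|^2+|h|^2)$ have bounded coefficients in the frame $du$, $d\ubar$, $d\log w_1$, $d\log \wbar_1$, $d\log w_2$, $d\log \wbar_2$ along the legs, whence so does $dd^c\bigl(\chi \log(|u|^2+|h|^2)\bigr)$; since $\omega_{\NCx \times \bC}$ has constant coefficients in this same frame, $\omega_\epsilon$ is a uniformly small perturbation of it and is therefore symplectic for all sufficiently small $\epsilon$. Substitute this boundedness check for your invariance claim.
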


\begin{proof}
When $\chi=1$, the symplectic form is the restriction of the form
\begin{align} \label{eq:symp_form3}
  \omega_\epsilon &=
  p^*\omega_{\NCx \times \bC}
   + \frac{\sqrt{-1} \epsilon}{2\pi}
    \partial \partialbar \lb \log \lb |v_0|^2+|v_1|^2 \rb \rb
\end{align}
on $\NCx \times \bC \times \bP^1$
to $\Ybarloc$.
It follows that,
whenever $\chi=1$,
$\omega_\epsilon$ is the restriction of a compatible symplectic form \pref{eq:symp_form3}
to an almost complex submanifold,
and hence symplectic in this region.
When $\chi(\bw,u) \neq 1$,
the first term in \pref{eq:symp_form} is a symplectic form.
Along the legs,
one explicitly checks that the forms
$d \log \lb |u|^2 +|h(\bw)|^2 \rb$,
$d^c \log \lb |u|^2 +|h(\bw)|^2 \rb$ and
$dd^c \log \lb |u|^2 +|h(\bw)|^2 \rb$
have bounded coefficients in the forms generated by
$du$, $d\ubar$,
$d \log w_1$,
$d \log \wbar_1$,
$d \log w_2$, and
$d \log \wbar_2$
when $\chi(\bw,u) \neq 1$.
This implies that
$
 dd^c \lb \chi(\bw,u) \log \lb |u|^2 +|h(\bw)|^2 \rb \rb
$
also has bounded coefficients
and hence $\omega_\epsilon$ is a symplectic form
for sufficiently small $\epsilon$.
\end{proof}

\begin{remark}
We make one comment
concerning the choice of symplectic form here and that in \cite{MR3502098}.
Observe that the above construction could be repeated
by choosing $U_{Z_{t,0}}$ a suitable tubular neighborhood of $Z_{t,0} \subset \NCx$,
$U_{Z_{t,0} \times 0}$ a tubular neighborhood of $Z_{t,0} \times 0$
such that
$
 \pr_1(U_{Z_{t,0} \times 0}) \subset U_{Z_{t,0}}
$
and $\chi(\bw,u)$ be a smooth function
whose support is in $U_{Z_{t,0} \times 0}$ and such that
$
 dd^c \chi(\bw,u) \log \lb \abs{u}^2+\abs{h_{t,0}}^2 \rb
$
has bounded coefficients
whenever $\chi(\bw,u) \ne 1$.
Let $Y_{t,1}$ be the variety defined by $uv=h_{t,0}$ and
equip it with a symplectic form as in \pref{eq:symp_form}
with $h(\bw)$ replaced by $h_{t,1}(\bw)$.
It follows from \cite[Proposition 4.9]{MR2240909}
that for $\epsilon$ sufficiently small and $t$ sufficiently large
and suitable $\chi(\bw,u)$, $U_{Z_{t,i}}$, $i \in \lc 0,1 \rc$,
there is a symplectomorphism $\phi_{0,1} \colon Y_{t,0} \cong Y_{t,1}$
which is the identity
away from the preimage of a tubular neighborhood of $Z_{t,0}$ of small size.
\end{remark}

We fix a convenient choice of a primitive $\theta_\epsilon$
for the restriction of $\omega_\epsilon$ to $Y$,
which we write $\omega_\epsilon$ by abuse of notation.
The form
\begin{align} \label{eq:prim}
 \thetavc \coloneqq
  -\frac{\epsilon}{4 \pi} d^c
   \lb \chi(\bw,u) \log \lb |u|^2 +|h(\bw)|^2 \rb -\log(|u|^2) \rb
\end{align}
is well-defined
on the subset $Y \subset \Ybarloc$
and gives a primitive for the form
\begin{align}
-\frac{\epsilon}{4\pi}
  d d^c \lb
     \chi(\bw,u) \log \lb |u|^2 +|h(\bw)|^2 \rb \rb.
\end{align}
Now we define
\begin{align} \label{eq:theta-epsilon}
 \theta_\epsilon \coloneqq p^*\theta_{\NCx \times \bC} + \thetavc,
\end{align}
where $\theta_{\NCx \times \bC}$ is the standard primitive of
$\omega_{\NCx \times \bC}$, so that
\begin{align}
 d \theta_\epsilon = \omega_\epsilon.
\end{align}
The $\bS^1$-action \eqref{eq:S^1-action} is Hamiltonian
with respect to the symplectic form \eqref{eq:symp_form}
with the moment map
\begin{align} \label{eq:moment_map}
 \mu &= \pi \abs{u}^2 + \frac{\epsilon}{2} \abs{u} \frac{\partial}{\partial \abs{u}}
  \lb \chi(\bw, u) \log \lb \abs{h(\bw)}^2+\abs{u}^2 \rb \rb.
\end{align}
Our conventions for the moment map follow those of \cite{MR3502098}
(in particular, it differs from the more standard convention by a factor of $2\pi$).
This formula specializes to
\begin{align} \label{eq:moment_map2}
 \mu &=
\begin{cases}
 \displaystyle
  \pi |u|^2 + \epsilon \frac{|u|^2}{|h(\bw)|^2 + |u|^2}
   & \text{where $\chi \equiv 1$ (near $E$)},\\
 \displaystyle
  \pi |u|^2
   & \text{where $\chi \equiv 0$ (away from $E$)}.
\end{cases}
\end{align}
The level set $\mu^{-1}(\lambda)$ is smooth
unless $\lambda = \epsilon$,
where it is singular along the fixed locus
\begin{align}
 \Ztilde = \{ (\bw, u, [v_0:v_1]) \in \Ybarloc \mid h(\bw) = u = v_1 = 0 \}.
\end{align}
The level set $\mu^{-1}(0)$ is the divisor
$D$ defined in \eqref{eq:D}.
The fiber $\pi_{\NCx}^{-1}(\bw)$
over any point $\bw \notin Z$
is a smooth conic.
Let $d\theta$ denote the natural $\bS^1$-invariant angular one-form
on the smooth fiber $\pi_{\NCx}^{-1}(\bw)$.
Then the primitive $\theta_\epsilon$ restricted to the smooth fiber is given by
\begin{align}
 \theta_\epsilon|_{\pi_{\NCx}^{-1}(\bw)}
  = \abs{u} \frac{\partial}{\partial \abs{u}} \lb
   \frac{1}{4}
    \abs{u}^2 + \frac{\epsilon}{4\pi} \chi(\bw, u) \log \lb \abs{h(\bw)}^2+\abs{u}^2\rb
     -\frac{\epsilon}{4 \pi} \log \lb \abs{u}^2 \rb
   \rb d\theta.
\end{align}
In view of \pref{eq:moment_map} and \pref{eq:prim},
we may rewrite this in the much simpler form:
\begin{align} \label{eq:primfiber}
\theta_\epsilon|_{\pi_{\NCx}^{-1}(\bw)}= \frac{1}{2\pi}(\mu-\epsilon) d\theta.
\end{align}
The same formula holds when $\bw \in Z$,
away from the singular points of $\pi_{\NCx}^{-1}(\bw)$.
\begin{comment}
Let $\theta_u$ be the argument of $u$ and
$X = 2 \pi \partial/\partial \theta_u$ be the fundamental vector field
for the $\bS^1$-action \pref{eq:S^1-action}.
Since the primitive $\theta_\epsilon$ is $\bS^1$-invariant,
the moment map is given by
$
 \iota_X \theta_\epsilon
$
up to an additive constant,
which fix by the condition that
the level set $\mu^{-1}(0)$ is the divisor $D$
defined in \eqref{eq:D}.
This gives
\begin{align} 
 \mu &= \pi \abs{u}^2 + \frac{\epsilon}{2} \abs{u} \frac{\partial}{\partial \abs{u}}
  \lb \chi(\bw, u) \log \lb \abs{h(\bw)}^2+\abs{u}^2 \rb \rb,
\end{align}
which specializes to
\begin{align} 
 \mu &=
\begin{cases}
 \displaystyle
  \pi |u|^2 + \epsilon \frac{|u|^2}{|h(\bw)|^2 + |u|^2}
   & \text{where $\chi \equiv 1$ (near $E$)},\\
 \displaystyle
  \pi |u|^2
   & \text{where $\chi \equiv 0$ (away from $E$)}.
\end{cases}
\end{align}
We define an $\bS^1$-invariant angular one-form $d \theta$
on the smooth fiber $\pi_{\NCx}^{-1}(\bw)$
by 
\begin{align}
 \left. \theta_\epsilon \right|_{\pi_{\NCx}^{-1}(\bw)}
  = \frac{1}{2\pi}(\mu-\epsilon) d\theta.
\end{align}
\end{comment}
For $\lambda \in \bR^{>0} \setminus \{ \epsilon \}$,
the map
$
 \pi_{\NCx} \colon Y \to \NCx
$
induces a natural identification
\begin{align}
 \Yredl \coloneqq \mu^{-1}(\lambda)/\bS^1
  \cong \NCx
\end{align}
of the reduced space
and $\NCx$.
The resulting reduced symplectic form
$\omegaredl$ on $\NCx$
can be averaged by the action of the torus
$N_{\bS^1} \coloneqq N_\bR / N$
to obtain a torus-invariant symplectic form
$\omega_{\NCx, \lambda}'$.
\cite[Lemma 4.1]{MR3502098} states that
there exists a family $(\phi_\lambda)_{\lambda \in \bR^{>0}}$
of diffeomorphisms of $\NCx$
such that
\begin{itemize}
 \item
$\phi_\lambda^* \omega_{\NCx, \lambda}' = \omegaredl$,
 \item
$\phi_\lambda = \text{id}$ at every point
whose $N_{\bS^1}$-orbit is disjoint from the support of $\chi$.
 \item
$\phi_\lambda$ depends on $\lambda$ in a piecewise smooth manner.
\end{itemize}
We set
$
 \pi_\lambda \coloneqq \Log \circ \phi_\lambda \colon
  \NCx \to N_\bR
$
and define a continuous, piecewise smooth map by
\begin{align} \label{eq:piB}
 \piB \colon Y \to B \coloneqq N_\bR \times \bR^{>0}, \quad
  x \in \mu^{-1}(\lambda) \mapsto \lb \pi_\lambda([x]), \lambda \rb.
\end{align}
One can easily see as in
\cite[Section 4.2]{MR3502098}
that fibers of $\piB^{-1}$ are smooth Lagrangian tori
outside of the \emph{discriminant locus}
$
 \Log \circ \phi_{\epsilon} (Z) \times \{ \epsilon \}.
$

\subsection{SYZ mirror construction} \label{subsec:SYZ_mirror_constr}

We continue to follow \cite{MR3502098}
in this subsection;
see also \cite{MR2386535,MR2537081,MR2899874}
for closely related constructions.
The critical locus of the SYZ fibration
$
 \pi_B: Y \to B
$
is given by
$
 Z \times \{ (0,0) \} \subset \NCx \times \bC^2,
$
which is the fixed locus
of the $\bS^1$-action.
Hence the discriminant locus of $\pi_B$ is given by
$
 \Gamma = \Piloc' \times \{ \epsilon \} \subset B,
$
where
$
 \Piloc' \coloneqq \pi_\epsilon(Z) \subset N_\bR
$
is essentially the amoeba of $Z$,
except that the map $\pi_\epsilon$
differs from the logarithm map $\Log$
by $\phi_\epsilon$.
The complement of the discriminant locus
will be denoted by $B^\sm \coloneqq B \setminus \Gamma$.
The SYZ fibration induces an integral affine structure on $B^\sm$.
The corresponding local integral affine coordinates $\{ x_j \}_{j=1}^3$
give local systems $T_\bZ B^\sm$ and $T_\bZ^* B^\sm$,
generated by $\{ \partial/\partial x_j \}_{j=1}^3$
and $\{ d x_j \}_{j=1}^3$ respectively.

A choice of a section of $\pi_B$ induces a symplectomorphism
\begin{align}
 \pi^{-1}(B^\sm) \cong T^*B^\sm / T_\bZ^* B^\sm
\end{align}
given by the action-angle coordinates \cite{MR0596430}.
The \emph{semi-flat mirror} of $Y$ is defined by
\begin{align}
 \Yv^\semiflat \coloneqq T B^\sm / T_\bZ B^\sm,
\end{align}
equipped with the natural complex structure $J_{\Yv^\semiflat}$
such that the holomorphic coordinates are given by
$
 \lc z_j = \exp 2\pi \lb x_j + \sqrt{-1} y_j \rb \rc_{j=1}^3.
$
Here $\{ y_j \}_{j=1}^3$ are the coordinates on the fiber
corresponding to $\{ x_j \}_{j=1}^3$.
To obtain the SYZ mirror $\Yv$,
one first correct the semi-flat complex structure by contributions
of the holomorphic disks bounded by Lagrangian torus fibers,
and then add fibers over $\Gamma = B \setminus B^\sm$.

Instead of correcting complex structures of the semi-flat mirror,
\cite{MR3502098} considers the subset
$B^\reg \subset B$
obtained by removing $\pi(p^{-1}(U_{Z} \times \bC))$
from $B$.
Here $U_{Z} \subset \NCx$ is a sufficiently small neighborhood
of $Z$ containing the support of $\chi$.
The connected components of $B^\reg$ are in one-to-one correspondence
with elements of $A$,
and all fibers over $B^\reg$ are \emph{tautologically unobstructed}
(i.e., they do not bound any non-constant holomorphic disks).
Let $U_\bsalpha$ denote the connected component of $B^\reg$
corresponding to $\bsalpha \in A$.
The semi-flat mirror $T U_\bsalpha / T_\bZ U_\bsalpha$
with coordinates $(z_{\bsalpha, 1}, z_{\bsalpha, 2}, z_{\bsalpha,3})$
can be completed to a torus
$
 \Utilde_\bsalpha
  \coloneqq \Spec \bC[z_{\bsalpha, 1}^{\pm 1}, z_{\bsalpha, 2}^{\pm 1}, z_{\bsalpha,3}^{\pm 1}].
$
Motivated by the counting of Maslov index two disks
in a partial compactification of $Y$,
\cite{MR3502098} glues
$\Utilde_\bsalpha$ and $\Utilde_\bsbeta$
for $\bsalpha, \bsbeta \in A$ together by
\begin{align} \label{eq:gluing}
\begin{split}
 z_{\bsalpha, 1} &= (1+z_{\bsbeta,3})^{\beta_1-\alpha_1} z_{\bsbeta,1}, \\
 z_{\bsalpha, 2} &= (1+z_{\bsbeta,3})^{\beta_2-\alpha_2} z_{\bsbeta,2}, \\
 z_{\bsalpha, 3} &= z_{\bsbeta, 3}.
\end{split}
\end{align}
These local coordinates are related to coordinates
$(w_1, w_2, w_3)$ of the dense torus by
\begin{align} \label{eq:toric_coord}
\begin{split}
 z_{\bsalpha,1} &= w_1 w_3^{-\alpha_1}, \\
 z_{\bsalpha,2} &= w_2 w_3^{-\alpha_2}, \\
 z_{\bsalpha,3} &= w_3-1.
\end{split}
\end{align}
Let $\Sigma$ be the fan in $M_\bR \oplus \bR$
associated with the coherent unimodular triangulation $\cP$,
and $X_\Sigma$ be the associated toric variety.
Let further $K$ be an anticanonical divisor in $X_\Sigma$
defined by the function $p \coloneqq \chi_{\bszero,1} - 1$,
where $\chi_{\bn,k} : X_{\Sigma} \to \bC$ is the function
associated with the character $(\bn,k) \in N \oplus \bZ$
of the dense torus of $X_\Sigma$.
By adding torus-invariant curves
to $\bigcup_{\bsalpha \in A} \Utilde_\bsalpha$,
one obtains the complement $\Yv \coloneqq X_\Sigma \setminus K$
of the anti-canonical divisor
\cite[Theorem 1.7]{MR3502098}.

\subsection{Coordinate ring of the mirror manifold}
 \label{sc:coord_ring}

One has
$
 H^0(\cO_{X_\Sigma})
  = \bigoplus_{(\bn,k) \in \cC} \chi_{\bn,k}
$
where
\begin{align}
 \cC \coloneqq \{ (\bn, \ell) \in N \oplus \bZ \mid
  \bn(\bm) + \ell \ge 0 \text{ for any } \bm \in A \}.
\end{align}
If we define a function
$
 \ell_1 \colon N \to \bZ
$
by
\begin{align}
 \ell_1(\bn) \coloneqq \min \lc \ell \in \bZ \relmid (-\bn,\ell) \in \cC \rc,
\end{align}
then the set
$
 \lc p^{i} \chi_{-\bn,\ell_1(\bn)} \rc_{(\bn,i) \in N \times \bZ}
$
forms a basis of the algebra
$
 H^0(\cO_\Yv)
  = H^0(\cO_{X_\Sigma})[p^{-1}].
$
The product structure is given by
\begin{align}
 p^{i} \chi_{-\bn,\ell_1(\bn)}
  \cdot
 p^{i'} \chi_{-\bn',\ell_1(\bn')}
  &=
 p^{i+i'} \chi_{-\bn-\bn',\ell_1(\bn)+\ell_1(\bn')} \\
  &=
 p^{i+i'} \chi_{\bszero,\ell_2(\bn,\bn')}
  \cdot
 \chi_{-\bn-\bn',\ell_1(\bn+\bn')} \\
  &=
      p^{i+i'}(1+p)^{\ell_2(\bn,\bn')}
  \cdot
 \chi_{-\bn-\bn',\ell_1(\bn+\bn')} \\
&=
 \sum_{j=0}^{\ell_2(\bn,\bn')} \binom{\ell_2(\bn,\bn')}{j}
  p^{i+i'+j} \chi_{-\bn-\bn',\ell_1(\bn+\bn')},
   \label{eq:product1}
\end{align}
where the function $\ell_2 \colon N \times N \to \bZ$ is defined by
\begin{align} \label{eq:ell}
 \ell_2(\bn, \bn') = \ell_1(\bn) + \ell_1(\bn') - \ell_1(\bn+\bn').
\end{align}

%

\section{Base-admissible Lagrangian sections}
 \label{sc:ALS}

\subsection{Liouville domains}

A pair $(\Xin, \theta)$
of a compact manifold $\Xin$ with boundary
and a one-form $\theta$ on $\Xin$
is called a \emph{Liouville domain}
if\begin{itemize}
\item $\omega \coloneqq d \theta$ is a symplectic form on $\Xin$,
\item the Liouville vector field $V_{\theta}$,
determined uniquely by the condition $\iota_{V_{\theta}} \omega = \theta$,
points strictly outward along $\partial \Xin$.
\end{itemize}
The one-form $\theta$ is called the \emph{Liouville one-form}.
The manifold
\begin{align}
 \widehat{\Xin} \coloneqq \Xin \cup_{\partial \Xin} [1,\infty) \times \partial \Xin
\end{align}
obtained by gluing the positive symplectization
$
 \lb [1,\infty) \times \partial \Xin, d \lb r \lb \theta|_{\Xin} \rb \rb \rb
$
of the contact manifold $\lb \partial \Xin, \theta|_{\Xin} \rb$ to $\Xin$
along $\partial \Xin$
is called the \emph{Liouville completion} of $(\Xin, \theta)$.
An exact symplectic manifold
obtained as the Liouville completion of a Liouville domain
will be called a \emph{Liouville manifold}.
The extension of the one-form $\theta$ to $X$ will be denoted by $\theta$ again
by abuse of notation.
The coordinate $r$ on the symplectization end
$[1,\infty) \times \partial \Xin$
corresponding to $[1, \infty)$
is called the \emph{Liouville coordinate}.

If $(X, J)$ is a Stein manifold
with an exhaustive plurisubharmonic function
$S \colon X \to \bR$
whose critical values are less than $K \in \bR$,
%
%
%
then the manifold
$
 \Xin \coloneqq S^{-1}((-\infty,K])
$
is a Liouville domain
with a Liouville one-form
$
 \theta \coloneqq -d^c S.
$
Under the additional assumption that the gradient flow of $S$ is complete, the Liouville completion may be identified with $X$.

\subsection{Base-admissible Lagrangian sections}

Let $S$ be an exhaustive plurisubharmonic function
on $\NCx$
defined by
\begin{align} \label{eq:NCxS}
 S(\bw) = \frac{1}{2} |\br|^2 = \frac{1}{2}(r_1^2+r_2^2)
\end{align}
in the logarithmic coordinates
$
 \bw
  = (w_1, w_2)
  = \lb e^{r_1 + \sqrt{-1} \theta_1}, e^{r_2 + \sqrt{-1} \theta_2} \rb.
$
One has
\begin{align}
 d S = r_1 d r_1 + r_2 d r_2
\end{align}
and
\begin{align}
 \theta_{\NCx} \coloneqq
 - d^c S = r_1 d \theta_1 + r_2 d \theta_2.
\end{align}
Let
$
 L_0 \coloneqq N_{\bR^{>0}} \times \bR^{>0}
$
be the positive real locus of
$
 \NCx \times \bCx
  = (\NCx \times \bC) \setminus \Dbar,
$
which is a Lagrangian submanifold
diffeomorphic to $\bR^3$.
Since $L_0$ is disjoint
from the tubular neighborhood
$U_{Z \times 0} \subset \NCx \times \bC$
of the center $Z \times 0$ of the blow-up,
it lifts to a Lagrangian submanifold of $Y$.
By abuse of notation, we write the lifted Lagrangian again as $L_0$.
More generally,
we consider the following type of Lagrangians:


\begin{definition} \label{df:ad_Lag}
An exact Lagrangian section $L$ of the SYZ fibration \eqref{eq:piB} of $Y$
is \emph{base-admissible}
if the following conditions are satisfied:
\begin{itemize}
 \item
$L$ is fibered over a Lagrangian submanifold
$\Lbar$ in $\NCx \setminus U_{Z}$;
\begin{align}
 L = \Lbar \times \bR^{>0}
  \subset (\NCx \setminus U_{Z}) \times \bC
  \subset Y.
\end{align}
 \item
$\Lbar$ is Legendrian at infinity,
i.e.,
$
 \left. \theta_{\NCx} \right|_{\Lbar}=0
$
outside of a compact set.
\end{itemize}
\end{definition}

It is clear from \pref{df:ad_Lag}
that base-admissible Lagrangian sections of $\pi_B \colon Y \to B$
are in one-to-one correspondence
with Lagrangian sections $\Lbar$ of $\Log \colon \NCx \to N_\bR$
which are disjoint from $U_{Z}$ and
satisfying the Legendrian condition at infinity.

\subsection{Framed Lagrangian sections}

For each lattice point $\bsalpha \in A$,
consider the polynomial
\begin{align}
 h^\bsalpha(\bw) = - t^{-\nu(\bsalpha)}(1-\phi_\bsalpha(\bw)) \bw^\bsalpha
  + \sum_{\bsbeta \in A \setminus \{ \bsalpha \}} t^{-\nu(\bsbeta)}(1-\phi_\bsbeta(\bw)) \bw^\bsbeta
\end{align}
obtained by flipping the sign of one term in \eqref{eq:ht1}.
The corresponding hypersurface will be denoted by
\begin{align}
 Z^\bsalpha \coloneqq \{ \bw \in \NCx \mid h^\bsalpha_{t,1}(\bw) = 0 \}.
\end{align}

\begin{lemma}\label{lem:amoebas_coincide}
The amoeba of $Z^\bsalpha$ coincides with that of $Z$.
\end{lemma}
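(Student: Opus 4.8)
The plan is to show that the amoeba of $Z^\bsalpha$ and the amoeba of $Z$ agree locally, chamber by chamber, using the explicit local description of the tropically localized hypersurface. Recall from the discussion around \eqref{eq:ht1} that $N_\bR$ is partitioned into the sets $O_\tau$ indexed by faces $\tau \in \cP$, that over each $O_\tau$ all but the terms of $h$ indexed by the vertices of $\tau$ vanish identically (two terms when $\tau$ is an edge, three when $\tau$ is a $2$-dimensional face), and that over $O_\bszero$ for a vertex $\bszero$ only one term survives. The key observation is that flipping the sign of the term $t^{-\nu(\bsalpha)}(1-\phi_\bsalpha(\bw))\bw^\bsalpha$ only affects those $O_\tau$ whose face $\tau$ actually contains $\bsalpha$ as a vertex; over every other $O_\tau$ the function $\phi_\bsalpha$ is identically $1$ near the relevant part of the amoeba, so the flipped term vanishes and $h^\bsalpha = h$ there exactly. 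Hence the two amoebae automatically coincide away from the star of $\bsalpha$.

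First I would handle the region over $O_\tau$ with $\tau$ an edge having $\bsalpha$ as an endpoint, say with other endpoint $\bsbeta$. There $Z$ is cut out by $t^{-\nu(\bsalpha)}(1-\phi_\bsalpha)\bw^\bsalpha + t^{-\nu(\bsbeta)}(1-\phi_\bsbeta)\bw^\bsbeta = 0$ while $Z^\bsalpha$ is cut out by $-t^{-\nu(\bsalpha)}(1-\phi_\bsalpha)\bw^\bsalpha + t^{-\nu(\bsbeta)}(1-\phi_\bsbeta)\bw^\bsbeta = 0$. Both equations force $|\bw^{\bsalpha-\bsbeta}|$ to equal the same explicit positive function of $\Log(\bw)$ (the modulus condition does not see the sign), so the image under $\Log$ is identical; only the argument $\theta_{\bsalpha-\bsbeta}$ is shifted by $\pi$, which is invisible to the amoeba. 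The same argument applies over $O_\sigma$ for $\sigma$ a $2$-dimensional face with $\bsalpha$ a vertex: writing the three-term equation and projecting to moduli, changing one sign changes which solutions in the torus fiber occur but not the set of $\Log$-values for which a solution exists, because this is governed by the triangle-inequality--type conditions on the three moduli $|t^{-\nu(\bsalpha_i)}\bw^{\bsalpha_i}|$ which are insensitive to the signs of the coefficients. Over the vertex chambers $O_\bszero$ with $\bszero = \bsalpha$ itself, a single monomial term cannot vanish on $\NCx$, so neither $Z$ nor $Z^\bsalpha$ meets $\Log^{-1}(O_{\bsalpha})$ there (after discarding the portion where the cutoff functions $\phi_\bsbeta$ are all $1$), and again the amoebae agree vacuously.

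Assembling these local comparisons over the cover $N_\bR = \coprod_{\tau} O_\tau$ gives $\Log(Z^\bsalpha) = \Log(Z)$. The one point requiring a little care — and the step I expect to be the main obstacle — is the matching over $O_\sigma$ for a $2$-dimensional cell $\sigma$ containing $\bsalpha$: there the cutoff factors $(1-\phi_{\bsalpha_i}(\bw))$ are genuinely nonconstant, so one must check that the region of $\Log$-values over which the (sign-flipped) trinomial has a zero is unchanged. The cleanest way is to argue that for fixed $\br = \Log(\bw)$ the equation $\sum_{i=0}^2 \pm a_i e^{\sqrt{-1}\theta_{\bsalpha_i}} = 0$ (with $a_i > 0$ the moduli, possibly scaled by the cutoffs) is solvable for the phases $\theta_{\bsalpha_i}$ precisely when the three $a_i$ satisfy the (generalized) triangle inequalities, a condition on moduli only; flipping a sign amounts to a relabeling $\theta_{\bsalpha} \mapsto \theta_{\bsalpha} + \pi$ and does not alter solvability. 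Once this is in place the lemma follows; I would also remark that in fact the identification is canonical enough that it intertwines the two hypersurfaces by a (logarithm-preserving) change of phase coordinates, which is the form in which it will be used later.
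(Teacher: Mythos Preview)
Your proposal is correct and follows essentially the same chamber-by-chamber strategy as the paper: reduce to the pieces $O_\tau$ containing $\bsalpha$, and compare the two hypersurfaces there. The paper is terser---after writing the two-term and three-term equations it simply normalizes coordinates so that $\bsbeta-\bsalpha=(1,0)$, $\gamma-\bsalpha=(0,1)$ and asserts the amoebas agree---whereas you supply the mechanism (phase shift in the edge case, the triangle-inequality solvability criterion in the $2$-face case). One small imprecision: over a $2$-face you cannot shift the single phase $\theta_\bsalpha$ independently of the others, since the three $\theta_{\bsalpha_i}$ are linear functions of only two angular variables; the clean way, which is what the paper's coordinate normalization buys, is the involution $(w_1,w_2)\mapsto(-w_1,-w_2)$ in the normalized chart, which carries $Z$ to $Z^\bsalpha$ and preserves $\Log$. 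Your triangle-inequality argument is nonetheless valid once you note that unimodularity makes $\theta_{\bsalpha_1}-\theta_{\bsalpha_0}$ and $\theta_{\bsalpha_2}-\theta_{\bsalpha_0}$ independent coordinates on the torus fiber.
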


\begin{proof}
If $\Log(\bw) \in O_\tau$ for $\tau \in \cP^{(1)}$
such that $\partial \tau = \{ \bsalpha, \bsbeta \}$,
then one has
\begin{align} \label{eq:Z-edge}
\begin{split}
 h(\bw)
  &= t^{-\nu(\bsalpha)} (1 - \phi_\bsalpha(\bw)) \bw^\bsalpha
    + t^{-\nu(\bsbeta)} (1 - \phi_\bsbeta(\bw)) \bw^\bsbeta, \\
 h^\bsalpha(\bw)
  &= - t^{-\nu(\bsalpha)} (1 - \phi_\bsalpha(\bw)) \bw^\bsalpha
    + t^{-\nu(\bsbeta)} (1 - \phi_\bsbeta(\bw)) \bw^\bsbeta.
\end{split}
\end{align}
If $\Log(\bw) \in O_\sigma$
where $\sigma \in \cP^{(2)}$ is the simplex
whose vertices are $\bsalpha, \bsbeta, \gamma \in A$,
then one has
\begin{align}
\begin{split}
 h(\bw)
  &= t^{-\nu(\bsalpha)} (1 - \phi_\bsalpha(\bw)) \bw^\bsalpha
    + t^{-\nu(\bsbeta)} (1 - \phi_\bsbeta(\bw)) \bw^\bsbeta
    + t^{-\nu(\gamma)} (1 - \phi_\gamma(\bw)) \bw^\gamma, \\
 h^\bsalpha(\bw)
  &= - t^{-\nu(\bsalpha)} (1 - \phi_\bsalpha(\bw)) \bw^\bsalpha
    + t^{-\nu(\bsbeta)} (1 - \phi_\bsbeta(\bw)) \bw^\bsbeta
    + t^{-\nu(\gamma)} (1 - \phi_\gamma(\bw)) \bw^\gamma.
\end{split}
\end{align}
By choosing a coordinate of $M$
in such a way that $\bsbeta-\bsalpha = (1,0)$
and $\gamma-\bsalpha=(0,1)$,
one can easily show that
the amoebas are identical in both cases.
\end{proof}

Choose a sufficiently large $t$
so that the connected components
of the complement of $\Piloc \coloneqq \Log(Z)$
are labeled by $A$ as
\begin{align} \label{eq:Q_alpha}
 N_\bR \setminus \Piloc
  = \coprod_{\bsalpha \in A} \cQ_\bsalpha
\end{align}
just as in \eqref{eq:C_alpha}.
For an interior lattice point $\bsalpha \in A \cap \Int \Delta$,
a \emph{tropical Lagrangian section}
is an exact Lagrangian section of the restriction of
$\Log \colon \NCx \to N_\bR$
to the inverse image of $\cQ_\bsalpha$
with boundary in $Z^\bsalpha$
which agrees with the parallel transport of $\partial L$
along a segment in $\bC$
in a small neighborhood of $\partial L$
\cite[Definitions 3.7 and 3.16]{MR2529936}.
The prototypical example of a tropical Lagrangian section
is the restriction of the positive real Lagrangian
\begin{align}
 \Lbar_0 \coloneqq N_{\bR^{>0}}
  = \{ \bw=(w_1,w_2) \in \NCx \mid w_1, w_2 \in \bR^{>0} \}
\end{align}
to the fibers over $\cQ_\bsalpha$.

\begin{lemma} \label{lm:Uloc}
A tropical Lagrangian section does not intersect
a sufficiently small tubular neighborhood $U_{Z}$ of $Z$.
\end{lemma}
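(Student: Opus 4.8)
The plan is to exploit the fact that, for an interior lattice point $\bsalpha \in A \cap \Int \Delta$, the complementary region $\cQ_\bsalpha$ is bounded, so that $\Log^{-1}(\overline{\cQ_\bsalpha})$ is compact and hence any tropical Lagrangian section $L$ over $\cQ_\bsalpha$ is itself compact. For such an $L$ it suffices to prove that $L \cap Z = \emptyset$: since $Z$ is closed in $\NCx$ and $L$ is compact, the continuous function $x \mapsto d(x, Z)$ attains a strictly positive minimum $\rho_0$ on $L$, and then any tubular neighborhood $U_Z$ of $Z$ contained in $\{ x \in \NCx \mid d(x, Z) < \rho_0 \}$ is automatically disjoint from $L$. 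I note in passing that, for this lemma, the parallel-transport condition near $\partial L$ in the definition of a tropical Lagrangian section is irrelevant; only the locations of $\Log(L)$ and of $\partial L$ enter.

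To prove $L \cap Z = \emptyset$, I would split $L$ along the decomposition $\overline{\cQ_\bsalpha} = \Int \cQ_\bsalpha \sqcup \partial \cQ_\bsalpha$. The part of $L$ lying over $\Int \cQ_\bsalpha$ is sent by $\Log$ into $\Int \cQ_\bsalpha$, which is disjoint from $\Piloc = \Log(Z)$ because $\cQ_\bsalpha$ is a connected component of $N_\bR \setminus \Piloc$; since every point of $Z$ lies over $\Piloc$, no such point of $L$ can belong to $Z$. The remaining part of $L$ is its boundary $\partial L$, which lies over $\partial \cQ_\bsalpha$ and, by hypothesis, is contained in $Z^\bsalpha$. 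Hence it remains only to check that $Z \cap Z^\bsalpha \cap \Log^{-1}(\partial \cQ_\bsalpha) = \emptyset$.

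For this I would argue combinatorially. Since $\cQ_\bsalpha$ is the component of $N_\bR \setminus \Piloc$ dual to the vertex $\bsalpha$ of $\cP$, its boundary $\partial \cQ_\bsalpha \subseteq \Piloc$ is a union of edges of $\Piloc$ dual to edges $\tau \in \cP^{(1)}$ having $\bsalpha$ as an endpoint, together with vertices of $\Piloc$ dual to triangles $\sigma \in \cP^{(2)}$ having $\bsalpha$ as a vertex. Consequently, under the partition $N_\bR = \coprod_{\tau \in \cP} O_\tau$ of \eqref{eq:Otau}, every point of $\partial \cQ_\bsalpha$ lies in some $O_\tau$ with $\bsalpha \in \tau \in \cP^{(1)}$ or in some $O_\sigma$ with $\bsalpha \in \sigma \in \cP^{(2)}$. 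Over such an $O_\tau$ (respectively $O_\sigma$), by the edge and vertex formulas for $h$ and $h^\bsalpha$ recorded in \eqref{eq:Z-edge} and in the proof of \pref{lem:amoebas_coincide}, the restrictions of $h$ and of $h^\bsalpha$ to $\Log^{-1}(O_\tau)$ (respectively $\Log^{-1}(O_\sigma)$) differ only by the sign of the single term $t^{-\nu(\bsalpha)}(1 - \phi_\bsalpha(\bw)) \bw^\bsalpha$. Therefore a point at which both $h$ and $h^\bsalpha$ vanish would satisfy $(1 - \phi_\bsalpha(\bw)) \bw^\bsalpha = 0$, which is impossible because $\bw^\bsalpha$ is nowhere zero on $\NCx$ and $\phi_\bsalpha \neq 1$ everywhere on $O_\tau$ (respectively $O_\sigma$) by the very definition of these sets. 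This yields $Z \cap Z^\bsalpha \cap \Log^{-1}(\partial \cQ_\bsalpha) = \emptyset$ and completes the proof. The only genuinely delicate step is the combinatorial statement about $\partial \cQ_\bsalpha$ above — that all strata of $\cP$ meeting $\partial \cQ_\bsalpha$ contain $\bsalpha$ — which one must extract from the duality between $\Piloc$ and $\cP$ together with the explicit form of the tropical localization; once this is in place, everything else is immediate.
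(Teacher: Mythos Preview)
Your proof is correct, and its overall shape---reduce to showing $L\cap Z=\emptyset$ by compactness, then split according to the face decomposition $N_\bR=\coprod_\tau O_\tau$---matches the paper's. Over $O_\tau$ for an edge $\tau\ni\bsalpha$ the two arguments are essentially identical: both amount to showing $Z\cap Z^\bsalpha=\emptyset$ there.

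The genuine difference is over $O_\sigma$ for triangles $\sigma$. The paper does not attempt to show $Z\cap Z^\bsalpha=\emptyset$ over $O_\sigma$; instead it invokes the structural fact (from \cite[Lemma~3.18]{MR2529936}) that any tropical Lagrangian section agrees with the positive real Lagrangian $\Lbar_0$ near $\partial\cQ_\bsalpha\cap O_\sigma$, and then simply observes that $\Lbar_0$ is disjoint from $Z$. Your argument is more uniform and more self-contained: subtracting $h$ and $h^\bsalpha$ kills all but the $\bsalpha$-term, and $\phi_\bsalpha\ne 1$ on $O_\sigma$ (since $\bsalpha\in\sigma$) forces a contradiction. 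This avoids the external citation entirely. On the other hand, the paper's route has the advantage that the fact ``the section equals $\Lbar_0$ near the vertices'' is needed anyway in the subsequent discussion of framed sections, so it comes for free.

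Regarding the ``delicate step'' you flag---that every point of $\partial\cQ_\bsalpha$ lies in some $O_\tau$ or $O_\sigma$ with $\bsalpha$ a vertex---note that the paper's proof is equally silent on this point: it simply treats the cases $\tau\in\cP^{(1)}$ with $\bsalpha\in\partial\tau$ and $\sigma\in\cP^{(2)}$ without justifying that these exhaust $\partial\cQ_\bsalpha$. The cleanest way to see it is that $\bn\in O_\tau$ with $\bsalpha\notin\tau$ forces $\phi_\bsalpha(\bn)=1$, hence $d(\bn,C_{\bsalpha,t})\ge\varepsilon\log t$, which for $\varepsilon$ small keeps $\bn$ away from $\overline{\cQ_\bsalpha}$.
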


\begin{proof}
Since a tropical Lagrangian section is compact
and $Z$ is closed,
it suffices to show that a tropical Lagrangian section
does not intersect $Z$.

If $\Log(\bw) \in O_\tau$ for $\tau \in \cP^{(1)}$
such that $\partial \tau = \{ \bsalpha, \bsbeta \}$,
then it follows from \eqref{eq:Z-edge} that
$\bw^{\bsbeta-\bsalpha}$ is in $\bR^{>0}$ for $\bw \in Z$ and
$\bR^{<0}$ for $\bw \in Z^\bsalpha$.
It follows that a tropical Lagrangian section does not intersect $Z$
in $\Log^{-1}(O_\tau)$ for $\tau \in \cP^{(1)}$.

If $\Log(\bw) \in O_\sigma$ for $\sigma \in \cP^{(2)}$,
then a tropical Lagrangian section agrees
with the positive real Lagrangian $\Lbar_0$
in the neighborhood
$\partial \cQ \cap O_\sigma$
of the vertex of $\Pi_\infty$
dual to $\tau$
\cite[Lemma 3.18]{MR2529936}.
The positive real Lagrangian is clearly disjoint from $Z$, and
\pref{lm:Uloc} is proved.
\end{proof}

One can use the complex structure of $\NCx$
to view $\NCx$ as the trivial $N_\bR / N$-bundle
$T N_\bR / T_\bZ N_\bR$ over $N_\bR$,
whose universal cover is the trivial $N_\bR$-bundle
$T N_\bR$.
A section of $T N_\bR$ can be identified
with a function on $N_\bR$
with values in $N_\bR$.
Since the tropical Lagrangian section agrees with the positive real Lagrangian
near $\cQ \cap O_\tau$ for $\tau \in \cP^{(2)}$,
a lift $\cQ \to T N_\bR$
of a tropical Lagrangian section $\cQ \to \NCx$
to the universal cover $T N_\bR \to \NCx \cong T N_\bR / T_\bZ N_\bR$
takes values in $N$ near $\cQ \cap O_\tau$.
The Hamiltonian isotopy class of a tropical Lagrangian section
is determined by the values
$(n_\tau)_{\tau \in \cP^{(2)}} \in N^{\cP^{(2)}}$
of its lifts near the vertices of $\Piloc$
\cite[Proposition 3.20]{MR2529936}.
Two lifts come from the same section
if and only if they are related by an overall shift by $N$.

For an edge $\tau \in \cP^{(1)}$
in the interior of $\Delta$,
choose a coordinate of $M$ in such a way that
$\tau$ is the line segment
between $\bsalpha = (0,0)$ and $\bsbeta = (1,0)$.
Then
one has
\begin{align}
 h^\bsalpha(\bw)
  = - t^{-\nu(\bsalpha)} + t^{-\nu(\bsbeta)} w_1
\end{align}
for $\bw \in O_\tau$,
so that
\begin{align}
 \Piloc \cap O_\tau
  = \{ (r_1, r_2) \in N_\bR \mid r_1 = - \nu(\bsalpha) + \nu(\bsbeta) \}.
\end{align}
It follows that a tropical Lagrangian section
is constant in the $w_1$-variable
above $O_\tau$.
A Lagrangian section $N_\bR \to \NCx$ is said to be \emph{framed}
if its restriction to $\cQ_\bsalpha$ is bounded by $Z^\bsalpha$
for any interior lattice point $\bsalpha \in A \cap \Int \Delta$.

If $\sigma$ and $\sigma'$ are elements of $\cP^{(2)}$
adjacent to an edge $\tau \in \cP^{(1)}$
in the interior of $\Delta$,
then the condition that the boundary of a Lagrangian section
lies in $Z$ implies that
\begin{align} \label{eq:comp_edge}
 \la n_\sigma - n_{\sigma'}, \bsalpha - \bsbeta \ra = 0.
\end{align}
For a collection
$(n_\sigma)_{\sigma \in \cP^{(2)}} \in N^{\cP^{(2)}}$
of elements of $N$,
there exists a framed Lagrangian section
whose lift takes the value $n_\sigma$
on $O_\sigma$
if and only if \eqref{eq:comp_edge} is satisfied
for any edge $\tau \in \cP^{(1)}$.


\subsection{Legendrian condition at infinity}

Recall from \pref{df:ad_Lag} that
a Lagrangian submanifold $\Lbar \subset \NCx$ is
Legendrian at infinity
if $d^c S|_{\Lbar} = 0$
outside of a compact set.
A direct calculation shows that
the graph
\begin{align}
 \Gamma_{d f} \coloneqq \lc (r_1, \theta_1, r_2, \theta_2) \in \NCx \relmid
  \theta_1 = \frac{\partial f}{\partial r_1}, \
  \theta_2 = \frac{\partial f}{\partial r_2} \rc
\end{align}
of the differential of a function $f \colon N_\bR \to \bR$
satisfies the Legendrian condition
$d^c S|_{\Gamma_{df}} = 0$
if and only if
\begin{align}
 \lb r_1 \frac{\partial}{\partial r_1} + r_2 \frac{\partial}{\partial r_2} \rb
  \frac{\partial f}{\partial r_1}
   =
 \lb r_1 \frac{\partial}{\partial r_1} + r_2 \frac{\partial}{\partial r_2} \rb
  \frac{\partial f}{\partial r_2}
   = 0.
\end{align}
This happens
if $f$ homogeneous of degree one:
\begin{align}
 \lb r_1 \frac{\partial}{\partial r_1} + r_2 \frac{\partial}{\partial r_2} \rb f = f.
\end{align}

\begin{proposition} \label{pr:Legendrian}
Any framed Lagrangian section can be made Legendrian at infinity
by a Hamiltonian isotopy.
\end{proposition}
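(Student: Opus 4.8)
The plan is to start from a framed Lagrangian section $\Lbar$ with lift data $(n_\sigma)_{\sigma \in \cP^{(2)}}$, and to exhibit a specific piecewise-linear function $g \colon N_\bR \to \bR$, homogeneous of degree one outside a compact set, whose graph $\Gamma_{dg}$ agrees with $\Lbar$ near infinity up to a Hamiltonian isotopy. Since the preceding calculation shows that the graph of $df$ is Legendrian at infinity whenever $f$ is homogeneous of degree one (at least away from a compact set), this will immediately give the statement once we know the isotopy can be realized by a compactly supported Hamiltonian. First I would recall that a section of $\Log \colon \NCx \to N_\bR$ is (the graph of) a closed one-form $\eta$ on $N_\bR$ with integral periods; exactness of the section means $\eta = df$ for a globally defined $f$, and the framing plus the compatibility condition \eqref{eq:comp_edge} pins down the linear behavior of $f$ on each unbounded region $\cQ_\bsalpha$. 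Over a leg above $O_\tau$ with $\tau$ the segment from $\bsalpha=(0,0)$ to $\bsbeta=(1,0)$, the earlier discussion shows a framed section is constant in the $w_1$-direction, so $\partial f/\partial r_1$ is locally constant there; combined with the fact that on each top-dimensional cell $O_\sigma$ the lift takes the constant value $n_\sigma \in N$, we see that $df$ is, outside a large ball, the differential of a piecewise-linear function that is linear on each unbounded component of $N_\bR \setminus \Piloc$.

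The key construction is then: let $g$ be the unique function which is linear with slope $n_\sigma$ on each unbounded cone region asymptotic to $O_\sigma$, patched together using a partition of unity so that $g$ is smooth and agrees with the piecewise-linear model outside a compact set; the compatibility condition \eqref{eq:comp_edge} $\langle n_\sigma - n_{\sigma'}, \bsalpha-\bsbeta\rangle = 0$ is exactly what guarantees that these linear pieces glue to a continuous (indeed Lipschitz, and after smoothing, $C^\infty$) function, and homogeneity of degree one holds on the nose for each linear piece. Then $\Gamma_{dg}$ is Legendrian at infinity by the computation just performed. The remaining point is that $\Lbar$ and $\Gamma_{dg}$ are Hamiltonian isotopic: both are graphs of exact one-forms $df$ and $dg$ on $N_\bR$, hence $\Lbar$ and $\Gamma_{dg}$ are the time-$0$ and time-$1$ images of the Hamiltonian isotopy generated (via the identification $\NCx \cong T N_\bR/T_\bZ N_\bR$) by the function $H_t$ with $dH_t = (1-t)\,df + t\,dg - $ (the family of one-forms interpolating the two sections), and $f - g$ is compactly supported by construction, so the isotopy is compactly supported and in particular a genuine Hamiltonian isotopy of $Y$ after lifting. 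One should check along the way that the isotopy stays disjoint from $U_Z$ — this follows because both $\Lbar$ and $\Gamma_{dg}$, and every intermediate graph $\Gamma_{(1-t)df+t\,dg}$, restrict over each $\cQ_\bsalpha$ to a section bounded by $Z^\bsalpha$ (the framing condition is convex in the section), hence by \pref{lm:Uloc} avoids a neighborhood of $Z$.

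The main obstacle I anticipate is the smoothing step: the naive piecewise-linear model for $g$ has corners along the walls of $\Piloc$, and one must smooth it within the class of functions that remain homogeneous of degree one outside a compact set \emph{and} whose graph remains framed (bounded by $Z^\bsalpha$ over $\cQ_\bsalpha$). Homogeneity of degree one is preserved under convex combinations of linear functions only if the combination is taken with constant coefficients, so the smoothing must be done "radially" — e.g. by choosing cutoffs depending only on the angular coordinate on $N_\bR \setminus B_R$ — and one has to verify that the resulting $\partial g/\partial r_i$ still land in the region cut out by $Z^\bsalpha$; this is where the explicit description of the legs of $Z$ from \eqref{eq:eqn_Zloc_leg} and the fact that over $O_\tau$ a framed section is forced to be $w_1$-constant do the work. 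A secondary technical point is matching $g$ to the \emph{given} $f$ rather than to some framed section with the same combinatorial data: since Hamiltonian isotopy classes of framed (equivalently tropical) Lagrangian sections are determined by $(n_\sigma)_{\sigma}$ up to an overall shift by \cite[Proposition 3.20]{MR2529936}, it suffices to produce \emph{one} Legendrian-at-infinity representative in each class, which is exactly what $\Gamma_{dg}$ provides.
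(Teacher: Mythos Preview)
Your overall reduction at the end --- it suffices to exhibit one Legendrian-at-infinity representative in each Hamiltonian isotopy class determined by $(n_\sigma)_\sigma$ --- is sound, and is essentially what the paper does. But your construction of that representative is tangled in a way that obscures the key point, and a couple of intermediate claims are not correct as stated. First, the regions $O_\sigma$ for $\sigma \in \cP^{(2)}$ are neighborhoods of the \emph{vertices} of the tropical curve and are compact; the data $n_\sigma$ lives there, not on ``unbounded cone regions asymptotic to $O_\sigma$.'' The unbounded part of $N_\bR \setminus \Piloc$ consists of chambers $\cQ_\bsalpha$ for $\bsalpha$ a vertex of $\Delta$, and over those chambers the framing condition imposes no constraint at all (framing only concerns interior lattice points). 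So your prescription ``linear with slope $n_\sigma$ on each unbounded cone'' does not parse. Second, the assertion that ``the framing condition is convex in the section,'' used to keep the interpolation $\Gamma_{(1-t)df + t\,dg}$ inside the framed class and disjoint from $U_Z$, is not justified and is not obviously true; and $f-g$ need not be compactly supported even if $df-dg$ is.

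The paper's argument is shorter because it localizes to where the constraint actually lives at infinity: the legs $\Pi_i$. Within the Hamiltonian isotopy class one first chooses the framed section to coincide with the positive real Lagrangian in a neighborhood of each leg outside a compact set; this is allowed since along a leg only one angular direction is constrained. The potential $f$ is then linear there. Now the only obstruction to homogeneity of degree one is in the sectors \emph{between} consecutive legs, where there is no constraint from $U_Z$ whatsoever, so one simply replaces $f$ by any homogeneous-of-degree-one function agreeing with $f$ on the compact set and on the neighborhoods of the legs. No smoothing subtleties, no interpolation-staying-framed argument --- the new potential agrees with the old one wherever $U_Z$ could be met. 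Your radial-smoothing worry is real for your construction but is avoided entirely by the paper's.
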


\begin{proof}
We can choose a framed Lagrangian
in such a way that it coincides with the positive real Lagrangian
in the neighborhood of each leg of $\Piloc$
outside of a compact set.
Then the potential $f$ of the Lagrangian is linear
in that neighborhood.
Now one can choose arbitrary homogeneous function of degree one
which coincides with $f$ in the compact set
and in the neighborhood of each leg,
and the Lagrangian generated by this function has the desired property.
\end{proof}

\subsection{SYZ transformation}

Let $L$ be a base-admissible Lagrangian section of $\pi_B \colon Y \to B$
associated with a framed Lagrangian section $\Lbar$
of $\Log \colon \NCx \to N_\bR$.
Let further $\tau \in \cP^{(1)}$ be an edge in the interior of $\Delta$
and $\ell \subset \Yv$ be the corresponding torus-invariant curve.
We use the same coordinates as in Section \ref{subsec:SYZ_mirror_constr}.

For each interior lattice point $\bsalpha \in A \cap \Int \Delta$,
a framed Lagrangian section $\Lbar$ restricts
to a tropical Lagrangian section $\Lbar_\bsalpha$
over $\cQ_\bsalpha$.
The fiberwise universal cover of the restriction
of $\Log \colon \NCx \to N_\bR$ to $\cQ_\bsalpha$
can be identified with $T\* \cQ_\bsalpha$,
with the positive real Lagrangian as the zero-section.
We write the lift of $\Lbar_\bsalpha$
as the graph of a one-form
\begin{align}
 \omega = \xi_1 d y_1 + \xi_2 d y_2,
\end{align}
where $\xi_1$ and $\xi_2$ are functions on $N_\bR$
satisfying
\begin{align} \label{eq:Lag_cond}
 \frac{\partial \xi_1}{\partial y_2} - \frac{\partial \xi_2}{\partial y_1}
  &= 0.
\end{align}

The \emph{semi-flat SYZ transform} of $\Lbar_\bsalpha$
is the trivial bundle on $T \cQ_\bsalpha$,
equipped with the connection
\begin{align}
 \nabla_\bsalpha \coloneqq d + 2 \pi \sqrt{-1} \omega = d + 2 \pi \sqrt{-1} (\xi_1 d y_1 + \xi_2 d y_2).
\end{align}
In cases without quantum corrections, this gives a holomorphic line bundle mirror to the given Lagrangian section \cite{Leung-Yau-Zaslow_SLHYM}. In general, however, quantum corrections have to be taken into account \cite{MR3022713, MR3164868, MR3439224}.
In our case, due to the nontrivial gluing formulas \eqref{eq:gluing}, the semi-flat SYZ transforms of $\Lbar_\bsalpha$ and $\Lbar_\bsbeta$ do not coincide over the intersection
\begin{align}
 \Utilde_\bsalpha \cap \Utilde_\bsbeta = T \ell / T_\bZ \ell,
\end{align}
where $\ell \subset \Pi_\infty$ is the edge of the intersection of the connected components $\cQ_\bsalpha, \cQ_\bsbeta \subset N_\bR \setminus \Piloc$
which is dual to $\tau \in \cP^{(1)}$, but are related by
\begin{align}
\nabla_\bsalpha = \nabla_\bsbeta + \sqrt{-1} \langle df,\bsbeta-\bsalpha \rangle d\text{arg}(1+z_3),
\end{align}
where $f$ is a primitive of $\Lbar$, i.e. $\xi_i = \partial f / \partial x_i$ for $i=1,2$.

Since $\Lbar_\bsalpha$ and $\Lbar_\bsbeta$ share the same boundary in $Z$ over $\ell$ and the defining equation for $Z$ is given as in \eqref{eq:eqn_Zloc_leg}, we have $k_{\bsalpha\bsbeta} \coloneqq \langle df,\bsbeta-\bsalpha \rangle \in \bZ$, so we may modify $\nabla_\bsbeta$ to the {\em gauge equivalent} connection
\begin{align}
\nabla_\bsbeta' \coloneqq \nabla_\bsbeta + \sqrt{-1} k_{\bsalpha\bsbeta} d\text{arg}(1+z_3).
\end{align}
Now $\nabla_\bsalpha$ and $\nabla_\bsbeta'$ glue to give a connection $\nabla_{\bsalpha\bsbeta}$ on the chart $\Utilde_\bsalpha \cup \Utilde_\bsbeta \subset \Yv$. It is clear that the cocycle condition is satisfied, so the connections $\{\nabla_{\bsalpha\bsbeta}\}$ define a global $U(1)$-connection over $\Yv$ whose curvature has trivial $(0,2)$-part since $L$ is Lagrangian.
This produces a holomorphic line bundle $\cF(L)$ over $\Yv$, called the \emph{SYZ transform} of $L$.

To determine the isomorphism class of $\cF(L)$,
let $\ell \subset \Pi_\infty$ be an edge
on the boundary of a connected component
$C_{\bsalpha,\infty} \subset N_\bR \setminus \Pi_\infty$
of the complement of the tropical curve $\Pi_\infty$.
We can choose a coordinate on $M$
in such a way that the endpoints of the edge $\tau$ is given by
$\bsalpha = (0,0)$ and $\bsbeta=(1,0)$.
A subset of the torus-invariant curve in $\Yv$
associated with the edge $\tau \in \cP^{(1)}$ dual to $\ell$
can naturally be identified with $T \ell / T_\bZ \ell$.
Let $\sigma, \sigma' \in \cP^{(2)}$ be the faces
adjacent to $\tau$, then
the degree of the restriction of $\cF(L)$
to $T \ell / T_\bZ \ell$ is given by
\begin{align}
 \frac{\sqrt{-1}}{2 \pi} \int_{T \ell/T_\bZ \ell} F_{\nabla_\bsalpha'}^{1,1}
  = - \int_{T \ell/T_\bZ \ell} \frac{\partial \xi_2}{\partial y_2'} d x_2' \wedge d y_2'
  = - \int_{\ell} \frac{\partial \xi_2}{\partial y_2'} d y_2'
  = \xi_2(s_\sigma) - \xi_2(s_{\sigma'}),
\end{align}
where $s_\sigma, s_{\sigma'} \in N_\bR$ are the endpoints of $\ell$
dual to $\sigma, \sigma' \in \cP^{(2)}$.
More generally, it can be shown that the degree of the restriction of $\cF(L)$
to $T \ell / T_\bZ \ell$ is given by $\langle df, (\bsbeta - \bsalpha)^\perp\rangle|_{s_{\sigma'}}^{s_\sigma}$.
This shows that the isomorphism class of $\cF(L)$ depends only
on the Hamiltonian isotopy class of $L$, hence proving \pref{th:main1}.

\section{Standard wrapped Floer theory}
 \label{sc:sWFT}

\subsection{Basic geometric objects}

Let $(X, \theta)$ be a Liouville manifold.
The induced contact structure on $\partial \Xin$ will be denoted by
$
 \xi \coloneqq \ker \lb \theta|_{\partial \Xin} \rb,
$
and the Liouville coordinate on the symplectization end
$
 [1,\infty) \times \partial \Xin
$
will be denoted by $r$.
We assume that the canonical bundle of $X$ is trivial,
and fix its trivialization.

\subsection{}

The vector field $\cZ$ on $X$
dual to the Liouville one-form $\theta$
with respect to the symplectic form $d \theta$
is called the \emph{Liouville vector field}.
It is given by $r \partial_r$ on the symplectization end.

\subsection{}

The \emph{Reeb vector field} $\cR$
on the contact manifold
$\lb \partial \Xin, \theta|_{\Xin} \rb$
is defined by
$\cR \in \Ker d \theta$
and $\theta(\cR)=1$.

\subsection{}

A Lagrangian submanifold
$L \subset X$
is \emph{Liouville-admissible}
if it is the completion
$\Lin \cup [1,\infty) \times \partial \Lin$
of a Lagrangian submanifold
$\Lin \subset \Xin$ such that $\theta|\Lin \in \Omega^1(\Lin)$ vanishes to infinite order along the boundary $\partial \Lin$. We assume that all such Lagrangians are spin and are equipped
with brane structures
in the sense of \cite{MR2441780}. 

\subsection{}

A \emph{Liouville-admissible Hamiltonian} is a positive function
$H \colon X \to \bR^{>0}$
which is $\lambda r$ outside of a compact set
for a positive real number $\lambda$
called the \emph{slope} of $H$.
The set of Liouville-admissible Hamiltonians of slope $\lambda$
is denoted by $\scrHLa(X)_\lambda$
and we set
$
 \scrHLa(X) \coloneqq \bigcup_{\lambda \in \bR^{>0}} \scrHLa(X)_\lambda.
$

\subsection{}

A \emph{Liouville-admissible almost complex structure} is
a compatible almost complex
which outside of a compact set in the symplectization
is the direct sum
of an almost complex structure on $\xi$
and the standard complex structure
on the rank 2 bundle
spanned by the Liouville vector field
and the Reeb vector field, i.e.,
$J \cZ = \cR$.
The set of Liouville-admissible complex structures
is denoted by $\scrJLa(X)$.

\subsection{}

Let $H \colon X \to \bR$ be a function
on a symplectic manifold $X$ and
$L$ be a Lagrangian submanifold.
%
%
A \emph{Hamiltonian chord}
is a trajectory $x \colon [0,1] \to X$
of the Hamiltonian flow
such that $x(0) \in L$ and $x(1) \in L$.
The set of Hamiltonian chords will be denoted by $\scrX(L,X; H)$.
We sometimes write $\scrX(L;H) \coloneqq \scrX(L,X;H)$,
if $X$ is clear from the context.
We also write the set of Hamiltonian chords
in a given relative homotopy class $\gamma \in \pi_1(X,L)$
as
\begin{align}
 \scrX(L;H)_\gamma
  \coloneqq \lc x \in \scrX(L;H) \relmid [x] = \gamma \rc.
\end{align}
A Hamiltonian chord $x$ is \emph{non-degenerate}
if the image $\varphi_1(L)$ of $L$
by the time-one Hamiltonian flow $\varphi_1 \colon X \to X$
intersects $L$ transversally
at the intersection point
corresponding to $x$.

\subsection{}

Let $\Sigma$ be a closed disc with $d+1$ boundary punctures
$\bszeta = \{ \zeta_0, \ldots, \zeta_d \}$,
which are called the \emph{points at infinity}.
We denote by
$\Sigmabar \coloneqq \Sigma \cup \bszeta$
the closed disk
obtained by filling in the punctures.
The connected component of the boundary of $\Sigma$
between $\zeta_i$ and $\zeta_{i+1}$,
which is homeomorphic to an open interval,
will be denoted by $\partial_i \Sigma$.
We also write
$
 \partial \Sigma \coloneqq \bigcup_{i=0}^d \partial_i \Sigma.
$
The moduli space of such discs
and its stable compactification
will be denoted by
$\cR^d$ and $\cRbar^d$ respectively.

\begin{remark}
As most of our calculations will actually take place at the cohomological level, we will be mostly interested in the cases when $d \leq 3$ in this paper.
\end{remark}

\subsection{}

A \emph{strip-like end} around a point $\zeta_i$
is a holomorphic embedding
\begin{align}
\left\{
\begin{aligned}
 & \epsilon \colon \bR^{\le 0} \times [0,1] \to \Sigma, \\
 & \epsilon^{-1}(\partial \Sigma) = \bR^{\le 0} \times \{ 0, 1 \}, \\
 & \lim_{s \to -\infty} \epsilon(s, -) = \zeta_i
\end{aligned}
\right.
\end{align}
if $i=0$, and
\begin{align}
\left\{
\begin{aligned}
 & \epsilon \colon \bR^{\ge 0} \times [0,1] \to \Sigma, \\
 & \epsilon^{-1}(\partial \Sigma) = \bR^{\ge 0} \times \{ 0, 1 \}, \\
 & \lim_{s \to \infty} \epsilon(s, -) = \zeta_i
\end{aligned}
\right.
\end{align}
otherwise. Strip-like ends can be chosen compatibly over the moduli spaces. 

\subsection{}
A \emph{Liouville-admissible Floer data} is a pair
\begin{align}
 (H, J) \in C^\infty([0,1], \scrHLa(X)) \times C^\infty([0,1],\scrJLa(X))
\end{align}
of families of Liouville-admissible Hamiltonians and
Liouville-admissible almost complex structures.

\subsection{}

Let $\Sigma$ be a closed disk
with $d+1$ boundary punctures.
A \emph{Liouville-admissible perturbation data}
$(K,J)$
consists of
\begin{enumerate}
 \item
a 1-form
$
 K \in \Omega^1(\Sigma,\scrHLa(X))
$
on $\Sigma$
with values in Liouville-admissible Hamiltonians
satisfying
\begin{align}
 \alitem{$K|_{\partial \Sigma}=0$, and}\\
 \alitem{$X_K=X_{H_{e}} \otimes \beta $
outside of a compact set,
where $H_e$ is of slope one and
$\beta$ is sub-closed
(i.e., $d\beta \leq 0$), and} \label{eq:perturbationdata}
\end{align}
 \item
a family
$J \in C^\infty(\Sigma, \scrJLa(X))$
of Liouville-admissible almost complex structures on $X$
parametrized by $\Sigma$.
\end{enumerate}
It is \emph{compatible}
with a sequence
$(\bsH, \bsJ) = (H_j, J_j)_{j=0}^d$
of Liouville-admissible Floer data
if
\begin{align} \label{eq:Fleor_compatible}
 \epsilon_j^* K = H_j(t) dt \quad \text{and} \quad
 J(\epsilon_j(s,t)) = J_j(t)
\end{align}
for any $j \in \{0, \ldots, d\}$ and any $t \in [0,1]$.

\subsection{}
A sequence
$
 \bx \coloneqq \lb x_k \in \scrX(L; H_k) \rb_{k=0}^d
$
of Hamiltonian chords
and a perturbation data $(K,J)$
allow us to define
\emph{Floer's equation}
\begin{align} \label{eq:Floer}
\left\{
\begin{aligned}
 & y \colon \Sigma \to \Ybar, \\
 & y(\partial \Sigma) \subset L, \\
 & \lim_{s \to \pm \infty} y(\epsilon_k(s, -)) = x_k, \quad k=0, \ldots, d,\\
 & (d y - X_{K})^{0,1} = 0,
\end{aligned}
\right.
\end{align}
where $X_K$ is the one-form with values in Hamiltonian vector fields on $\Ybar$
associated with $K$.
Outside of a compact set,
our choices of perturbation data agrees with that studied in \cite{MR2602848} and so
the $C^0$ estimates of Section 7 of \emph{loc.~cit.}~still hold.

\subsection{Cohomological constructions}

For a non-degenerate Liouville-admissible Hamiltonian $H$, the \emph{Floer complex} is defined by
\begin{align} \label{eq:Floer_complex}
 \CF^*(L;H)
  \coloneqq \bigoplus_{x \in \scrX(L;H)} |\frako_x|,
\end{align}
where $|\frako_x|$ is the one-dimensional $\bC$-normalized orientation space
associated to $x$
(see \cite[Section 12]{MR2441780}).
For a pair $\bx = (x_0, x_1)$ of Hamiltonian chords,
the matrix element
of the \emph{Floer differential}
$\frakm_1$
is defined
by counting
the number of solutions
to Floer's equation \eqref{eq:Floer}
on the strip
$
 \Sigma \coloneqq \bR \times [0,1]
$
with perturbation data $K=Hdt$
up to $\bR$-translations.
The cohomology of the Floer complex
\eqref{eq:Floer_complex}
is denoted by
$
 \HF^*(L;H),
$
and called the \emph{Floer cohomology}.

\subsection{}

A \emph{Liouville-admissible sequence of Hamiltonians}
is a sequence $(H_m)_{m=1}^\infty$
of Liouville-admissible Hamiltonians
satisfying the following conditions:
\begin{enumerate}
 \item \label{it:LasH1}
For each $m \in \bZ^{>0}$,
the set $\scrX(L;H_m)$ consists only of non-degenerate chords.
 \item \label{it:LasH2}
The slopes $\lambda_m$
of $H_m$
satisfy
$\lambda_m < \lambda_{m+1}$
and $\lambda_m+\lambda_{m'} \le \lambda_{m+m'}$
for any $m, m' \in \bZ^{>0}$.
\end{enumerate}
Note that (\ref{it:LasH2}) implies
$\lim_{m \to \infty} \lambda_m = \infty$.

\subsection{}
 \label{sc:fD1}

We fix a Liouville-admissible sequence
$(H_m)_{m=1}^\infty$ of Hamiltonians
and a sequence $(J_m)_{m=1}^\infty$
of Liouville-admissible almost complex structures.
In addition,
for each $m \in \bZ^{>0}$,
we fix a Liouville-admissible perturbation data
$K(m, m+1)$
on the strip
$\bR \times [0,1]$,
which is compatible with the pair
$((H_m, J_m), (H_{m+1}, J_{m+1}))$
of Floer data.
For any $n > m$,
by gluing $(K(i,i+1))_{i=m}^{n-1}$,
we obtain a perturbation data
$K(m,n)$
on the strip
$\bR \times [0,1]$,
which is compatible with the pair
$((H_m, J_m), (H_{n}, J_{n}))$
of Floer data.
By counting numbers of solutions
to Floer's equation \eqref{eq:Floer}
on the strip
with respect to this perturbation data,
we obtain the \emph{continuation map}
\begin{align} \label{eq:cont1}
 \CF^*(L;H_m) \to \CF^*(L;H_n)
\end{align}
on the Floer cochain complex.
A standard argument in Floer theory shows that
the continuation map
commutes with the Floer differential,
and induces the continuation map
\begin{align} \label{eq:cont2}
\kappa_{m,n}:\HF^*(L;H_m) \to \HF^*(L;H_n)
\end{align}
on the Floer cohomology.
The colimit
\begin{align} \label{eq:HW_def}
 \HW(L) \coloneqq \varinjlim_m
  \HF(L;H_m)
\end{align}
with respect to the continuation map \eqref{eq:cont2} is called
the \emph{wrapped Floer cohomology}.

\subsection{}
 \label{sc:fD2}
The most important Floer theoretic operation
on wrapped Floer cohomology in this paper
will be the pair-of-pants product.
To define it, for any $m,n \in \bZ^{>0}$,
we fix a Liouville-admissible perturbation data
$K(m,n,m+n)$
on a disk with three punctures,
which is compatible with the triple
\begin{align}
 ((H_m, J_m), (H_n, J_n), (H_{m+n}, J_{m+n}))
\end{align}
of Floer data.
This allows us to define a linear map
\begin{align}
 \frakm_2 \colon \CF^*(L;H_n) \otimes \CF^*(L;H_m)
  \to \CF^*(L;H_{m+n})
\end{align}
by counting numbers of solutions to Floer's equation.
A standard arguments in Floer theory
shows that
$\frakm_2$ satisfies the Leibniz rule
with respect to $\frakm_1$,
and hence induces a map on the Floer cohomology.
As is standard in Floer theory,
the fact that this pair-of-pants product is well-behaved comes
from the existence of certain auxilliary moduli spaces
which are enhanced with suitable choices of Floer data.
For example,
in order to show that this product is actually well-defined on the direct limit,
we make the following construction:

\subsection{}
 \label{sc:fD3}

For any $m_1, m_2, m_3 \in \bZ^{>0}$
with $m_1<m_2$,
we fix a one-parameter family
$\lb K_\tau(m_1, m_2, m_3) \rb_{\tau \in [0,1]}$
of Liouville-admissible perturbation data
such that
$K_0(m_1, m_2, m_3)$ is the gluing of $K(m_1,m_3,m_1+m_3)$
with $K(m_1+m_3, m_2+m_3)$ and
$K_1(m_1, m_2, m_3)$ is the gluing of $K(m_1,m_2)$
with $K(m_2,m_3,m_2+m_3)$.
We also fix the analogous data for the case
when the continuation map occur along the other positive strip-like end.
A standard cobordism argument using this family shows that
the product is well-defined on the direct limit.

\subsection{}
 \label{sc:fD4}

For any $m_1,m_2,m_3 \in \bZ^{>0}$ and any $n \in \bZ^{\ge 0}$,
we set
$
 m \coloneqq \sum_{i=1}^3 m_i +n.
$
We then choose a family $K(m_1,m_2,m_3)$ of Liouville-admissible perturbation data
on the universal family of disks with 4 punctures over the moduli space $\cRbar^3$,
which is compatible with $(H_m,J_m)$
along the negative end and $(H_{m_{i}},J_{m_{i}})$
along the three positive ends.
We assume that along one end of the boundary of $\cRbar^3$,
the family $K(m_1,m_2,m_3)$ restricts
to the fiber product of a perturbation datum in $K(m_1,m_2,m-m_3)$
(for some datum in the interior of the above homotopy)
with $K(m-m_3 ,m_3, m)$,
and at the other end of the boundary,
it restricts to the fiber product of a perturbation datum
in $K(m_2,m_3,m-m_1)$
(for some choice of data in the interior of the above homotopy)
with $K(m_1,m-m_1,m)$.
We also require that for sufficiently small gluing parameters,
in the `thin" regions of the holomorphic curves,
the perturbation data restricts exactly to the gluing of these perturbation data
following \cite[Section (9i)]{MR2441780}.

For generic choices of Floer data and perturbation data,
all moduli spaces above may be regularized.
A standard argument in Floer theory using the moduli space of solutions to Floer's equation
with respect to this perturbation data shows that the product on the Floer cohomology is associative.

\begin{remark} The extra flexibility in the parameter $n$ above may seem unusual, but is motivated by our intended applications where the theory is not as nicely behaved as in the standard Liouville case. \end{remark}

\subsection{Chain level structures}
We now explain how to enhance the above constructions to the chain level.
The existence of this chain level construction is important
when we discuss (the (split-)generation of) the derived category.
Still, as most of our computations will take place at the cohomological level,
we will be brief and refer the reader to \cite{MR2602848}.
To define the chain level structure,
we assume
that our Liouville-admissible families $H_m$ satisfy $\lambda_m=m$ and
that
\begin{align} \label{eq:controlLiouville}
 H_m(x)=\lambda_m r
\end{align}
in the region of $X$ defined by $r \ge 2$.
We similarly assume
that all our Liouville-admissible Lagrangians $L$ are conical over this region,
and that for any perturbation data,
\pref{eq:perturbationdata} holds over this region as well.  

\begin{definition}
The \emph{wrapped Floer complex}
of a Liouville-admissible Lagrangian $L$
is defined by
\begin{align}
 \CW^*(L) \coloneqq \oplus_m \CF^*(L,H_m)[q],
\end{align}
where $\deg q=-1$ and $q^2=0$.
\end{definition}
We fix perturbation data of the form $K(m, m+1)$ and
define a differential on this complex via the formula
\begin{align} \label{eq:telescope}
 \mu^1(x+qy)
  = (-1)^{|x|} \frakm_1(x)
   +(-1)^{|y|}(q\frakm_1(y)
   + \kappa_{m,m+1}(y)-y).
\end{align}
The cohomology of this complex gives the wrapped Floer cohomology
defined in \pref{eq:HW_def};
\begin{align}
 H^*(\CW^* (L)) \cong \HW^*(L).
\end{align}
Fix $d \ge 1$ and labels $p_f \in \lc 1, \cdots ,d \rc $
(possibly not distinct) indexed by a finite set $F$.
Let
$
 \bfp \colon F \to \lc 1, \cdots ,d \rc
$
be the map given by $f \mapsto p_f$.

\begin{definition}
A \emph{$\bfp$-flavored popsicle}
is a disk $\Sigma \in \cR^{d}$
with $d+1$ boundary punctures
together with holomorphic maps
$
 \phi_f \colon \Sigma \to Z
$
to the strip $Z \coloneqq \bR \times [0,1]$
which extend to an isomorphism $\Sigmabar \simto \Zbar$
such that $\phi_f(z_0) = -\infty$ and $\phi_f(z_{p_{f}})=\infty$.
The moduli space of $\bfp$-flavored popsicles
is denoted by $\cR^{d,\bfp}$.
\end{definition}

This moduli space admits a compactification
$\cRbar^{d,\bfp}$
over stable discs. Moreover, we can choose universally consistent strip-like ends over $\cRbar^{d,\bfp}$
in the sense of
\cite[Section 2.4]{MR2602848}.
Observe that if $\bfp$ is not injective,
then there is a symmetry group of $\Sym^{\bfp}$ of permutations of $F$ preserving $\bfp$. This admits a natural action on $\cR^{d,\bfp}$ which extends to the compactification $\cRbar^{d,\bfp}$.
Fix flavors $\bfp$ and weights $\bsm = (m_0, \cdots, m_d) \in \lb \bZ^{>0} \rb^{d+1}$
satisfying
\begin{align}
 m_0 = \sum_{i=1}^d m_i + \abs{F}.
\end{align}
We denote by $\cRbar^{d,\bfp, \bsm}$ the moduli space of popsicles with weights $\bsm$
(although this is just a copy of $\cRbar^{d,\bfp}$,
it is useful to separate these).
We equip this with perturbation data compatible with the strip-like ends $H_i dt$ 
and admissible complex structures $J_t$.\footnote{
Some care must be taken in the choice of complex structure over the moduli space,
see \cite[Section 3.2]{MR2602848}.}
This data is chosen universally consistently and equivariantly with respect to the $\Sym^{\bfp}$ actions. Denote these choices by $(K^{\bfp,\bsm}, J^{\bfp,\bsm})$. 

Given a collection of chords $x_i \in \scrX(L;H_i)$,
we can form the moduli space $\cRbar^{d,\bfp, \bsm}(\bx)$
of solutions to Floer's equation.
With generic choices of $(K^{\bfp,\bsm}, J^{\bfp,\bsm})$,
these spaces of maps have the expected dimension.
Whenever the expected dimension is zero,
counting these solutions with appropriate signs gives rise to operations $\mu^{\bfp,\bsm}$.
Out of these operations,
Abouzaid and Seidel constructs an $A_\infty$-structure
on the Floer complex $\CW^*(L,L).$ 

These constructions can easily be adapted
to a collection of Liouville-admissible Lagrangians,
giving rise to an $A_\infty$ category $\cW(X)$
whose objects are these Lagrangians and
whose morphisms are the Floer complexes.
Finally,
we embed this,
via the Yoneda embedding,
into the larger category
$D^\pi \cW(X) \coloneqq \perf \cW(X)$
of perfect $A_{\infty}$-modules over this category.
The smallest full triangulated subcategory of $D^\pi \cW(X)$
containing $\cW(X)$ will be denoted by $D^b \cW(X)$.

\begin{remark} The reader will note that we have used slightly more general Hamiltonians than those of the form $H_m=mH$ for a fixed admissible Hamiltonian. This is for two reasons, the first being that as one of their genericity constraints on the Hamiltonians, Abouzaid and Seidel
\cite[(39)]{MR2602848}
impose the condition that no point on $L$ is both an endpoint and a starting point of a Hamiltonian chord.
This is to rule out certain solutions of zero geometric energy
which can roughly speaking be thought of as constant curves
landing in triple intersections of the Lagrangians
after being perturbed by the Hamiltonian flow
(these solutions are problematic for transversality).
For the Hamiltonians we wish to choose,
all the Hamiltonian chords on the cylindrical end are Hamiltonian orbits.
However,
in our setting,
because we can choose more flexible families of Hamiltonians,
we can choose our data so that such constant curves are excluded a priori.The second consideration is practical:
by working with slightly more general Hamiltonians,
we can obtain smaller models for the Floer cohomology.
\end{remark}

\section{Adapted wrapped Floer theory}
 \label{sc:aWFT}


\subsection{}

Let $S \colon \NCx \to \bR$ be the standard exhaustive
plurisubharmonic function
defined in \eqref{eq:NCxS}.
%
A function
$
 H \colon \NCx \to \bR
$
is
\emph{homogeneous of degree one}
with respect to $S$
if
\begin{align}
 \alitem{
$-d^cS(X_H)=H$, and
}\\
 \alitem{
$dS(X_H)=0$.
}
\end{align}

\subsection{}

A positive function
$\Hb \colon \NCx \to \bR^{>0}$
is an \emph{admissible base Hamiltonian}
if it is homogeneous of degree one
outside of a compact set.

\subsection{}

For a function $H \colon N_\bR \to \bR$,
the composition $H \circ \Log \colon \NCx \to \bR$
will also be denoted by $H$
by abuse of notation.
The Hamiltonian vector field is given by
\begin{align}
 X_H
  = \frac{\partial H}{\partial r_1} \frac{\partial}{\partial \theta_1}
   + \frac{\partial H}{\partial r_2} \frac{\partial}{\partial \theta_2}.
\end{align}
One has
\begin{align}
 - d^c S(X_H)
  = r_1 \frac{\partial H}{\partial r_1} + r_2 \frac{\partial H}{\partial r_2},
\end{align}
so that $- d^cS(X_H) = H$
if and only if $H$ is homogeneous of degree one in the usual sense.

\subsection{}


A positive function $\Hba \colon \Ybarloc \to \bR^{>0}$ is
a \emph{base-admissible Hamiltonian}
if there exists an admissible base Hamiltonian
$\Hb \colon \NCx \to \bR^{> 0}$
and a compact set $K \subset \NCx$
such that 
\begin{itemize}
\item for any $y \in \Ybar \setminus \pibar_{\NCx}^{-1}(K)$,
one has
\begin{align}
 (\pibar_{\NCx})_*(X_{\Hba}(y))= X_{\Hb}(\pibar_{\NCx}(y)).
\end{align}
\item Outside of $\pibar_{\NCx}^{-1}(U_{Z})$, $\Hba$ is a $C^2$-small perturbation
of $\pibar_{\NCx}^{-1}(H_b)$
\end{itemize}
The set of base-admissible Hamiltonians on $\Ybarloc$
is denoted by $\scrHba(\Ybarloc)$.


\begin{proposition} \label{pr:Hba}
There exists a base-admissible Hamiltonian.
\end{proposition}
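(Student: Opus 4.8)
The plan is to produce such a Hamiltonian as the pull-back $\Hba \coloneqq \pibar_{\NCx}^{*}\Hb$ of a carefully chosen admissible base Hamiltonian $\Hb$ on $\NCx$; the only genuine point is to arrange the behaviour of $\Hb$ near the legs so that the projection identity $(\pibar_{\NCx})_{*}X_{\Hba} = X_{\Hb}\circ\pibar_{\NCx}$ survives the failure of $\pibar_{\NCx}$ to be a symplectic product over $U_Z$.

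First I would construct $\Hb$. A positive function on $N_\bR$ which is homogeneous of degree one outside a compact set is, there, of the form $\bn\mapsto|\bn|\,g(\bn/|\bn|)$ for a positive function $g$ on the unit circle of $N_\bR$. For a leg $\Pi_i$ set $\vec v_i\coloneqq(\bsalpha_i-\bsbeta_i)^\bot\in N$, so that $\br_{(\bsalpha_i-\bsbeta_i)^\bot}=\langle\vec v_i,\br\rangle$ and $\langle\vec v_i,\bsbeta_i-\bsalpha_i\rangle=0$; the vector $\vec v_i$ is primitive since $\cP$ is unimodular, and $\langle\vec v_i,\bn\rangle>0$ for $\bn$ near the asymptotic direction $\widehat{v}_i\coloneqq\vec v_i/|\vec v_i|$ of $\Pi_i$. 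Choose $g$ to equal $\bn\mapsto\langle\vec v_i,\bn\rangle$ on a small arc around each $\widehat{v}_i$ (these are compatible where two legs are parallel, since then the neighbourhoods $U_{\Pi_i}$ are taken equal) and to be a positive interpolation elsewhere, put $\Hb(\bn)\coloneqq|\bn|\,g(\bn/|\bn|)$ for $|\bn|$ large, and extend $\Hb$ to a positive smooth function over a compact ball. Then $\Hb$ is an admissible base Hamiltonian, and since $\widehat{\br}\to\widehat{v}_i$ uniformly on $U_{\Pi_i}$ as $\br_{(\bsalpha_i-\bsbeta_i)^\bot}\to\infty$, enlarging a compact set $K\subset\NCx$ so as to contain $\Log^{-1}(U_{\Pi_c})$ together with the bounded part of every leg, we may assume
\begin{align} \label{eq:Hb-leg}
 \Hb=\br_{(\bsalpha_i-\bsbeta_i)^\bot}\quad\text{on }(\NCx\setminus K)\cap\Log^{-1}(U_{\Pi_i})\text{ for every }i.
\end{align}

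Now put $\Hba\coloneqq\pibar_{\NCx}^{*}\Hb$, a positive smooth function on $\Ybarloc$. Outside $\pibar_{\NCx}^{-1}(U_Z)$ it is literally $\pibar_{\NCx}^{*}\Hb$, a (trivial) $C^{2}$-small perturbation of itself, so the second condition in the definition holds; it remains to verify the projection identity over $\NCx\setminus K$. Over $\NCx\setminus(K\cup U_Z)$ the cut-off $\chi$ of \eqref{eq:symp_form} vanishes, so there $\omega_\epsilon=p^{*}\omega_{\NCx\times\bC}$ and $p$ identifies $\pibar_{\NCx}^{-1}(\NCx\setminus U_Z)$ with $(\NCx\setminus U_Z)\times\bC$ carrying the product form; hence $X_{\Hba}=X_{\Hb}\oplus0$ and projects to $X_{\Hb}$. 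Over $(\NCx\setminus K)\cap U_Z$, i.e.\ over the unbounded parts of the legs, \eqref{eq:Hb-leg} shows that $\Hb=\br_{(\bsalpha_i-\bsbeta_i)^\bot}$ is the moment map of the $\vec v_i$-rotation of $\NCx$; as $\langle\vec v_i,\bsbeta_i-\bsalpha_i\rangle=0$, this circle is the image under $\pibar_{\NCx}$ of the subtorus of the $\bT^2$-action preserving the monomial $\bw^{\bsbeta_i-\bsalpha_i}$, which together with the $u$-rotation \eqref{eq:S^1-action} acts on $\Ybarloc$ preserving $\omega_\epsilon$ over $U_i$ by the crucial feature noted after \eqref{eq:chiG2} (and trivially off the support of $\chi$, so on all of this region). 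Since $\omega_\epsilon$ is exact, this $\bT^2$-action is Hamiltonian; normalising its moment map so that one component is $\mu$ as in \eqref{eq:moment_map}, the other component equals $\Hba$ up to an additive constant, so $X_{\Hba}$ generates the corresponding circle action on $\Ybarloc$, which $\pibar_{\NCx}$ carries to the $\vec v_i$-rotation of $\NCx$ generated by $X_{\Hb}$; thus $(\pibar_{\NCx})_{*}X_{\Hba}=X_{\Hb}\circ\pibar_{\NCx}$ here as well. If a Hamiltonian differing genuinely from $\pibar_{\NCx}^{*}\Hb$ near the discriminant is wanted, one may perturb $\Hba$ freely inside $\pibar_{\NCx}^{-1}(U_Z)$ without affecting either condition.

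The main obstacle is this last region: reconciling the symplectic product structure valid over $\NCx\setminus U_Z$ with the merely $\bT^2$-equivariant structure over the legs, for which the leg-adapted form \eqref{eq:Hb-leg} of $\Hb$ and the $\bT^2$-invariance of $\omega_\epsilon$ on the $U_i$ are exactly what is needed. The rest is bookkeeping: checking that over the unbounded part of each leg the total space is covered by $U_i$ together with the locus $\{\chi=0\}$, that the divisors $D$ and $E$ meet $\Ybarloc\setminus\pibar_{\NCx}^{-1}(K)$ only inside such good regions, and that the moment-map normalisation above is compatible with the chosen primitive $\theta_\epsilon$ of \eqref{eq:theta-epsilon}.
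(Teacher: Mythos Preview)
There is a genuine gap in the argument over the legs inside $U_Z$, exactly the region you single out as ``the main obstacle''. Your key claim is that, along $U_i$, the pull-back $\Hba=\pibar_{\NCx}^{*}\br_{(\bsalpha_i-\bsbeta_i)^\bot}$ is, up to an additive constant, the moment map for the $\vec v_i$-circle inside the local $\bT^2$-action on $(\Ybarloc,\omega_\epsilon)$, and hence that $X_{\Hba}$ generates that circle. This is false on the support of $\chi$: a direct contraction of the generator $X=\partial_{\theta_{(\bsalpha_i-\bsbeta_i)^\bot}}$ against $\omega_\epsilon$ in \eqref{eq:symp_form} gives
\[
 -\iota_X\omega_\epsilon
   = d\Bigl(\br_{(\bsalpha_i-\bsbeta_i)^\bot}
     - \tfrac{\epsilon}{4\pi}\,\iota_X d^c\bigl(\chi(G_i)\log F_i\bigr)\Bigr),
\]
so the genuine moment map is $\rho_i\coloneqq\br_{(\bsalpha_i-\bsbeta_i)^\bot}-\tfrac{\epsilon}{4\pi}\iota_X d^c(\chi\log F_i)$, which differs from $\br_{(\bsalpha_i-\bsbeta_i)^\bot}$ by a \emph{non-constant} function supported on $\operatorname{supp}\chi$. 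Consequently $X_{\Hba}$ differs from $X$ by the Hamiltonian vector field of this correction term, and since that correction depends nontrivially on $\bw$, the discrepancy does not project to zero under $\pibar_{\NCx}$; the identity $(\pibar_{\NCx})_*X_{\Hba}=X_{\Hb}$ fails over $\operatorname{supp}\chi\cap U_i$. Because $U_{Z\times 0}$ extends to infinity along every leg, this failure is not confined to a compact set and cannot be absorbed into $K$.

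The paper's proof fixes precisely this point: rather than pulling back $\Hb$, it \emph{uses the actual local moment map $\rho_i$} as the Hamiltonian over $U_i$, observes that $\rho_i$ agrees with $\br_{(\bsalpha_i-\bsbeta_i)^\bot}$ off $\operatorname{supp}\chi$, and glues the $c_i\rho_i/R$ to $\pibar_{\NCx}^{*}\Hb$ there to obtain a global function $\rho$. By construction $X_\rho=X$ on $U_i$, so the projection identity holds everywhere outside a compact set. Your construction of $\Hb$ is fine and matches the paper's; the missing ingredient is replacing $\pibar_{\NCx}^{*}\Hb$ by $\rho_i$ over the legs, i.e.\ correcting $\Hba$ inside $\pi_{\NCx}^{-1}(U_Z)$ rather than leaving it as the pure pull-back.
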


\begin{proof}
Recall from \eqref{eq:chiG2} that
$\chi |_{U_i} = \chi_i \circ G_i$.
The symplectic form on $U_i$ is invariant
under the $\bS^1$-action on $\NCx$
which preserves $\bw^{\bsalpha-\bsbeta}$.
The essential idea is to produce the base Hamiltonian
by gluing together local moment maps for these actions.
In more detail, set $F_i= |u|^2+|h(\bw)|^2$.
Then we have
 \begin{align} \label{eq:express1}
 \omega
 &= \pibar_{\NCx}^*\omega_{\NCx} + \pibar_{\bC_u}^* \omega_{\bC_u}
  - \frac{\epsilon}{4\pi} dd^c(\chi(G_i)\log(F_i)).
\end{align}
Let $X$ be the vector field on $\Ybar$,
which is $\partial_{\theta_{({\bsalpha_i}-{\bsbeta_i})^\bot}}$
with $(u, \bw)$ as coordinates;
it is characterized by
\begin{align}
 \iota_X d u
  = \iota_X d \bw^{{\bsalpha_i}-{\bsbeta_i}}
  = \iota_X d r_{({\bsalpha_i}-{\bsbeta_i})^\bot} = 0,
  \quad
 \iota_X d \theta_{({\bsalpha_i}-{\bsbeta_i})^\bot} = 1.
\end{align}
Then we have
\begin{align}
& \iota_X (\pibar_{\NCx})^* \omega_{\NCx}
 = - d r_{({\bsalpha_i}-{\bsbeta_i})^\bot}.
\end{align}
By invariance of the functions $F_i$ and $G_i$
under the local circle action generated by $X$,
we have
\begin{align}
 \iota_X dd^c \lb \chi(G_i)\log(F_i) \rb
  &= (\scrL_X - d \iota_X) d^c \lb \chi(G_i)\log(F_i) \rb \\
  &= - d \lb \iota_X d^c\lb \chi(G_i) \log(F_i) \rb \rb,
\end{align} 
so that
\begin{align}
 -\iota_X \omega
 &= d\lb r_{({\bsalpha_i}-{\bsbeta_i})^\bot}- \frac{\epsilon}{4\pi} \iota_X d^c\lb \chi(G_i) \log(F_i) \rb \rb.
\end{align}
If we define
$
 \rho_i \colon U_i \to \bR
$
by
\begin{align}
 \rho_i
  \coloneqq  r_{({\bsalpha_i}-{\bsbeta_i})^\bot}
  -\frac{\epsilon}{4\pi} \iota_X d^c\lb \chi(G_i) \log(F_i)\rb,
\end{align}
then
$
 \rho_i - \br_{({\bsalpha_i}-{\bsbeta_i})^\bot}
$
is a bounded function
whose support is contained
in the support of $\chi$. Let $R$ be a positive number
satisfying $R \gg R_0$.
Let $\Hb$ be a positive function on $N_\bR$
(which is also considered as a function on $\NCx$
by composing with $\Log \colon \NCx \to N_\bR$)
such that
\begin{align}
 \alitem{
$\Hb$ is homogeneous of degree one outside of a compact set,
}\\
 \alitem{
$\Hb|_{U_{\Pi_i}} = c_i \br_{(\bsalpha-\bsbeta)^\bot}/R$
where $c_i$ is defined in \pref{eq:ci}, and
}\\
 \alitem{
$\Hb(\br) = \abs{\br}/R$ outside of a neighborhood of $U_{\Pi}$.
}
\end{align}
Since the function $\frac{c_i \rho_i}{R}$ agrees with $\pibar_{\NCx}^* \Hb$
outside the support of $\chi$,
we may glue $\frac{c_i \rho_i}{R}$ for $i=1, \ldots, \ell$ and
$\pibar_{\NCx}^* \Hb$ together
to obtain a positive function $\rho$
defined on the complement of $\pibar_{\NCx}^{-1}(K)$
for a compact subset $K$ of $\NCx$.
We may extend this function to $\Ybar$ arbitrarily
to obtain a function
which satisfies the necessary axioms.
\end{proof}

We fix a function $\rho$
appearing in the proof of \pref{pr:Hba}
throughout the rest of this paper.
We say that
a base-admissible Hamiltonian $\Hba$ has a \emph{slope}
$\lambda \in \bR^{>0}$
if it is a $C^2$-small perturbation of $\lambda \rho$
which coincides with $\lambda \rho$
outside of the inverse image by $\pibar_{\NCx}$
of a compact set in $\NCx$.

\subsection{}

An $\omega$-compatible almost complex structure
$J_{\Ybarloc}$ on $\Ybarloc$ is said to be \emph{base-admissible}
if the map
$
 \pibar_{\NCx} \colon \Ybarloc \to \NCx
$
is $(J_{\Ybarloc},J_{\NCx})$-holomorphic
outside of a compact set.
The set of base-admissible almost complex structures on $\Ybarloc$
will be denoted by $\Jba(\Ybarloc)$.

\subsection{}

A \emph{base-admissible Floer data} is a pair
\begin{align}
 (H, J) \in C^\infty([0,1],\scrHba(\Ybarloc)) \times C^\infty([0,1],\Jba(\Ybarloc))
\end{align}
of families of base-admissible Hamiltonians and
base-admissible almost complex structures.

\subsection{}

A \emph{base-admissible perturbation data}
$(K,J)$
consists of
\begin{enumerate}
 \item
a 1-form
$
 K \in \Omega^1(\Sigma,\scrHba(\Ybar))
$
on $\Sigma$
with values in base-admissible Hamiltonians
satisfying
\begin{enumerate}
 \item
$K|_{\partial \Sigma}=0$, and
 \item
outside of a compact set in the base, we have
$
 \pi_{\NCx,*} (K) =X_{\Hb} \otimes \gamma
$
for $\gamma$ sub-closed, and
\end{enumerate}
 \item
a family
$J \in C^\infty(\Sigma, \Jba(\Ybar))$
of base-admissible almost complex structures on $\Ybar$
parametrized by $\Sigma$.
\end{enumerate}
It is \emph{compatible}
with a sequence
$(\bsH, \bsJ) = (H_j, J_j)_{j=0}^d$
of base-admissible Floer data
if \eqref{eq:Fleor_compatible} holds
for any $j \in \{0, \ldots, d\}$ and any $t \in [0,1]$.


\begin{lemma} \label{lm:max1}
Let $y \colon \Sigma \to \Ybar$
be a solution to Floer's equation \pref{eq:Floer}
with respect to a base-admissible perturbation data.
Set $p \coloneqq S \circ \pibar_{\NCx} \circ y \colon \Sigma \to \bR$.
If $p$ is not a constant function,
then $p$ does not have a maximum on $\Sigma$
whose maximum value is outside of the compact set
appearing in the definitions of $\scrHba(\Ybar)$ and $\Jba(\Ybar)$.
\end{lemma}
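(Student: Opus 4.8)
The plan is to deduce the statement from the standard maximum principle for solutions of Floer's equation on the Liouville manifold $(\NCx, \theta_{\NCx})$, applied to the projected map $\bar y \coloneqq \pibar_{\NCx} \circ y$. Enlarging the compact set appearing in the definitions of $\scrHba(\Ybar)$ and $\Jba(\Ybar)$ if necessary, we may assume it is a sublevel set $S^{-1}((-\infty, c_0])$ of $S$, with $c_0$ large enough that $J_{\NCx}$ is standard on its complement; then $\pibar_{\NCx}(y(z)) \notin S^{-1}((-\infty, c_0])$ precisely when $p(z) > c_0$. Suppose, for contradiction, that $p$ is not constant and attains its maximum at a point $z_0 \in \Sigma$ with $p(z_0) > c_0$. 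By continuity there is a connected open neighbourhood $\Omega$ of $z_0$ in $\Sigma$ with $\bar y(\Omega) \cap S^{-1}((-\infty, c_0]) = \emptyset$.

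The first step is to check that $\bar y$ solves, over $\Omega$, a Floer equation on $\NCx$ of the type considered in \cite[Section~7]{MR2602848}. Applying $d\pibar_{\NCx}$ to the equation $(dy - X_K)^{0,1} = 0$ of \pref{eq:Floer} and using that over $\bar y(\Omega)$ the map $\pibar_{\NCx}$ is $(J, J_{\NCx})$-holomorphic while $X_{\Hba}$ projects to $X_{\Hb}$ and $X_K$ projects to $X_{\Hb} \otimes \gamma$ — with $\Hb$ an admissible base Hamiltonian and $\gamma$ sub-closed — one obtains that $\bar y|_\Omega$ satisfies $(d\bar y - X_{\Hb} \otimes \gamma)^{0,1} = 0$ with respect to $J_{\NCx}$, with boundary on $\Lbar$. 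Two features of this equation are decisive. First, since $\Hb$ is homogeneous of degree one with respect to $S$, one has $dS(X_{\Hb}) = 0$, so the Hamiltonian term leaves $p = S \circ \bar y$ unaffected. Second, by the Legendrian-at-infinity condition of \pref{df:ad_Lag} together with the identity $\iota_{\cZ} \omega_{\NCx} = \theta_{\NCx}$ and the fact that $\Lbar$ is Lagrangian, the Liouville vector field $\cZ$ of $(\NCx, \theta_{\NCx})$ lies in $T\Lbar$ near $\bar y(z_0)$; that is, $\Lbar$ is conical there.

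The second step is to invoke the maximum principle. The Liouville coordinate on $(\NCx, \theta_{\NCx})$ outside a compact set is $\abs{\br} = \sqrt{2S}$ (its Liouville vector field being the radial field $r_1 \partial_{r_1} + r_2 \partial_{r_2}$), a Hamiltonian homogeneous of degree one is then linear in that coordinate, $\gamma$ is sub-closed, and $\Lbar$ is conical near $\bar y(z_0)$ — this is exactly the configuration treated in \cite[Section~7]{MR2602848}. By the standard computation there, $p = S \circ \bar y$ is subharmonic on $\Omega$: the holomorphic energy density of $\bar y$ is non-negative because $-dd^c S = \omega_{\NCx} \ge 0$, the Hamiltonian term contributes nothing because $dS(X_{\Hb}) = 0$, and the contribution of $d\gamma$ has the right sign because $d\gamma \le 0$. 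Consequently an interior maximum of $p$ in $\Omega$ forces $p$ to be locally constant near it by the strong maximum principle; and when $z_0 \in \partial\Sigma$, the Lagrangian boundary condition with $\Lbar$ conical near $\bar y(z_0)$ gives, through the Hopf-lemma computation of the outward normal derivative of $p$ at $z_0$, that this derivative is non-positive, with equality only if $p$ is locally constant near $z_0$. In either case $p$ is locally constant near $z_0$. Finally, the subset of $\Sigma$ on which $p$ attains its global maximum is closed and, since at each of its points the value of $p$ is again $> c_0$ so that the preceding argument applies verbatim, it is open; as $\Sigma$ is connected, $p$ is constant, contradicting our assumption.

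The main obstacle is the boundary case $z_0 \in \partial\Sigma$: one must verify carefully that the Legendrian-at-infinity condition makes $\Lbar$ invariant under the Liouville flow near infinity, so that the boundary form of the maximum principle of \cite[Section~7]{MR2602848} applies to $\bar y|_\Omega$ without change, and one must match the normal-derivative computation there with the present normalisation of $S$ and of the homogeneous base Hamiltonians. The remaining points — the holomorphicity of $\pibar_{\NCx}$ and the projection of the Hamiltonian terms — amount to unwinding the ``outside a compact set'' clauses in the definitions of base-admissible Floer and perturbation data.
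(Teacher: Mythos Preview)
Your proposal is correct and follows essentially the same approach as the paper: project to $\NCx$, verify the projected map satisfies a Floer equation $(d\bar y - X_{\Hb}\otimes\gamma)^{0,1}=0$, and then run the maximum-principle/Hopf-lemma argument using $dS(X_{\Hb})=0$, $-d^cS(X_{\Hb})=\Hb\ge 0$, $d\gamma\le 0$, and the Legendrian condition $\theta_{\NCx}|_{\Lbar}=0$ at the boundary. The paper carries out the $-dd^c p$ computation and the boundary normal-derivative calculation explicitly rather than citing \cite{MR2602848}, but the content is the same; your observation that Legendrian-at-infinity is equivalent to $\Lbar$ being conical (via $\iota_{\cZ}\omega_{\NCx}=\theta_{\NCx}$ and $T\Lbar^{\perp_\omega}=T\Lbar$) is exactly what underlies the paper's boundary step.
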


\begin{proof}
It follows from the base-admissibility of $K$ that
the map
$
 w \coloneqq \pi_{\NCx} \circ y \colon \Sigma \to \NCx
$
satisfies the Floer's equation
\begin{align} \label{eq:Fbase}
 (dw- X_{\Hb} \otimes \gamma)^{0,1} = 0
\end{align}
on $X$ for the Hamiltonian $\Hb$
outside of a compact set in $\NCx$.
%
%
We write the almost complex structures on $\NCx$ and $\Sigma$
as $J$ and $j$ respectively, and set
$
 \beta \coloneqq - \partial^c S = - dS \circ J
$
and
$
 p \coloneqq S \circ w.
$
Applying $d S$ to both sides of Floer's equation
\begin{align}
 (dw- X_{\Hb} \otimes \gamma) \circ j
  = J \circ (dw - X_{\Hb} \otimes \gamma)
\end{align}
and using $dS(X_{\Hb})=0$,
one obtains
\begin{equation} \label{dcrhogamma}
 d^c p = - \beta \circ (dw - X_{\Hb} \otimes \gamma).
\end{equation}
By applying $d$ to both sides and using
$
 \omega = d \beta = - d d^c S,
$
one obtains
\begin{equation} \label{laprhogamma}
 -d d^c p
  = w^* \omega - d (\beta(X_{\Hb}) \cdot \gamma).
\end{equation}
Since
$
 \beta(X_{\Hb}) = -\Hb
$
outside of a compact set in $N_\bR$,
one has
\begin{align}
 - d d^c p
  &= w^* \omega - d(w^* \Hb \cdot \gamma) \\
  &= w^* \omega - d(w^*\Hb) \wedge \gamma - w^*\Hb \cdot d\gamma \\
  &= \| dw - X_{\Hb} \otimes \gamma \|^2 - w^*\Hb \cdot d\gamma \\
  &\ge 0
\end{align}
since $\Hb \ge 0$ and $d \gamma \le 0$.
Now $- d d^c$ is an operator of the form
\eqref{eq:elliptic_op},
so the function $p$ satisfies the strong maximum principle.
If the function $p$ attains a maximum
at $\Sigma = \Sigmabar \setminus \bszeta$,
then Hopf's lemma implies that
\begin{align} \label{eq:Hopf1}
 d p(\nu) > 0
\end{align}
for an outward normal vector $\nu$ of $\partial \Sigma$
at some point $x \in \partial \Sigma$.
Let $\tau \in T_x (\partial \Sigma)$ be the tangent vector
such that $\nu = j \tau$.
Then one has
\begin{align}
 dp(\nu)
  &= dS \circ (dw \circ j) (\tau) \\
  &= dS \circ \lb X_{\Hb} \otimes \gamma \circ j
   + J \circ (du-X_{\Hb} \otimes \gamma) \rb (\tau) \\
  &= - \beta \circ (dw-X_{\Hb} \otimes \gamma) (\tau),
\end{align}
where we used $dS(X_{\Hb})=0$ and $\beta=-dS \circ J$.
The first term vanishes by the Legendrian condition $\beta|_{\Lbar} = 0$ at infinity,
and the second term vanishes by $\gamma|_{\partial \Sigma}=0$.
This contradicts \eqref{eq:Hopf1},
and \pref{lm:max1} is proved.
\end{proof}

\subsection{}
 \label{sc:vaJ1}

An almost complex structure $J$ on $\Ybar$
is \emph{fibration-admissible}
if
\begin{enumerate}
\item
$J$ is base-admissible,
\item
the map $\pibar_{\bC}: \Ybar \to \bC$ is $J$-holomorphic on
$
 \pibar_{\bC}^{-1} \lb \lc u \in \bC \relmid \abs{u} > C_0 \rc \rb
$
for some $C_0$,
\item
the divisor $\Ebar$
defined in \eqref{eq:Ebar} is $J$-holomorphic, and
\item
the almost complex structure $J|_{U_D}$ is the product of
$J_{\NCx}$ and the standard complex structure
on $\bD_\delta$
under the identification
\eqref{eq:UD2}.
\end{enumerate}
The set of fibration-admissible almost complex structures on $\Ybar$
will be denoted by $\scrJ(\Ybar)$.

\subsection*{}

The following stronger notion will be used later in \pref{sc:HW}:

\subsection{}

A fibration-admissible almost complex structure $J$ on $\Ybar$ is
\emph{integrably fibration-admissible}
if there exists an almost complex structure $J_{\NCx}$ adapted to $Z$
such that when one equips $\NCx \times \bC$
with the almost complex structure $(J_{\NCx}, J_\bC)$,
the structure map
$
 p = (\pibar_{\NCx}, \pibar_{\bC}) \colon \Ybar \to \NCx \times \bC
$
of the blow-up is pseudo-holomorphic on the union of
\begin{enumerate}
 \item
$\pibar_{\bC}^{-1} \lb \lc u \in \bC \relmid \abs{u} > C_0 \rc \rb$ for some $C_0$,
\item
$\pibar_{\NCx}^{-1} \lb \lc \bw \in \NCx \relmid S(\bw) > C_1 \rc \rb$ for some $C_1 > 0$, and
\item
$U_D \cup \pi^{-1}(U_{Z})$.
\end{enumerate}
The set of integrably fibration-admissible almost complex structures on $\Ybar$
will be denoted by $\Jint(\Ybar)$.


\subsection{}
 \label{sc:avH}

Fix $\mu_0, \mu_1 \in \bR^{>0}$ such that
$\mu_0 \ll \epsilon \ll \mu_1$.
In the exact structure \pref{eq:primfiber},
if we set the boundary to be
$\lc \mu = \mu_0 \rc$ and $\lc \mu=\mu_1 \rc$,
then the Liouville coordinates become
$c_-(\epsilon - \mu)$ and $c_+(\mu-\epsilon)$
for some constant $c_{-}$ and $c_{+}$.
%
A function
$
 \Hv \colon \Ybar \to \bR^{>0}
$
is an \emph{admissible vertical Hamiltonian}
of slope $\lambda \in \bR^{>0}$
if there exist a function
$
 f \colon \bR^{\ge 0} \to \bR^{>0}
$
satisfying
\begin{enumerate}
 \item \label{it:avH0}
$f''(x) \ge 0$ for any $x \in \bR^{\ge 0}$,
 \item \label{it:avH1}
$f(x) = \lambda c_- (\epsilon - x)$ when $x < \mu_0$,
 \item \label{it:avH2}
$f'(x)=0$ in a neighborhood of $x=\epsilon$,
\item \label{it:avH3}
$f(x) = \lambda c_+ (x - \epsilon)$ when $x > \mu_1$, and
 \item \label{it:avH4}
$
 \Hv = f \circ \mu
$
where
$
 \mu \colon \Ybar \to \bR^{\ge 0}
$
is the moment map \eqref{eq:moment_map}.
\end{enumerate}
The set of admissible vertical Hamiltonians
will be denoted by $\scrHv \lb \Ybar \rb$.
%
\begin{figure}
\centering
\input{Hver.pst}
\caption{An admissible vertical Hamiltonian}
\label{fg:Hver}
\end{figure}
The Hamiltonian vector field associated with $\Hv$ is given by
\begin{align} \label{eq:XHver}
 X_{\Hv} = f'(\mu) \cdot X_{\mu},
\end{align}
where $X_\mu$ is the fundamental vector field
for the $\bS^1$-action \eqref{eq:S^1-action}.
It follows that
\begin{align} \label{eq:Hver1}
 \lb \pi_{\NCx} \rb_*(X_{\Hv})=0.
\end{align}

\subsection{}
 \label{sc:faH}

A \emph{fibration-admissible Hamiltonian}
of slope $\lambda \in \bR^{>0}$
is a function
$
 H \colon \Ybar \to \bR^{> 0}
$
satisfying the following conditions:
\begin{enumerate}
 \item
One has
$
 (\pibar_{\NCx})_*(X_H) = X_{\lambda \rho}
$
outside of a compact set in $\NCx$.
 \item
Whenever $\abs{\mu}<\mu_0$ or $\abs{\mu} > \mu_1$,
one has
$
 H=\Hba+\Hv,
$
where $\Hba$ is a base-admissible Hamiltonian of slope $\lambda$ and
$\Hv$ is an admissible vertical Hamiltonian of slope $\lambda$.
 \item
$X_H$ is tangent to $\Ebar$.
\end{enumerate}
The set of fibration-admissible Hamiltonians of slope $\lambda$
will be denoted by $\scrH_\lambda \lb \Ybar \rb$.

\begin{remark}
To actually construct examples,
we may assume that
$
 H=\Hba+\Hv
$
holds everywhere.
The extra flexibility of our definitions are included for \pref{sc:Comp}.
\end{remark}

\subsection{}

A \emph{fibration-admissible sequence of Hamiltonians}
is a sequence $(H_m)_{m=1}^\infty$
of fibration-admissible Hamiltonians
such that the slopes $\lambda_m$
of $H_m$ for $m \in \bZ^{>0}$
satisfy
$\lambda_m < \lambda_{m+1}$
and $\lambda_m+\lambda_{m'} \le \lambda_{m+m'}$
for any $m, m' \in \bZ^{>0}$.


\subsection{}

A \emph{fibration-admissible perturbation data}
is a base-admissible perturbation data
\begin{align}
 (K, J)
  \in \Omega^1 \lb \Sigma, \Hba \lb \Ybar \rb \rb
   \times C^\infty \lb \Sigma, \scrJ \lb \Ybar \rb \rb
\end{align}
satisfying the following conditions:
\begin{enumerate}
 \item
$X_K$ is tangent to $\Ebar$ and $D$.
 \item
For $\mu \gg \epsilon$,
one has
$
 \lb \pibar_{\bC} \rb_*(X_K)
  = \lb \pibar_{\bC} \rb_* X_{\Hv} \otimes \gamma_{+}
$
for a subclosed one form $\gamma_+$
and a vertical Hamiltonian $\Hv$.
 \item
For points on $\Ybar \setminus \pi_{\NCx}^{-1}(U_{Z})$ with $\mu \ll \epsilon$,
one has
\begin{align} \label{eq:admcondition3}
 \lb \pibar_{\bC} \rb_* (X_K)
  = \lb \pibar_{\bC} \rb_* X_{\Hv} \otimes \gamma_{-}
\end{align}
for a subclosed one form $\gamma_-$
and a vertical Hamiltonian $\Hv$.
\end{enumerate}

\subsection{}

Assume $d \leq 3$ and
let $\cR^d(Y, \bx)$ be the moduli space of solutions
to Floer's equation for perturbation data
in Sections \ref{sc:fD1}, \ref{sc:fD2} and \ref{sc:fD4} (with Liouville admissible data replaced by the corresponding fibration-admissible data).
Similarly,
we let $\cR^2_\tau(Y, \bx)$ be the moduli space of solutions
to Floer's equation for the perturbation data $K_\tau(m_1, m_2, m_3)$
appearing in \pref{sc:fD3},
whose union is denoted by
\begin{align}
 \cR^2_\bullet(Y, \bx) \coloneqq \bigcup_{\tau \in [0,1]} \cR^2_\tau(Y, \bx).
\end{align}

Let $\Lc$ be the closure in $\Ybar$
of a base-admissible Lagrangian section $L$.
By base-admissibility,
solutions to Floer's equation are now constrained
to lie in a compact subspace in $\Ybar$
and so Gromov compactness applies as usual.
Gromov compactness is typically stated for Lagrangians without boundary,
but here it applies because we can extend $\Lc$ slightly in the negative real direction as well.

Since $\Lc$ is contractible,
the relative homotopy group $\pi_2(\Ybar, \Lc)$ is a torsor over $\pi_2(\Ybar)$, and
the possible relative homology classes of Floer curves in $\Ybar$ is a torsor
over the image of $\pi_2(\Ybar)$ in $H_2(\Ybar)$.
From the general properties of blowing up,
we have
\begin{align}
 H_2(\Ybar) \cong H_2(\NCx) \oplus [E_{\bw}],
\end{align}
where $[E_{\bw}]$ is the class
generated by any exceptional sphere
over $\bw \in Z$.
It follows that the image of $\pi_2(\Ybar)$ in $H_2(\Ybar)$ is one-dimensional
and generated by $[E_{\bw}]$.

The moduli spaces $\cR^{d}(Y, \bx)$ and $\cR^2_\tau(Y, \bx)$ embeds naturally
into the Gromov compactifications of maps into $\Ybar$
with some relative homology class $A_{\bx} \in H_2 \lb \Ybar, \Lc \rb$
which is uniquely determined by the intersection with either $E$ or $F$.
The closures of these spaces will be denoted by
$\cRbar^{d}(\Ybar,\bx)$
and $\cRbar^2_\tau(\Ybar,\bx)$ respectively.
Moreover,
if every component of $u \in \cRbar^{d}(\Ybar,\bx)$ (or $\cRbar^2_\tau(\Ybar,\bx)$) avoids $D \cap \Lc$
and is asymptotic to chords in $Y$,
then the image of $u$ is contained in $Y$.
We also set
\begin{align}
 \cRbar^2_\bullet(\Ybar,\bx)
  \coloneqq \bigcup_{\tau \in [0,1]} \cRbar^2_\tau(\Ybar,\bx).
\end{align}

\begin{definition}
Hamiltonian chords for the Lagrangian $\Lc$
which are completely contained in $D$
are called \emph{divisor} chords.
\end{definition}

\subsection{}

We assume that the Hamiltonian flow preserves $U_{Z \times 0}$,
so that all Hamiltonian chords are disjoint from $\pi_{\NCx}^{-1}(U_{Z})$.

\begin{lemma} \label{lm:Gcompactness}
Let $(K, J)$ be a fibration-admissible perturbation data, and
consider a sequence $\lb y_s \rb_{s=1}^\infty$
of maps $y_s \colon \Sigma \to Y$ in $\cR^{d}(\bx)$
converging to
$
 y_\infty \in \cRbar^{d}(\Ybar,\bx).
$
Let $\lc y_{k,\infty} \rc_{k=1}^l$ be the irreducible components
of the maps.
If a component
$y_{k,\infty} \colon \Sigma_k \to \Ybar$ intersects $D$
at a point on the boundary $\partial \Sigma_k$,
then the component $y_{k,\infty}$ lies entirely in $D$.
\end{lemma}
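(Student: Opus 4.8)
The plan is to work near $D$ using the explicit product structure there, to reduce the statement to an elementary fact about the single ``normal'' coordinate of $y_{k,\infty}$, and to rule out a genuine isolated boundary contact by comparison with the approximating curves $y_s$. Recall that in the tubular neighbourhood $U_D \cong \NCx \times \bD_\delta$ one has $D \cap U_D = \{ v_0 = 0 \}$ and, by fibration-admissibility, $J|_{U_D}$ is the product of $J_{\NCx}$ with the standard complex structure on $\bD_\delta$, while $X_K$ is tangent to $D$. Suppose $y_{k,\infty} \colon \Sigma_k \to \Ybar$ meets $D$ at a point $z_\ast \in \partial \Sigma_k$, and set $g \coloneqq \pi_{\bD_\delta} \circ y_{k,\infty}$, defined on $\Sigma'_k \coloneqq y_{k,\infty}^{-1}(U_D)$, so that $g^{-1}(0) = y_{k,\infty}^{-1}(D)$ there and $g(z_\ast) = 0$. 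Since $\pi_{\bD_\delta}$ is pseudo-holomorphic for the product structure, and since the $\bD_\delta$-component of $X_K$ vanishes along $\{ v_0 = 0 \}$ (tangency to $D$) and hence is, to first order, linear in $(v_0, \bar v_0)$, pushing Floer's equation \eqref{eq:Floer} forward along $\pi_{\bD_\delta}$ shows that $g$ satisfies a perturbed Cauchy--Riemann equation
\begin{align}
 \bar\partial g + A\, g + B\, \bar g = 0
\end{align}
with $A, B$ bounded near $z_\ast$. Finally, a base-admissible section is the positive real locus in the $u$-coordinate, and since $v_0 = u/h(\bw)$ with $h$ nowhere vanishing on $\Lbar \subset \NCx \setminus U_Z$, the image under $g$ of the free boundary of $\Sigma_k$ lies on a ray whose direction depends smoothly on $\pibar_{\NCx}(y_{k,\infty}(\cdot))$; after a bounded smooth gauge transformation of $g$ correcting for this direction near $z_\ast$ --- which only alters $A$ and $B$ --- we may assume $g$ is real-valued on the free boundary near $z_\ast$.

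Given this set-up, the argument I would run is the following. Reflecting $g$ across the diameter of a small half-disc about $z_\ast$ produces a solution of an equation of the same type on a full disc; by the Carleman--Aronszajn similarity principle it factors as $e^{\psi} G$ with $\psi$ continuous and $G$ holomorphic. Hence either $g$ vanishes on an entire neighbourhood of $z_\ast$ in $\Sigma_k$, or $z_\ast$ is an isolated zero of $g$ of some finite order $k \ge 1$. In the first case $y_{k,\infty}^{-1}(D)$ has non-empty interior; since the similarity principle also gives unique continuation, that interior is closed as well, and as $\Sigma_k$ is connected we conclude $y_{k,\infty}(\Sigma_k) \subseteq D$, which is the assertion of the lemma.

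It remains to exclude the isolated-zero alternative, and here I would use the $y_s$. The functions $g_s \coloneqq \pi_{\bD_\delta} \circ y_s$ have no zeros at all in the closed half-disc about $z_\ast$: none in the interior, because $y_s$ maps into $Y = \Ybar \setminus D$; and none on $\partial \Sigma$, because there $y_s$ lies on $L$, on which $g_s$ takes values on a ray through but distinct from the origin. On the other hand, if $z_\ast$ is not a node of $y_\infty$ then Gromov convergence is $C^\infty$ near $z_\ast$, so $g_s \to g$ locally uniformly, and the coefficients of the corresponding equations (coming from the fixed $(K,J)$) converge as well; applying Hurwitz's theorem to the holomorphic factors $G_s$ of the resulting similarity normalisations forces $g_s$ to have at least $k \ge 1$ zeros near $z_\ast$ for $s$ large --- a contradiction. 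Therefore only the first alternative survives, and the lemma follows for a free boundary point $z_\ast$.

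The step I expect to be the main obstacle is the case when $z_\ast$ is a \emph{node} of $y_\infty$: there the domain degenerates near $z_\ast$ and one cannot simply compare $g_s$ with $g$ on a fixed neighbourhood. I would deal with this by a soft-rescaling analysis of the neck: the $y_s$ still avoid $D$ along the neck, the rescaled limits describe the two components in the normal directions at the node, and the Hurwitz argument is re-run on the rescaled pieces; put differently, a neck joining a component contained in $D$ to one not contained in $D$ would force some $g_s$ to acquire a zero. A smaller, essentially routine point is to check that the Schwarz reflection is compatible with the gauge-transformed real boundary condition and with the perturbed equation, i.e.\ that the boundary form of the similarity principle applies here; this is standard in the theory of $J$-holomorphic curves with totally real boundary.
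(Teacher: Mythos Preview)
Your argument is correct for a smooth free-boundary point $z_\ast$, and it takes a genuinely different route from the paper. The paper does not use the similarity principle or the coordinate $v_0$ at all. Instead it runs a maximum-principle argument for the function $1/|u|$ \emph{on the approximating curves $y_s$ themselves}, over the fixed domain $\Sigma$. The key observation is that on $Y$ one has $\{u=0\}=E\cap Y$, so the zeros of $u\circ y_s$ are exactly the intersections of $y_s$ with the exceptional divisor $E$; the total number $y_s\cdot E$ is a topological invariant of the sequence, and these zeros (which are interior points of $\Sigma$, since $L$ is disjoint from $E$) cluster in the Gromov limit near finitely many interior points $z_{k,\infty}$, all distinct from the boundary point $z_\ell$. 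Excising small balls around those clustering points, $u\circ y_s$ is nowhere zero on the remainder, and the same computation as in \pref{lm:max1} together with the boundary condition $d^c(1/|u|)|_{\Lc}=0$ (which holds because $u\in\bR^{>0}$ on any base-admissible $L$) forbids a local maximum of $1/|u\circ y_s|$ there. But $y_{k,\infty}(z_\ell)\in D$ forces $1/|u\circ y_s|$ to be arbitrarily large near $z_\ell$ for $s\gg 0$, which is the contradiction.

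The payoff of the paper's approach is precisely at what you flag as the main obstacle: since the maximum principle is applied to $y_s$ on the undegraded domain $\Sigma$, the argument never confronts the nodal domain of $y_\infty$, and the node case requires no separate treatment. Your approach, by contrast, is purely local near $z_\ast$ and avoids tracking intersections with $E$ or invoking the Floer-type maximum principle, at the price of the extra soft-rescaling analysis at a node. One small technical point on your side: invoking Hurwitz on the holomorphic factors $G_s$ from the similarity normalisation presupposes $G_s\to G$, which does not follow just from convergence of the coefficients $A,B$ (the transformation $\psi$ in $g=e^\psi G$ depends on the solution). It is cleaner to bypass this and argue directly with the winding number of $g_s$ and $g$ on a small (reflected) circle about $z_\ast$, using only the uniform convergence $g_s\to g$.
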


\begin{proof}
Observe that there are four essentially distinct ways
that a limiting component $y_{k, \infty}$ could intersect the divisor $D$,
i.e.,
\begin{itemize}
\item
in the interior of $\Sigma$,
\item
on the boundary $\partial \Sigma$,
\item
$y_{k,\infty}$ lies completely in $D$, or
\item
$y_{k,\infty}$ limits to a divisor chord in $D$
along some strip-like end $\epsilon$.
\end{itemize}
It therefore remains to rule out intersections along a boundary,
which we claim follows from the fact that $d^c (1/|u|) |_{\Lc}=0$,
where $u$ is the base coordinate on $\bC$ discussed above.
Consider a subsequence
$y_s \colon \Sigma \to Y$
with boundary on $L$
which meets the exceptional divisor $E$ at points $z_{k,s}$.
We can define the intersection number
\begin{align}
 y_s \cdot E = \sum_{z_{k,s}} d_{k,s},
\end{align}
where $d_{k,s} \geq 0$ are the local intersection numbers
of $y_s$ with $E$ at $z_{k,s}$ on $\Sigma$.
To define this number efficiently,
observe that Gromov's trick \cite{MR809718}
(see e.g. \cite[Section 8.1]{MR2954391} for an exposition)
allows us to view a solution $y_s$ to Floer's equation
as a pseudo-holomorphic section
$
 \ytilde_s \colon \Sigma \to \Sigma \times \Ybar
$
for a specific
almost complex structure on $\Sigma \times \Ybar$.
When the perturbation data $(K, J)$ are admissible,
both $\Sigma \times E$ and $\Sigma \times D$ are
almost complex submanifolds of codimension 2.
The local intersection number is then the intersection number
of the section with $\Sigma \times E$.
This number is constant in our sequence $\lb y_s \rb$.
Assume for contradiction that
a sequence $\lb y_s \rb$ has a convergent subsequence
which limits to $u_\infty$ that has a component $y_{k,\infty}$
intersecting $D$ along some $\Lc$ at a point $z_\ell$.
Then the intersection points above
limit to intersection points $z_{k,\infty}$
which are in the interior of $y_{\infty}$.

Fix a small ball $\bD_{\epsilon_i}(z_{k,\infty})$
about these points which avoids $z_\ell$.
We choose $s$ large enough so that all of the points $z_{k,s}$ lie
in $\bD_{\epsilon_i}(z_{k,\infty})$.
Then for large $s$,
there must be a local maximum of $1/|u|$ near $z_\ell$.
This is impossible by the same calculation as in \pref{lm:max1}
if we note that $d^c(1/|u|)|_{\Lc}=0$.
\end{proof}

\subsection{}

There are two useful ways of grading Hamiltonian chords.
The first is used to grade Hamiltonian chords in $Y$.
Namely, one grades $Y$ by the holomorphic volume form
\begin{align}
 \Res \ld \frac{1}{h(\bw) - u v}
    \frac{d w_1}{w_1}
     \wedge \frac{d w_2}{w_2}
     \wedge d u \wedge d v \rd
  &= \frac{d w_1}{w_1}
     \wedge \frac{d w_2}{w_2}
     \wedge \frac{d u}{u}
\end{align}
on $Y$ and
graded Lagrangian submanifolds
with respect to this grading of $Y$.
Note that this grading of $Y$ restricts
to the standard grading on $\NCx \times \bCx$.
We denote this standard grading by $\abs{x_i}$.
If $d \ge 2$, then we have
\begin{align} \label{eq:dim_formula1}
 \vdim \cR^{d}(Y, \bx) = \abs{x_0}-\sum_{i \neq 0} \abs{x_i} + d - 2.
\end{align}
If $d=1$ and the perturbation data is invariant under the $\bR$-translation
(this is the case when we define the Floer differential),
then \pref{eq:dim_formula1} continues to hold,
after modding out by automorphisms.
If $d=1$ and the perturbation data is not translation invariant
(this is the case when we define the continuation map),
then we have
\begin{align}
 \vdim \cR^{1}(Y,\bx) = \abs{x_0}-\abs{x_1}.
\end{align}
In the case of perturbation data in \pref{sc:fD4},
we have
\begin{align}
 \dim \cR^{2}_\bullet(Y,\bx) = \abs{x_0}-\sum_{i \neq 0} \abs{x_i} + 1.
\end{align}
For generic $J \in \Jint(\Ybar)$,
all moduli spaces can be cut out transversely
so that the virtual dimension agrees with the actual dimension. 

\subsection{}

The second way of grading allows us to grade all chords,
including divisor chords.
This is by grading chords $x_i$
with respect to the standard volume form on $\NCx \times \bC$.
Equivalently, one may view this as choosing an algebraic volume form
\begin{align}
 \frac{d w_1}{w_1}
  \wedge \frac{d w_2}{w_2}
  \wedge d u
\end{align}
with a simple zero along $E$,
restricting this to $\Ybar  \setminus E$,
and then grading the Lagrangians.
It follows from the index computation in \cite[Lemma 3.22]{MR3294958}
that for $d \ge 2$, one has 
\begin{align} \label{eq:relindex}
 \dim(\cRbar^{d}(\Ybar,\bx))
  = \reldeg{x_0}-\sum_{i \neq 0} \reldeg{x_i} -2 (A_{\bx} \cdot E) + d - 2,
\end{align}
and similarly for $\cRbar^{1}(\Ybar,\bx)$ and $\cRbar^2_\bullet(\Ybar,\bx)$.

\begin{remark}
Floer theoretic operations will respect this second grading
after one inserts a formal parameter $t$ of cohomological degree $-2$
and curves $y$ are weighted  by $t^{y\cdot E}$,
where $y\cdot E$ is their intersection with the divisor $E$
(see \cite[Lemma 3.22 and Definition 5.1]{MR3294958}).
We will be primarily interested in the first grading,
but use this second grading
to rule out certain breaking configurations and
in certain arguments in \pref{sc:HW}.
\end{remark}

\subsection{}

Now we restrict our attention to the Lagrangian $L_0$,
which is the example treated in this paper.
Throughout the rest of this paper,
we will equip $L_0$ with the trivial spin structure
to view it as a Lagrangian brane.
We may assume that
all Hamiltonian chords are non-degenerate.
For example, we may take a fixed $\Hba$ and $\Hv$ of slope one
and then take $\Hbam$ and $\Hvm$
to be sufficiently small perturbations of $m\Hba$ and $m\Hv$
supported away from $Z$.

\begin{lemma} \label{lm:pi_1}
The relative homotopy group $\pi_1(Y, L_0)$
is naturally isomorphic to $N$.
\end{lemma}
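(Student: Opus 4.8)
The plan is to reduce to the absolute fundamental group $\pi_1(Y)$ and then to compute the latter from the birational description $Y = \Ybarloc \setminus D$ of \pref{sc:bu}. Since $L_0 \cong \bR^3$ is contractible, in particular simply connected, any path in $Y$ with endpoints on $L_0$ can be closed up to a loop by a path in $L_0$, which is unique up to homotopy; this produces a bijection $\pi_1(Y, L_0) \simto \pi_1(Y)$ compatible with the concatenation operations on both sides, so that the asserted isomorphism is the natural one. It therefore suffices to show that $\pi_{\NCx} \colon Y \to \NCx$ induces an isomorphism $\pi_1(Y) \simto \pi_1(\NCx) = N$.

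Here $\Ybarloc = \Bl_{Z \times 0}(\NCx \times \bC)$ and $D \cong \NCx$ is a smooth connected complex hypersurface in $\Ybarloc$. First I would observe that $\pi_1(\Ybarloc) \cong \pi_1(\NCx \times \bC) \cong \pi_1(\NCx) = N$: blowing up along a smooth centre of complex codimension two does not change $\pi_1$, by a van Kampen argument, since the exceptional divisor $E$ is a $\bP^1$-bundle and a meridian of $E$ already bounds in $\Ybarloc \setminus E \cong (\NCx \times \bC) \setminus (Z \times 0)$, whose fundamental group coincides with that of $\NCx \times \bC$ because $Z \times 0$ has real codimension four. Next, removing the smooth connected divisor $D$, the inclusion $Y \hookrightarrow \Ybarloc$ is surjective on $\pi_1$ with kernel normally generated by a single meridian loop $\mu_D$ of $D$ (the standard van Kampen description of a divisor complement, applied to a tubular neighbourhood of $D$). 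Consequently $\pi_1(Y) / \langle\langle \mu_D \rangle\rangle \cong N$, and the proof comes down to checking that $\mu_D$ is nullhomotopic in $Y$.

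This last point is the geometric heart of the argument, and the only place where the conic bundle structure is used. Working in the chart $\lc (\bw, u, [v_0 : 1]) \relmid u = h(\bw) v_0 \rc$ around a point of $D$ lying over some $\bw_0 \in \NCx \setminus Z$, one identifies $\mu_D$ with the loop $\theta \mapsto \lb \bw_0,\, \epsilon e^{\sqrt{-1}\theta},\, h(\bw_0)/(\epsilon e^{\sqrt{-1}\theta}) \rb$ inside the smooth conic fibre $\pi_{\NCx}^{-1}(\bw_0) = \lc uv = h(\bw_0) \rc \cong \bCx$, winding once around $\lc u = 0 \rc$. Dragging $\bw_0$ along a path in $\NCx$ to a point $\bw_* \in Z$, while keeping $u = \epsilon e^{\sqrt{-1}\theta}$ and taking $v = h(\bw)/(\epsilon e^{\sqrt{-1}\theta})$, gives a homotopy supported entirely in $Y$ — the curves never meet $D$, as $u \neq 0$ throughout — ending at the loop $\theta \mapsto \lb \bw_*,\, \epsilon e^{\sqrt{-1}\theta},\, 0 \rb$. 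This terminal loop lies in the branch $\lc v = 0 \rc \cap Y = Z \times \bC$ of the singular fibre $\pi_{\NCx}^{-1}(\bw_*) = \lc uv = 0 \rc$, a copy of $\bC$ in which a small circle obviously bounds. Hence $\mu_D$ is trivial in $\pi_1(Y)$, the kernel above vanishes, and $\pi_{\NCx}$ induces the isomorphism $\pi_1(Y) \simto N$; combined with the first paragraph this yields $\pi_1(Y, L_0) \cong N$.

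I expect the contraction of the meridian to be the main obstacle: the real content is that the $\pi_1(\bCx) = \bZ$ carried by a generic conic fibre dies once the fibre degenerates over $Z$, and making this precise — choosing the dragging path and verifying the homotopy stays off $D$ — is where the geometry is used, whereas the remaining reduction steps are routine applications of van Kampen and general position.
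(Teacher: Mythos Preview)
Your proof is correct and follows essentially the same approach as the paper: reduce to $\pi_1(Y)$ via contractibility of $L_0$, use that blowing up leaves $\pi_1(\Ybarloc)\cong\pi_1(\NCx)\cong N$ unchanged, identify the kernel of $\pi_1(Y)\to\pi_1(\Ybarloc)$ with the normal closure of a meridian of $D$, and kill that meridian using the degeneration of conic fibres over $Z$. The paper's version is terser---it simply asserts that the meridian loop ``$|v_0|=\epsilon$'' is contractible ``due to the singular conic bundle structure''---whereas you spell out the dragging-to-$Z$ homotopy explicitly, which is a helpful elaboration but not a different argument.
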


\begin{proof}
The relative homotopy group $\pi_1(Y, L_0)$ is isomorphic
to the fundamental group $\pi_1(Y)$ since $L_0$ is contractible.
It is well-known that the blow-up $\Ybar \to \NCx \times \bC$ induces
an isomorphism
$
 \pi_1 \lb \Ybar \rb \simto \pi_1(\NCx \times \bC) \cong \pi_1(\NCx)
$
of the fundamental group.
The kernel of the map
$
 \pi_1(Y) \to \pi_1 \lb \Ybar \rb
$
is the normal subgroup
generated by the class of a loop
of the form $|w|=\mathrm{pt}$, $|v_0|=\epsilon$
(cf.~e.g.~\cite[Lemma 2.3(a)]{9801075}).
Such loops are contractible in $Y$ due to the singular conic bundle structure.
We therefore conclude that this map is an isomorphism.
The fundamental group of $\NCx$ is naturally isomorphic to $N$,
and \pref{lm:pi_1} is proved.
\end{proof}

Recall that we have assumed that near $D$,
our Hamiltonians have the form
\begin{align}\label{eq:sumform}
H_m=\Hbam+\Hvm
\end{align}
for $\Hbam$ and $\Hvm$ sufficiently generic so that all chords are non-degenerate.
It follows that for every Hamiltonian chord $p$ in $\scrX \lb \Lbar_0,\NCx;\Hbm \rb$,
there is a divisor chord in $\scrX \lb \Lc_0,\Ybar;H_m \rb$,
which we write as $\pd$.

\begin{lemma}
One has
$
 \reldeg{\pd}=2\floor{c_{-}\lambda_{m}}+1 +\abs{p}_{\NCx}.
$
\end{lemma}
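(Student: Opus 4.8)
The plan is to exploit the fibred structure near $D$: I would split the linearised flow along $\pd$ into a ``base'' part and a ``vertical'' part, identify the base part with $p$ itself, and reduce the vertical part to a one--dimensional index computation. The first point is that $\pd$ is nothing but $p$ lifted to $D$. Near $D$ one has $H_m = \Hbam+\Hvm$ by \pref{eq:sumform}, with $\Hvm = f\circ\mu$ and $f'(x)=-c_-\lambda_m$ for $x<\mu_0$ by conditions \pref{it:avH0}--\pref{it:avH4}. Since $D=\mu^{-1}(0)$ is the fixed locus of the $\bS^1$--action \pref{eq:S^1-action}, the vector field $X_{\Hvm}=f'(\mu)\,X_\mu$ vanishes identically on $D$ (and, being $O(|u|^2)$ there, its $\bw$--linearisation also vanishes on $D$), while $X_{\Hbam}$ is tangent to $D$ with $(\pibar_{\NCx})_*X_{\Hbam}=X_{\Hbm}$. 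Hence the flow of $X_{H_m}$ preserves $D$ to first order along $D$, its restriction to $D$ is the flow of $\Hbm$ under $\pibar_{\NCx}|_D\colon D\xrightarrow{\ \sim\ }\NCx$, and $\pd$ is the trajectory through the point of $\Lc_0\cap D$ lying over $p(0)\in\Lbar_0$.

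Next I would split the grading. Along $\pd$ the linearised flow respects the splitting $T\Ybar|_{\pd}=\pibar_{\NCx}^*T\NCx|_p\oplus N_{D}|_p$ — the relevant Jacobian is block diagonal because of the vanishing noted above — and $T\Lc_0$ splits accordingly, since near $D$ the Lagrangian $\Lc_0$ is the product of $\Lbar_0$ with the real line in the normal direction (using the slight extension of $\Lc_0$ past $D$). The relative degree, being a Maslov index of a path of Lagrangian subspaces graded by the algebraic volume form with a simple zero along $E$, therefore decomposes as the sum of the contributions of the two factors. Along $\pd$ one stays away from $\pi_{\NCx}^{-1}(U_Z)$, so $h(\bw)\ne 0$ and $u$ is a local defining function for $D$ there (so that $D\cup E$ meets a neighbourhood of $\pd$ only in $D$); in this trivialisation the volume form restricts on $D$ to $\tfrac{dw_1}{w_1}\wedge\tfrac{dw_2}{w_2}$, which is exactly the form defining the grading $\abs{\,\cdot\,}_{\NCx}$ on $\NCx$. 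Hence the base factor contributes precisely $\abs{p}_{\NCx}$.

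It then remains to compute the contribution of $N_{D}|_p\cong\bC$. Here the linearised flow is the composition of the parallel transport of the horizontal part of $X_{\Hbam}$ — which, being carried along the contractible path $p$, contributes nothing to the Maslov index — with the rotation generated by $X_{\Hvm}=-c_-\lambda_m\,X_\mu$ near $D$. Since $X_\mu$ generates the $\bS^1$--action \pref{eq:S^1-action} and, with the moment--map normalisation of \pref{eq:moment_map} (which differs from the standard one by $2\pi$), its time--one flow is a full rotation of the normal line, the time--one linearised flow on $N_{D}|_p$ is a rotation by the angle $2\pi c_-\lambda_m$ (up to sign); with the real line as Lagrangian boundary condition this is a monotone path in the Lagrangian Grassmannian of $\bC$, and a direct Conley--Zehnder computation (in the spirit of \cite[Lemma~3.22]{MR3294958}) gives that its contribution to $\reldeg{\pd}$ equals $2\floor{c_-\lambda_m}+1$. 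Non--degeneracy of $\pd$, which forces $c_-\lambda_m\notin\bZ$, is guaranteed by the genericity of $\Hbam$ and $\Hvm$, and the $C^2$--small perturbation achieving it does not change $\floor{c_-\lambda_m}$. Adding the two contributions yields $\reldeg{\pd}=2\floor{c_-\lambda_m}+1+\abs{p}_{\NCx}$.

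The step I expect to be the main obstacle is the last one. One must carefully match the normalisations — of the moment map $\mu$, of the Liouville coordinate $c_-(\epsilon-\mu)$, and of the $\bS^1$--action \pref{eq:S^1-action} — to the actual rotation rate of the normal line along $\pd$, and then keep track of the half--integer endpoint corrections in the Maslov/Robbin--Salamon index together with the degree shift relating it to the Floer grading, so that the vertical contribution comes out to exactly $2\floor{c_-\lambda_m}+1$ rather than an expression involving $\floor{2c_-\lambda_m}$. The corresponding bookkeeping for the base factor, the verification that the off--diagonal block of the Jacobian genuinely vanishes along $\pd$, and the (by now standard) additivity of the relative grading over the splitting are routine by comparison.
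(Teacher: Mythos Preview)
Your proposal is correct and follows essentially the same approach as the paper: use the product splitting near $D$ (the paper phrases it as the product decomposition of $\NCx \times \bC$, you phrase it as $T\Ybar|_{\pd}\cong \pibar_{\NCx}^*T\NCx\oplus N_D$) to separate the grading into a base contribution $\abs{p}_{\NCx}$ and a fiber contribution, then compute the latter as $2\floor{c_-\lambda_m}+1$ via a one-dimensional Maslov index. The paper's proof is much terser---it simply cites the product splitting, says the short chord contributes $1$ and each rotation contributes $2\floor{c_-\lambda_m}$, and refers to \cite[Section (11e)]{MR2441780}---so your detailed verification of the block-diagonality of the Jacobian and the normalisation bookkeeping is more than the paper itself provides, but the structure of the argument is the same.
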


\begin{proof}
This is obtained by observing that
we have a product splitting for the Lagrangians,
Hamiltonian flow, and the trivializations
under the product decomposition of $\NCx \times \bC$.
So it suffices to compute the contribution from the $\bC$ factor.
The short chord contributes $1$ to the Maslov index and
each rotation around the cylindrical end contributes
$2\floor{c_{-}\lambda_{m}}$
(see \cite[Section (11e)]{MR2441780}).
\end{proof}

\subsection{}
For
\begin{align}
 \bn
  \in N
  \cong \pi_1(Y,L_0)
  \cong \pi_1(\Ybar, \Lc_0)
  \cong \pi_1(\NCx, \Lbar_0),
\end{align}
we set
\begin{align}
 \scrX(\Lbar_0;\Hbm)_\bn
  \coloneqq \lc p \in \scrX(\Lbar_0;\Hbm) \relmid [p] = \bn \rc.
\end{align}
Let
\begin{align}
 \ell^{\sharp} \colon N \times \bZ^{>0} \to \bZ
\end{align}
be a function
satisfying the following conditions
for any $\bn \in N$;
\begin{align}
 \alitem{
$
 \ell^{\sharp}(\bn, m)
  < \inf_{p \in \scrX(\Lbar_0;\Hbm)_\bn} \reldeg{\pd}-1
$
for any $m \in \bZ^{>0}$,
}\\
 \alitem{
$\ell^\sharp(\bn, m) < \ell^\sharp(\bn, m+1)$
for any $m \in \bZ^{>0}$,
}\\
 \alitem{
$
 \lim_{m \to \infty} \ell^{\sharp}(\bn,m) = \infty.
$
}
\end{align}
We set
\begin{align} \label{eq:Floercomplex}
 \CF^*(L_0;H_m)
  \coloneqq \bigoplus_{x \in \scrX(L_0;H_m)} |\frako_x|,
\end{align}
where $|\frako_x|$ is a one-dimensional $\bC$-normalized orientation space
associated to $x$.
We also set
\begin{align}
 \scrX(L_0;H_m)^{\le \ell^\sharp} \coloneqq \lc x \in \scrX(L_0;H_m) \relmid
  \reldeg{x} \le \ell^{\sharp}([x],m) \rc
\end{align}
and
\begin{align} \label{eq:adFloercomplex}
 \CF^*(L_0;H_m)^{\leq \ell^{\sharp}}
  \coloneqq \bigoplus_{x \in \scrX(L_0;H_m)^{\le \ell^\sharp}} |\frako_x|.
\end{align}
The vector space
$\CF^*(L_0;H_m)^{> \ell^{\sharp}}$
is defined similarly.

\begin{definition}

A pair
$
 \lb \lb H_m \rb_{m=1}^\infty, \ell^{\sharp} \rb
$
is said to satisfy \emph{Assumption A}
if the following conditions are satisfied:
\begin{enumerate}[1.]
 \item
For every pair
$
 \bx = (x_0, x_1)
$
of chords
$
 x_0 \in \scrX(L_0;H_{m'})
$
and
$
 x_1 \in \scrX(L_0;H_m)
$
which lie in the same relative homotopy class,
there is a topological strip
$y_\bx$ in $\Ybar$ between $x_1$ and $x_0$ bounded by $\Lc_0$ such that
$y_\bx$ has intersection number zero
with both components $E$ and $F$ of $\Ebar$.
 \item
If a pair
$
 \bx = (x_0, x_1)
$
of chords
$
 x_0 \in \scrX(\Lc_0;H_{m})^{> \ell^{\sharp}}
$
and
$
 x_1 \in \scrX(\Lc_0;H_{m})^{\leq \ell^{\sharp}}
$
satisfies
$
 \vdim \cRbar^1 \lb \Ybar, \bx \rb = 0,
$
then
$
 \cRbar^1 \lb \Ybar, \bx \rb
$
is empty for generic
$
 J \in C^\infty([0,1],\scrJ(\Ybar)).
$
\end{enumerate}
\end{definition}

The following compactness lemma allows us to give
$\CF^*(L_0;H_m)^{\leq \ell^{\sharp}}$
the structure of a complex and
to define continuation maps which preserve the relative grading.

\begin{lemma} \label{lm:rel_cpt1}
If a pair
$
 \bx = (x_0, x_1)
$
of chords
$
 x_0 \in \scrX(L_0;H_{m'})^{\leq \ell^{\sharp}}
$
and
$
 x_1 \in \scrX(L_0;H_m)^{\leq \ell^{\sharp}}
$
for
$
 m \le m'
$
satisfies
$
 \reldeg{x_0} < \ell^{\sharp}(\bn,m),
$
then for a generic almost complex structure $J \in C^\infty \lb [0,1], \scrJ(\Ybar) \rb$,
the space
$
 \cRbar^1 \lb \Ybar, \bx \rb
$
consists of curves which are disjoint from $D$ and
with no strip-breaking along chords in $\scrX(L_0)^{> \ell^{\sharp}}$.
\end{lemma}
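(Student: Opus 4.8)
The plan is to combine the compactness already in place with an index estimate governed by the auxiliary relative grading. We may assume $[x_0]=[x_1]=\bn$ in $\pi_1(\Ybar,\Lc_0)\cong N$, as otherwise $\cRbar^1(\Ybar,\bx)$ is empty. First I would appeal to \pref{lm:max1} and base-admissibility: exactly as in the discussion preceding \pref{lm:Gcompactness}, a sequence in $\cR^1(Y,\bx)$ is confined to a fixed compact subset of $\Ybar$ (with $\Lc_0$ extended slightly in the negative real direction so that Gromov compactness applies with boundary), hence has a subsequence converging to some $y_\infty\in\cRbar^1(\Ybar,\bx)$, a broken Floer strip possibly carrying sphere or disk bubbles. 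Since $\Lc_0$ is contractible, every bubble has homology class a multiple $k[E_\bw]$; for generic $J$ the classes with $k\le 0$ are excluded on index and stability grounds, whereas a bubble with $k\ge 1$ meets $D$ in $k[E_\bw]\cdot D=k>0$ points. As $E$ and $D$ are $J$-holomorphic, Gromov's trick \cite{MR809718} realizes the solutions as pseudo-holomorphic sections over $\Sigma$, so the intersection number with $D$ is conserved and equals $0$; with positivity of intersections this forces every irreducible component of $y_\infty$ to be either disjoint from $D$ or entirely contained in $D$, and excludes all bubbles unless some component lies in $D$.

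Next I would show that if $y_\infty$ meets $D$, then a divisor chord occurs among its breaking chords. By the preceding paragraph any component meeting $D$ is contained in $D$; such a component cannot be a sphere or a disk, since $D\cong\NCx$ carries no non-constant holomorphic spheres and no non-constant holomorphic disks bounding the positive real locus $\Lc_0\cap D$, so it is a strip asymptotic to chords lying in $D$, i.e., to divisor chords. The same holds for any component merely asymptotic along a strip-like end to a chord contained in $D$. Since $x_0,x_1\notin D$, such a chord can only be an internal chord of the broken configuration, i.e., a breaking chord $\pd$; it has some slope $m''$ with $m\le m''\le m'$ (the slopes decrease monotonically from the $x_0$-end to the $x_1$-end), and by the computation $\reldeg{\pd}=2\floor{c_-\lambda_{m''}}+1+\abs{p}_{\NCx}$ together with the defining inequality $\ell^{\sharp}(\bn,m'')<\reldeg{\pd}-1$ of $\ell^{\sharp}$ we get $\reldeg{\pd}>\ell^{\sharp}(\bn,m'')+1$, so in particular $\pd\in\scrX(L_0)^{>\ell^{\sharp}}$.

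The key point is then that no breaking chord of $y_\infty$ can lie in $\scrX(L_0)^{>\ell^{\sharp}}$ at all. For generic $J\in C^\infty([0,1],\scrJ(\Ybar))$, each unbroken Floer strip $v$ in the spine of $y_\infty$ lies in a transversally cut out moduli space of non-negative dimension; applying the relative-index dimension formula \pref{eq:relindex} to $v$, whose two asymptotic chords sit at its $s\to-\infty$ and $s\to+\infty$ ends, and using $A_v\cdot E\ge 0$ (positivity, $E$ being $J$-holomorphic) together with the fact that the additive constant in \pref{eq:relindex} is $\le 0$, we obtain $\reldeg{(-\infty\text{-end})}\ge\reldeg{(+\infty\text{-end})}$ for every piece. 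Hence $\reldeg{\cdot}$ is non-increasing along $y_\infty$ from the $x_0$-end to the $x_1$-end, so every breaking chord $y$ of $y_\infty$, of some slope $m_y\in[m,m']$, satisfies $\reldeg{y}\le\reldeg{x_0}<\ell^{\sharp}(\bn,m)\le\ell^{\sharp}(\bn,m_y)$, using the hypothesis $\reldeg{x_0}<\ell^{\sharp}(\bn,m)$ and monotonicity of $\ell^{\sharp}$ in its second argument; thus $y\notin\scrX(L_0)^{>\ell^{\sharp}}$ and a fortiori $y$ is not a divisor chord. Combining with the second paragraph, $y_\infty$ does not meet $D$; in particular no component lies in $D$, so by the first paragraph $y_\infty$ carries no bubbles, and $\cRbar^1(\Ybar,\bx)$ consists of broken strips all of whose pieces lie in $Y$, with no strip-breaking along chords in $\scrX(L_0)^{>\ell^{\sharp}}$.

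The step I expect to be the main obstacle is the index bookkeeping in the third paragraph: one must confirm that the dimension formula \pref{eq:relindex}, with the correct $\le 0$ additive constant, applies to each stratum of the compactification — the $\bR$-reduced strips defining the Floer differential, the non-translation-invariant strips defining continuation maps, and any bubble components — so that the monotonicity of $\reldeg{\cdot}$ along broken configurations is genuine. Granting this, together with the transversality already recorded for generic almost complex structures, the remainder of the argument is formal.
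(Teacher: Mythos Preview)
Your overall architecture is sound, and paragraph~3 is essentially the paper's argument: once you know each strip piece satisfies $A_v\cdot E\ge 0$, the relative index formula \pref{eq:relindex} gives monotonicity of $\reldeg{\cdot}$ along the broken configuration, and the hypothesis $\reldeg{x_0}<\ell^\sharp(\bn,m)$ together with monotonicity of $\ell^\sharp$ in $m$ rules out breaking at chords in $\scrX(L_0)^{>\ell^\sharp}$.

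There is, however, a genuine gap in paragraph~1. You argue via the intersection number with $D$, claiming it is ``conserved and equals~$0$''. But $\Lc_0\cap D = N_{\bR^{>0}}\times\{0\}\ne\emptyset$, so the intersection pairing $H_2(\Ybar,\Lc_0)\times [D]\to\bZ$ is not well-defined: a limit component could have boundary sliding onto $\Lc_0\cap D$, and then its ``intersection number with $D$'' loses meaning. This is precisely why the paper proves \pref{lm:Gcompactness} by a maximum-principle argument rather than by homological intersection with $D$. Your exclusion of bubbles therefore rests on an unjustified step.

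The paper's route is both shorter and avoids this entirely by working with $E$ and $F$ rather than $D$ --- and here it uses Assumption~A, which you never invoke. Since $\Lc_0$ is disjoint from both $E$ and $F$, the intersection numbers with $E$ and $F$ \emph{are} homological invariants on $H_2(\Ybar,\Lc_0)$. Assumption~A supplies a reference topological strip $y_\bx$ with $y_\bx\cdot E=y_\bx\cdot F=0$; any other representative differs by $k[E_\bw]$, and since $[E_\bw]\cdot E=-1$ and $[E_\bw]\cdot F=+1$, non-negativity of both intersections forces $k=0$. Hence \emph{every} Floer strip has intersection number zero with $E$ (and $F$), which simultaneously (i) excludes sphere bubbles (a fiber $E_\bw$ would contribute $+1$ to the total $\cdot F$, contradicting $A_\bx\cdot F=0$), and (ii) gives exact preservation of $\reldeg{\cdot}$, not just monotonicity. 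Disjointness from $D$ then follows from the sentence preceding \pref{lm:Gcompactness}: once all components avoid $D\cap\Lc_0$ and are asymptotic to chords in $Y$, the image lies in $Y$. Your proof can be repaired by replacing the $D$-intersection argument with this $E,F$-argument; the rest then goes through.
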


\begin{proof}
Since
\begin{itemize}
 \item
the intersection numbers of any Floer strip with $E$ and $F$ must be non-negative,
 \item
the relative homology class of any topological strip can be obtained
as the connected sum of $y_\bx$ and an integer multiple of $[E_\bw]$, and
 \item
the intersection numbers of $[E_\bw]$ with $E$ and $F$ are given by
\begin{align}
 [E_\bw] \cdot E = - [E_\bw] \cdot F = -1,
\end{align}
\end{itemize}
any Floer strip must have intersection number zero
(and hence, in fact, be disjoint) with $E$ (and $F$).
It follows that
any Floer strip must raise or at least preserve
the relative grading,
so that there can be no strip breaking along such chords with higher relative grading.
\end{proof}

\begin{definition}
A pair $\lb \lb H_m \rb_{m=1}^\infty, \ell^{\sharp} \rb$
is said to satisfy \emph{Assumption B}
if there exists a function
$
 s \colon N^2 \to \bZ^{\ge 0}
$
satisfying the following conditions:
\begin{enumerate}[1.]
 \item
For any $(\bn_1,\bn_2) \in N^2$,
any $(m_1, m_2) \in (\bZ^{>0})^2$
and any triple
$
 \bx = (x_0, x_1, x_2)
$
of chords
$
 x_0 \in \scrX(L_0;H_{m_1+m_2})_{\bn_1+\bn_2}^{\leq \ell^{\sharp}},
$
$
 x_1 \in \scrX(L_0;H_{m_1})_{\bn_{1}}^{\leq \ell^{\sharp}},
$
and
$
  x_2 \in \scrX(L_0;H_{m_2})_{\bn_{2}}^{\leq \ell^{\sharp}}
$
satisfying
\begin{align}
 \vdim(\cR(x_0,x_1,x_2)) \in \lc 0,1 \rc
\end{align}
and
\begin{align} \label{eq:divisorchords1}
\reldeg{x_0} \geq \reldeg{x_1} + \reldeg{x_2} + s(\bn_1,\bn_2),
\end{align}
the space $\cRbar \lb \Ybar, \bx \rb$ is empty
for a generic almost complex structures.
\item
For any chords
$
 x_1 \in \scrX(L_0;H_{m_1})_{\bn_{1}}^{\leq \ell^{\sharp}}
$
and
$
 x_2 \in \scrX(L_0;H_{m_2})_{\bn_{2}}^{\leq \ell^{\sharp}},
$
one has
\begin{align} \label{eq:triangleineq}
 \ell^{\sharp}(\bn_1+\bn_2, m_1+m_2)
  \ge \reldeg{x_1} + \reldeg{x_2} + s(\bn_1,\bn_2).
\end{align}
 \item
Let $x_1$, $x_2$ be as above and $x_0 \in \scrX(L_0;H_{m+n+r})^{\leq \ell^{\sharp}}$.
We assume that whenever
$
 \vdim(\cR^2_\bullet(x_0,x_1,x_2)) \in \lc 0,1 \rc
$
and \pref{eq:divisorchords1} holds,
the moduli spaces $\cR^2_\bullet(x_0,x_1,x_2)$ are empty
for generic complex structures.
We also require that the same holds
when the continuation maps are along the other positive strip like end. 
\end{enumerate}
\end{definition}

\begin{lemma} \label{lm:rel_cpt2}
\begin{enumerate}[1.]
 \item
If a triple of chords
$
 x_0 \in \scrX(L_0;H_{m+n})^{\leq \ell^{\sharp}},
$
$
 x_1 \in \scrX(L_0;H_{m})^{\leq \ell^{\sharp}},
$
and
$
 x_2 \in \scrX(L_0;H_n)^{\leq \ell^{\sharp}}
$
satisfies
\begin{align} 
& \reldeg{x_1} + \inf_{p \in \scrX(\Lbar_0;H_{\operatorname{ba},n})_{\bn_2}} \reldeg{\pd} > \reldeg{x_0}, \\
& \reldeg{x_2} + \inf_{p \in \scrX(\Lbar_0;\Hbam)_{\bn_1}} \reldeg{\pd} > \reldeg{x_0}, \quad \text{and} \\
& \vdim(\cR^2(x_0,x_1,x_2))=\lc 0,1 \rc,
\end{align}
then for generic surface-dependent almost complex structures,
the closure $\cRbar^2(x_0,x_1,x_2)$ consists of curves
which are disjoint from $D$ and
with no breaking along chords in $\scrX(L_0,L_0)^{> \ell^{\sharp}}$.
\item
If a triple of chords
$
 x_0 \in \scrX(L_0;H_{m_2+m_3})^{\leq \ell^{\sharp}},
$
$
 x_1 \in \scrX(L_0;H_{m_1})^{\leq \ell^{\sharp}}
$
and
$
 x_2 \in \scrX(L_0;H_{m_3})^{\leq \ell^{\sharp}}
$
for $m_1 < m_2$
satisfies
\begin{align}
& \reldeg{x_1}+ \inf_{p \in \scrX(\Lbar_0;H_{\operatorname{ba},m_2})_{\bn_2}} \reldeg{\pd} > \reldeg{x_0}, \\
& \reldeg{x_2} + \inf_{p \in \scrX(\Lbar_0;H_{\operatorname{ba},m_1})_{\bn_1}} \reldeg{\pd} > \reldeg{x_0}, \\
& \reldeg{x_0} < \ell^{\sharp}(\bn,m_1+m_3), \text{ and} \\
& \vdim(\cR^2_t(x_0,x_1,x_2)) \in \lc 0, 1 \rc,
\end{align}
then for generic surface-dependent almost complex structures,
the closure $\cRbar^{2}_\bullet(x_0,x_1,x_2)$
consists of curves
which are disjoint from $D$ and
which do not break along chords in $\scrX(L_0,L_0)^{> \ell^{\sharp}}$.
\end{enumerate}
\end{lemma}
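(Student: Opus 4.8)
The plan is to repeat the argument of \pref{lm:rel_cpt1} one puncture higher: for the triangle moduli space $\cRbar^2(\Ybar,\bx)$ in Part~1 and for the one-parameter family $\cRbar^2_\bullet(\Ybar,\bx)$ in Part~2. First I would set up Gromov compactness: by base-admissibility and the maximum principle of \pref{lm:max1}, a sequence of solutions in $\cR^2(Y,\bx)$ (resp.\ $\cR^2_\tau(Y,\bx)$) stays in a compact subset of $\Ybar$, so it has a subsequence converging to a stable broken curve $y_\infty=\bigcup_k y_{k,\infty}$ into $\Ybar$. The homological bookkeeping is then identical to that in \pref{lm:rel_cpt1}: using $H_2(\Ybar)\cong H_2(\NCx)\oplus[E_\bw]$ with $[E_\bw]\cdot E=-[E_\bw]\cdot F=-1$, the vanishing of the $E$- and $F$-intersection numbers of the total relative class (which for $d=2$ one gets from the torsor structure of relative homotopy classes over the rank-one image of $\pi_2(\Ybar)$ together with positivity of intersection against the $J$-holomorphic $E$ and $F$), and positivity of intersection force \emph{every} component $y_{k,\infty}$ to be disjoint from $E$ and $F$ and rule out sphere bubbles carrying a nonzero multiple of $[E_\bw]$. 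In particular \eqref{eq:relindex} applies with $A\cdot E=0$ on each component, so every Floer strip and every Floer triangle occurring in $y_\infty$ is monotone for the relative degree in the sense dictated by \eqref{eq:relindex}.

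Next I would enumerate the codimension-one boundary strata of the relevant $1$-dimensional (resp.\ $2$-dimensional) moduli space: interior disk or sphere bubbling, excluded by the previous paragraph together with genericity of $J\in\scrJ(\Ybar)$; and strip-breaking at one of the strip-like ends, producing an intermediate chord $z$. If a component adjacent to $z$ meets $D$ along its boundary, then by \pref{lm:Gcompactness} that component lies entirely in $D$ and $z=\pd$ is a divisor chord for a base chord $p\in\scrX(\Lbar_0;H_{\operatorname{ba},\bullet})_{\bn_i}$ whose slope and class are forced by which end broke. Via $\reldeg{\pd}=2\floor{c_-\lambda}+1+\abs{p}_{\NCx}$ one gets $\reldeg{z}\ge\inf_p\reldeg{\pd}$, and substituting this into $\vdim\ge0$ for the remaining Floer triangle (which is \eqref{eq:relindex} with $A\cdot E=0$) yields precisely $\reldeg{x_1}+\inf_p\reldeg{\pd}\le\reldeg{x_0}$ when the breaking is at the $x_2$-end, and the symmetric inequality with $x_2$ when it is at the $x_1$-end --- each contradicting one of the two displayed hypotheses of Part~1. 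The case that $z$ is not a divisor chord but still has relative degree above $\ell^{\sharp}$ (for its own class and slope) is handled as in \pref{lm:rel_cpt1}: relative-degree monotonicity propagates the grading along the broken configuration, and since $\reldeg{x_0},\reldeg{x_1},\reldeg{x_2}\le\ell^{\sharp}$, the relative degrees of all chords of $L_0$ over a fixed base chord are congruent to $\abs{p}_{\NCx}+1$ modulo $2$, and $\ell^{\sharp}$ sits by construction below $\inf_p\reldeg{\pd}-1$, any such $z$ in fact has $\reldeg{z}\ge\inf_p\reldeg{\pd}$ and is absorbed into the divisor-chord case. Finally, an interior intersection of a component with $D$ is ruled out by positivity against the $J$-holomorphic $D$ together with the fact that the original curves in $Y$ are disjoint from $D$, since no component can compensate with a negative $D$-intersection unless it lies in $D$, which is covered above.

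For Part~2 the only structural change is the extra parameter $\tau\in[0,1]$, so one studies the family $\cRbar^2_\bullet(\Ybar,\bx)$, of dimension one higher, whose boundary now also includes the $\tau=0$ and $\tau=1$ ends; by the construction of \pref{sc:fD3} these are glued configurations involving a continuation datum in the $H_{m_1}\to H_{m_2}$ direction. The additional hypothesis $\reldeg{x_0}<\ell^{\sharp}(\bn,m_1+m_3)$ plays exactly the role of $\reldeg{x_0}<\ell^{\sharp}(\bn,m)$ in \pref{lm:rel_cpt1}: a breaking of that continuation strip along a chord $z$ with $\reldeg{z}>\ell^{\sharp}$ (in particular a divisor chord) would, by the dimension count, push the relative degree of a subsequent chord above $\ell^{\sharp}(\bn,m_1+m_3)\ge\reldeg{x_0}$, which then cannot be brought back down; one reaches the same contradiction. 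Apart from this the argument is the same exhaustive check as in Part~1, with all indices shifted by one.

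I expect the main obstacle to be bookkeeping rather than anything conceptual: making the enumeration of degeneration types genuinely complete, and in each case matching the slope and relative homotopy class of the intermediate chord $z$ to the precise hypothesis --- one of the two relative-degree inequalities of Part~1, or, in Part~2, the bound $\reldeg{x_0}<\ell^{\sharp}(\bn,m_1+m_3)$ --- that closes it; the delicate part there is the interplay between the parity/gap structure of the chord spectrum and the defining properties of $\ell^{\sharp}$, which must be arranged when choosing $\ell^{\sharp}$. A secondary point requiring care is the precise $d=2$ replacement for the topological strip $y_\bx$ used in the first paragraph, but since relative homotopy classes of triangles with fixed asymptotics form a torsor over the rank-one image of $\pi_2(\Ybar)$ and a Floer triangle can exist only when its $E$- and $F$-intersections vanish, this goes through as in \pref{lm:rel_cpt1}.
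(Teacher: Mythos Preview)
Your overall approach matches the paper's: the heart of the argument is the virtual-dimension count via the relative grading \eqref{eq:relindex}, which rules out breaking along divisor chords at the input ends (the triangle component would then have negative virtual dimension, contradicting regularity for generic $J$), while breaking on the output side is handled exactly as in \pref{lm:rel_cpt1}. That is precisely what the paper does, in three sentences.

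There is, however, a genuine factual error in your setup. You assert that ``a Floer triangle can exist only when its $E$- and $F$-intersections vanish,'' and hence that every component of the limit is disjoint from $E$ and $F$. This is false. The torsor structure of relative classes over $\bZ[E_\bw]$ does hold, but there is no $d=2$ analogue of the reference class with vanishing $E$- and $F$-intersection supplied by Assumption~A; that assumption is specifically about \emph{strips}. Indeed, in \pref{sc:HW} the paper computes that a Floer triangle with inputs in classes $\bn,\bn'$ has $y\cdot\Ebar=\ell_2(\bn,\bn')$, generically nonzero, and $y\cdot E$ ranges over $0,\ldots,\ell_2(\bn,\bn')$. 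These nonzero $E$-intersections are exactly the mechanism deforming the product structure.

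Fortunately the error is harmless for the key step: in \eqref{eq:relindex} one has $A_\bx\cdot E\ge 0$ by positivity of intersection (the triangle component not lying in $E$), so the term $-2(A_\bx\cdot E)$ only makes $\vdim$ smaller. Thus when an input breaks along a divisor chord $z=\pd$ one still obtains
\[
\vdim \;\le\; \reldeg{x_0}-\reldeg{x_1}-\reldeg{z} \;<\; 0
\]
from the first hypothesis (and symmetrically with $x_2$), which is the paper's argument verbatim. What you should drop is the stronger conclusion that every component avoids $E$, and with it your method of excluding sphere bubbles via the vanishing-intersection claim; use positivity $A\cdot E\ge 0$ together with the standard genericity/index argument instead. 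Your treatment of breaking on the output side is fine as written, since those components are strips and Assumption~A genuinely applies there.
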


\begin{proof}
A broken curve in this moduli space will have one component which is a Floer triangle $\Sigma$ and possibly many components which are spheres and strips. The most difficult case to rule out is breaking along a divisor chord. The curve cannot break along a divisor chord at a position in our tree after or as outputs of the curve $\Sigma$ for the same reason as in the preceding compactness lemma.  Now we consider the case of breaking along divisor chords before or as inputs of $\Sigma$. The two bulleted conditions rule that out since the Floer triangle would necessarily have negative virtual dimension and hence not exist for generic almost complex structures. 
\end{proof}

We define the product
$
 \frakm_2
$
of two (orientation lines corresponding to) chords
$x_{1} \in \scrX(L_0;H_{m})_{\bn_{1}}^{\leq \ell^{\sharp}}$
and
$x_2 \in \scrX(L_0;H_{n})_{\bn_{2}}^{\leq \ell^{\sharp}}$ as follows.
Let $m_{1}$ and $m_{2}$ be the smallest integers such that
\begin{align}
 \ell^{\sharp}(\bn_1,m_1) > \reldeg{x_1}+ s(\bn_1,\bn_2)
\end{align}
and
\begin{align}
 \ell^{\sharp}(\bn_2,m_2) > \reldeg{x_2}+ s(\bn_1,\bn_2).
\end{align}
If $m<m_{1}$ or $n<m_{2}$, we may define the Floer product by first applying the continuation maps into
$\CF^*(L_0;H_{m_{1}})^{\leq \ell^{\sharp}}$
and
$\CF^*(L_0;H_{m_{2}})^{\leq \ell^{\sharp}}$
and then applying the usual Floer product.
This is compatible with continuation maps in both variables. 

\begin{definition}
A pair
$
 \lb \lb H_m \rb_{m=1}^\infty, \ell^{\sharp} \rb
$
is said to satisfy \emph{Assumption C}
if there is a function
\begin{align}
 s \colon \lb \pi_1(Y, L_0) \rb^3 \to \bZ^{\ge 0}
\end{align}
such that for any
$
 \lb \bn_1, \bn_2, \bn_3 \rb \in \lb \pi_1(Y, L_0) \rb^3,
$
any
$
 (m_1, m_2, m_3) \in (\bZ^{>0})^3,
$
any
$
 m_4 \in \bZ^{\ge 0},
$
any
$
 \bx = (x_0, x_1, x_2, x_3)
$
consisting of
$
 x_i \in \scrX(\Lbar_0;H_{m_{i}})_{\bn_{i}}^ {\leq \ell^{\sharp}}
$
for $i=1, 2, 3$
and
$
 x_0 \in \scrX(L_0;H_{m_1+m_2+m_3+m_4})^{\leq \ell^{\sharp}}
$
satisfying 
\begin{align}
 \vdim(\cRbar^3(\Ybar, \bx)) \in \lc 0,1 \rc 
\end{align}
and
\begin{align} \label{eq:m3condition1}
 \reldeg{x_0} \ge s(\bn_i) + \sum_{i=1}^3 \reldeg{x_i},
\end{align}
the space $\cRbar^3(\Ybar, (x_0,x_1,x_2,x_3))$ is empty.
\end{definition}

A compactness result similar to \pref{lm:rel_cpt1} and \pref{lm:rel_cpt2} shows that
under Assumption C,
the restriction of the $A_\infty$-operation $\frakm_2$
to $\CF^*(L_0;H_m)^{\le \ell^\sharp}$
gives rise to an associative product
on the adapted wrapped Floer cohomology
defined below.

\begin{definition}
A pair
$
 \lb \lb H_m \rb_{m=1}^\infty, \ell^{\sharp} \rb
$
is \emph{admissible}
if it satisfies Assumptions A, B and C. 
Given an admissible pair,
the \emph{adapted wrapped Floer cohomology}
is the colimit
\begin{align}
 \HWad(L_0) \coloneqq \varinjlim_m  \HFad(L_0;H_m)
\end{align}
of the cohomology
$\HFad(L_0;H_m)$
of the complex
\pref{eq:adFloercomplex}.
\end{definition}

\subsection{}

Let us assume that our data is chosen to satisfy \pref{eq:sumform} everywhere.
For every $p$ in $\scrX(\Lbar_0;\Hbam)$,
in addition to $\pd$,
there are chords $p_i$
for every $i \in \bZ$
satisfying $\abs{i}<c_{\pm} \lambda_m$.

\begin{lemma}
We have
$
 \reldeg{p_{i}}
  = 2 i + \abs{p}_{\NCx}
$
and
$
 \abs{p_{i}}
  = \abs{\pd}
  = \abs{p}_{\NCx}.
$
\end{lemma}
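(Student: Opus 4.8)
The plan is to reduce the computation to the conic fiber, exactly as in the computation of $\reldeg{\pd}$ above. Over the region $\Ybar \setminus \pibar_{\NCx}^{-1}(U_{Z})$, which contains $\Lc_0$ and all of the relevant Hamiltonian chords, we have the product description $\NCx \times \bC$: under it $L_0 = \Lbar_0 \times \bR^{>0}$, $H_m = \Hbam + \Hvm$ with $\Hvm = f \circ \mu$ pulled back from the $\bC$-factor, and the compatible almost complex structures split; moreover the holomorphic volume form $\frac{dw_1}{w_1} \wedge \frac{dw_2}{w_2} \wedge \frac{du}{u}$ defining the first grading and the volume form $\frac{dw_1}{w_1} \wedge \frac{dw_2}{w_2} \wedge du$ defining the relative grading split as products of $\frac{dw_1}{w_1} \wedge \frac{dw_2}{w_2}$ on $\NCx$ with $\frac{du}{u}$, respectively $du$, on $\bC$. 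Hence both gradings of any chord lying over a base chord $p$ split as the corresponding contribution of $p$ in $\NCx$ plus the contribution of its fiber component in the conic fiber $\cong \bCx$; since the base factor of the volume form is the same in the two cases, the base contribution is $\abs{p}_{\NCx}$ for both gradings, and it remains to handle the fiber contribution.

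First I would compute the fiber contribution to the \emph{first} grading. The conic fiber through a point of $L_0$, equipped with the exact structure of \pref{eq:primfiber}, is a Liouville domain modeled on $T^*S^1$, with $\bR^{>0}$ a cotangent fiber and $\Hvm$ (a small perturbation of) an admissible vertical Hamiltonian of slope $\lambda_m$. With respect to the first trivialization of the canonical bundle, which on the fiber is $\frac{du}{u}$, the cotangent fiber has constant phase, so the time-one chord $p_i$ winding $i$ times is graded like the generator of $\HW^*$ of a cotangent fiber of $S^1$ corresponding to $i \in \bZ \cong \pi_1(S^1)$, and is therefore concentrated in degree $0$ (this is the same rotation-chord index computation as in \cite[Section (11e)]{MR2441780}). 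Hence $\abs{p_i} = \abs{p}_{\NCx}$ for every admissible $i$. For the divisor chord $\pd$, note that $\mu$ vanishes identically on $D$, so $\Hvm|_D$ is constant and the $\bS^1$-action fixes $D$ pointwise; thus the $H_m$-flow restricted to $D$ is the $\Hbm$-flow on $D \cong \NCx$ and $\pd$ is the chord $p$ sitting inside $D$. Taking the first grading of such chords with respect to the residue of $\frac{dw_1}{w_1} \wedge \frac{dw_2}{w_2} \wedge \frac{du}{u}$ along $D$ — which is the standard volume form on $D \cong \NCx$ — gives $\abs{\pd} = \abs{p}_{\NCx}$. This proves the second chain of equalities.

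Next I would pass to the \emph{relative} grading of $p_i$. Since $du = u \cdot \frac{du}{u}$ and, by construction of $p_i$, the coordinate $u$ winds exactly $i$ times around the origin along $p_i$, the phase of the reference volume form used for the relative grading rotates an extra $2\pi i$ relative to the first one along the chord; equivalently, a capping of $p_i$ has intersection number $i$ with $E$. By the relation between the two gradings — the same index computation underlying \pref{eq:relindex}, which is a special case of \cite[Lemma 3.22]{MR3294958} — this shifts the degree by $2i$, so $\reldeg{p_i} = \abs{p_i} + 2i = 2i + \abs{p}_{\NCx}$, which together with the previous paragraph completes the proof.

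The step I expect to be the main obstacle is making these comparisons precise with the correct conventions: identifying the smooth conic fiber with its chosen exact structure as the standard $T^*S^1$, so that the winding-$i$ chords genuinely have vanishing first grading; fixing the orientation convention on each of the two cylindrical ends $\mu \to 0$ and $\mu \to \infty$ so that the shift comes out as $+2i$ (with $i$ itself the signed intersection number with $E$); and, for $\pd$, justifying that the residue prescription is the correct extension of the first grading to divisor chords. None of this is conceptually difficult — it parallels, and is slightly simpler than, the computation of $\reldeg{\pd}$ already carried out — but it is where the bookkeeping has to be done with care.
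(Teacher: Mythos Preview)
Your argument is correct and is exactly the approach the paper takes: the paper does not supply a proof for this lemma, but it is meant to follow by the same product-splitting computation used in the immediately preceding lemma on $\reldeg{\pd}$ (split Lagrangian, split Hamiltonian, split trivialization, reduce to the $\bC$-factor and invoke \cite[Section (11e)]{MR2441780}). Your write-up fills in precisely those details.

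Two minor remarks. First, the aside ``equivalently, a capping of $p_i$ has intersection number $i$ with $E$'' is a little loose as stated---the natural capping in $\NCx \times \bC$ meets $\{u=0\}$ with multiplicity $i$, and one then has to track this through the blow-up---but your direct phase argument via $du = u \cdot \frac{du}{u}$ is clean and does not rely on this. Second, you correctly flag that making sense of the first grading $\abs{\pd}$ for a divisor chord requires a convention; your residue prescription along $D$ is the natural one and yields the stated $\abs{\pd} = \abs{p}_{\NCx}$.
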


\begin{remark} \label{rm:one_generator}
In fact,
we can arrange that
there is at most one generator
$
 p_{\bn}^m \in \scrX(\Lbar_0, H_m)
$
of grading 0
in each homotopy class
$\bn \in \pi_1(Y,L_0)$.
See the explicit constructions of \pref{sc:HW} for further details on this point.
\end{remark}

\begin{lemma} \label{lm:existence}
There exist an admissible pair
$
 \lb \lb H_m \rb_{m=1}^\infty, \ell^{\sharp} \rb.
$
\end{lemma}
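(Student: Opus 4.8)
The plan is to produce explicit data and then reduce Assumptions~A, B and~C to a transversality statement together with a few elementary inequalities. First I would fix the base Hamiltonian $\Hb$ from the proof of \pref{pr:Hba} and an admissible vertical Hamiltonian $\Hv$ of slope~$1$, set $\lambda_m = m$, and let $\Hbam$ and $\Hvm$ be sufficiently small generic perturbations of $m\Hb$ and $m\Hv$ supported away from $\pibar_{\NCx}^{-1}(U_{Z})$, with $H_m \coloneqq \Hbam + \Hvm$. Since $\lambda_m + \lambda_{m'} = \lambda_{m+m'}$, this is a fibration-admissible sequence and a generic perturbation makes all chords non-degenerate. As in \pref{rm:one_generator} and the explicit constructions of \pref{sc:HW}, the perturbation may be chosen so that every $p \in \scrX(\Lbar_0;\Hbam)$ has $\abs{p}_{\NCx} = 0$; then every chord of $\scrX(L_0;H_m)$ has the form $p_i$ with $\reldeg{p_i} = 2i$, in particular of even relative degree, while every divisor chord has $\reldeg{\pd} = 2\floor{c_-\lambda_m} + 1$, which tends to $\infty$ with $m$ and grows linearly with slope $2 c_- > 1$. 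I would also fix generic fibration-admissible perturbation data $K(m,m+1)$, $K(m,n,m+n)$, the homotopies $K_\tau(m_1,m_2,m_3)$ and the families over $\cRbar^3$ of Sections~\pref{sc:fD1}--\pref{sc:fD4}, together with a generic $J \in \Jint(\Ybar)$.

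For $\ell^{\sharp}$ I would take $\ell^{\sharp}(\bn, m) \coloneqq m - C$, independent of~$\bn$, for a constant $C \geq 2$ chosen large enough that $m - C < \inf_p \reldeg{\pd} - 1 = 2\floor{c_-\lambda_m}$ for every~$m$ (possible since the right-hand side grows linearly with slope $2c_-> 1$). Then $\ell^{\sharp}$ is strictly increasing in~$m$, diverges, lies below $\inf_p\reldeg{\pd}-1$, and is superadditive up to~$2$ in the sense that $\ell^{\sharp}(\bn_1+\bn_2, m_1+m_2) \geq \ell^{\sharp}(\bn_1, m_1) + \ell^{\sharp}(\bn_2, m_2) + 2$, as $(m_1 - C) + (m_2 - C) + 2 \leq (m_1 + m_2) - C$ for $C \geq 2$. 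I take the functions $s$ in Assumptions~B and~C to be the constant~$2$.

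It remains to verify the three Assumptions. A(1) holds because $\Lc_0$ and every Hamiltonian chord project under $\pibar_{\NCx}$ into $N_\bR \setminus \Piloc$ (the flow preserves $U_{Z \times 0}$), so two chords in a common homotopy class can be joined by a topological strip disjoint from $\Ebar = E \cup F$. A(2) follows from the positivity argument of \pref{lm:rel_cpt1} --- the only relative class with non-negative intersection with both $E$ and $F$ is $y_\bx$, as $[E_\bw]\cdot E = -[E_\bw]\cdot F = -1$ --- so a rigid Floer strip is disjoint from $E$ and $D$, hence is asymptotic only to non-divisor chords and raises $\reldeg$ by exactly~$1$; since every non-divisor chord has even relative degree, no such strip can connect a pair $(x_0, x_1)$ with $\reldeg{x_0} > \ell^{\sharp}(\bn, m) \geq \reldeg{x_1}$. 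Conditions~B(1), B(3) and Assumption~C are automatic for dimension reasons: the relevant curve is disjoint from $E$, so its virtual dimension equals $\reldeg{x_0} - \sum_{i \neq 0}\reldeg{x_i} + d - 2 \leq 1$, incompatible with \pref{eq:divisorchords1} (resp.\ \pref{eq:m3condition1}) once $s \geq 2$. Condition~B(2) is the superadditivity noted above (using $\reldeg{x_i} \leq \ell^{\sharp}(\bn_i, m_i)$), and the hypotheses of \pref{lm:rel_cpt2} follow the same way, using that after the continuation maps built into the definition of $\frakm_2$ the inputs satisfy $\reldeg{x_i} \leq \ell^{\sharp}(\bn_i, m_i) < \inf_p\reldeg{\pd} - 1$ for the relevant base Hamiltonians. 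Passing the resulting operations on $\CF^*(L_0;H_m)^{\leq \ell^{\sharp}}$ to the colimit then yields the associative ring $\HWad(L_0)$, as required.

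I expect the combinatorics of $\ell^{\sharp}$ and~$s$ to be essentially forced once one commits to linear growth with slope below that of $\inf_p\reldeg{\pd}$; the main obstacle is instead the transversality step, namely checking that the non-standard admissibility constraints on the perturbation data --- tangency of $X_K$ to $E$, $F$ and~$D$, the product form on~$U_D$, and the $\gamma_\pm$-conditions for $\mu \ll \epsilon$ and $\mu \gg \epsilon$ --- still leave enough freedom to regularize, in one consistent choice, all of the (finitely many types of, since $d \leq 3$) moduli spaces occurring above. This is manageable because the constrained region $U_D \cup \pi_{\NCx}^{-1}(U_{Z})$ together with a neighborhood of $\Ebar$ is exactly where the Floer curves of interest do not enter, so full genericity is retained in its complement.
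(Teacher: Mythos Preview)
Your argument has a genuine gap in the verification of Assumptions~B and~C. You assert that for Floer triangles and quadrangles ``the relevant curve is disjoint from $E$'', and from this deduce that the virtual dimension equals $\reldeg{x_0} - \sum_{i\neq 0}\reldeg{x_i} + d - 2$ with no correction term. This is false. The disjointness-from-$E$ argument of \pref{lm:rel_cpt1} is special to strips: there the projected curve in $\NCx$ is a cylinder between two chords in the \emph{same} homotopy class, so it has intersection number zero with $Z$ and hence with $\Ebar$. For a triangle with inputs in classes $\bn_1,\bn_2$ and output in $\bn_1+\bn_2$, the projected triangle in $\NCx$ has intersection number $y\cdot\Ebar = \ell_2(\bn_1,\bn_2)$ with $Z$, which is in general positive and unbounded as $\bn_1,\bn_2$ vary. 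The intersection $y\cdot E$ can then be any integer in $\{0,1,\dots,\ell_2(\bn_1,\bn_2)\}$ (the remainder being absorbed by $y\cdot F\ge 0$), and by \pref{eq:relindex} the dimension-$0$ or $1$ condition forces $\reldeg{x_0}-\reldeg{x_1}-\reldeg{x_2}$ to lie in $\{2(y\cdot E),\,2(y\cdot E)+1\}$, which can be as large as $2\ell_2(\bn_1,\bn_2)+1$. (Indeed, the entire computation of \pref{sc:HW} rests on triangles with $y\cdot E = j$ for every $0\le j\le \ell_2(\bn_1,\bn_2)$.)

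Consequently your constant $s\equiv 2$ fails for B(1), B(3) and~C whenever $\ell_2(\bn_1,\bn_2)\ge 1$, and once $s$ is forced to depend on $(\bn_1,\bn_2)$ through $\ell_2$, your $\bn$-independent choice $\ell^\sharp(\bn,m)=m-C$ cannot satisfy B(2). The paper's proof addresses exactly this: it takes $s$ of order $2\,(y\cdot\Ebar)+1$ and $\ell^\sharp(\bn,m)=2m-2\ell_1(\bn)$, so that the defining identity $\ell_2(\bn_1,\bn_2)=\ell_1(\bn_1)+\ell_1(\bn_2)-\ell_1(\bn_1+\bn_2)$ makes the required superadditivity \pref{eq:triangles} hold. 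Your verification of Assumption~A is fine, and your broad outline (split Hamiltonians, degree-zero chords, transversality away from the constrained region) matches the paper; the error is purely in the combinatorics of $s$ and $\ell^\sharp$, which must track $\ell_2$ rather than be constant.
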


\begin{proof}
Let us assume that our data is chosen to satisfy \pref{eq:sumform} everywhere and
to satisfy the condition stated in \pref{rm:one_generator}.
Then,
there are no differentials since everything lies in degree zero,
so we only have to consider continuation solutions to define the wrapped Floer groups,
Floer triangles to define the multiplication,
and one-dimensional moduli spaces of Floer quadangles
to ensure that the multiplication is associative.
A continuation solution $y$ with input $p_{\bn_1,i_1}^{m_1}$ does not intersect $\Ebar$ and
in particular does not intersect $E$.
Since the output must be of the form $p_{\bn_1,i_2}^{m_2}$,
we require that
\begin{align}
 \ell^{\sharp}(\bn_1,m_1) < \ell^{\sharp}(\bn_1,m_2).
\end{align}
Given a Floer triangle $y$ with inputs
$p_{\bn_1,i_1}^{m_1}$ and
$p_{\bn_2,i_2}^{m_2}$,
the intersection number $y \cdot \Ebar$ is fixed,
which is greater than or eqaul to $y \cdot E$,
and the output is necessarily of the form
$p_{\bn_1+\bn_2,i_3}^{m_1+m_2+r}$.
In  view of \pref{eq:relindex},
the degree of 
$p_{\bn_1+\bn_2,i_3}^{m_1+m_2+r}$ is at most
\begin{align}
 \sum_{i=1}^2 \reldeg{x_i}+ 2 y \cdot \Ebar +1
\end{align}
and the condition \pref{eq:divisorchords1} is satisfied.
The condition \pref{eq:triangleineq} is also satisfied
if 
\begin{align}\label{eq:triangles}
 \ell^{\sharp}(\bn_1,m_1)+\ell^{\sharp}(\bn_2,m_2) + 2 y\cdot \Ebar +1 \leq \ell^{\sharp}(\bn_1+\bn_2,m_1+m_2)
\end{align}
for every $\bn_1$, $\bn_2$.
Conversely,
these two conditions guarantee the admissibility of $\ell^{\sharp}$.
To see this,
we consider the curves in $\cR^3$ and the equations \pref{eq:m3condition1}.
\pref{eq:relindex} shows that for every $\bn_1$, $\bn_2$, $\bn_3$,
the grading for $x_0$ is at most
\begin{align} \label{eq:R3}
\sum_{i=1}^{3} \reldeg{x_i} + 2 y\cdot \Ebar 
\end{align}
where $y \cdot \Ebar$ is again determined by $\bn_i$.
Setting $s(\bn_i)=2y \cdot \Ebar +1$ shows that \pref{eq:m3condition1} is satisfied.
\end{proof}

\begin{remark}
We will see that we can take the function
$
 \ell^\sharp(\bn) = 2m - 2 \ell_1(\bn),
$
where $\ell_1$ is the function defined in \pref{sc:HW}
provided $c_{-} \lambda_m > m$.
\end{remark}

\begin{definition}
Given two positive integers $m$ and $m'$,
two functions
$
 k^\sharp, \ell^\sharp \colon \pi_1(Y, L_0) \times \bZ^{> 0} \to \bZ
$
is said to satisfy
$
 k^\sharp(\bullet, m) < \ell^\sharp(\bullet, m')
$
if
$
 k^\sharp(\bn, m) < \ell^\sharp(\bn, m')
$
for any $\bn \in \pi_1(Y, L_0)$.
\end{definition}
The following definition will be used in the proof of \pref{thm: isofloer}.
\begin{definition}
Two admissible pairs
$
 \lb \lb G_m \rb_{m=1}^\infty, \ell_G^{\sharp} \rb
$
and
$
 \lb \lb H_m \rb_{m=1}^\infty, \ell_H^{\sharp} \rb
$
are \emph{equivalent} if
\begin{enumerate}
\item
for every $m$ there are admissible homotopies: $H_{s,m}$
between $H_m$ and $G_{m'}$ for some $m'>m$
with $\ell^{\sharp}_{m',G}>\ell^{\sharp}_{m,H}$, and
$G_{s,m}$ between $G_m$ and $H_{m''}$ for $m''>m$ with
$\ell^{\sharp}_{m'',H}>\ell^{\sharp}_{m,H}$, and
\item
for any $m$ and $m'$, and $x \in \scrX(L_0;H_m)$ and
$y \in \scrX(L_0;G_{m'})$
which lie in the same relative homotopy class,
there is a topological strip $\Sigma$ in $\Ybar$
between $x$ and $y$ satisfies $\Sigma \cdot E=0$ and $\Sigma \cdot F=0$.
\end{enumerate}
\end{definition}

\begin{lemma} \label{lm:invarianceadp}
Given an equivalence between two fibered admissible families,
the induced maps
$\HWad^*(L_0;H_m) \to \HWad^*(L_0;G_m)$
is an equivalence.
\end{lemma}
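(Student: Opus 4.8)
\textbf{Setup and overall strategy.}
The plan is to run the standard telescope/continuation-map comparison of Floer cohomologies (as in \cite{MR2602848}), but carefully restricted to the sub-telescope cut out by the relative grading condition $\reldeg{x}\le\ell^\sharp([x],m)$. I would first observe that by definition of an equivalence, for each $m$ we are given an admissible homotopy $H_{s,m}$ from $H_m$ to some $G_{m'}$ with $\ell^\sharp_{m',G}>\ell^\sharp_{m,H}$ and, conversely, $G_{s,m}$ from $G_m$ to some $H_{m''}$ with $\ell^\sharp_{m'',H}>\ell^\sharp_{m,G}$. Counting rigid solutions of the continuation equations for these homotopies produces chain maps
\[
 \CF^*(L_0;H_m)\longrightarrow\CF^*(L_0;G_{m'}),\qquad
 \CF^*(L_0;G_m)\longrightarrow\CF^*(L_0;H_{m''}).
\]
The first step of the argument is to check that these maps \emph{preserve the relative-grading filtration}, i.e.\ send $\CF^*(L_0;H_m)^{\le\ell^\sharp}$ into $\CF^*(L_0;G_{m'})^{\le\ell^\sharp}$, so that they descend to maps of the adapted complexes. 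This is exactly where condition (2) of the definition of equivalence enters: it guarantees that any topological strip between $x\in\scrX(L_0;H_m)$ and $y\in\scrX(L_0;G_{m'})$ in the same relative homotopy class has $\Sigma\cdot E=\Sigma\cdot F=0$, so — combining this with the description $H_2(\Ybar)\cong H_2(\NCx)\oplus[E_\bw]$ and $[E_\bw]\cdot E=-[E_\bw]\cdot F=-1$ used in the proof of \pref{lm:rel_cpt1} — every continuation strip is in fact disjoint from $D$ and weakly raises the relative grading. Hence it cannot map a generator of relative degree $\le\ell^\sharp_{m,H}(\bn)$ to one of relative degree $>\ell^\sharp_{m',G}(\bn)$, and the filtration is respected.

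\textbf{Key steps.}
(i) Pass to the colimits: since $\ell^\sharp_{m',G}>\ell^\sharp_{m,H}$ and $\ell^\sharp_{m'',H}>\ell^\sharp_{m,G}$, the two collections of maps are compatible with the internal continuation maps $\kappa_{m,m+1}$ (this uses Assumption A for the relevant pairs, exactly as in \pref{lm:rel_cpt1}, to know the continuation maps preserve the adapted subcomplex), so they induce
\[
 \Phi\colon\HWad^*(L_0;H_m)\longrightarrow\HWad^*(L_0;G_m),\qquad
 \Psi\colon\HWad^*(L_0;G_m)\longrightarrow\HWad^*(L_0;H_m).
\]
(ii) Show $\Psi\circ\Phi=\mathrm{id}$ and $\Phi\circ\Psi=\mathrm{id}$: the composite is, after passing far enough in the telescope, the continuation map associated to the concatenated homotopy $H_m\rightsquigarrow G_{m'}\rightsquigarrow H_{m''}$; a standard homotopy-of-homotopies (cobordism) argument in the space of admissible perturbation data identifies this with the internal continuation map $\HF^*(L_0;H_m)\to\HF^*(L_0;H_{m''})$, which becomes the identity in the colimit. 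One has to check that the two-parameter family of solutions used in this cobordism argument stays inside the adapted subcomplex — again by the same $E$/$F$-intersection positivity argument, since the relative homology class of any strip in the family is a connected sum of $y_\bx$ (which has intersection number $0$ with $E$ and $F$ by Assumption A and condition (2)) with a multiple of $[E_\bw]$. (iii) Finally, compatibility with the product: since the Floer products on both $\HWad^*(L_0;H_\bullet)$ and $\HWad^*(L_0;G_\bullet)$ are defined by the $\frakm_2$ operations restricted to the adapted subcomplexes (using Assumptions B and C), and the continuation maps intertwine these products up to the usual chain homotopies — whose defining moduli spaces again have all components disjoint from $D$ by the intersection-number argument — the maps $\Phi,\Psi$ are ring homomorphisms. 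Hence $\Phi$ is an isomorphism of $\bC$-algebras, which is the assertion.

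\textbf{Main obstacle.}
The genuinely delicate point is not the formal colimit bookkeeping but verifying that \emph{all} the auxiliary moduli spaces entering the standard proof — the continuation strips, the homotopy-of-homotopies parametrized families, and the associativity/product-compatibility polygons — remain confined to the adapted part of the complex, i.e.\ that no breaking off of a divisor chord in $\scrX(L_0)^{>\ell^\sharp}$ can occur. This is the adapted analogue of \pref{lm:Gcompactness}, \pref{lm:rel_cpt1} and \pref{lm:rel_cpt2}, and it rests on two inputs supplied by the notion of equivalence: condition (2) (existence of a topological strip $\Sigma$ with $\Sigma\cdot E=\Sigma\cdot F=0$ between corresponding generators for $H_m$ and $G_{m'}$) together with Assumption A for the families themselves. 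I would phrase the argument once, in the form: the relative homology class of any curve in any of these moduli spaces differs from a fixed reference class of intersection number $0$ with $E$ and with $F$ by a non-negative multiple of $[E_\bw]$; since $[E_\bw]\cdot E<0$, the intersection number with $E$ is forced to be $0$, the curve is disjoint from $D$, and every irreducible piece weakly raises the relative grading, ruling out the bad breaking. Granting this, the remainder is the routine continuation-map formalism and I would not spell it out in detail.
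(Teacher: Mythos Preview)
Your proposal is correct and follows essentially the same approach as the paper: the key point is that the continuation maps induced by the homotopies $H_{s,m}$ and $G_{s,m}$ preserve the relative grading (the paper cites \eqref{eq:relindex} directly, you unpack the underlying $E$/$F$-intersection argument as in \pref{lm:rel_cpt1}), after which the result follows from the standard continuation/cobordism formalism. Your write-up is considerably more detailed than the paper's one-sentence proof, and your step (iii) on ring-structure compatibility, while useful downstream, goes beyond what the lemma as stated asserts.
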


\begin{proof}
The continuation maps induced by $G_{s,m}$ and $H_{s,m}$ preserve
the relative gradings by \pref{eq:relindex} and
hence the equivalence follows by a standard Floer theoretic argument.
\end{proof}

\begin{remark}
Fix finitely many base-admissible Lagrangian sections $L_i$.
By restricting our attention to suitable Hamiltonians
of the form
$
 H=\Hba+\Hv,
$
it is possible to formulate a generalization of the functions $\ell^{\sharp}$
for every pair $L_i$ and $L_j$,
to construct an $A_{\infty}$ category with objects $L_i$.
As we have emphasized,
we will discuss only the Lagrangian $L_0$ in this paper,
and hence do not pursue this generalization here.
\end{remark}

\section{Wrapped Floer cohomology ring of $L_0$}
 \label{sc:HW}

\subsection{Approximations of a quadratic Hamiltonian}

In this section,
we compute the adapted wrapped Floer cohomology ring
of the admissible Lagrangian submanifold $L_0$.
The first step in our computation is
to construct a particular sequence
$\lb H_m \rb_{m=1}^\infty$
of fibration-admissible Hamiltonians.
Our calculation will be modeled
on the following simplified situation:
The positive real part
$
 N_{\bR^{>0}} \times \bR^{>0}
$
is an open Lagrangian submanifold of $\NCx \times \bC$,
which can be separated from the center $Z \times 0$
of the blow-up by an open set.
By choosing a function supported in this open set
as the cut-off function $\chi$
appearing in the symplectic form \eqref{eq:symp_form}
on the blow-up,
this Lagrangian submanifold naturally lifts diffeomorphically
to the Lagrangian submanifold $L_0$ of $Y$.
Note that $\NCx \times \bCx$
has a natural quadratic Hamiltonian
\begin{align} \label{eq:Hquad}
 \Hquad(\bw, u) \coloneqq \frac{1}{2} |\br|^2 + \frac{1}{2} (\log |u|)^2.
\end{align}
The image of $ N_{\bR^{>0}} \times \bR^{>0}$
under the time 1 Hamiltonian flow
with respect to this quadratic Hamiltonian
is given by $\theta_1 = r_1$,
$\theta_2 = r_2$, and $\arg u = \log |u|$,
so that the set of Hamiltonian chords
is naturally in bijection with $N \times \bZ$.
This bijection sends a chord
to its class in the relative homotopy group
$\pi_1(\NCx \times \bCx,  N_{\bR^{>0}} \times \bR^{>0})$,
which can naturally be identified with $N \times \bZ$.
It is well known in this case that one can identify
the wrapped Floer cohomology ring of $L_0$ in $\NCx \times \bCx$
with the group ring $\bC[N \times \bZ]$.

We now regard $L_0$ as a Lagrangian submanifold of $Y$ and
construct a sequence
$\lb H_m \rb_{m=1}^\infty$
of Hamiltonians,
such that $H_m$ behaves like $\Hquad$
in a compact set and
has a slope $\lambda_m$ elsewhere.
To be more precise,
for each $m \in \bZ^{>0}$,
we consider a function of the form
$
 H_m = \Hbam + \Hvm.
$
We require that $\Hvm=f_m \circ \mu$ and that there exist sequences
$\{ b_m \}_{m=1}^\infty$, $\{ c_m \}_{m=1}^\infty$ of positive real numbers
satisfying the following conditions:
\begin{itemize}
 \item
$ \lim_{m \to \infty} b_m = \infty$
and $\lim_{m \to \infty} c_m = 0$.
\item
$f_m \colon \bR^{\ge 0} \to \bR^{> 0}$ is a convex function.
 \item
$f_m(x) = \lambda_m x$ for $x > b_m$.
 \item
$f_m(x) = \lambda_m(\epsilon-x)$ for $x < c_m$.
 \item
If $m < m'$,
then any Hamiltonian chord of $H_{\mathrm{v},m}$
is also a Hamiltonian chord of $H_{\mathrm{v},m'}$.
\end{itemize}
To construct the function $\Hbam$, we may embed the neighborhoods $U_{\Pi_c}$ and $U_{\Pi_i}$ in neighborhoods $W_{\Pi_i}$ and $W_{\Pi_c}$, where
\begin{align}
 W_{\Pi_c} \coloneqq \lc \br \in N_\bR \relmid
  \abs{\br} < R_1 \rc
\end{align}
for $R_0 < R_1 \ll R$ and
\begin{align}
 W_{\Pi_i}
  &= \lc \br \in N_\bR \relmid
   \abs{\br_{\bsalpha_i-\bsbeta_i}}< 2\varepsilon_{\mathrm{n}}
    \text{ and }
   \br_{(\bsalpha_i-\bsbeta_i)^\bot} \ge a_i + \varepsilon_{\mathrm{n}}
   \rc
\end{align}
Let
$
 \gdeg \colon Y \to \bR
$
be a smooth convex function satisfying
\begin{align} \label{eq:gdeg}
 \gdeg =
\begin{cases}
 0 & \br \in U_c, \\[2mm]
\dfrac{c_i^2}{2 R^2} \rho^2 + \psi_i(r_{\alpha_i-\beta_i})
  & \br \in W_i \setminus (W_i \cap W_c), \ i=1, \ldots, \ell, \\[4mm]
 \dfrac{S(\br)}{R^2} & \br \in N_\bR \setminus \lb W_c \cup \bigcup_{i=1}^\ell W_i \rb
\end{cases}
\end{align}
where
$
 \psi_i \colon \bR^{\ge 0} \to \bR^{\ge 0}
$
is a smooth convex function satisfying
\begin{itemize}
 \item
$\psi_i^{-1}(0) = [0, \varepsilon_{\mathrm{n}}]$ and
 \item
$\psi_i(r) = \frac{c_i r^2}{2 R^2}$ if $r \ge 2 \varepsilon_{\mathrm{n}}$
\end{itemize}
We let $\Hbar_{\operatorname{ba},m}$ be a function satisfying
\begin{align}
 \Hbar_{\operatorname{ba},m} =
\begin{cases}
 \gdeg & \rho < \lambda_m - \varepsilon_m,\\
 \lambda_m \rho + \text{constant} & \rho > \lambda_m + \varepsilon_m
\end{cases}
\end{align}
for some $0 <\varepsilon_m \ll \lambda_m$ and
$\Hbar_{\operatorname{ba},m}=h_i(\rho)$
in each tubular neighborhood $U_i$
for some function $h_i \colon \bR \to \bR$.
Finally, we let $\Hbam$ be a small perturbation of this function, where the perturbation is supported outside of $\pibar^{-1}(U_Z)$
(in $W_i$, it suffices to take a small perturbation of the function $\psi$).
We assume that
the Hamiltonian flow preserves $U_{Z \times 0}$,
so that the Hamiltonian chords for $L_0$ live
entirely in the region where the fibration
$\pibar_{\NCx} \colon \Ybar \to \NCx$ is trivial.
By choosing suitable slopes $\lambda_m$ and
perturbations in  the definition of  $\Hbam$,
we can assume
\begin{align}
 \alitem{
all Hamiltonian chords
are in the region where $\Hbam$ is quadratic,
}\\
 \alitem{
for any $m \in \bZ^{>0}$ and any $\bn \in N$ satisfying $|\bn| \le \lambda_m$,
there are exactly $2 \lambda_m+1$
Hamiltonian chords $\lc p^m_{\bn,i} \rc_{i=-m}^m$of $H_m$
in the homotopy class
$
\bn
$, and}\\
 \alitem{
all $p^m_{\bn,i}$ are non-degenerate.
}
\end{align}

We assume $\lambda_m >m$ and define $\ell^{\sharp}(\bn)=2k-2\ell_1(\bn)$. These functions are admissible and we may consider the adapted Floer groups $\HFad(L_0; H_m)$. For each Hamiltonian chord $p_{\bn,i}$,
there is an associated orientation space $\frako_{p_{\bn,i}}$.
However, as noted by Pascaleff \cite[Section 4.6]{MR3265556},
in our situation,
the kernel and cokernel of the local orientation operator $D_{p_{\bn,i}}$ are trivial.
It follows that $|\frako_{p_{\bn,i}}|$ is canonically trivialized and
has a canonical generator.
We denote this generator by $p_{\bn,i}$. In constructing homotopies between $H_m$ and $H_{m'}$, we can and will assume that our family of Hamiltonians satisfies $\partial_sH_{s,t} \leq 0$ everywhere.

\begin{proposition} \label{pr:cont}
If $m < m'$,
then the continuation map
\begin{align}
 \HFad(L_0; H_m) \to \HFad(L_0; H_{m'})
\end{align}
sends $p_{\bn,i}^m$ to $p_{\bn,i}^{m'}$.
\end{proposition}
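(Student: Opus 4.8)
The plan is to evaluate the continuation map directly on the canonical basis, reducing to two elementary model computations by means of the product structure of the data.

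Since every generator of the adapted Floer complex sits in degree $0$, the Floer differential vanishes, so $\HFad(L_0;H_m)$ coincides with the complex \pref{eq:adFloercomplex} itself and the continuation map is simply a linear map, which it suffices to evaluate on generators. It respects the splitting of the complex into relative homotopy classes $\pi_1(Y,L_0)\cong N$, and by the dimension formula \pref{eq:relindex} a rigid continuation strip disjoint from $E$ preserves the relative grading $\reldeg{\cdot}$. By \pref{rm:one_generator} and the construction of the chords in \pref{sc:HW}, for each relative homotopy class $\bn$ and each $i$ in the relevant range both $\HFad(L_0;H_m)$ and $\HFad(L_0;H_{m'})$ contain exactly one generator of class $\bn$ and relative degree $2i$, namely $p^m_{\bn,i}$ and $p^{m'}_{\bn,i}$. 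Hence the continuation map sends $p^m_{\bn,i}$ to $c_{\bn,i}\,p^{m'}_{\bn,i}$ for a scalar $c_{\bn,i}\in\bC$, and everything reduces to showing $c_{\bn,i}=1$.

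To compute $c_{\bn,i}$ I would choose the continuation data conveniently. All chords lie in the region where $\pibar_{\NCx}\colon\Ybar\to\NCx$ is trivial, where $Y$ is identified with $\NCx\times\bCx$, the Lagrangian $L_0$ with $\Lbar_0\times\bR^{>0}$, and $H_m$ with $\Hbam+\Hvm$; moreover, using the defining properties of the sequences $(\Hbam)$ and $(\Hvm)$ one may assume $H_m$ and $H_{m'}$ agree near every chord of $H_m$, so that $p^m_{\bn,i}$ and $p^{m'}_{\bn,i}$ coincide as paths. Taking the monotone homotopy $(H_{s,m})_{s\in[0,1]}$ and the accompanying almost complex structures to respect this product splitting and to be $s$-independent near the chords, every continuation solution decomposes as $y=(y_{\mathrm{ba}},y_{\mathrm{v}})$, with $y_{\mathrm{ba}}$ a continuation solution in $\NCx$ bounded by $\Lbar_0$ and $y_{\mathrm{v}}$ one in $\bCx$ bounded by $\bR^{>0}$, and the constant map $y(s,t)=p^m_{\bn,i}(t)$ is a distinguished regular rigid solution. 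Its orientation contribution carries no sign ambiguity because, as observed following Pascaleff \cite[Section 4.6]{MR3265556}, the kernels and cokernels of the orientation operators at $p^m_{\bn,i}$ and $p^{m'}_{\bn,i}$ vanish, canonically trivializing $|\frako_{p^m_{\bn,i}}|$ and $|\frako_{p^{m'}_{\bn,i}}|$.

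It then remains to rule out further contributions. Compactness holds as in \pref{lm:rel_cpt1}: the maximum principle of \pref{lm:max1} confines $y$ to the product region in the base direction, the relative grading forces every continuation strip to have intersection number zero with both $E$ and $F$, and there is no disk or sphere bubbling since the image of $\pi_2(\Ybar)$ in $H_2(\Ybar)$ is generated by $[E_\bw]$ and hence acts trivially on strips disjoint from $E$. Under the product splitting the count therefore reduces to the continuation map for a cotangent fibre in $\NCx\cong(\bCx)^2\cong T^{*}T^{2}$, which is the identity on the generators $p_{\bn}$ of $\HW^{*}\cong\bC[N]$, together with the analogous elementary computation for $\bR^{>0}\subset\bCx$ with a convex Hamiltonian in the moment-map coordinate (which plays the role of the radial coordinate), which is the identity on the generators $p_i$; multiplying gives $c_{\bn,i}=1$.

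The main obstacle is the compatibility of the product decomposition with the admissibility conditions: one must verify that the $s$-dependent data can be chosen simultaneously to respect the product structure near the chords, preserve $U_{Z\times 0}$, stay monotone, and remain fibration-admissible, so that the reduction to the two model computations is legitimate. This is precisely the point at which the flexibility built into the definitions of \pref{sc:avH}--\pref{sc:faH} is needed.
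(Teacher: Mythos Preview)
Your overall strategy—pin down the unique possible target $p^{m'}_{\bn,i}$ via homotopy class and relative grading, exhibit the constant strip as a regular rigid solution, then argue there are no others—is sound, and matches the paper up to the last step. The divergence is in how you rule out nonconstant solutions. You attempt a product decomposition $y=(y_{\mathrm{ba}},y_{\mathrm{v}})$ and a reduction to the model continuation maps in $\NCx$ and $\bCx$. The paper instead observes that, once the solution is known to avoid $\Ebar$ and hence to live in $Y\setminus\Ebar\cong(\NCx\setminus Z)\times\bCx$, the asymptotes $p^m_{\bn,i}$ and $p^{m'}_{\bn,i}$ are literally the same path and $H_m$, $H_{m'}$ agree on it (both equal the quadratic there), so $\Etop(y)=0$; since the homotopy is monotone ($\partial_s H_{s,t}\le 0$), one gets $\Egeom(y)\le\Etop(y)=0$ and $y$ is constant.

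The gap in your route is the product decomposition itself. Disjointness from $E$ and $F$ (which you correctly extract from \pref{lm:rel_cpt1}) places $y$ in $Y\setminus\Ebar$, but it does \emph{not} confine $y$ to the trivialized region $Y\setminus\pi_{\NCx}^{-1}(U_Z)$. A continuation strip may well pass through $\pi_{\NCx}^{-1}(U_Z\setminus Z)$, where the correction term in $\omega_\epsilon$ is nonzero and no fibration-admissible $J$ is required (or can be chosen, compatibly with $\omega_\epsilon$) to be of product type; on that region a solution has no reason to split as a pair. \pref{lm:max1} only bounds $\abs{\br}$ and says nothing about this. Your final paragraph flags a concern but phrases it as a matter of choosing data carefully; the obstruction is more basic, since the symplectic form itself is not split near $\Ebar$, so no choice of Hamiltonian or $J$ will make the splitting of solutions automatic. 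The paper's energy argument sidesteps the issue entirely because $\Etop$ depends only on the asymptotes and the primitive, not on which region the curve visits.
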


\begin{proof}
Note that the continuation map preserves homotopy classes.
Since there is a unique Hamiltonian chord on $\NCx$
in each homotopy class,
the limits of the curve are the same
once projected to $\NCx$.
The projection of such a solution
$\pi_{\NCx}(y)$
is therefore a topological cylinder such that
$\pi_{\NCx}(y) \cdot Z=0$.
It follows that such solutions $y$ have intersection number zero
with $\Ebar$ and in particular $E$.
This means that these solutions lie in
$
 (\NCx \setminus Z) \times \bCx.
$

However, in this space, the orbits of our Hamiltonian all lie
in different homotopy classes
(or alternatively we can reach the same conclusion
by comparison of $\reldeg{-}$).
Our family of Hamiltonians satisfies $\partial_sH_{s,t} \leq 0$ everywhere.
Hnce we have $\Egeom(y) \le \Etop(y)$.
The quantity on the right-hand side is zero
since the solution has the same asymptotes at the positive and negative ends.
We therefore conclude that the solution is constant.
\end{proof}

\begin{corollary}
One has
\begin{align}
 \HWad(L_0)
  \coloneqq  \varinjlim_{m} \HFad(L_0; H_m)
  = \vspan \{ p_{\bn,i} \}_{(\bn,i) \in N \times \bZ},
\end{align}
where $p_{\bn,i} = \lim_{m \to \infty} p^m_{\bn,i}$.
\end{corollary}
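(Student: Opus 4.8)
The plan is to deduce the statement formally, since by this point all the substantive work has been done: the only ingredients needed are \pref{pr:cont} and the grading bookkeeping established earlier in this section. First I would record that, for the data fixed above (chosen to satisfy \pref{eq:sumform} everywhere and the condition of \pref{rm:one_generator}), every Hamiltonian chord $p^m_{\bn,i}$ of $H_m$ lies in the region where $\Hbam$ is quadratic, and there its first grading is $\abs{p^m_{\bn,i}} = \abs{p}_{\NCx} = 0$. Hence $\CF^*(L_0;H_m)^{\le \ell^\sharp}$ is concentrated in degree $0$, the Floer differential $\frakm_1$ vanishes identically (as already observed in the proof of \pref{lm:existence}), and consequently
\begin{align}
 \HFad(L_0;H_m)
  = \CF^0(L_0;H_m)^{\le \ell^\sharp}
  = \vspan \lc p^m_{\bn,i} \relmid \reldeg{p^m_{\bn,i}} \le \ell^\sharp(\bn,m) \rc.
\end{align}
Since $\reldeg{p^m_{\bn,i}} = 2i$ (using $\abs{p}_{\NCx} = 0$) and $\ell^\sharp(\bn,m) = 2m - 2\ell_1(\bn)$, a chord $p^m_{\bn,i}$ contributes a basis vector to $\HFad(L_0;H_m)$ exactly when it exists (i.e. when $\abs{\bn} \le \lambda_m$) and $i \le m - \ell_1(\bn)$.

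Next I would invoke \pref{pr:cont}: for $m < m'$ the continuation map $\HFad(L_0;H_m) \to \HFad(L_0;H_{m'})$ sends the basis vector $p^m_{\bn,i}$ to the basis vector $p^{m'}_{\bn,i}$; note that when $p^m_{\bn,i}$ lies in the adapted complex so does $p^{m'}_{\bn,i}$, as $\ell^\sharp(\bn,\cdot)$ is increasing. In particular every map in the directed system is injective and identifies the basis of its source with a subset of the basis of its target. Since $\ell^\sharp(\bn,\cdot)$ is strictly increasing with $\lim_{m\to\infty}\ell^\sharp(\bn,m) = \infty$, and $\lim_{m\to\infty}\lambda_m = \infty$, each fixed pair $(\bn,i) \in N \times \bZ$ enters the system for all sufficiently large $m$ and remains a basis vector thereafter. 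Passing to the colimit of vector spaces — which along injective maps is just the union of the images — then gives that $\HWad(L_0) = \varinjlim_m \HFad(L_0;H_m)$ is the $\bC$-vector space with basis $\lc p_{\bn,i} \rc_{(\bn,i)\in N\times\bZ}$, where $p_{\bn,i}$ denotes the common image of the $p^m_{\bn,i}$, $m \gg 0$, in the colimit.

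There is no genuine obstacle in this argument; it is entirely formal once \pref{pr:cont} is in hand. The only point that warrants a sentence of care is the interchange of the direct limit with the choice of basis, which is legitimate precisely because all connecting maps are injective and carry basis elements to basis elements, so that no cancellation or collapsing can take place in the colimit.
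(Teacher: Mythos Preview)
Your argument is correct and is exactly the formal deduction the paper has in mind; the paper states this corollary without proof, treating it as an immediate consequence of \pref{pr:cont} together with the degree-zero concentration observed earlier. You have simply written out those implicit steps carefully, and nothing more is needed.
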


\subsection{}

We may also consider the family
$
 \lb 2 H_m \rb_{m=1}^\infty
$
of Hamiltonians.
We equip these with the function
$
 \ell^{\sharp}(\bn,m)=4m-2\ell_1(\bn).
$
For any positive integer $m$ and any $\bn \in N$ satisfying $|\bn| \le 2 \lambda_m$,
there are exactly $4 \lambda_m+1$
Hamiltonian chords of $2 H_m$
in the homotopy class
$
 \bn
  .
$
All of them are non-degenerate,
and will be denoted as $q^m_{\bn, i}$
for $-2\lambda_m \le i \le 2\lambda_m$.

\subsection{Calculation of the product}
Let $\phi_{H_m}$ denote the time-one flow of $X_{H_m}$.
For this section,
we adopt the notation that
$L_1=\phi_{H_m}(L_0)$ and
$L_2=\phi_{2H_m}(L_0)$. Time-one chords in $\scrX_{H_m}(L_0)$  (respectively $2H_m$)  correspond to intersection points of  $\phi_{H_m}(L_0)$ and $\phi_{2H_m}(L_0)$. The first step in our proof is to relate our problem of calculating the product in wrapped Floer cohomology to the ordinary product in Lagrangian Floer cohomology for these Lagrangians.

For the purposes of our calculation,
we will define the Floer triangle product with respect to perturbation data of the form
\begin{align}
 X_K=X_{H_{m}} \otimes \gamma,
\end{align}
where $\gamma$ is a \emph{closed} form on the pair of pants such that $\epsilon_0^*\gamma=2dt$,
$\epsilon_1^*\gamma=dt$ and $\epsilon_2^*\gamma=dt$.
This implies that a Floer triangle can then be recast as a pseudo-holomorphic curve
for a \emph{domain-dependent} almost complex structure on $Y$.
Given
$
 y \colon \Sigma \to (Y, J,H),
$
we consider
$
 \ytilde \coloneqq \phi^{\tau}_{H_m} \circ y,
$
where
$
 \tau \colon \Sigma \to [0,2]
$
is the primitive of the closed one form $\gamma$
used to define the pair-of-pants product.
In particular, the product structure then agrees with the usual product operation
\begin{align}
 \Hom(L_1,L_2)\otimes \Hom(L_0,L_1) \to \Hom(L_0,L_2)
\end{align}
in Lagrangian Floer theory
for the domain dependent almost complex structure
$J_{\tau}=(\phi^{\tau})^{ -1}_*(J)$.
We remark that, after perhaps modifying our original $J$ at infinity,
we may assume that $J_\tau$ is fibration-admissible.
As a consequence of this,
for this section
we will consider only pseudoholomorphic maps
and all perturbation data $X_K$ will be taken to be trivial.

\begin{proposition}\label{pr:contII}
Let $J'$ be another fibration-admissible almost complex structure.
Then the continuation maps
\begin{align}
 \HFad(L_{i},L_0;J') \to \HFad(L_{i},L_0;J_\tau)
\end{align}
are the identity.
\end{proposition}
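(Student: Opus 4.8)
The plan is to prove the stronger statement that, for a generic path $(J_s)_{s \in \bR}$ of fibration-admissible almost complex structures joining $J'$ to $J_\tau$, every rigid solution of the associated continuation equation is constant, so that the continuation map is literally the identity on the Floer complex. This is the exact analogue of \pref{pr:cont}, with one simplification: here the Hamiltonian (equivalently, after the reformulation of this section, the pair of Lagrangians $L_i$ and $L_0$) is held fixed and only the almost complex structure is deformed, so there is no curvature term and one has $\Egeom(y)=\Etop(y)$ for every continuation strip $y$, the right-hand side being the difference of the actions of its two asymptotic intersection points.

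First I would pin down a continuation strip $y$ homologically. It preserves the relative homotopy class, which projects under $\pibar_{\NCx}$ to a class in $\pi_1(\NCx,\Lbar_0)\cong N$ by \pref{lm:pi_1}; call it $\bn$. Since each $J_s$ is fibration-admissible, $E$ and $F$ are $J_s$-holomorphic, so the local intersection numbers $y\cdot E$ and $y\cdot F$ are separately non-negative. Projecting by $\pibar_{\NCx}$ --- which is $(J_s,J_{\NCx})$-holomorphic throughout the trivial region, in which all Hamiltonian chords and, by the relative-grading bound of the adapted theory, all relevant Floer strips are confined --- yields a $J_{\NCx}$-holomorphic strip $w=\pibar_{\NCx}\circ y$ with boundary on $\Lbar_0$ and on the tropical section $\phi_{i\Hbam}(\Lbar_0)$ (using $(\pi_{\NCx})_*X_{\Hvm}=0$), asymptotic at both punctures to the \emph{unique} Hamiltonian chord in the class $\bn$. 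By the homological argument of \pref{pr:cont}, such a $w$ is a topological cylinder with $w\cdot Z=0$, whence $y\cdot\Ebar=0$ and therefore $y\cdot E=0$.

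Next I would observe that consequently $y$ must be contained in the product $(\NCx\setminus Z)\times\bCx$, inside which relative homotopy classes refine to $N\times\bZ$ and the generators $p^m_{\bn,i}$ (respectively $q^m_{\bn,i}$) sit in pairwise distinct classes. Since $y$ is connected and preserves this refined class, its two asymptotes must agree, so $\Etop(y)=0$; with $\Egeom(y)=\Etop(y)$ this forces $y$ to be constant. Hence the continuation map fixes every generator, which is the assertion. The same reasoning applies verbatim when source and target are interchanged, or when the continuation is inserted along the other positive strip-like end in the deformation data of \pref{sc:fD3} and \pref{sc:fD4}.

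The main obstacle is the intersection-theoretic heart of the argument: showing that the projected cylinder $w$ really has zero intersection with $Z$. This rests on the $H_2(\NCx,\Lbar_0)$-computation already used in \pref{pr:cont}, combined with the positivity of $y\cdot E$ and $y\cdot F$, which in turn requires $E$ and $F$ to be $J_s$-holomorphic for the whole interpolating family. One must also ensure that Gromov compactness applies with no sphere bubbling and no strip-breaking along divisor chords; this follows as in \pref{lm:Gcompactness}, \pref{lm:rel_cpt1} and \pref{lm:rel_cpt2}, since the moduli spaces in question are rigid, carry the bound $\reldeg{\,\cdot\,}\le\ell^\sharp$, and consist of curves disjoint from $E$ --- and that a generic path $(J_s)$ in $\scrJ(\Ybar)$ achieving transversality exists, which is routine.
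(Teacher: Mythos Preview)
Your argument is correct and follows the same route as the paper, which simply refers back to the proof of \pref{pr:cont} together with the relative-index comparison to conclude that continuation solutions are constant. Your added observation that $\Egeom(y)=\Etop(y)$ holds exactly here (since only $J$ varies and $K=0$) is a clean replacement for the monotonicity hypothesis $\partial_s H_{s,t}\le 0$ used in \pref{pr:cont}; one small remark is that you do not need $w=\pibar_{\NCx}\circ y$ to be $J_{\NCx}$-holomorphic---the intersection argument is purely topological, as in \pref{pr:cont}.
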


\begin{proof}
The proof follows that of \pref{pr:cont} quite closely.
Namely, observe that such continuation solutions
have intersection number zero with $E$.
By comparing the relative indices $\reldeg{-}$,
we see that the continuation maps must be constant.
\end{proof}

\begin{lemma}
Any Floer triangle $y$ with inputs $p_{\bn, i}$, $p_{\bn',i'}$
has a fixed intersection number $y \cdot \Ebar$ with $\Ebar$.
\end{lemma}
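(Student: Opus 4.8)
The plan is to show that $y\cdot\Ebar$ is a purely topological quantity which depends only on the relative homotopy classes of the two input chords, and hence is insensitive to the auxiliary labels $i,i'$, to the output chord of $y$, and to $y$ itself. The first step is to invoke \pref{lm:pi_1}: the relative homotopy group $\pi_1(\Ybar,\Lc_0)$ is canonically $N$, and under the product structure of $\NCx\times\bCx$ the chords $p_{\bn,i}$ with $\bn$ fixed and $i$ varying all represent the single class $\bn$, because the $\bZ$-summand of $\pi_1(\NCx\times\bCx,L_0)\cong N\times\bZ$ becomes trivial in $\Ybar$. Likewise $p_{\bn',i'}$ represents $\bn'$, and by additivity of relative homotopy classes along the pair-of-pants the output chord of any triangle $y$ with these inputs represents $\bn+\bn'$, independently of all labels.

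The second step combines this with the observation recorded in the discussion preceding \pref{lm:Gcompactness}: since $\Lc_0$ is contractible, the relative homology classes $A_\bx\in H_2(\Ybar,\Lc_0)$ realized by topological triangles with prescribed asymptotic chords form a torsor over the image of $\pi_2(\Ybar)$ in $H_2(\Ybar)$, which is the rank-one subgroup $\bZ[E_\bw]$ arising from $H_2(\Ybar)\cong H_2(\NCx)\oplus\bZ[E_\bw]$. Since the asymptotic data of $y$ is pinned down by $\bn$ and $\bn'$ alone, the $\bZ[E_\bw]$-coset containing $A_\bx$ depends only on $\bn$ and $\bn'$. Finally, using $[E_\bw]\cdot E=-1$ and $[E_\bw]\cdot F=1$ we obtain
\begin{align}
 [E_\bw]\cdot\Ebar=[E_\bw]\cdot E+[E_\bw]\cdot F=0,
\end{align}
so all classes in that coset have the same intersection number with $\Ebar$; therefore $y\cdot\Ebar=A_\bx\cdot\Ebar$ is a fixed integer, depending only on $\bn$ and $\bn'$.

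The point requiring the most care — and the one I expect to be the main obstacle — is that the indeterminacy in $A_\bx$ is exactly $\bZ[E_\bw]$ and nothing larger, even after one allows the output chord to range over all representatives of the class $\bn+\bn'$: one must check both that any two topological triangles asymptotic to a fixed chord-triple differ by an element of $\operatorname{im}(\pi_2(\Ybar,\Lc_0)\to H_2(\Ybar,\Lc_0))$, and that a strip interpolating between two homotopic output chords contributes a class lying in $\bZ[E_\bw]$. Contractibility of $\Lc_0$ is used essentially in both places — to identify $\pi_2(\Ybar,\Lc_0)$ with $\pi_2(\Ybar)$ and to produce the interpolating strip — after which the statement is a formal consequence of \pref{lm:pi_1} and the structure of $H_2(\Ybar)$ already established.
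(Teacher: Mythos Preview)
Your argument is correct and reaches the same conclusion by a genuinely different route. The paper's proof is shorter: it composes $y$ with the projection $\pi_{\NCx}\colon\Ybar\to\NCx$, observes that the projected triangle $\pi_{\NCx}(y)$ has its relative homotopy class in $(\NCx,\Lbar_0)$ uniquely determined by the projected input chords (since $\pi_2(\NCx,\Lbar_0)\cong\pi_2(\NCx)=0$), and then uses that $\Ebar$ is the total transform of $Z\times\bC$ to write $y\cdot\Ebar=p(y)\cdot(Z\times\bC)=\pi_{\NCx}(y)\cdot Z$. In effect the paper kills the $[E_{\bw}]$-ambiguity by pushing down to the aspherical base, whereas you stay upstairs in $\Ybar$ and neutralise the ambiguity by computing $[E_{\bw}]\cdot\Ebar=0$. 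Your approach is more intrinsic to $\Ybar$, but, as you yourself flag, it forces you to track what happens when the output chord (and the labels $i,i'$) vary; the projection sidesteps this entirely because downstairs there is a unique projected output chord and a unique homotopy class of triangle. One small point: the phrase in your final paragraph that the interpolating strip ``contributes a class lying in $\bZ[E_{\bw}]$'' is slightly imprecise (the strip by itself is not a relative cycle), but the content is correct --- one can either verify that the glued difference of two such triangles caps off to a sphere, or simply choose the interpolating strip to lie in $(\NCx\setminus U_Z)\times\bC\subset\Ybar$, where it is manifestly disjoint from $\Ebar$.
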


\begin{proof}
The projected input chords
$
 \pi_{\NCx} \lb  p_{\bn, i} \rb
$
and
$
 \pi_{\NCx} \lb p_{\bn', i'} \rb
$
determine
(the projected output chords and)
the relative homotopy class
of the projected Floer triangle $\pi_{\NCx}(y)$ uniquely.
This fixes the intersection number
$
 \pi_{\NCx}(y) \cdot Z
  = p(y) \cdot (Z \times \bC),
$
which is equal to $y\cdot \Ebar$
since $\Ebar = E \cup F$ is the total transform of $Z \times \bC$.
\end{proof}

We set
$
 j \coloneqq y \cdot E,
$
which is a non-negative integer satisfying
$
 j \le \jmax \coloneqq y \cdot \Ebar.
$
Note that $\jmax$ depends
only on $\bn$ and $\bn'$.
The relative gradings imply that one has
\begin{align} \label{eq:mult_rule}
 p_{\bn, i} \cdot p_{\bn',i'}
  = \sum_{j=0}^{\jmax}
   N_j(p_{\bn, i}, p_{\bn',i'}) q_{\bn+\bn', i+i'+j}
\end{align}
for some integers
$N_j(p_{\bn, i}, p_{\bn',i'})$.

The rest of this section is devoted to the computation of
$N_j(p_{\bn, i}, p_{\bn',i'})$.
To simplify notation,
we set
$
 \bsp_1 \coloneqq p_{\bn,i},
$
$
 \bsp_2 \coloneqq p_{\bn',i'},
$
$
 \bsp_0 \coloneqq q_{\bn+\bn',i+i'+j},
$
and
$
 \bsp \coloneqq (\bsp_0, \bsp_1,\bsp_2).
$
Fix $J \in \Jint(D \cup \Ebar)$ and $J_{\NCx \times \bC}$
such that $p$ is pseudoholomorphic.
Recall that
$
 \cR^2(Y, \bsp)
$
is the moduli space of pseudoholomorphic maps
$
 y \colon \Sigma \to Y
$
from disks
$
 (\Sigma, (\zeta_0, \zeta_1, \zeta_2))
$
with three boundary punctures
satisfying
\begin{align} \label{eq:bc1}
 y(\partial_i \Sigma) \subset L_i
\end{align}
and
\begin{align} \label{eq:bc2}
 \lim_{s \to \pm \infty} y(\epsilon_i(s, -)) = \bsp_i
\end{align}
for $i=0,1,2$.
We now introduce some relative moduli spaces
which will be needed in our argument.

\begin{definition}
\begin{enumerate}[1.]
%
 \item
Let
$
 \cR^{2,j}(Y, \bsp)
$
be the moduli space of triples
$
 ((\Sigma, \bszeta), \bsz, y)
$
consisting of disks
$
 (\Sigma, \bszeta)
$
with three boundary punctures
$
\bszeta = (\zeta_0, \zeta_1, \zeta_2),
$
$j$ marked points
$
 \bsz = (z_1, \ldots, z_j)
$
in the interior of $\Sigma$,
and pseudoholomorphic maps
$
 y \colon \Sigma \to Y
$
satisfying
\pref{eq:bc1} and
\pref{eq:bc2}.
 \item
Let
$
 \cR^{2,j}(Y, E, \bsp)
$
be the subspace of
$
 \cR^{2,j}(Y, \bsp)
$
consisting of pseudoholomorphic maps
$
 y \colon \Sigma \to Y
$
satisfying
\begin{align}
 \lc z_i \rc_{i=1}^j = y^{-1}(E).
\end{align}
\end{enumerate}
\end{definition}

Signs can be associated to these moduli spaces
exactly like their non-relative counterparts.
Namely,
given a pseudoholomorphic map $y$,
there exists an isomorphism
\begin{align}
 \abs{\frako_{\bsp_0}}
  \cong \abs{\det(D_y)}
  \otimes \abs{\frako_{\bsp_{1}}}
  \otimes \abs{\frako_{\bsp_{2}}},
\end{align}
where $D_y$ is the extended linearized operator at $y$.
Whenever all curves in $\cR^{2,j}(Y, \bsp)$
are regular and the natural evaluation map
\begin{align}
 \ev \colon \cR^{2,j}(Y, \bsp) \to Y^j
\end{align}
is transverse to $E^j$,
this enables us to assign a map
between orientation spaces for each isolated map
$y \in \cR^{2,j}(Y, E, \bsp)$.
This is because in this situation,
the determinant of the linearized operator can be expressed as
$
 \det(D_y) \cong \otimes _i \det(TY_{y(z_i)})
  \otimes \det (TE_{y(z_i)})^{\vee}
$
which is canonically oriented.
There is an orientation-preserving surjection from
$
 \cR^{2,j}(Y,E,\bsp)
$
to
$
 \cR^{2}(Y,\bsp),
$
which is $j!$ to $1$
because of the choice of an ordering of the marked points.

\begin{definition}
\begin{enumerate}[1.]
 \item
Let
$
 \cR^{2,j}( \NCx \times \bC, \bsp)
$
be the moduli space of triples
$
 ((\Sigma, \bszeta), \bsz)
$
consisting of disks
$
 (\Sigma, \bszeta)
$
with three boundary punctures
$
 \bszeta = (\zeta_0, \zeta_1, \zeta_2)),
$
$j$ marked points
$
 \bsz = (z_1, \ldots, z_j)
$
in the interior of $\Sigma$,
and pseudoholomorphic maps
$
 y \colon \Sigma \to \NCx \times \bC
$
satisfying
\pref{eq:bc1} and
\pref{eq:bc2}.
 \item
Let
$
 \cR^{2,j}(Y, E, \bsp)
$
be the subspace of
$
 \cR^{2,j}(Y, \bsp)
$
consisting of pseudoholomorphic maps
$
 y \colon \Sigma \to \NCx \times \bC
$
satisfying
\begin{align}
 \lc z_i \rc_{i=1}^j = y^{-1}(Z \times 0).
\end{align}
\end{enumerate}
\end{definition}

For suitably generic $J_{\NCx \times \bC}$,
the linearized operator is surjective at all curves in
$
 \cR^{2,j}(\NCx \times \bC, \bsp)
$
and the evaluation map
\begin{align} \label{eq:eval}
 \ev \colon \cR^{2,j}(\NCx \times \bC, \bsp) \to (\NCx \times \bC)^j
\end{align}
is transverse to $(Z \times 0)^j$.
For such $J_{\NCx \times \bC}$ and
when $\cR^{2,j}( \NCx \times \bC, \bsp)$ is isolated,
the moduli space can be given a relative orientation
in a similar fashion to $\cR^{2,j}(Y, E, \bsp)$.
We now show that split almost complex structures of the form
$J_{\NCx \times \bC}=(J_B, J_{\bC})$
on $\NCx \times \bC$ are sufficiently generic
to achieve these two conditions.
The Lagrangians $L_i$ are also split Lagrangians, i.e.,
can be written as $\Lbar_{i} \times \pi_{\bC}(L_{i})$.
Thus,
for this class of almost complex structures,
we view our holomorphic curve $p \circ y$ as pairs $(y_1,y_2)$ where
\begin{align} \label{eq:y1_cond}
\left\{
\begin{aligned}
 & y_1 \colon \Sigma \to \NCx, \\
 & y_1(\partial_k \Sigma) \subset \Lbar_k, \\
 & \lim_{s \to \pm \infty} y_1(\epsilon_k(s, -))
  = \underline{q}_{\bn+\bn'}, \underline{p}_{\bn}, \underline{p}_{\bn'},
\end{aligned}
\right.
\end{align}
and
\begin{align} \label{eq:y2_cond}
\left\{
\begin{aligned}
 & y_2 \colon \Sigma \to \bC, \\
 & y_2(\partial_k \Sigma) \subset \pi_{\bC}(L_k), \\
 & \lim_{s \to \pm \infty} y_2(\epsilon_k(s, -)) = q_{i+i'+j}, p_{i}, p_i'.
\end{aligned}
\right.
\end{align}
We have 
\begin{align}
 y_1 \cdot Z=\Ebar \cdot y=\jmax.
\end{align}
For generic $J$,
we can assume that $y_1^{-1}(Z)$ are distinct points, and
we fix an ordering $(z_1, \ldots, z_{\jmax})$ of these points.
Given an ordered subset
$
 \lb i_1, \ldots, i_j \rb
$
of
$
 \lc 1, \ldots , \jmax \rc,
$
the triple
$
 ((\Sigma, \bszeta), \bsz, (y_1, y_2))
$
with
$
 \bsz = (z_{i_1}, \ldots, z_{i_j})
$
and $(y_1, y_2)$ satisfying
\pref{eq:y1_cond} and \pref{eq:y2_cond}
lies in
$
 \cR^{2,j}(\NCx \times \bC, Z \times 0, \bsp)
$
if and only if
\begin{align} \label{eq:mapstoC}
 y_2^{-1}(0)= \lc z_{i_{1}},\cdots, z_{i_{j}} \rc.
\end{align}

\begin{lemma} \label{lm:splittrans}
If
$
 J_{\NCx \times \bC} = (J_B, J_\bC)
$
is the product of a generic almost complex structure $J_B$ adapted to $Z$
and the standard almost complex structure $J_\bC$,
then all pseudoholomorphic maps in
$
 \cR^{2,j}( \NCx \times \bC, \bsp)
$
are Fredholm regular and
the evaluation map \pref{eq:eval} is transverse to
$(Z \times 0)^j$.
\end{lemma}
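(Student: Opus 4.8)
The plan is to exploit the product structure in every way it can be exploited, reducing the statement to two independent transversality problems: one in the base factor $\NCx$ and one in the fiber factor $\bC$, of which only the former requires a genericity argument. First I would split every curve. Since $J_{\NCx \times \bC} = (J_B, J_\bC)$ is split and each Lagrangian decomposes as $L_k = \Lbar_k \times \pi_\bC(L_k)$, any pseudoholomorphic map $y \in \cR^{2,j}(\NCx \times \bC, \bsp)$ decomposes as $y = (y_1, y_2)$ with $y_1$ and $y_2$ pseudoholomorphic, and the extended linearized operator decomposes as $D_y = D_{y_1} \oplus D_{y_2}$. Consequently $y$ is Fredholm regular if and only if both $D_{y_1}$ and $D_{y_2}$ are surjective, and, writing $(Z \times 0)^j = Z^j \times \{0\}^j$ inside $(\NCx)^j \times \bC^j$, the evaluation map $\ev$ is transverse to $(Z \times 0)^j$ if and only if the base evaluation is transverse to $Z^j$ and the fiber evaluation is transverse to $\{0\}^j$.

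For the fiber factor I would invoke automatic transversality. Here $J_\bC$ is the standard integrable structure and $\dim_\bC \bC = 1$, so $D_{y_2}$ is a real Cauchy--Riemann operator on a bundle pair over a punctured disk; since the boundary Maslov number of $y_2$ is non-negative (this follows from the index formulae of the previous section, because all of our curves lie in their expected dimension), $D_{y_2}$ is automatically surjective. Moreover the $j$ interior marked points are free parameters, and each point of $y_2^{-1}(0)$ is a simple zero -- a zero of higher order would push the curve into a lower-dimensional stratum, contradicting the dimension count -- so $z \mapsto y_2(z)$ has non-vanishing differential at every such point, and the fiber evaluation is therefore automatically transverse to $\{0\}^j$.

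For the base factor I would run the usual Sard--Smale argument on the universal moduli space, enhanced by the $j$ marked points carrying incidence conditions along $Z$, to obtain a residual set of $J_B$ for which every $y_1$ is regular and the base evaluation is transverse to $Z^j$ simultaneously. The one subtlety is that the requirement that $J_B$ be \emph{adapted to $Z$} pins it down near the origin of $N_\bR$ and forces $h$ to be $J_B$-holomorphic on $U_Z$ -- that is, the perturbation space is constrained in precisely the region where incidences with $Z$ occur. The way around this is that each $y_1$ is non-constant (its asymptotics are distinct Hamiltonian chords), so by Aronszajn unique continuation it has an injective point lying outside $U_Z$ and outside the fixed neighborhood of the origin; on that open region $J_B$ may be perturbed freely, which is enough to make the universal linearized operator surjective.

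I expect this last step to be the main obstacle: one must check that the \emph{constrained} space of complex structures adapted to $Z$ is still rich enough to regularize all the relevant curves \emph{and} to cut the evaluation map transverse to $Z^j$ at the same time, which requires a careful examination of a local model near $Z$ where $J_B$ is not free. Once that is in place, combining the base and fiber conclusions gives the lemma, with the fiber part (automatic transversality in complex dimension one) essentially for free.
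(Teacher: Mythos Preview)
Your splitting of the linearised operator as $D_y = D_{y_1}\oplus D_{y_2}$ and your treatment of Fredholm regularity are fine; this is also how the paper proceeds. The gap is in the transversality of the evaluation map.

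The claim that $\ev \pitchfork (Z\times 0)^j$ is equivalent to ``base evaluation $\pitchfork Z^j$'' together with ``fiber evaluation $\pitchfork \{0\}^j$'' is false, because the $j$ interior marked points are \emph{shared} between the two factors: the map $\ev$ is not a product of two maps with independent domains. Concretely, the tangent space to the marked moduli space is $\Ker D_{y_1}\oplus \Ker D_{y_2}\oplus (T\Sigma)^j$, the normal bundle of $(Z\times 0)^j$ is $(N_Z)^j\oplus \bC^j$, and a marked-point variation $\delta z_k$ contributes \emph{simultaneously} to the $N_Z$--component via $dy_1$ and to the $\bC$--component via $dy_2$. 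You therefore cannot spend the $2j$--dimensional marked-point freedom twice to cover a $4j$--dimensional normal space. Your fiber argument (simple zeros of $y_2$ plus free marked points) and your base argument (generic $J_B$ plus free marked points) are each correct in isolation, but they do not combine.

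What is actually required is this: once $y_1\pitchfork Z$ (so the marked points are pinned to the finite set $y_1^{-1}(Z)$ and absorb the $(N_Z)^j$--direction), one must show that the evaluation $\Ker D_{y_2}\to \bC^j$ at those \emph{fixed} points is surjective --- equivalently, that any $\xi\in\Ker D_{y_2}$ vanishing at all $j$ points is zero. The paper obtains this by observing that $\xi$ solves a real Cauchy--Riemann equation on the complex line bundle $y_2^*(T\bC)$ and then invoking Seidel's bound on the number of zeroes of such a section \cite[Proposition~11.5]{MR2441780}. This is the ``automatic transversality with point constraints'' statement you were reaching for; mere surjectivity of $D_{y_2}$ does not suffice.
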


\begin{proof}
We have split Lagrangian boundary conditions and
no curve is constant when projected to either of the product factors,
so it is clear that we can achieve surjectivity of the operator $D_y$
using split $J$.
The transversality of the evaluation map \pref{eq:eval} to $Z \times 0$ is equivalent
to the transversality of $y_1$ with $Z$,
which
can be achieved by a small perturbation of $J_B$,
and the transversality with $0^j$
of the evaluation map for the moduli space of curves of the form \pref{eq:mapstoC}
to $\bC^j$.
It suffices to prove that the tangent space to $y_2$
of the moduli space of pseudoholomorphic maps of the form $\pref{eq:mapstoC}$
is trivial.
An element of this tangent space is precisely an element of $\Ker(D_{y_{1}})$
which vanishes at $z_{i_{1}}, \ldots, z_{i_{j}}$.
The key for us is that $y_1^*(T\bC_u)$ is  a line bundle
and our element is a section satisfying a $\partialbar$-equation
with a prescribed number of zeroes.
We may therefore apply \cite[Proposition 11.5]{MR2441780},
which controls the number of zeroes on such solutions on line bundles,
to show that this element must be trivial.
\end{proof}

Fix $J \in \Jint(D \cup \Ebar)$ and $J_{\NCx \times \bC}$
be a split almost complex structure such that $p$ is pseudoholomorphic.
Given a pseudoholomorphic triangle $y$ in $\cR^2(Y,E,\bsp)$
with boundary on $L_i$,
we can compose it with the structure morphism
$p \colon \Ybar \to \NCx \times \bC$
to obtain a curve $p \circ y$.
Moreover given a curve $p \circ y$,
we can reconstruct $y$ as the proper transform of $p \circ y$.
The condition that $y \in R^{2,j}(Y,E,\bsp)$
is equivalent to
$
 p \circ y \in  \cR^{2,j}( \NCx \times \bC, Z \times 0, \bsp).
$
In particular, whenever curves in
$
 \cR^{2,j}( \NCx \times \bC, Z \times 0, \bsp)
$
are isolated, so are curves in $R^{2,j}(Y,E, \bsp)$.
In fact, this persists at the level of linearized operators.

\begin{lemma}
Let $J \in \Jint(D \cup \Ebar)$ and
$J_{\NCx \times \bC}$ be a split almost complex structure
satisfying the conditions of \pref{lm:splittrans}
such that $p$ is pseudoholomorphic.
Let further $y$ be the proper transform of a curve
in $\cR^{2,j}( \NCx \times \bC, Z \times 0, \bsp)$.
Then the linearized operator $D_y$ is surjective.
\end{lemma}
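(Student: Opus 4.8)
The plan is to deduce surjectivity of $D_y$ from surjectivity of the linearized operator $D_{p\circ y}$ of the curve $p\circ y$ downstairs, by comparing the two via the differential of the blow-down $p$. The key point is that $p$ is $\lb J,J_{\NCx\times\bC}\rb$-holomorphic near $\Ebar$: indeed $E$ lies over $Z\times 0$ and hence inside $\pibar_{\NCx}^{-1}(U_{Z})$, on which $p$ is pseudoholomorphic by the definition of $\Jint(\Ybar)$, while over $\Ybar\setminus\Ebar$ the map $p$ is a biholomorphism. Therefore the bundle map $dp\colon y^*T\Ybar\to\lb p\circ y\rb^*T\lb\NCx\times\bC\rb$ intertwines $D_y$ with $D_{p\circ y}$. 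Since $y(\partial\Sigma)\subset\bigcup_i L_i$, which is disjoint from $U_{Z}$ and hence from $\Ebar$, the map $dp$ is an isomorphism over a neighbourhood of $\partial\Sigma$ and identifies the totally real boundary conditions on the two sides; the comparison is thus unobstructed along the boundary, and it is compatible with the $j$ interior marked points on each side, which are constrained to $E$ and to $Z\times 0$ respectively and correspond under $p$.

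First I would write down the short exact sequence of Cauchy--Riemann complexes on $\Sigma$ induced by $dp$. Off the finite set $y^{-1}(E)=\lc z_1,\dots,z_j\rc$ the map $dp$ is an isomorphism, and near each $z_i$, since $p\circ y$ meets $Z\times 0$ with multiplicity one there --- a consequence of the transversality of $p\circ y$ with $Z\times 0$, which is part of the conclusion of \pref{lm:splittrans} that we are invoking --- the proper transform $y$ meets $E$ transversally and $dp$ is the standard elementary modification twisting the normal line by $\cO(-z_i)$. Combined with the blow-up identity $c_1(T\Ybar)=p^*c_1\lb T(\NCx\times\bC)\rb-[E]$, which pins the length of the quotient sheaf at $j=y\cdot E$, this yields
\begin{align}
 0\longrightarrow\lb y^*T\Ybar,D_y\rb
  \longrightarrow\lb\lb p\circ y\rb^*T(\NCx\times\bC),D_{p\circ y}\rb
  \longrightarrow\bigoplus_{i=1}^{j}\bC_{z_i}\longrightarrow 0,
\end{align}
where $\bC_{z_i}$ is the skyscraper at $z_i$ with zero differential, whose fibre is the normal space to $Z\times 0$ at $p(y(z_i))$ modulo the tautological line of the exceptional $\bP^1$ through $y(z_i)$ --- exactly the target of the linearized incidence condition cutting $\cR^{2,j}(\NCx\times\bC,Z\times 0,\bsp)$ out of $\cR^{2,j}(\NCx\times\bC,\bsp)$.

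Then I would pass to the long exact cohomology sequence
\begin{align}
 \Ker D_{p\circ y}\longrightarrow\bigoplus_{i=1}^{j}\bC_{z_i}
  \longrightarrow\operatorname{coker}D_y\longrightarrow\operatorname{coker}D_{p\circ y}\longrightarrow 0.
\end{align}
The first arrow sends a deformation $\xi$ of $p\circ y$ to the classes of its values $\xi(z_i)$ in the quotients $\bC_{z_i}$, which under the identifications above is precisely the linearization of the incidence conditions defining the relative moduli space; by \pref{lm:splittrans} --- where one uses the split structure together with \cite[Proposition 11.5]{MR2441780} to see that $\ev\colon\cR^{2,j}(\NCx\times\bC,\bsp)\to(\NCx\times\bC)^j$ is transverse to $(Z\times 0)^j$ --- this arrow is surjective, and $\operatorname{coker}D_{p\circ y}=0$ since $p\circ y$ is Fredholm regular by the same lemma. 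Exactness then forces $\operatorname{coker}D_y=0$, which is the claim.

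The main obstacle is the local analysis at the points of $y^{-1}(E)$: identifying the elementary modification and checking that the induced map $\Ker D_{p\circ y}\to\bigoplus_i\bC_{z_i}$ genuinely equals the linearized incidence condition, with $J$ only assumed $J$-holomorphic --- not integrable --- in a neighbourhood of $\Ebar$. This becomes routine once $p\circ y$ is known to meet $Z\times 0$ simply at each point, which \pref{lm:splittrans} provides; the model computation is then the classical one for a blow-up along a smooth centre, performed with the split Lagrangian boundary conditions, the tangential factor of $T(\NCx\times\bC)$ and the exceptional direction contributing nothing to the quotient.
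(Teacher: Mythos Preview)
Your argument is correct and yields the conclusion, but it is organised differently from the paper. The paper exploits that the moduli spaces are zero-dimensional: since $\operatorname{ind} D_y=0$, surjectivity is equivalent to $\ker D_y=0$; any $u\in\ker D_y$ pushes down via $dp$ to an element of $\ker D_{p\circ y}$ whose linearized evaluations at the $z_i$ lie in $T(Z\times 0)$ (because $dp$ carries $TE$ into $T(Z\times 0)$), and such an element vanishes by the transversality in \pref{lm:splittrans}; finally $dp$ is fibrewise injective away from the $z_i$, hence $u=0$ by continuity. Your long exact sequence argument avoids the zero-dimensionality hypothesis entirely and would work for higher-dimensional moduli as well, at the cost of setting up the elementary modification carefully.

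One small imprecision: the fibre $\bC_{z_i}$ of your skyscraper is $N_{Z\times 0}/\ell_{y(z_i)}$, which is \emph{not} literally the target of the linearized incidence condition cutting out $\cR^{2,j}(\NCx\times\bC,Z\times 0,\bsp)$; that target is the full normal space $N_{Z\times 0}$, and its domain includes the marked-point variations $T_{z_i}\Sigma$. What saves you is that $d(p\circ y)_{z_i}=dp_{y(z_i)}\circ dy_{z_i}$ has image contained in $T(Z\times 0)\oplus\ell_{y(z_i)}$, so the marked-point variations contribute only to $\ell_{y(z_i)}$ and are killed by the quotient; hence surjectivity of the evaluation map from \pref{lm:splittrans} does imply surjectivity of your connecting map $\Ker D_{p\circ y}\to\bigoplus_i N_{Z\times 0}/\ell_{y(z_i)}$. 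You should also note that your short exact sequence is of sheaves, not of vector bundles (since $dp$ has a one-dimensional kernel at each $z_i$), and the passage to a short exact sequence of Fredholm complexes requires a word on why $dp$ is injective on $W^{1,p}$ sections---this is where the simple-intersection hypothesis and continuity of $W^{1,p}$ sections enter.
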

\begin{proof}
Since all of our moduli spaces are isolated, it suffices to check that any element $u \in \operatorname{ker}(D_y)$ such that the linearized evaluation map is tangent to $E$ is zero. Given such a $u$, we may push this down to an element $p \circ u \in \operatorname{ker}(D_{p\circ y})$ such that the linearized evaluation maps at $z_i$ are tangent to $Z \times 0$. By our hypothesis, such an element must be zero. For every point $z \neq z_i$  the map $y^*(TY)_z \to (p \circ y)^*(T(\NCx \times \bC)_p(z)$ induced by the projection from $y$ to $p\circ y$ is  injective and hence it, it follows that $u=0$.
\end{proof}
The key enumerative calculation is the following:

\begin{lemma} \label{lm:enum}
The numbers $N_{j}(p_{\bn, i}, p_{\bn',i'})$
appearing in \pref{eq:mult_rule} are $\displaystyle{\binom{\jmax}{j}}$.
\end{lemma}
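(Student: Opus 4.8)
The plan is to use the correspondence, already set up in the preceding paragraphs, between pseudoholomorphic triangles $y$ in $\cR^{2,j}(Y, E, \bsp)$ and their images $p \circ y$ in $\cR^{2,j}(\NCx \times \bC, Z \times 0, \bsp)$ under the blow-down, together with the product structure $J_{\NCx \times \bC} = (J_B, J_\bC)$ and the split form of the Lagrangians $L_i = \Lbar_i \times \pi_\bC(L_i)$. First I would observe that, by \pref{lm:splittrans} and the surjectivity lemma just proved, it suffices to count curves downstairs: the number $N_j(p_{\bn,i}, p_{\bn',i'})$ is computed (with signs) by the zero-dimensional moduli space $\cR^{2,j}(\NCx \times \bC, Z \times 0, \bsp)$, divided appropriately by the $j!$ relating ordered and unordered marked points. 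A split curve $p \circ y = (y_1, y_2)$ decomposes: $y_1 \colon \Sigma \to \NCx$ has boundary on the $\Lbar_k$ and is the unique (rigid) triangle in its relative homotopy class, contributing exactly one curve with $y_1 \cdot Z = \jmax$ marked points, while $y_2 \colon \Sigma \to \bC$ has boundary on the $\pi_\bC(L_k)$ and must vanish precisely at the $j$ chosen points among the $\jmax$ points $\{z_1, \ldots, z_{\jmax}\} = y_1^{-1}(Z)$.

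The core of the argument is then the enumeration of the $y_2$-component. Having fixed $y_1$ (hence the domain $\Sigma$ and the configuration of the $\jmax$ points $z_k$), one must count holomorphic maps $y_2 \colon \Sigma \to \bC$ with the prescribed boundary conditions and asymptotics, subject to $y_2^{-1}(0) = \{z_{i_1}, \ldots, z_{i_j}\}$ for a chosen $j$-element ordered subset of $\{1, \ldots, \jmax\}$. This is where the degeneration-of-domain argument enters: I would degenerate the domain $\Sigma$ (a disk with three boundary punctures carrying $\jmax$ interior marked points) so that the marked points distribute onto bubble components, reducing the count to counting holomorphic triangles in $\bC = \bCx \cup \{0\}$ with a single incidence condition at the origin on each bubble. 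The ambient target for $y_2$ being $\bC$ with the boundary conditions $\pi_\bC(L_k)$ (which are rays / shifted real axes determined by the integers $i, i', i+i'+j$), a holomorphic triangle with the correct combinatorics of asymptotics passes through $0 \in \bC$ exactly once and is rigid; matching this against the algebra of the shifts $i + i' + j$ forces that, for each choice of which $j$ of the $\jmax$ points the curve $y_2$ is required to hit (equivalently, each way the remaining $\jmax - j$ points are \emph{not} mapped to $0$), there is exactly one curve. Summing over all $\binom{\jmax}{j}$ such choices — and checking that all contributions come with the same sign, using the canonical relative orientations of $\cR^{2,j}(Y, E, \bsp)$ and $\cR^{2,j}(\NCx\times\bC, Z\times 0, \bsp)$ described above — yields $N_j(p_{\bn,i}, p_{\bn',i'}) = \binom{\jmax}{j}$.

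The main obstacle I anticipate is the sign computation: showing that all $\binom{\jmax}{j}$ configurations contribute $+1$ (rather than cancelling in pairs). This requires comparing the canonical orientation of $\det(D_y) \cong \bigotimes_i \det(TY_{y(z_i)}) \otimes \det(TE_{y(z_i)})^\vee$ with the split orientation coming from $(y_1, y_2)$, and verifying that permuting the marked points or changing which subset is selected acts trivially on orientations — essentially because each exceptional fiber $[E_\bw]$ and each incidence point contributes a complex (hence canonically oriented) factor. A secondary, more routine point is confirming that the degeneration of the domain introduces no unexpected boundary strata (no sphere bubbles into $Y \setminus E$, no disk bubbles off onto the Lagrangians), which follows from the a priori control on intersection numbers with $E$ and $F$ established in \pref{lm:Gcompactness} together with the exactness of the Lagrangians. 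Once the signs are pinned down, the combinatorial identity $\sum_j \binom{\jmax}{j} = 2^{\jmax}$ is exactly the statement that $q_{\bn+\bn'} \mapsto \prod (1 + z_3)$-type expansion of the product rule \pref{eq:product1} matches on the mirror side, which is the consistency check that makes \pref{th:main2} fall out.
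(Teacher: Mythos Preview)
Your overall architecture matches the paper's: reduce to the split moduli space $\cR^{2,j}(\NCx \times \bC, Z \times 0, \bsp)$, factor curves as $(y_1, y_2)$, and observe that the combinatorics of choosing $j$ of the $\jmax$ points in $y_1^{-1}(Z)$ produces the binomial coefficient once one knows that, for each fixed choice, the signed count of pairs $(y_1, y_2)$ is $+1$. Your treatment of $y_1$ (unique rigid triangle in $\NCx$) is also in line with the paper.

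The gap is in your mechanism for the $y_2$ count. You propose to ``degenerate the domain so that the marked points distribute onto bubble components,'' reducing to triangles in $\bC$ with a single incidence at the origin on each bubble. As stated this does not work: sphere bubbles in $\bC$ are constant and cannot carry a nontrivial incidence condition at $0$, and disk bubbles on the rays $\pi_\bC(L_k) \subset \bC$ are likewise constant by exactness, so no such bubbling isolates the incidence conditions. The paper's degeneration is of a different kind: one neck-stretches around each of the $j$ marked points, converting them to \emph{positive cylindrical ends} with asymptotics on the primitive Hamiltonian orbit $\alpha$ of $\bCx \subset \bC$. This replaces the incidence problem by a TQFT computation. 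Step~1 of the paper shows that the ``cap'' (a plane $\bP^1 \setminus \{0\} \to \bC$ with the marked point at $\infty$ sent to $0$ and asymptotic to $\alpha$) has signed count $+1$, using the known isomorphism $\SH^0(\bCx) \cong \bC[\alpha,\alpha^{-1}]$ and a gluing with a disk on the zero-section $\bS^1 \subset \bCx$. Step~2 then uses the TQFT structure on $\HW^0(T_q) \cong \bC[p_1,p_1^{-1}]$ to see that the count of curves on a surface $S_3$ with two positive strip-like ends, one negative strip-like end, and $j$ positive cylindrical ends (inputs $p_i, p_{i'}$, $j$ copies of $\alpha$; output $q_{i+i'+j}$) is $+1$. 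Gluing the caps from Step~1 onto the cylindrical ends of $S_3$ and then deforming the marked points and perturbation data back yields the desired $y_2$ count. This TQFT input is what also pins down the signs without a separate orientation analysis --- your anticipated ``main obstacle'' is absorbed into the known ring structures on $\SH^0(\bCx)$ and $\HW^0(T_q)$ rather than handled by a direct comparison of determinant lines.
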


\begin{proof}
We first determine the signed count of elements in the moduli space
$\cR^{2,j}( \NCx \times \bC, Z \times 0, \bsp)$,
up to dividing by $j!$
to remove for the redundancy from the ordering of the marked points.
After picking $j$ points $z_{i_{1}} \cdots, z_{i_{j}}$,
it suffices to prove that the signed count of pairs $(y_1,y_2)$ is 1.
Let $\mu$ be the standard moment map on the disc and
consider a vertically admissible Hamiltonian $v(\mu)$
which is of slope $\lambdav>1$ near $\mu=0$.
Let $\alpha$ be the Hamiltonian orbit
whose homotopy class in $\bCx$ is primitive.
Let $S$ be $\bP^1 \setminus \lc 0 \rc$
with a distinguished marked point at $z=\infty$
and a negative cylindrical end near $z=0$ given by
\begin{align}
 (s,t) \to e^{s+\sqrt{-1}t}.
\end{align}
Fix a subclosed one-form $\beta$
which restricts to $dt$ on the cylindrical end and which restricts to zero in a neighborhood of $z=\infty$.
To be explicit, we consider a non-negative, monotone non-increasing cutoff function $\rho(s)$ such that
\begin{align}
 \rho(s) =
\begin{cases}
 0 & s \gg 0, \\
 1 & s \ll 0,
\end{cases}
\end{align}
and set $\beta=\rho(s)dt$.
The remainder of our proof is divided into two steps:

\emph{Step 1.} Consider solutions $y_I$ to the equation
 \begin{align} \label{eq:mapstoC2}
\left\{
\begin{aligned}
 & y_I: S \to \bC \\
&  (dy_I- X_{v(\mu)} \otimes \beta)^{0,1} = 0 \\
 & y_I(0)=0 \\
 & \lim_{s \to - \infty} u(\epsilon(s, -)) = \alpha
\end{aligned}
\right.
\end{align}
There is a well-known isomorphism
\begin{align} \label{eq:stringtop1}
\SH^0(\bCx) \cong H_0(C_{1-*}(\mathcal{L} \bS^1)) \cong \bC[\alpha,\alpha^{-1}].
\end{align}

We wish to prove that the signed count of curves $y_I$ as above is $+1$.
Let $S_2$ be the disc with a fixed boundary marked point $\zeta$ and with one interior puncture. Equip $S_2$ with a positive strip-like end near the puncture. Let $L=\bS^1$ denote the zero section of $T^* \bS^1 \cong \bCx$ and $x \in L$. A consequence of the isomorphism \pref{eq:stringtop1} is that for suitable choices of Floer data, the signed count of curves with domain $S_2$ with Hamiltonian input $\alpha$ and with the marked point $\zeta$ passing through $x$ is +1.  Now view $\bC$ as a compactification of $\bCx$ and glue $S$ to $S_2$ along $\alpha$ to obtain a disc with boundary on $L$ and with a marked point passing through $z=0$. If we then the perturbation data to zero, the result then follows from the obvious fact that there is a unique disc in $\bC$ with a marked point at $z=0$ and a boundary marked point at any point on the Lagrangian $L$.

\emph{Step 2.} With this established, let $S_3$ be a Riemann surface with $j$ positive cylindrical ends, two positive strip like ends and one negative strip like end.  There is also an isomorphism \begin{align} \label{eq:stringtop2} \HW^0(T_q) \cong \bC[p_1,p_1^{-1}] \end{align}  After choosing suitable perturbation data it follows from \pref{eq:stringtop1}, \pref{eq:stringtop2}, together with the TQFT structure that the signed count of curves $S_3$ with chord inputs $p_i$, $p_{i'}$,  Hamiltonian orbit inputs $\bsalpha$ and output $q_{i+i'+j}$ must be $+1$. At each Hamiltonian input, we may then glue on the above curves with domain $S$ to obtain maps satisfying all of the requirements of \pref{eq:mapstoC} except that the marked points $z'_{i_j}$ may be different and there will be non-trivial perturbation data $X_K$ (these depend on the gluing parameters along the ends).

 We may deform away the Floer data and deform the marked points to $z_{i_{j}}$ as above. The signed count of these curves does not change because there is no possible breaking and it follows that the signed count of elements $y_1$  satisfying \pref{eq:mapstoC} is +1. The signed count of curves $y_2$ is also +1. Since we have split Lagrangian boundary conditions and almost complex structures, the count of configurations of curves of the form $(y_1,y_2)$ contribute +1. One readily checks that the bijection between $\cR^{2,j}( \NCx \times \bC, Z \times 0, \bsp)$ and $\cR^{2,j}(Y, E, \bsp)$ preserves orientations, so the signed count of curves $(y_1,y_2)$ contributes $+1$ to the Floer product as well. 
\end{proof}

\begin{theorem}
Given two generators, $p_{\bn, i}$ and $p_{\bn',i'}$, let $m= \Ebar \cdot y$.
Then we have
\begin{align} \label{eq:product2}
 p_{\bn, i} \cdot p_{\bn',i'}
  = \sum_{j=0}^{m}
   \binom{m}{j} q_{\bn+\bn', i+i'+j}.
\end{align}
\end{theorem}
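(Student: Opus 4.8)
The plan is to assemble the final formula \eqref{eq:product2} directly from the pieces already established in this section. The key fact is that, by the relative-grading argument leading to \eqref{eq:mult_rule}, the product $p_{\bn,i} \cdot p_{\bn',i'}$ is a $\bZ$-linear combination of the generators $q_{\bn+\bn',i+i'+j}$ for $0 \le j \le \jmax$, with coefficients $N_j(p_{\bn,i},p_{\bn',i'})$, and that $\jmax = y \cdot \Ebar$ depends only on $\bn$ and $\bn'$; this is exactly the quantity we are now calling $m$. So the content of the theorem is the identification of these integers, and that is precisely \pref{lm:enum}, which asserts $N_j(p_{\bn,i},p_{\bn',i'}) = \binom{\jmax}{j}$.

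First I would recall the chain of reductions behind \pref{lm:enum} so that the reader sees the product formula is an immediate corollary. By the choice of perturbation data at the start of the subsection ``Calculation of the product'', the Floer triangle product is recast as an honest pseudoholomorphic-curve count for a fibration-admissible domain-dependent almost complex structure, so we may take all perturbation one-forms $X_K$ trivial and work with genuine holomorphic triangles. Using a split almost complex structure $J_{\NCx \times \bC} = (J_B, J_\bC)$ and the correspondence between curves in $Y$ meeting $E$ with multiplicity and curves in $\NCx \times \bC$ meeting $Z \times 0$ (projection one way, proper transform the other), together with \pref{lm:splittrans} and the ensuing surjectivity lemma, the count $N_j$ equals the signed count of pairs $(y_1,y_2)$ with $y_1$ mapping to $\NCx$ and $y_2$ mapping to $\bC$, once one has chosen which $j$ of the $\jmax$ intersection points of $y_1$ with $Z$ are to lie over $0$ in the $\bC$-factor. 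There are $\binom{\jmax}{j}$ such choices, and the degeneration-of-domain argument of Steps 1 and 2 of \pref{lm:enum} shows that each choice contributes exactly $+1$; hence $N_j = \binom{\jmax}{j} = \binom{m}{j}$.

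Substituting this into \eqref{eq:mult_rule} yields
\begin{align}
 p_{\bn, i} \cdot p_{\bn',i'}
  = \sum_{j=0}^{m}
   \binom{m}{j} q_{\bn+\bn', i+i'+j},
\end{align}
which is \eqref{eq:product2}. The only points requiring a word of care are that $m = \Ebar \cdot y$ is well-defined independently of the representative triangle $y$ (established in the lemma immediately preceding the definition of $\jmax$) and that the orientation signs all come out $+1$; the latter is handled by the remark in the proof of \pref{lm:enum} that the bijection $\cR^{2,j}(\NCx \times \bC, Z \times 0, \bsp) \simto \cR^{2,j}(Y, E, \bsp)$ preserves orientations and by the canonical trivialization of the orientation lines $|\frako_{p_{\bn,i}}|$ noted earlier. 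I expect the only real obstacle in writing this out is bookkeeping: making sure that the index shift $j$ in the $i$-variable of the output $q_{\bn+\bn',i+i'+j}$ matches the intersection multiplicity $y \cdot E = j$ dictated by the second (relative) grading via \eqref{eq:relindex}, so that the sum is supported exactly on $0 \le j \le m$ and no further terms appear.
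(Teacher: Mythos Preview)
Your proposal is correct and follows exactly the paper's approach: the theorem is stated as an immediate consequence of \pref{lm:enum}, which computes the structure constants $N_j(p_{\bn,i},p_{\bn',i'}) = \binom{\jmax}{j}$, and substituting these into the general shape of the product established in \eqref{eq:mult_rule} gives \eqref{eq:product2}. The additional exposition you give (recalling the reductions behind \pref{lm:enum} and the bookkeeping on orientations and gradings) is all accurate and simply expands on what the paper compresses into a single sentence.
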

	
\begin{proof}
This is a consequence of  \pref{lm:enum}.
\end{proof}

We next observe the following fact concerning continuation maps.
The above theorem allows us to calculate the continuation map:
\begin{align}
 \frakc: \HFad (L_0;H) \to \HFad(L_0;2H)
\end{align}

\begin{lemma}
For an arbitrary element $p_{\bn,i},$
one has
$
 \frakc(p_{\bn,i}) = q_{\bn,i}.
$
\end{lemma}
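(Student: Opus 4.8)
The plan is to compute the continuation map $\frakc \colon \HFad(L_0; H) \to \HFad(L_0; 2H)$ directly by relating it to the Floer product $p_{\bn, i} \cdot p_{\bszero, 0}$, where $p_{\bszero, 0} \in \HFad(L_0; H)$ is the canonical generator in the trivial homotopy class of relative degree zero. Since the pair-of-pants product $\HFad(L_0; H) \otimes \HFad(L_0; H) \to \HFad(L_0; 2H)$ is, by the chain-level constructions of \pref{sc:fD2}--\pref{sc:fD4} adapted to fibration-admissible data, well-defined and unital (the unit is the canonical generator $p_{\bszero,0}$ supported in the quadratic region, just as in $\NCx \times \bCx$), we have $\frakc(p_{\bn,i}) = p_{\bn,i} \cdot p_{\bszero, 0}$. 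The point of phrasing it this way is that the right-hand side is exactly of the form covered by \pref{eq:product2}.

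First I would verify that $p_{\bszero, 0} = q_{\bszero, 0}$ under the identification of generators, i.e.\ that the canonical generator in the trivial class and degree zero for the Hamiltonian $H$ maps to that for $2H$ under the continuation map; this is immediate since the projected chord to $\NCx$ in the class $\bszero$ is the constant chord and the unique orbit in the $\bC$-factor in class $\bszero$ of index zero is the short one, so the argument of \pref{pr:cont} applies verbatim to give that the continuation solution is constant. Next I would invoke \pref{eq:product2} with the second input equal to $p_{\bszero, 0}$. The relevant quantity is $m = \Ebar \cdot y$ where $y$ is a Floer triangle with inputs $p_{\bn, i}$ and $p_{\bszero, 0}$. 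The key claim is that $m = 0$ in this case: by the lemma preceding \pref{eq:mult_rule}, $\jmax = \Ebar \cdot y$ is determined by the projected input chords $\pi_{\NCx}(p_{\bn,i})$ and $\pi_{\NCx}(p_{\bszero,0})$, which fix the relative homotopy class of the projected triangle $\pi_{\NCx}(y)$; since $\pi_{\NCx}(p_{\bszero,0})$ is the constant chord, the projected triangle degenerates to a strip in the class $\bn$, whose intersection number with $Z$ is zero (the projected Hamiltonian chords live in $\NCx \setminus Z$ and the strip can be taken disjoint from $Z$, exactly as in \pref{lm:rel_cpt1}). Hence $\jmax = \pi_{\NCx}(y) \cdot Z = 0$.

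With $m = 0$, the sum in \pref{eq:product2} collapses to the single term $j = 0$ with coefficient $\binom{0}{0} = 1$, giving $p_{\bn,i} \cdot p_{\bszero,0} = q_{\bn + \bszero, \, i + 0 + 0} = q_{\bn, i}$, which is precisely the assertion $\frakc(p_{\bn,i}) = q_{\bn,i}$. I expect the only genuinely delicate point to be the bookkeeping of which generator plays the role of the unit and the verification that the product with it indeed computes the continuation map at the chain level for the adapted theory --- one must check that the degeneration-of-domain argument behind \pref{eq:product2} is compatible with collapsing the second input to the unit, i.e.\ that the relevant one-parameter family of perturbation data (of the type fixed in \pref{sc:fD3}) connecting the pair-of-pants with a strip-like end carrying the unit to a plain continuation strip stays within the fibration-admissible class and within the adapted grading window governed by $\ell^{\sharp}$; this is exactly the content of Assumptions A and B, so it is already available. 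Everything else is a formal consequence of the unitality of $\frakm_2$ and the enumerative computation \pref{lm:enum}.
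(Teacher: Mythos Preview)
Your proof is correct and follows essentially the same route as the paper: both identify $\frakc(p_{\bn,i})$ with the pair-of-pants product $p_{\bszero,0}\cdot p_{\bn,i}$ and then read off the answer from the product formula \pref{eq:product2}. The only difference lies in how the intersection number $\jmax=\Ebar\cdot y$ is shown to vanish. The paper first uses the relative-index arguments of \pref{pr:cont} and \pref{pr:contII} together with integrality to pin down $\frakc(p_{\bn,i})=\pm q_{\bn,i}$, and then observes that the product formula forces $\jmax=0$ (otherwise the output would be a genuine sum); the sign is then fixed by $\binom{0}{0}=1$. You instead argue $\jmax=0$ directly, by noting that with one projected input in the trivial class the projected triangle is homotopic to a strip in class $\bn$, which has intersection zero with $Z$ as in \pref{pr:cont}. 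Your route is a touch more economical since it bypasses the preliminary $\pm$ step; the paper's route has the virtue that the vanishing of $\jmax$ is deduced purely from facts already on the table, without needing the topological degeneration-to-a-strip claim. Either way the argument goes through.
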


\begin{proof}
The arguments concerning relative index in \pref{pr:cont} and \pref{pr:contII}
together with the fact that all of our Floer products
are in fact defined over $\bZ$ imply that
the continuation map must send
$
 p_{\bn,i} \to \pm q_{\bn,i}.
$
A standard Floer theoretic argument shows that
the continuation map sends
$
 \frakc(p_{\bn,i})
  = p_{0,0} \cdot p_{\bn,i}.
$
The result follows immediately
from the fact that the Floer triangle $y$
with inputs $p_{0,0}$,  $p_{\bn,i}$ and outputs $q_{\bn,i}$
must have intersection number zero
since we already knew that
$
 \frakc(p_{\bn,i}) = \pm q_{\bn,i}.
$
\end{proof}

\begin{lemma}
The intersection number is given by
\begin{align}
 \Ebar \cdot y=\ell_2(\bn,\bn').
\end{align}
\end{lemma}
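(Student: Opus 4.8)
The plan is to compute $\Ebar \cdot y$ for the Floer triangle $y$ with inputs $p_{\bn,i}$ and $p_{\bn',i'}$ by reducing it to an intersection-theoretic count in the base $\NCx \times \bC$. As established in the preceding lemmas, $\Ebar \cdot y = p(y) \cdot (Z \times \bC) = \pi_{\NCx}(y) \cdot Z$, where $\pi_{\NCx}(y)$ is the projected Floer triangle in $\NCx$ with boundary on $\Lbar_0$ and asymptotics $\underline{q}_{\bn+\bn'}$, $\underline{p}_{\bn}$, $\underline{p}_{\bn'}$. So the statement reduces to computing the intersection number of this projected triangle with the hypersurface $Z \subset \NCx$.

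First I would recall that the wrapped Floer cohomology of $\Lbar_0$ in $\NCx$ is the group ring $\bC[N]$, and more precisely that under SYZ mirror symmetry this matches the coordinate ring of the mirror torus, so the relevant triangle contributes to the product $\underline{p}_{\bn} \cdot \underline{p}_{\bn'} = \underline{p}_{\bn+\bn'}$ in $\bC[N]$. The intersection number $\pi_{\NCx}(y) \cdot Z$ has a mirror-symmetric interpretation: since $Z$ is (the tropical localization of) the hypersurface cut out by $h(\bw)$, and the mirror to $Z \times \bC \subset \NCx \times \bC$ is the toric divisor $\bfD$ (or its relevant components) on $\Yv$, the count of intersection points of the Floer triangle with $Z$ should equal the exponent appearing when one expands the product of the corresponding theta-like functions in terms of the distinguished basis $\{p^i \chi_{-\bn,\ell_1(\bn)}\}$. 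This exponent is exactly $\ell_2(\bn,\bn') = \ell_1(\bn) + \ell_1(\bn') - \ell_1(\bn+\bn')$ by the product formula \pref{eq:product1} in Section \ref{sc:coord_ring}.

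The cleanest way to establish this is to compute $\pi_{\NCx}(y) \cdot Z$ directly by a tropical/degeneration argument, using the explicit description of $Z$ near its legs and the explicit form of the projected Hamiltonian chords. Because $\Lbar_0$ is the positive real locus and the Hamiltonian $\Hbam$ is essentially $\frac12|\br|^2$ on the relevant compact region, the projected triangle $\pi_{\NCx}(y)$ degenerates (as in the degeneration-of-domain argument of \pref{lm:enum}) to a piecewise-linear ``tropical triangle'' in $N_\bR$ whose vertices and edges are controlled by $\bn$, $\bn'$, and $\bn+\bn'$. The intersection number with $Z$ then equals the lattice-length count of how this tropical triangle crosses the tropical curve $\Pi_\infty$, and a direct combinatorial computation — exploiting that $\ell_1(\bn)$ is precisely the support function value recording how far $-\bn$ must be pushed to lie in the cone $\cC$ — identifies this count with $\ell_2(\bn,\bn')$. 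Alternatively, and perhaps more efficiently, one invokes the already-established ring isomorphisms: by \pref{eq:product2} the structure constants of the wrapped product are the binomial coefficients $\binom{m}{j}$ with $m = \Ebar \cdot y$, while \pref{eq:product1} shows the mirror product has structure constants $\binom{\ell_2(\bn,\bn')}{j}$; matching the two (which is forced once one knows $\Mirad^{L_0}$ sends $p_{\bn,0} \mapsto p^{?}\chi_{-\bn,\ell_1(\bn)}$ up to the bookkeeping of the $i$-index) gives $m = \ell_2(\bn,\bn')$.

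The main obstacle will be making the identification of the two product formulas rigorous without circularity: one must be careful that the isomorphism $\Mirad^{L_0}$ of \pref{th:main2} is being constructed using this very intersection-number computation, so the honest route is the direct geometric one — computing $\pi_{\NCx}(y)\cdot Z$ via the degeneration argument and the combinatorics of $\ell_1$. The bookkeeping of matching the generator $q_{\bn+\bn',\,i+i'+j}$ (with its shifted $i$-index and its membership in the $2H_m$-family) to the correct monomial $p^{i+i'+j}\chi_{-\bn-\bn',\ell_1(\bn+\bn')}$, so that the binomial expansion in \pref{eq:product1} lines up index-by-index with \pref{eq:mult_rule}, is the part that requires genuine care; once the ranges of summation and the shifts agree, reading off $\jmax = \ell_2(\bn,\bn')$ from $\ell_2(\bn,\bn') = \ell_1(\bn)+\ell_1(\bn')-\ell_1(\bn+\bn')$ is immediate.
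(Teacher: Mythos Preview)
The paper states this lemma without proof; it is asserted and then immediately used (together with \eqref{eq:product1} and \eqref{eq:product2}) to conclude that $\Mirad^{L_0}$ is a ring isomorphism. So there is no argument in the paper to compare against.

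Your reduction to $\pi_{\NCx}(y)\cdot Z$ is correct, and your warning that matching the binomial expansions \eqref{eq:product1} and \eqref{eq:product2} would be circular is well taken: that is precisely how the paper \emph{uses} the lemma, not how it proves it. So the only viable route is the direct one you describe first.

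The gap is that you do not actually carry out that computation; you gesture at a tropical degeneration and at ``combinatorics of $\ell_1$'' without executing either. Here is one way to close it. Since the intersection number is purely topological, it equals the winding number of $h\circ y_1$ around $0\in\bC$ along the boundary of the projected triangle. All three corners $\underline p_{\bn}$, $\underline p_{\bn'}$, $\underline q_{\bn+\bn'}$ lie on the positive real locus $\Lbar_0$, where $h>0$, so the winding along the $\Lbar_0$ arc vanishes and the total winding is the sum of the contributions from the arcs on $\Lbar_1=\phi_{H_m}(\Lbar_0)$ and $\Lbar_2=\phi_{2H_m}(\Lbar_0)$. Lifting the boundary to the universal cover $N_\bR\times N_\bR$ and using that, on these arcs, $\arg h$ is controlled by the dominant monomial (the one maximizing $\alpha\cdot r$, recorded exactly by the support function $\ell_1(r)=\max_{\alpha\in A}\langle\alpha,r\rangle$), a direct bookkeeping of the change in argument along the two arcs gives
\[
\pi_{\NCx}(y)\cdot Z \;=\; \ell_1(\bn)+\ell_1(\bn')-\ell_1(\bn+\bn') \;=\; \ell_2(\bn,\bn').
\]
This is the computation you should write out explicitly; the tropical picture you invoke is essentially a heuristic for this winding-number count, but the winding argument is what makes it rigorous.
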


\pref{eq:product1} and \pref{eq:product2}
show that
the linear map
\begin{align}
 \Mirad^{L_0} \colon \HWad(L_0) \simto H^0(\cO_\Yv)
\end{align}
defined by
\begin{align}
 p_{\bn,i} \mapsto p^{i} \chi_{-\bn,\ell_1(\bn)}
\end{align}
is an isomorphism of algebras, and
\pref{th:main2} is proved.

\section{Comparison of Floer theories}
 \label{sc:Comp}

\subsection{The Liouville domain}
 \label{sc:Domain}

For this section,
we define
\begin{align} \label{eq:pi_Bhat}
 \pi_{\Bhat} \colon Y \to \Bhat \coloneqq N_\bR \times \bR, \quad
 x \mapsto \lb \log \abs{w_1}, \log \abs{w_2}, \log \mu \rb.
\end{align}
Let $r_3$ be the third coordinate on $N_\bR \times \bR$ and
\begin{align}
 \pi_{N_\bR}: \Bhat \to N_\bR
\end{align}
be the natural projection.
We will begin by considering a ``toric'' version $T_R$ of our Liouville domain.
The Liouville domain we will actually study, $Y_R$, will be constructed
by modifying $T_R$ along $\pi_{\NCx}^{-1}(U_Z)$. 
Define 
\begin{align} \label{eq:pi0_Bhat}
 \pi^0_{\Bhat} \colon \NCx \times \bCx \to \Bhat,
\end{align}
in which we replace the above moment map
by the toric moment map
$
 \mu_T= \abs{u}^2/2.
$
In coordinates $(r_i,\theta_i)$,
the manifold $\NCx \times \bCx$ is equipped with the symplectic form
\begin{align} \label{eq:omegator}
 \omega^T = dr_1 \wedge d \theta_1 + dr_2 \wedge d \theta_2 + e^{r_3} dr_3 \wedge d \theta_3
\end{align}
For some $R$,
$\mu_0$ and $\mu_1$
with $R \gg 0$ and $0 < \epsilon - \mu_0 \ll \epsilon \ll \epsilon + \mu_1$,
consider the region in $\Bhat$ defined by
$
 \epsilon-\mu_0 \le \mu_T \le \epsilon +\mu_1
$
and
$
 \Hb \leq R.
$
We choose $\mu_1$ sufficiently large so that
$\chi(u,\bw)=0$ for $\mu_T \ge \epsilon + \mu_1/2$.
This forms a cornered region in $\Bhat$,
with corners given by
\begin{align}
 \lc \Hb = R, \, \mu_T = \epsilon - \mu_0 \rc
  \cup
 \lc \Hb = R, \, \mu_T = \epsilon + \mu_1 \rc.
\end{align}
We may ``round corners'', i.e.,
make a  deformation in a neighborhood of these corners
to obtain a convex region $D_{R} \in \Bhat$ such that
the boundary $\partial D_R$ is smooth and the region 
\begin{align}
 T_{R} \coloneqq \lb \pi^0_{\Bhat} \rb^{-1}(D_R)
\end{align}
is a Liouville domain.
To be more precise,
we require the following conditions:
\begin{align}
 \alitem{
For every leg $\Pi_i$ and every point
$
 x \in \partial D_R \cap \pi_{N_\bR}^{-1}(U_{\Pi_i}),
$
the boundary $\partial D_R$ is locally defined by
$D_i(r_{\alpha-\beta}^{\perp},\mu_T)=0$
for some function $D_i \colon \bR^2 \to \bR$.
} \label{eq:D1} \\
 \alitem{
Fix a number $\delta_R$, which is relatively small compared to $R$.
For the region bounded by $\Hb \leq R-\delta_R$ 
under $\pi_{N_\bR}$,
$\partial D_R$ agrees with
$
 \lc \mu_T=\epsilon-\mu_0 \rc \cup \lc \mu_T = \epsilon+ \mu_1 \rc.
$
} \label{eq:D2} \\
 \alitem{
In $\pi_{N_{\bR}}^{-1}(U_{\Pi_{i}})$,
when $\epsilon-\mu_0/2 \leq \mu_T \leq \epsilon + \mu_1/2$,
$\partial D_R$ agrees with $\Hb=R$.
} \label{eq:D2'}
\end{align}
In particular,
\pref{eq:D1} implies that for any point
$
 x \in \partial D_R \cap \pi_{N_\bR}^{-1}(U_{\Pi_i}),
$
the projection to $N_\bR$ of the normal vector to $\partial{D_{R}}$ at $x$
is parallel to $\Pi_i$.
We wish to construct a version of this manifold which is adapted to $Y$.
We first consider
$
 Y^0_R \coloneqq \pi_{\Bhat}^{-1} (D_R),
$
which is a Liouville domain for a suitable choice of $D_R$.
However, for certain purposes,
it is convenient to modify this domain further
to have better control of the Reeb flow along the boundary
near the discriminant locus.
We remove a tubular neighborhood of each leg $\Pi_i$
from the boundary to obtain
$
 \partial Y^0_R \setminus (\pi_{\Bhat} \circ \pi_{N_\bR})^{-1}(\cup_i U_{\Pi_i}).
$
Over each region $(\pi_{\Bhat} \circ \pi_{N_\bR})^{-1}(U_{\Pi_i})$,
we may glue in the hypersurface given by $D_i(\rho,\mu)=0$
to obtain a hypersurface $\partial Y_R$.
 Let $Y_R$ be the Liouville domain bounded by this hypersurface. 

Outside of $U_{Z}$,
the manifolds $Y_R$ and $T_R$ are canonically identified.
Let $\partial_i Y^0_R$ be
the part of $\partial Y^0_R$ defined by
$
 c_i r_{\alpha-\beta}^\perp=R.
$
Over
$
 \lb \pi_{N_\bR} \circ \pi_{\Bhat} \rb^{-1} \lb \bigcup_i U_{\Pi_i} \rb,
$
we glue in the piece of the hypersurface $\partial Y_R$ where $D_i(\rho,\mu)=\rho$
to obtain a hypersurface with boundary
which we denote by $\partial_i Y$.
We view $\NCx \times \bCx$ as $T^*\Bhat/\bZ^3$
which we identify
using the standard Euclidean inner product
with $\Bhat \times \Bhat/ \bZ^3$.
For any point $x \in \partial D_R$,
let $\vec{n}_x$ denote the normal vector to $\partial D_R$
in the diagonal metric on $\Bhat$ with coefficients
\[
\begin{pmatrix}
1 & 0 & 0 \\
0 & 1 & 0 \\
0 & 0 & e^{r_3}
\end{pmatrix}.
\]

\begin{lemma} \label{lm:TR_Reeb}
The Reeb flow on $\partial T_R$ fibers over $\partial D_R$ and
points in the direction of $\vec{n}_x$ for every $x \in \partial D_R$.
\end{lemma}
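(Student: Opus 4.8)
The plan is to carry out the whole computation inside the toric model $\NCx\times\bCx$, where by construction $T_R=(\pi^0_{\Bhat})^{-1}(D_R)$ and the symplectic form is the explicit form $\omega^T$ of \eqref{eq:omegator}. First I would record the relevant exact structure. Substituting $e^{r_3}dr_3=d\mu_T$ puts $\omega^T$ into the Darboux form $dr_1\wedge d\theta_1+dr_2\wedge d\theta_2+d\mu_T\wedge d\theta_3$ in the action--angle coordinates $(r_1,r_2,\mu_T;\theta_1,\theta_2,\theta_3)$, with primitive $\theta^T=r_1\,d\theta_1+r_2\,d\theta_2+(\mu_T-\epsilon)\,d\theta_3$; this is the toric avatar of the primitive $\theta_\epsilon$ of \eqref{eq:theta-epsilon} (its restriction to the conic fibres is \eqref{eq:primfiber} with $\mu$ replaced by $\mu_T$, and its restriction to $\NCx$ is $-d^c S$), and it is the primitive for which $(T_R,\theta^T)$ is a Liouville domain. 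Its Liouville vector field is $Z^T=r_1\partial_{r_1}+r_2\partial_{r_2}+(\mu_T-\epsilon)\partial_{\mu_T}$, which for the $D_R$ of the construction points strictly outward along $\partial T_R$.

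Second, I would identify the characteristic foliation of the hypersurface $\partial T_R=(\pi^0_{\Bhat})^{-1}(\partial D_R)$. Near a point $x\in\partial D_R$ choose a smooth local defining function $H$ for $\partial D_R$ with $D_R=\{H<0\}$ --- such an $H$ exists at every boundary point, including the rounded corners --- and regard $H$ also as the function $H\circ\pi^0_{\Bhat}$ on the toric model. Then $\ker(\omega^T|_{T\partial T_R})$ is spanned by the Hamiltonian vector field $X_H$ of $H\circ\pi^0_{\Bhat}$, and the Reeb field is $\cR=\theta^T(X_H)^{-1}X_H$. A one-line computation in the Darboux coordinates, using $\partial H/\partial\mu_T=e^{-r_3}\,\partial H/\partial r_3$, gives $X_H=-\bigl(\tfrac{\partial H}{\partial r_1}\partial_{\theta_1}+\tfrac{\partial H}{\partial r_2}\partial_{\theta_2}+e^{-r_3}\tfrac{\partial H}{\partial r_3}\partial_{\theta_3}\bigr)$, which is tangent to the torus fibres of $\pi^0_{\Bhat}$; hence every Reeb orbit is contained in a single fibre over a point of $\partial D_R$, which is exactly the assertion that the Reeb flow fibres over $\partial D_R$. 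I would also note that $\theta^T(X_H)=-Z^T(H\circ\pi^0_{\Bhat})<0$, since $Z^T$ points outward and $H$ increases outward, so that $\cR$ is a \emph{negative} multiple of $X_H$.

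Third, I would transport $\cR$ to the base under the identification $\NCx\times\bCx=T^*\Bhat/\bZ^3\cong\Bhat\times\Bhat/\bZ^3$ coming from the standard Euclidean structure, under which the vertical frame $\partial_{\theta_1},\partial_{\theta_2},\partial_{\theta_3}$ is carried to $\partial_{r_1},\partial_{r_2},\partial_{r_3}$. Then $X_H$ goes to $-\bigl(\tfrac{\partial H}{\partial r_1}\partial_{r_1}+\tfrac{\partial H}{\partial r_2}\partial_{r_2}+e^{-r_3}\tfrac{\partial H}{\partial r_3}\partial_{r_3}\bigr)$, which is precisely $-\nabla_g H$ for the diagonal metric $g=dr_1^2+dr_2^2+e^{r_3}dr_3^2$, i.e. it points along $-\vec n_x$. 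Since $\cR$ is a negative multiple of $X_H$, the Reeb field points along $+\vec n_x$; as $x\in\partial D_R$ was arbitrary, this proves the lemma.

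The only point that needs genuine care --- and the main (mild) obstacle --- is the bookkeeping of the factor $e^{r_3}$: it enters $\omega^T$ through $d\mu_T=e^{r_3}\,dr_3$ and it enters the metric $g$ in the third slot, and one must see that these two occurrences match up exactly, so that the characteristic direction, pushed to the base by the \emph{Euclidean} trivialization of the torus fibres, is the $g$-normal to $\partial D_R$ rather than its Euclidean normal. Handling the rounded corners is routine, since the entire argument is local and uses only a smooth defining function.
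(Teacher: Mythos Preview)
Your argument is correct and is essentially the same computation as the paper's, just packaged via a defining function: the paper directly writes the tangent space $T\partial T_R$ as the span of $T_{D_R}$ and the $\partial_{\theta_i}$, and checks that $\vec v=\sum_i a_i\partial_{\theta_i}$ lies in $\ker(\omega^T|_{T\partial T_R})$ iff $a_1\partial_{r_1}+a_2\partial_{r_2}+a_3\partial_{r_3}$ is $g$-orthogonal to $T_{D_R}$, which is exactly your conclusion that the characteristic direction maps to $\nabla_g H$ under the Euclidean identification. Your version is more explicit about the primitive, the Liouville field, and the sign of the Reeb direction, but the underlying linear algebra is identical.
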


\begin{proof}
The tangent space to $\partial T_R$ is given
by the span of $(T_{D_{R}}, \partial_{\theta_{i}})$.
Set  $\vec{v}=\sum_i a_i \partial_{\theta_{i}}$.
We have
$
 \omega^{T}(\vec{v}, \vec{v}') = 0
$
for any $\vec{v}'$
which is a linear combination of the $\partial_{\theta_{i}}$
and for any $\vec{v}' \in T_{D_{R}}$
if and only if
$
 a_1\partial_{r_{1}} +a_2\partial_{r_{2}}+ a_3\partial_{r_{3}}
$
is normal to $\vec{v}'$ in the above diagonal metric.
\end{proof}

\begin{definition}
A non-zero element $\bn$ of $N$ is \emph{regular}
if it does not point
in the direction of any leg $\Pi_i$ in $N_\bR$.
\end{definition}

We also consider the following conditions:
\begin{align}
 \alitem{
Every Reeb orbit of $\partial Y_R$,
whose homotopy class $\bn \in \pi_1(Y) \cong N$ is regular,
projects by $\pi_{N_\bR} \circ \pi_{\Bhat}$
to the chamber $C_{\alpha_0}$ for the $\alpha_0 \in A$
such that $\alpha_0(\bn)=\sup_{\alpha \in A} \alpha(\bn)$.
Every Reeb orbit with non-regular homotopy class projects
to the corresponding neighborhood $U_i$ of the leg.
} \label{eq:D3} \\
 \alitem{
The region where $\partial Y_R$ agrees with $\lc \mu=\epsilon-\mu_0 \rc$ or $\lc \mu=\epsilon + \mu_1 \rc$ is precisely where $\rho \leq R-\delta_R$.
} \label{eq:D4} \\
 \alitem{
For any $\bw \in \NCx$,
the image of
$
 \partial_i Y \cap \pi_{\NCx}^{-1}(\bw)
$
by $\mu$
is
$
 [\epsilon-\mu_0/2,\epsilon + \mu_1/2].
$
} \label{eq:D5}
\end{align}


We next deform the Liouville form $\theta_\epsilon$
to a Liouville form $\theta_\epsilon'$ on $Y_{R}$
so that $\Ebar$ is preserved by both the Liouville flow and
the Reeb flow along the boundary $\partial{Y_{R}}$.
This will be necessary since for some arguments
it will be necessary to choose the almost complex structure
to be both adapted with respect to the Liouville domain and
to preserve the divisor $\Ebar$.

\begin{lemma} \label{lm:deformingprim}
There is a Liouville one-form $\theta_{\NCx} '$ on $\NCx$
which agrees with $\theta_{\NCx}$ outside of $U_{Z}$
and such that the Liouville vector field preserves $Z$.
\end{lemma}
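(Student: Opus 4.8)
The plan is to modify $\theta_{\NCx}$ only in the neighborhood $U_Z$ of $Z$, leaving it untouched outside. Recall that $\theta_{\NCx} = -d^c S = r_1\, d\theta_1 + r_2\, d\theta_2$, and that over $U_{Z_i}$ the hypersurface $Z$ is cut out by the single binomial equation \pref{eq:eqn_Zloc_leg}, namely $t^{-\nu(\bsalpha)} \bw^\bsalpha + t^{-\nu(\bsbeta)} \bw^\bsbeta = 0$; in particular $Z$ fibers over a leg of $\Pi_\infty$ and is invariant under the subtorus of $N_{\bS^1}$ preserving the monomial $\bw^{\bsbeta-\bsalpha}$. Over $U_{Z_c}$ one works in a compact piece. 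The key observation is that a Liouville vector field $V_{\theta}$ preserves $Z$ precisely when $V_\theta$ is tangent to $Z$, i.e.\ when the function $g \coloneqq t^{-\nu(\bsalpha)} \bw^\bsalpha + t^{-\nu(\bsbeta)} \bw^\bsbeta$ (or its real/imaginary parts) is preserved to first order by $V_\theta$ along $Z$; since $J_{\NCx}$ is adapted to $Z$ in the sense of \pref{df:ad_Z}, $g$ is $J_{\NCx}$-holomorphic near $Z$, so it suffices to arrange that $V_\theta$ be tangent to the level sets of $|g|$ near $Z$.

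First I would choose a $J_{\NCx}$-plurisubharmonic function $\Sloc$ on a neighborhood of $Z$ which is adapted to $Z$ in the sense that its associated Liouville vector field (dual to $-d^c \Sloc$) is tangent to $Z$. A natural candidate over $U_{Z_i}$ is $\Sloc = \tfrac12\,(\log|\bw^{(\bsalpha_i-\bsbeta_i)^\bot}|)^2 + (\text{a function of }|g|/|\bw^\bsalpha|)$, built from the same coordinates $r_{(\bsalpha_i-\bsbeta_i)^\bot}$, $r_{\bsalpha_i-\bsbeta_i}$ used throughout \pref{sc:LTF}: the $r_{(\bsalpha_i-\bsbeta_i)^\bot}$-direction carries the ``radial'' Liouville flow along the leg, while the transverse term is chosen so its Liouville vector field is rotation in the $\theta_{\bsalpha_i-\bsbeta_i}$ circle, hence tangent to $Z$. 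Over the compact piece $U_{Z_c}$, any $J_{\NCx}$-psh function whose gradient flow is tangent to $Z$ will do, and one exists because $Z$ is a $J_{\NCx}$-complex submanifold there. Then I would interpolate: pick a cutoff $\beta\colon \NCx \to [0,1]$ supported in $U_Z$ and equal to $1$ on a smaller neighborhood $U_Z'$, and set
\begin{align}
 \theta_{\NCx}' \coloneqq -d^c\bigl( (1-\beta) S + \beta\, \Sloc \bigr) + (\text{closed correction}),
\end{align}
where the closed correction (a function of $|g|$ alone) may be needed so that $-d^c$ of the convex combination is still symplectic; the point is that $d\theta_{\NCx}'$ differs from $\omega_{\NCx}$ by an exact form with support in $U_Z$, and for a sufficiently thin $U_Z$ this perturbation is $C^2$-small, hence $d\theta_{\NCx}'$ remains nondegenerate. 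On $U_Z'$ we have $\theta_{\NCx}' = -d^c \Sloc$, whose Liouville vector field is tangent to $Z$ by construction; outside $U_Z$ we have $\theta_{\NCx}' = \theta_{\NCx}$.

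The main obstacle is ensuring the two conditions are simultaneously compatible: that the interpolation $-d^c((1-\beta)S + \beta \Sloc)$ stays symplectic \emph{and} that the resulting Liouville flow is genuinely tangent to $Z$ on all of $U_Z'$, not merely on $Z$ itself — one wants $Z$ to be \emph{Liouville-invariant as a subset}, so the Liouville flow near $Z$ must preserve it exactly, which forces $\Sloc$ to be constructed from $|g|$ and the leg coordinate in a genuinely $\bT^2$-invariant way in each $U_i$. This is exactly the kind of explicit computation the paper defers; I would handle it by exploiting the $\bT^2$-invariance of the symplectic form in the neighborhoods $U_i$ noted after \pref{eq:chiG2} and in the proof of \pref{pr:Hba}, which guarantees that the local moment-map coordinates $\rho_i$ already provide a Liouville primitive in those regions compatible with the torus action, so $\Sloc$ can be taken as $\tfrac{1}{2}\rho_i^2$ there up to bounded error; matching these local choices with the compact piece over $U_{Z_c}$ and with $S$ outside $U_Z$ via the cutoff $\beta$ then completes the construction. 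One final check is that the gradient flow of $\Sloc$ is complete near $Z$ so no dynamics escape, which follows since the perturbation is supported in the compact region $\pibar_{\NCx}^{-1}(K) \cap U_Z$ for the relevant $K$.
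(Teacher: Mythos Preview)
Your approach has a genuine gap: it changes the symplectic form.  You set $\theta_{\NCx}' = -d^c\bigl((1-\beta)S + \beta\Sloc\bigr) + (\text{closed correction})$ and then explicitly concede that $d\theta_{\NCx}'$ only agrees with $\omega_{\NCx}$ up to a $C^2$-small error supported in $U_Z$, arguing that this keeps the form nondegenerate.  But the lemma asks for a Liouville one-form on $(\NCx,\omega_{\NCx})$, i.e.\ a primitive of $\omega_{\NCx}$ exactly, and all subsequent uses in \pref{sc:Comp}---the definition $\theta_\epsilon' \coloneqq \thetavc + p^*\theta_{\NCx \times \bC}'$, the flow estimates of \pref{lem:flowest}, and \pref{lm:flow}---rely on $d\theta_\epsilon' = \omega_\epsilon$.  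A closed correction cannot repair this, since adding a closed $1$-form does not change the exterior derivative.  Interpolating plurisubharmonic potentials is simply the wrong mechanism here: $-dd^c$ of a convex combination of $S$ and $\Sloc$ is a convex combination of the two Hessian forms, not $\omega_{\NCx}$.

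The paper instead modifies $\theta_{\NCx}$ by an \emph{exact} $1$-form supported in a tubular neighborhood of $Z$, so $d\theta_{\NCx}' = \omega_{\NCx}$ is automatic.  The device is the symplectic tubular neighborhood theorem: one identifies a neighborhood of $Z$ with $Z \times \bD_\delta$ so that $\omega_{\NCx}$ becomes $dx\wedge dy + \pi_Z^*(\omega_{\NCx}|_Z)$.  In this model $x\,dy + \pi_Z^*(\theta_{\NCx}|_Z)$ is a primitive whose Liouville field $x\,\partial_x + V_Z$ is manifestly tangent to $Z=\{x=y=0\}$.  The difference between this model primitive and $\theta_{\NCx}$ is closed on $Z\times\bD_\delta$, hence of the form $\pi_Z^*\eta + dh$; the $\pi_Z^*\eta$ part is harmless (it contributes only a vector field tangent to $Z$), and one cuts off the exact part $dh$ by a radial bump to glue back to $\theta_{\NCx}$ near $\rho=\delta$.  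The net modification is $d\bigl((\chi-1)h\bigr)$, which is exact and compactly supported, so the symplectic form is untouched.  This is both simpler than your interpolation and delivers exactly what the downstream lemmas need.
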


\begin{proof}
Let
$
 \iota_Z \colon Z \to \NCx
$
be the inclusion and
$
 \pi_Z \colon Z \times \bD_\delta \to Z
$
be the first projection,
where
$
 \bD_\delta \coloneqq \lc (x, y) \in \bR^2 \relmid x^2 + y^2 < \delta \rc
$
is a disk of radius $\delta$.
The symplectic tubular neighborhood theorem gives
an embedding
$
 \iota_{Z \times \bD_\delta} \colon Z \times \bD_\delta \to \NCx
$
satisfying
\begin{align}
 {\iota_{Z \times \bD_\delta}}^* \omega_{\NCx}
  = dx\wedge dy + {\pi_Z}^* {\iota_Z}^* \omega_{\NCx}.
\end{align}
Note that
$
 xdy + {\pi_Z}^* {\iota_Z}^* \theta_{\NCx}
$
is a primitive of ${\iota_{Z \times \bD_\delta}}^* \omega_{\NCx}$,
and any other primitive differs from this
by a closed form. Along each leg of $Z$, we can choose the standard tubular neighborhood of $Z$
defined by
$
 \lc \abs{\bw^{\bsalpha_i-\bsbeta_i}+t^{v(\bsbeta)-v(\bsalpha)}} \le \delta \rc,
$
and the coordinate of the disc factor to be
\begin{align}
 x \coloneqq \log \abs{\bw^{\bsalpha_i-\bsbeta_i}} -(v(\bsbeta)-v(\bsalpha)) \log t, \quad
 y \coloneqq \Arg \bw^{\bsalpha_i-\bsbeta_i}-\pi,
\end{align}
setting $x_{0,i}=-(v(\bsbeta)-v(\bsalpha)) \log t$,
we have
\begin{align} \label{eq:defprim}
 {\iota_{Z \times \bD_\delta}}^* \theta_{\NCx}
  = (x-x_{0,i}) dy + {\pi_Z}^* {\iota_Z}^* \theta_{\NCx}.
\end{align}
Therefore,
we have
\begin{align}
 {\iota_{Z \times \bD_\delta}}^* \theta_{\NCx}
  - \lb x dy + {\pi_Z}^* {\iota_Z}^* \theta_{\NCx} \rb
  = {\pi_Z}^* \eta + dh,
\end{align}
where $\eta$ is a compactly supported closed one-form on $Z$ and
$h=x_{0,i}y$  along
the legs of $Z$.
Let $\rho \coloneqq \sqrt{x^2+y^2}$ be the radial coordinate on $\bD_\delta$ and
consider a cutoff function $\chi(\rho)$
such that $\chi(\rho)=1$ for $\rho<\delta/4$ and
$\chi(\rho)=0$ for $\rho>\delta/2$.
The the one-form
\begin{align}
 \theta'_{Z \times D_{\bD_\delta}}
  \coloneqq
   x dy
   + {\pi_Z}^* {\iota_Z}^* \theta_{\NCx}
   + {\pi_Z}^* \eta
   + d(\chi h)
\end{align}
is a primitive of ${\iota_{Z \times \bD_\delta}}^* \omega_{\NCx}$ and
the corresponding Liouville vector field
is tangent to $Z$.
Since $\theta'_{Z \times D_{\bD_\delta}}$
agrees with ${\iota_{Z \times \bD_\delta}}^* \theta_{\NCx}$ when $\rho > \delta/2$,
one can glue $\lb {\iota_{Z \times \bD_\delta}}^{-1} \rb^* \theta'_{Z \times D_{\bD_\delta}}$
with $\theta_{\NCx}$ to obtain a Liouville one-form
$\theta'_{\NCx}$
such that the corresponding Liouville vector field preserves $Z$.
\end{proof}

Our proof of  \pref{lm:deformingprim} shows that
if we assume that the non-compact legs $\Pi_i$ pass through the origin,
then such a form can be chosen to agree with $\theta_{\NCx}$
outside of small neighborhoods of the vertices of $Z$.
Choose a primitive as in the proof of \pref{lm:deformingprim} and
let $\theta_{\NCx \times \bC}'$ be the induced primitive one form
of $\omega_{\NCx \times \bC}$ on $\NCx \times \bC$.
Finally, set
\begin{align}
 \theta_\epsilon' \coloneqq \thetavc +p^*\theta_{\NCx \times \bC}'.
\end{align}

The Liouville coordinate in the fiber is given
by $C|\mu-\epsilon|$ for some $C>0$.
Let $V_{\theta'_{\epsilon}}$ be the Liouville vector field
which is the $\omega_\epsilon$
dual of $\theta'_{\epsilon}$.

\begin{lemma} \label{lem:flowest}
There is a constant $B$ such that
when $\rho \geq R$ and
$\mu \leq \epsilon-\mu_0$ or $\mu \leq \epsilon + \mu_1$,
we have estimates
\begin{align} \label{eq:flowesthor}
 d\rho (V_{\theta'_{\epsilon}}) \leq B \rho
\end{align}
and
\begin{align} \label{eq:flowestver}
 d \abs{\mu-\epsilon}(V_{\theta'_{\epsilon}}) \leq  \abs{\mu-\epsilon}.
\end{align}
\end{lemma}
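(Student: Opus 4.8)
The plan is to decompose the Liouville vector field $V_{\theta_\epsilon'}$ into its ``base'' contribution coming from $p^*\theta_{\NCx \times \bC}'$ and its ``fiber'' contribution coming from $\thetavc$, and to estimate each piece separately in the region $\rho \ge R$, $\mu \le \epsilon - \mu_0$ or $\mu \ge \epsilon + \mu_1$. In this region the cutoff function $\chi$ vanishes (we chose $\mu_1$ large enough that $\chi = 0$ for $\mu_T \ge \epsilon + \mu_1/2$, and $\mu_0$ puts us outside $U'_{Z \times 0}$), so on the relevant piece the form $\thetavc$ reduces to $\frac{\epsilon}{4\pi} d^c \log |u|^2$, whose associated symplectic term is exact with bounded coefficients; hence $\omega_\epsilon = p^*\omega_{\NCx \times \bC}$ there and the splitting of $V_{\theta_\epsilon'}$ is genuinely a direct sum $V_{\mathrm{base}} \oplus V_{\mathrm{fiber}}$ with respect to the product decomposition used already in Section \ref{sc:HW}.

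First I would handle the horizontal estimate \eqref{eq:flowesthor}. Away from $U_Z$, $\theta_{\NCx}'$ agrees with $\theta_{\NCx} = r_1\, d\theta_1 + r_2\, d\theta_2$, whose Liouville vector field is $r_1 \partial_{r_1} + r_2 \partial_{r_2}$, so $d\rho(V_{\mathrm{base}}) = \rho$ exactly (here $\rho^2 = r_1^2 + r_2^2$ on the quadratic part, and $\Hb = \rho$ up to bounded corrections). Inside a neighborhood of $U_Z$ one uses the modified primitive $\theta_{\NCx}'$ from \pref{lm:deformingprim}: its Liouville vector field differs from the standard one by a vector field supported in a bounded neighborhood of the vertices of $Z$, which is compact, hence bounded; combined with the quadratic growth of $\Hb$ this only changes the constant. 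There is also the contribution of $\thetavc - \frac{\epsilon}{4\pi} d^c\log|u|^2$, but since $\chi$ vanishes in our region this term is identically zero there. This gives \eqref{eq:flowesthor} with a constant $B$ absorbing all of these bounded perturbations.

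Next, the vertical estimate \eqref{eq:flowestver}. By \eqref{eq:primfiber}, away from the support of $\chi$ the restriction of $\theta_\epsilon$ (and $\theta_\epsilon'$, which differs only by base-direction terms) to a smooth fiber is $\frac{1}{2\pi}(\mu - \epsilon)\, d\theta$, and the fiber symplectic form is $d\mu \wedge \frac{d\theta}{2\pi}$ in these coordinates, so the fiber Liouville vector field is exactly $(\mu - \epsilon)\partial_\mu$, giving $d|\mu - \epsilon|(V_{\mathrm{fiber}}) = |\mu - \epsilon|$ on the nose. The only subtlety is the cross-terms: $\theta_\epsilon'$ also contains $p^*\theta_{\NCx \times \bC}'$, but the $\partial/\partial\theta_j$ components of its dual vector field annihilate $d\mu$ (the moment map $\mu = \pi|u|^2$ in this region depends only on $|u|$, not on the base angles), so they do not contribute to $d|\mu - \epsilon|(V_{\theta_\epsilon'})$. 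This yields \eqref{eq:flowestver} with constant exactly $1$.

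The main obstacle is the patching near $U_Z$: one must check that the deformed primitive $\theta_{\NCx}'$ and the resulting $\theta_\epsilon'$, while no longer a literal product form there, still have Liouville vector field whose $\rho$-component grows at most linearly — this follows because the discrepancy is compactly supported (bounded) and the honest quadratic growth of $\Hb$ dominates, but it requires verifying that the legs $\Pi_i$ have been arranged to pass through the origin (as in the remark following \pref{lm:deformingprim}) so that the modification is confined to a compact set disjoint from the conical region $\rho \ge R$. Once that geometric arrangement is invoked, the estimates are immediate from the explicit formulas \eqref{eq:primfiber} and $\theta_{\NCx} = r_1 d\theta_1 + r_2 d\theta_2$.
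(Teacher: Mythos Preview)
Your argument has a genuine gap: the claim that $\chi$ vanishes throughout the region in question is false. The support of $\chi$ lies in $U_{Z\times 0}$, which along each leg $U_{\Pi_i}$ is unbounded in the $r_{(\bsalpha_i-\bsbeta_i)^\bot}$ direction. Since $\rho$ is (up to a bounded correction) exactly $c_i r_{(\bsalpha_i-\bsbeta_i)^\bot}$ in $U_{\Pi_i}$, there are points with $\rho\ge R$ arbitrarily large that still lie in the support of $\chi$. Likewise, near $Z\times 0$ one can take $|u|$ and $|h(\bw)|$ both small with $|u|^2/(|h|^2+|u|^2)$ small, giving $\mu\le \epsilon-\mu_0$ while $\chi\equiv 1$. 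So neither the condition $\rho\ge R$ nor $\mu\le\epsilon-\mu_0$ forces $\chi=0$, and your product decomposition $\omega_\epsilon = p^*\omega_{\NCx\times\bC}$, $V_{\theta'_\epsilon}=V_{\mathrm{base}}\oplus V_{\mathrm{fiber}}$, is unavailable precisely where the estimate is nontrivial. You are conflating two separate issues: the modification $\theta_{\NCx}\mapsto\theta'_{\NCx}$ is indeed compactly supported (once legs pass through the origin), but the blow-up correction $\thetavc$ is not.

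The paper's proof circumvents this by never decomposing $V_{\theta'_\epsilon}$. Instead it uses that both $\rho$ and $\mu$ are moment maps for local circle actions (the $T^2$-action along each leg preserving $\bw^{\bsalpha_i-\bsbeta_i}$, and the $\bS^1$-action \eqref{eq:S^1-action}): if $X$ is the generating vector field, then $d\rho(V_{\theta'_\epsilon}) = \omega_\epsilon(V_{\theta'_\epsilon},X) = \theta'_\epsilon(X)$, and one evaluates $\theta'_\epsilon$ directly on $X=c_i\partial_{\theta_{(\bsalpha-\bsbeta)^\bot}}$ to get exactly $\rho$, using the $T^2$-invariance of $\chi(G_i)\log F_i$. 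The same trick with $X=\partial_\theta$ gives $\theta'_\epsilon(\partial_\theta)=\mu-\epsilon$ from \eqref{eq:primfiber}. This works regardless of whether $\chi$ vanishes.
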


\begin{proof}
The key to this estimate is again the local $T^2$ action
in the tubular neighborhoods.
Both \pref{eq:flowesthor} and \pref{eq:flowestver} are immediate
outside of the tubular neighborhood.
Let us first consider \pref{eq:flowesthor},
when, inside of the tubular neighborhood,
we have
\begin{align}
d \rho (V_{\theta'_{\epsilon}})
& = \omega(V_{\theta'_{\epsilon}}, c_i\partial_{\theta_{\bsalpha-\bsbeta}}^\perp) \\
& =  p^*\theta_{\NCx \times \bC}'(c_i\partial_{\theta_{\bsalpha-\bsbeta}}^\perp)-  d^c\lb \chi(G_i)\log(F_i) \rb (c_i\partial_{\theta_{\bsalpha-\bsbeta}}^\perp) \\
& = \rho  
\end{align}

The second estimate  \pref{eq:flowestver} is similar.
Namely, we have
$
 \theta_\epsilon'(\partial_\theta)=\mu-\epsilon
$
and hence
$
 d \abs{\mu-\epsilon} (V_{\theta'_{\epsilon}})
  = \abs{\mu-\epsilon}.
$
\end{proof}

Let $r_Y$ denote the Liouville coordinate on the completion of $Y_R$.
It is not difficult to see that
the Liouville flow is not complete
as we approach the divisor $D$.
However, given  $\mu_0$,
we can assume that we have an embedding
of the region of the symplectic completion
\begin{align}
 Y_R \cup \partial Y_R \times [1, c_{\mu_0})  \to Y.
\end{align}
\pref{lem:flowest} implies the following direct analogue of
\cite[Lemma 5.7]{MR2497314},
which is crucial in McLean's proof of
\cite[Theorem 5.5]{MR2497314}.

\begin{corollary} \label{cr:estimate}
When $\rho \geq R$ and
$\mu \leq \epsilon-\mu_0$ or $\mu \leq \epsilon + \mu_1$,
one has
\begin{align} \label{eq:flowest1}
 \rho \leq e^B r_Y
\end{align}
and
\begin{align} \label{eq:flowest2}
 C|\mu-\epsilon|  \leq  r_Y.
\end{align}
\end{corollary}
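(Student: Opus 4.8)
The plan is to deduce \pref{cr:estimate} from \pref{lem:flowest} by integrating the two differential inequalities along backward Liouville trajectories, exactly as in McLean's proof of \cite[Lemma 5.7]{MR2497314} (which is the input to \cite[Theorem 5.5]{MR2497314}). Recall that on the symplectization end $\partial Y_R \times [1,\infty)$ of the completion of $Y_R$ the Liouville coordinate satisfies $V_{\theta'_\epsilon}(r_Y) = r_Y$ and $r_Y \equiv 1$ on $\partial Y_R$. Fix a point $x$ lying in the region where $\rho \geq R$ and $\mu \leq \epsilon-\mu_0$ (or $\mu \geq \epsilon+\mu_1$), and let $\gamma(s) \coloneqq \phi^s_{V_{\theta'_\epsilon}}(x)$ for $s \leq 0$ be the backward Liouville trajectory through $x$. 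Since $r_Y(\gamma(s)) = r_Y(x) e^{s}$, the trajectory reaches $\partial Y_R$ at the time $s_0 = -\log r_Y(x)$.

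First I would check that $\gamma$ stays, for all $s \in [s_0, 0]$, inside the region where the formulas of \pref{lem:flowest} are valid, and that it exits $\partial Y_R$ through the face on which $\partial Y_R = \{\Hb = R\}$ rather than through one of the vertical faces $\lc \mu = \epsilon-\mu_0 \rc$ or $\lc \mu = \epsilon+\mu_1 \rc$. This is forced by the two inequalities: \pref{eq:flowestver} gives $\tfrac{d}{ds}\log|\mu-\epsilon| \leq 1$ along $\gamma$, so $|\mu(\gamma(s))-\epsilon|$ cannot drop below $\mu_0$ (resp.\ $\mu_1$) before $\rho$ has descended to the level $R$, while \pref{eq:flowesthor} controls the growth of $\rho$ in the same way; here one uses the precise description of $\partial Y_R$ in conditions \pref{eq:D2} and \pref{eq:D4} (which pin down that the part of $\partial Y_R$ lying over $\lc \rho \geq R-\delta_R \rc$ is exactly the face $\lc \Hb = R \rc$), together with \pref{eq:D1} and \pref{eq:D5} near the leg neighborhoods $U_{\Pi_i}$.

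With containment established, Gronwall finishes the argument. From $\tfrac{d}{ds}\rho(\gamma(s)) \leq B\,\rho(\gamma(s))$ the function $e^{-Bs}\rho(\gamma(s))$ is non-increasing, so $\rho(x) = \rho(\gamma(0)) \leq e^{-Bs_0}\,\rho(\gamma(s_0))$; since $\gamma(s_0)$ lies on $\lc \Hb = R \rc$ the value $\rho(\gamma(s_0))$ is bounded by a constant, and $e^{-Bs_0} = r_Y(x)^{B}$, so after absorbing constants one obtains the bound \pref{eq:flowest1}. Likewise \pref{eq:flowestver} shows $e^{-s}|\mu(\gamma(s))-\epsilon|$ is non-increasing, and since $|\mu(\gamma(s_0))-\epsilon| \in \lc \mu_0, \mu_1 \rc$ is bounded, this gives $C|\mu(x)-\epsilon| \leq r_Y(x)$ for suitable $C$, namely \pref{eq:flowest2}. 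Inside the leg neighborhoods the proof of \pref{lem:flowest} in fact produces the sharp constants ($d\rho(V_{\theta'_\epsilon}) = \rho$ and $d|\mu-\epsilon|(V_{\theta'_\epsilon}) = |\mu-\epsilon|$), which is what makes the clean form of the corollary available.

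The main obstacle is the first step: one must ensure that the backward Liouville flow does not leave the coordinate patch in which $(\rho,\mu)$ and the estimates of \pref{lem:flowest} make sense — in particular near the rounded corners of $D_R$ and inside the tubular neighborhoods of the legs — before it hits the correct face of $\partial Y_R$. This is precisely where McLean's argument has to be adapted to our situation: rather than the single cylindrical end of a Lefschetz fibration, we have a corner region, so the containment must be argued separately in the ``horizontal'' regime ($\rho$ large, $\mu$ bounded away from $\epsilon$) and in the leg neighborhoods, using in the latter the local $T^2$-invariance exploited in the proof of \pref{lem:flowest}.
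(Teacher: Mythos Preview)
Your approach is exactly the paper's: integrate the differential inequalities of \pref{lem:flowest} along the Liouville flow, using that a point at Liouville coordinate $r_Y$ is reached from $\partial Y_R$ by flowing for time $\log r_Y$. The paper's proof is in fact a single sentence to this effect and does not spell out the containment argument you flag as the main obstacle.
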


\begin{proof}
This follows from \pref{lem:flowest}
because $r_Y$ is defined to be the integration
of the Liouville vector field for time $\log(r_Y)$.
\end{proof}

As a consequence of \pref{cr:estimate},
we may take $c_{\mu_0} \to \infty$ as $\mu_0 \to 0$.
For the moment take $\mu_0$ sufficiently small
so that $c_{\mu_0} \geq 2$. The key point of modifying the primitive can be seen in the following lemma:

\begin{lemma} \label{lm:flow}
For suitable choices of R and $\mu_1$, the Liouville flow on $Y_R$ preserves $\Ebar$ for $r_Y \geq 1$.
\end{lemma}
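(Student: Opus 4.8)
The plan is to decompose $\partial Y_R$ and the completion $\widehat{Y_R}$ according to the three pieces of the boundary isolated by conditions \pref{eq:D1}--\pref{eq:D5}, namely the ``vertical'' pieces where $\partial Y_R$ agrees with $\lc \mu = \epsilon - \mu_0 \rc$ or $\lc \mu = \epsilon + \mu_1 \rc$ (by \pref{eq:D4} this is exactly the region $\rho \le R - \delta_R$), the ``horizontal'' piece where it agrees with $\lc \Hb = R \rc$, and the transition region $\partial_i Y$ over the leg neighborhoods $U_{\Pi_i}$ where it is cut out by $D_i(\rho, \mu) = 0$. On each piece I want to check that the flow of $V_{\theta_\epsilon'}$, followed by the Reeb flow along the boundary, cannot leave $\Ebar$ once $r_Y \ge 1$. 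First I would record that $\Ebar$ is a union of almost complex submanifolds contained in $\mu^{-1}([0,\epsilon])$, and that by construction of $\theta_\epsilon' = \thetavc + p^*\theta_{\NCx \times \bC}'$ together with \pref{lm:deformingprim}, the Liouville vector field $V_{\theta_\epsilon'}$ is tangent to $Z$ in $\NCx$, tangent to the fibers of $\pi_\bC$ near $\mu = \epsilon$ via the $\bS^1$-symmetry, and hence tangent to $\Ebar$ wherever both of these local models are valid. The point is then to show that for suitable $R$ and $\mu_1$, starting from $r_Y = 1$ the whole Liouville trajectory toward infinity stays in the region where these local models apply.

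The key inputs are \pref{lem:flowest} and \pref{cr:estimate}. Using \pref{eq:flowest2}, $C|\mu - \epsilon| \le r_Y$, so on the region $\mu \ge \epsilon + \mu_1$ the Liouville flow increases $r_Y$ but $|\mu - \epsilon|$ stays comparable to $r_Y$; since $\Ebar \subset \mu^{-1}([0,\epsilon])$ this piece is disjoint from $\Ebar$ and there is nothing to prove there except that trajectories which start near $\Ebar$ (hence with $\mu$ near $\epsilon$, i.e. $r_Y$ near $1$) cannot jump into it — which follows because $V_{\theta_\epsilon'}$ restricted to a smooth fiber is $\propto (\mu - \epsilon)\partial_\theta$ plus a radial term that moves $\mu$ monotonically away from $\epsilon$ along the symplectization direction. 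On the vertical pieces $\lc \mu = \epsilon - \mu_0 \rc$, condition \pref{eq:D4} tells us $\rho \le R - \delta_R$, so the horizontal coordinate stays bounded and \pref{eq:flowesthor} together with \pref{eq:flowest1} controls how fast $\rho$ can grow; choosing $R$ large and $\mu_1$ in the allowed window $0 < \epsilon - \mu_0 \ll \epsilon \ll \epsilon + \mu_1$, the trajectory emanating from a point of $\Ebar \cap \partial Y_R$ stays inside $\pibar_\bC^{-1}(\lc |u| > C_0 \rc)^c \cap \pibar_\NCx^{-1}(U_Z \cup \dots)$, which is precisely the region where $J$ (and hence $V_{\theta_\epsilon'}$, via the compatible $\omega_\epsilon$) respects the product and blow-up structure and therefore preserves $\Ebar = E \cup F$. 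On the transition pieces $\partial_i Y$ cut out by $D_i(\rho,\mu) = 0$, the local $T^2$-action preserving $\bw^{\bsalpha_i - \bsbeta_i}$ (used already in the proof of \pref{pr:Hba} and \pref{lem:flowest}) shows $V_{\theta_\epsilon'}$ decomposes into a piece moving $\rho$ and a piece moving $\theta_{(\bsalpha_i - \bsbeta_i)^\perp}$, both tangent to $\Ebar$ because $\Ebar$ is a union of $T^2$-orbits over $Z$ together with the $\mu$-fibers; hence $\Ebar$ is preserved there as well.

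Finally I would combine the Reeb flow along $\partial Y_R$ with the Liouville flow: by \pref{lm:TR_Reeb} (and its evident analogue for $\partial Y_R$ over the leg regions, which is built to have the same fibered structure) the Reeb vector field on $\partial Y_R$ projects to $\partial D_R$ and is a linear combination of the $\partial_{\theta_i}$; since $\Ebar$ is invariant under all the relevant torus rotations and the projection of the Reeb field to $\partial D_R$ lies in the directions $\vec{n}_x$ which by \pref{eq:D1} are parallel to $\Pi_i$ over the leg neighborhoods, the Reeb flow is tangent to $\Ebar \cap \partial Y_R$. Conicality of the completion then propagates this: for $r_Y \ge 1$, $\widehat{Y_R}$ is the Reeb-invariant symplectization, the Liouville flow is $r_Y \partial_{r_Y}$, and $\Ebar$ is swept out by the Reeb-invariant cone over $\Ebar \cap \partial Y_R$, so it is preserved. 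The main obstacle I anticipate is verifying the transition region carefully: one must confirm that the modified boundary $\partial_i Y$ (glued in over $U_{\Pi_i}$ by the equation $D_i(\rho,\mu) = \rho$) and the deformed primitive $\theta_\epsilon'$ are mutually compatible so that $V_{\theta_\epsilon'}$ is genuinely tangent to $\partial_i Y \cap \Ebar$ and to $\Ebar$ simultaneously, and that the choice of $R, \mu_1$ can be made uniformly in $i$; this is exactly the estimate packaged in \pref{lem:flowest}, so the proof really amounts to quoting that lemma on each piece and assembling the pieces, with the bookkeeping of the allowed ranges of $R$ and $\mu_1$ being the only genuinely delicate point.
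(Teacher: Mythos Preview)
Your proposal has a structural gap: the conicality argument at the end is circular. You write that ``$\Ebar$ is swept out by the Reeb-invariant cone over $\Ebar \cap \partial Y_R$,'' but this is exactly what the lemma asserts. The embedding $Y_R \cup \partial Y_R \times [1, c_{\mu_0}) \hookrightarrow Y$ is defined via the Liouville flow, so the image of the cone $[1,\infty) \times (\Ebar \cap \partial Y_R)$ coincides with $\Ebar$ on the collar \emph{if and only if} the Liouville flow on $Y$ is tangent to $\Ebar$. You cannot use the conicality as an input. Relatedly, the growth estimates from \pref{lem:flowest} and \pref{cr:estimate} bound how fast $\rho$ and $|\mu-\epsilon|$ grow along the flow, but they say nothing about tangency of $V_{\theta_\epsilon'}$ to $\Ebar$; a vector field can satisfy those bounds and still have a component transverse to $\Ebar$. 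There is also a factual slip: you assert $\Ebar \subset \mu^{-1}([0,\epsilon])$, but this holds only for $E$; on the strict transform $F \cong Z \times \bC$ one has $\mu \ge \epsilon$ (indeed $\mu = \pi|u|^2 + \epsilon$ where $\chi \equiv 1$), so the region $\mu \ge \epsilon + \mu_1$ is not disjoint from $\Ebar$.

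The paper's proof is a direct pointwise computation and does not pass through the Reeb flow or the estimates at all. One writes out $\omega_\epsilon$ and $\theta_\epsilon'$ explicitly at points of $E$ (using the blow-up chart and \pref{eq:symp_form3}) and checks that the lift $(V_{\theta'_{\NCx}},0)$ of the base Liouville field, which is tangent to $Z$ by \pref{lm:deformingprim}, satisfies $\iota_{(V_{\theta'_{\NCx}},0)}\omega_\epsilon = \theta_\epsilon'$ along $E$; this is where the deformation of the primitive in \pref{lm:deformingprim} is actually used. For the component $F$, one chooses $R$ and $\mu_1$ so that on $F \cap \partial Y_R$ either $\chi \equiv 0$ or one is over a leg where $h = t^{-\nu(\bsalpha)}\bw^\bsalpha + t^{-\nu(\bsbeta)}\bw^\bsbeta$; in the latter case one checks that the $\bw^{\bsalpha-\bsbeta}$-components of $d^c(\chi(G_i)\log F_i)$ vanish along $F$, so that $\theta_\epsilon'$ restricted to the subspace spanned by $(\partial_{r_{\bsalpha-\bsbeta}},\partial_{\theta_{\bsalpha-\bsbeta}})$ is zero and the Liouville field has no component in those directions. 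The role of ``suitable $R, \mu_1$'' is precisely to force $F \cap \partial Y_R$ into this regime, not to control trajectory growth.
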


\begin{proof}
Consider equation \pref{eq:symp_form3}.
We may expand this out further to see that at points along points lying in $E$,
\begin{align} \label{eq:symp_formE}
  \omega_\epsilon &=
  \pi_{\NCx}^*\omega_{\NCx} +\frac{\sqrt{-1}}{4} |v_0|^2 dh \wedge d{\bar{h}}
   + \frac{\sqrt{-1} \epsilon}{2\pi}
    \partial \partialbar \lb \log \lb 1+|v_0|^2 \rb \rb
\end{align}
Similarly,
for the primitive $\theta_\epsilon$,
we have
\begin{align} \label{eq:primitiveE}
  \theta_\epsilon' &=
  \pi_{\NCx}^*\theta_{\NCx}' - \frac{\epsilon}{4\pi} d^c \lb \log \lb 1+|v_0|^2 \rb- \log |v_0|^2 \rb
\end{align}
along points lying in $E$.
The second term of \pref{eq:symp_formE} vanishes
on vector fields tangent to $Z$.
Since the Liouville vector field $V_{\theta'}$ on $\NCx$ is tangent to $Z$,
we have that viewing this chart as a product manifold with coordinates $(\bw,v_0)$
\begin{align}
 \pi_{\NCx}^*\omega_{\NCx} +\frac{\sqrt{-1}}{4} |v_0|^2 d h \wedge d {\bar{h}}((V_{\theta'},0),-)
  = \pi_{\NCx}^*\theta_{\NCx}'(-)
\end{align}
as required. On the other component of $\Ebar \cap \partial{Y}_R$, after choosing $\mu_1$ and $R$ appropriately assume that that the function $\chi(u,\bw)=0$ on $\Ebar \cap \partial{Y}_R$ except when $h=t^{-v(\bsalpha)}w^{\bsalpha}+ t^{-v(\bsbeta)}w^{\bsbeta}$.
Following the notation and proof of \pref{pr:Hba},
we have that the $\bw^{\bsalpha-\beta}$ components of $d^c(G_i)$ and $d^c(F_i)$ vanish over this locus which implies the same for $d^c\lb \chi(G_i)\log(F_i) \rb $. For any point in this portion of $\Ebar \cap \partial{Y}_R$, let $S$ be the subspace of the tangent space spanned by $(\partial_{r_{\alpha-\beta}},\partial_{\theta_{\alpha-\beta}})$. Restricting to $S$ at the points in question, we have that the one form $\theta'_\epsilon$ vanishes there. As can be seen from \pref{eq:express1}, we have that the symplectic orthogonal to the subspace $S$ is given by the span of $(\partial_{r_{\alpha-\beta}^{\bot}},\partial_{\theta_{\alpha-\beta}^{\bot}}, \partial_{|u|},\theta_u)$. It follows that the projection of the Liouville field to those coordinates vanishes and the result is proven. 
\end{proof}

\subsection{} \label{sc:Reeb_orbits}

Let us discuss the Reeb flow.
Away from $\pi_{\NCx}^{-1}(U_Z)$,
the Reeb flow corresponds with that on $\partial T_R$
as described  in \pref{lm:TR_Reeb}.
Within $\pi_{N_\bR}^{-1}(U_{\Pi_i})$,
the Reeb flow acts on the $T^2$ orbits of the local torus action
with slope $(\partial_{\rho} D_i(\rho,\mu), \partial_{\mu} D_i(\rho,\mu))$.
For all other points in $\pi_{\NCx}^{-1}(U_Z)$,
we have by \pref{eq:D2} that
the Reeb flow rotates the fibers of the conic bundle.
In particular,
the Reeb flow also preserves $\Ebar$
as well as the neighborhood $\pi_{\NCx}^{-1} \lb U_Z \rb$. 

Let $\partial L_0$ denote the Legendrian $L_0 \cap \partial Y_R$.
The Reeb flow on $\partial T_R$ or $\partial Y_R$ is
degenerate.
However,
we have good control over the families of Reeb orbits and
Legendrian Reeb chords of $\partial L_0$.
There are families $\cF_{(\bn,i)}$
of parameterized Reeb orbits
such that the entire family avoids $\pi^{-1}(U_Z)$
which come in families of the following type:
\begin{itemize}
 \item
(Type I)
For every $(\bn,0)$ with $\bn$ regular,
there is a manifold with boundary of parameterized Reeb orbits
diffeomorphic to $I \times T^3$.
These Reeb orbits occur
where the $r_3$-component of the normal vector to $\partial D_R$ vanishes.
By the assumption \pref{eq:D2'} on the shape of $D_R$,
this fibers over an interval in the $\bR^3$ projection.
 \item
(Type II)
For every other class $(\bn,i)$ with $\bn$ regular,
we may assume that we have a $T^3$ worth of orbits. 
\end{itemize}
In both cases we denote the family by $\cF_{(\bn,i)}$ for $\bn$ regular.
When viewed as Reeb orbits of $\partial T_R$,
these identifications are $T^3$ equivariant with respect  to the natural actions on both sides.
   
There are additional families of Reeb orbits in homotopy class
with $\bn \neq 0$ non-regular:
\begin{itemize}
 \item
(Type III)
Those families of orbits beginning at any point in $\partial_i Y$.
We will denote these by $\cF_{(\bn,0)}$ for $\bn$ non-regular. 
 \item
(Type IV)
Those families which contain Reeb orbits in $\pi^{-1}(U_{\Pi_j})$
with $\partial_{\mu} D_j(\rho, \mu) \neq 0$.
We may assume that these families are diffeomorphic to $I \times T^3$.
We will denote these by $\cF_{(\bn,i)}$
for $\bn$ non-regular and $i \neq 0$
corresponds to the winding number of the flow
in the fibers of the conic fibration.
\end{itemize}
Finally there are contractible Reeb orbits:
\begin{itemize}
 \item
(Type V)
When $\lc \mu= \epsilon-\mu_0 \rc$ if $i>0$ or
$\lc \mu=\epsilon-\mu_1 \rc$ if $i<0$.
These are denoted by $\cF_{(0,i)}$.
\end{itemize}
For each of these families $\cF_{(\bn,i)}$, the Reeb flow gives $\cF_{(\bn,i)}$ the structure of an oriented $\bS^1$ bundle over a manifold with corners. Every $\cF_{(\bn,i)}$ gives rise to a contractible family $\cF_{(\bn,i)}^{0} \coloneqq \partial L_0 \cap \cF$ of Reeb chords and these are all Reeb chords of $\partial{L}_0$. For families of Type I, $\cF_{(\bn,i)}^{0} \cong I$, an interval and we may assume this for families of Type IV as well. For families of Type II, $\cF_{(\bn,i)}^{0}$ is a point (non-degenerate chord). Finally, we have an $I^2$ in the Type III case and $D^2$ in the Type V case.

\begin{definition} \label{df:Liouvadmis}
Let $\Yhat_R$ denote the Liouville completion of $Y_R$ and
let $\hat{\theta'}_\epsilon$ denote the standard extension
of the Liouville form to the completion.
We say that an almost complex structure on $\Yhat_R$
is Liouville-admissible in a neighborhood of a level set $r_Y=c$
if there exists a positive real number $\delta$
such that
$
\hat{\theta'}_\epsilon \circ J= dr_Y
$
holds in the region defined by
$
 c-\delta < r_Y < c+\delta.
$
\end{definition}

It is these almost complex structures
which are used in \cite{MR2602848}.
We will make use of almost complex structure
which combine \pref{df:Liouvadmis} and \ref{sc:vaJ1}.

\begin{definition}
We let $J(Y_R,\Ebar)$ denote
the set of admissible almost complex structures on $\Ybar$
which are Liouville-admissible
in a neighborhood of the hypersurface where $r_Y=2$.
\end{definition}

\begin{lemma}
In the above situation,
the space $J(Y_R,\Ebar)$ is non-empty.
\end{lemma}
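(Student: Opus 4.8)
The plan is to obtain $J$ by starting from a fibration-admissible almost complex structure $J_0 \in \scrJ(\Ybar)$ and modifying it only in a collar of the hypersurface $\lc r_Y = 2 \rc$ so as to make it Liouville-admissible there, always remaining inside the space of $\omega_\epsilon$-compatible almost complex structures that make $\Ebar$ a complex submanifold; that space is non-empty and, over any compact region, contractible. Such a $J_0$ exists — for instance one can take a suitable modification of the standard complex structure of $\Ybar \subset \NCx \times \bC \times \bP^1$, which is already holomorphic near $U_D$ and for $\abs{u}$ large and whose $\Ebar$ is holomorphic, after adjusting it near $\Log^{-1}$ of a neighborhood of the origin to make $\pibar_{\NCx}$ holomorphic there — and by definition every element of $\scrJ(\Ybar)$ already preserves $\Ebar$. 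So the only new requirement to arrange is Liouville-admissibility in the sense of \pref{df:Liouvadmis} near $r_Y = 2$, without destroying fibration-admissibility or holomorphicity of $\Ebar$.

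The core of the argument is a model $J_1$ on a collar $\lc 2 - \delta < r_Y < 2 + \delta \rc$. By \pref{cr:estimate} this collar lies in the embedded part $Y_R \cup \partial Y_R \times [1, c_{\mu_0})$ of the completion once $\mu_0$ is small. On a level set $M_c \coloneqq \lc r_Y = c \rc$, with contact form $\alpha \coloneqq \hat{\theta'}_\epsilon|_{M_c}$ and distribution $\xi \coloneqq \ker \alpha$, the Liouville vector field $V_{\theta'_{\epsilon}}$ is tangent to $\Ebar$ by \pref{lm:flow}, and the Reeb vector field of $M_c$ is tangent to $\Ebar$ by the discussion in \pref{sc:Reeb_orbits}; hence $\Ebar \cap M_c$ is a contact submanifold of $M_c$ and $T(\Ebar \cap M_c) \cap \xi$ is a symplectic subbundle of $(\xi, d\alpha|_\xi)$. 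I would pick a complex structure $J_\xi$ on $\xi$ compatible with $d\alpha|_\xi$ and preserving $T(\Ebar \cap M_c) \cap \xi$, taking $J_\xi$ to be the standard ``toric'' complex structure along the legs — where the local $T^2$-actions of \pref{pr:Hba} govern the symplectic geometry and the Reeb flow rotates the corresponding torus orbits, so that this choice also makes $\pibar_{\NCx}$ holomorphic — and the product structure near $U_D$. Extending $J_\xi$ over the collar by $J_1 V_{\theta'_{\epsilon}} = \cR$ and invariance under the Liouville flow yields a $J_1$ that is Liouville-admissible near $r_Y = 2$, fibration-admissible there (base-admissibility holds along the legs, the product condition near $U_D$ holds by construction, and the condition on $\pibar_{\bC}$ for $\abs{u}$ large is vacuous there since $\mu$ is bounded near $r_Y = 2$), and preserves $\Ebar$ — the last point precisely because the collar is swept out by Liouville trajectories and both $V_{\theta'_{\epsilon}}$ and $\cR$ lie in $T\Ebar$.

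It then remains to glue $J_1$ to $J_0$. On the two annuli $\lc 2 - \delta \le r_Y \le 2 - \delta/2 \rc$ and $\lc 2 + \delta/2 \le r_Y \le 2 + \delta \rc$, both structures lie in the contractible space of $\omega_\epsilon$-compatible almost complex structures preserving $\Ebar$, so they can be joined by a path in that space; since the regions on which $\scrJ(\Ybar)$ imposes rigid conditions ($U_D$, the locus $\abs{u} > C_0$, and the complement of a compact set in the base) can be arranged, after fixing $R$, $\mu_0$ and $\mu_1$ as in \pref{sc:Domain}, to be disjoint from a neighborhood of $\lc r_Y = 2 \rc$, the path can be kept fibration-admissible. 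Patching $J_0$ (for $r_Y \le 2 - \delta$ and $r_Y \ge 2 + \delta$), these interpolations, and $J_1$ (near $r_Y = 2$) gives the required element of $J(Y_R, \Ebar)$.

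I expect the main obstacle to be the compatibility, near $\lc r_Y = 2 \rc$, between Liouville-admissibility and holomorphicity of $\Ebar$: this is exactly what \pref{lm:flow} — and the deformed primitive $\theta'_\epsilon$ of \pref{lm:deformingprim} that makes it hold — is designed to provide, since it puts the Liouville and Reeb directions pinning down a Liouville-admissible $J$ inside $T\Ebar$. A secondary point needing care is that the ``toric'' model along the legs, the product model near $U_D$, and the Liouville-admissible extension must be mutually consistent on the overlaps of these local pictures; this follows from the normal forms recorded in \pref{sc:Domain} and \pref{sc:Reeb_orbits} but is bookkeeping the full proof has to carry out.
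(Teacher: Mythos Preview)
Your proposal is correct and follows essentially the same approach as the paper: the key point is that, thanks to \pref{lm:flow} and the discussion of the Reeb flow in \pref{sc:Reeb_orbits}, both the Liouville and Reeb vector fields are tangent to $\Ebar$, so a Liouville-admissible $J$ (which is forced to send one to the other) can be chosen on the contact distribution so as to preserve $T\Ebar \cap \xi$. The paper's proof states exactly this in three sentences and declares the remaining check ``routine''; you have spelled out the construction of $J_\xi$, the extension over the collar, and the gluing with a background fibration-admissible structure in considerably more detail than the paper provides.
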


\begin{proof}
By \pref{lm:flow},
the Liouville vector fields and Reeb fields are tangent
to the two components of $\Ebar$.
It suffices to check that the almost complex structure
on the contact distributions can be chosen
so as to preserve both components of $\Ebar$.
This is a routine calculation left to the reader.
\end{proof}

\subsection{Floer theory for the Liouville domain}
 \label{sc:ASFloertheory}

The Lagrangian $L_0$ satisfies $\theta_\epsilon'|L=0$ everywhere on $Y$
and is in particular Legendrian at infinity for the Liouville structure.
In this subsection,
we recall in more detail the Liouville-admissible families of Hamiltonians
which are relevant to Abouzaid and Seidel's theory for Liouville domains.
We then recall a variant of their construction which uses Hamiltonians
which are constant outside of a compact region and
review the equivalence of the two approaches.
This is all completely standard material
in 
symplectic cohomology,
and it is easily adapted to the Lagrangian setting.

Choose a generic family of Liouville-admissible Hamiltonians $H_m$
of slope $\lambda_m$.
For the remainder of this article,
we assume that all Liouville-admissible Hamiltonians satisfy
\begin{align} \label{eq:adHams}
\begin{cases}
 \text{$H_m(x)=h_m(r_Y)$ for $r_Y \ge 1$}, \\
 \text{$h_m''(r) \ge 0$ for $r \ge 1$, and}, \\
 \text{$H_m(x)$ is $C^2$-small on $Y_R$}.
\end{cases}
\end{align}

For any Liouville-admissible Hamiltonian $H_m$,
notice that the flow of $H_m$ preserves the neighborhood
$\pi_{\NCx}^{-1}(U_{Z})$
along the cylindrical end, and
we may assume this is true everywhere.
Furthermore,
we can assume that it preserves $\pi_{\NCx}^{-1}(Z)$.
We first compute the wrapped Floer cohomology
\begin{align}
 \HW^*(L_0) \coloneqq
  \varinjlim_{m} \HF^*(L_0;H_{m})
\end{align}
as a vector space.
Since the flow of $H_{m}$ preserves the neighborhood $\pi_{\NCx}^{-1}(U_{Z})$,
all Hamiltonian chords of $\scrX (L_0,H_{m})$ lie in
$Y \setminus \pi_{\NCx}^{-1}(U_{Z})$.
We have seen in the discussion preceeding \pref{df:Liouvadmis}, that for each homotopy class $(\bn,i)$, there is a contractible submanifold with corners of Hamiltonian chords for $L_0$
(diffeomorphic to either a point, an interval $I$, or $I^2$).
For each of these submanifolds with corners,
we may perform an arbitrarily small Hamiltonian perturbation
supported in a neighborhood of these orbits,
so that there is a unique chord of index 0. We may therefore label generators by $\theta^{m}_{\bn,i}$,
which corresponds to the unique chord
of period less than or equal to $\lambda_m$
(for the perturbed Hamiltonian)
whose homotopy class in $\NCx \times \bCx$ is given by $(\bn,i)$.
As $\lambda_m\to \infty$,
all homotopy classes are eventually realized.

\begin{lemma} \label{lm:AScontinuation}
If the Hamiltonian chord
corresponding to $\theta^{m}_{\bn,i}$
has period $\lambda_0$,
then for any $\lambda_m' \gg \lambda_m \gg \lambda_0$,
the continuation map
$
 \HF^*(L_0;H_m) \to \HF^*(L_0;H_{m'})
$
sends
$
 \theta^{m}_{\bn,i} \to \theta^{m'}_{\bn,i}.
$
\end{lemma}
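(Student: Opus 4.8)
The plan is to mimic the proof of \pref{pr:cont} (and \pref{pr:contII}), exploiting that $Y$ is a product away from the discriminant locus. First I would record that continuation maps preserve relative homotopy classes in $\pi_1(Y,L_0)\cong N$ (\pref{lm:pi_1}), so $\frakc(\theta^m_{\bn,i})$ is a linear combination of chords $\theta^{m'}_{\bn,i'}$ with the same $\bn$ and some $i'\in\bZ$. Next, I would confine the relevant continuation curves to the product region. For an admissible $J\in J(Y_R,\Ebar)$ the divisors $E$ and $F$ are $J$-holomorphic, so any continuation solution $y$ satisfies $A_{\bx}\cdot E\ge 0$ and $A_{\bx}\cdot F\ge 0$ by positivity of intersection. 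Since the chords $\theta^m_{\bn,i}$, $\theta^{m'}_{\bn,i'}$ both avoid $\pi_{\NCx}^{-1}(U_Z)$, there is an explicit topological strip $y_{\bx}$ between them lying in $(\NCx\setminus Z)\times\bCx$ (homotope first in the $\NCx$-factor, then in the $\bCx$-factor), hence with $y_{\bx}\cdot E=y_{\bx}\cdot F=0$; every relative class differs from $y_{\bx}$ by a multiple $k[E_\bw]$ with $[E_\bw]\cdot E=-[E_\bw]\cdot F=-1$, so the inequalities $A_{\bx}\cdot E,A_{\bx}\cdot F\ge 0$ force $k=0$, i.e. $A_{\bx}\cdot E=A_{\bx}\cdot F=0$. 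Thus $y$ is disjoint from $\Ebar=E\cup F$, and the maximum-principle argument $d^c(1/|u|)|_{\Lc_0}=0$ of \pref{lm:max1}--\pref{lm:Gcompactness} keeps it away from $D$, so $y$ lies entirely in $(\NCx\setminus Z)\times\bCx\subset Y$.

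On this product region the chords are distinguished by their class $(\bn,i)\in N\times\bZ\cong\pi_1((\NCx\setminus Z)\times\bCx, L_0)$, and continuation preserves that finer class, so $i'=i$; alternatively this follows by comparing the relative gradings $\reldeg{-}$, which on $L_0$ equal $2i$ after the normalization of \pref{rm:one_generator} together with the index computation of \pref{sc:aWFT}. Moreover $\theta^m_{\bn,i}$ and $\theta^{m'}_{\bn,i}$ are literally the same geometric chord when $\lambda_m\gg\lambda_0$ (the short chord sits in the region where the two Hamiltonian profiles coincide), and $L_0$ is exact with $\theta_\epsilon'|_{L_0}=0$, so the two chords have equal action. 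Choosing the homotopy $H_{s,t}$ so that $\partial_s H_{s,t}\le 0$ everywhere (as in \pref{sc:HW}) gives $\Egeom(y)\le\Etop(y)=0$, whence $y$ is the constant solution at $\theta_{\bn,i}$. Therefore the transversely cut out, zero-dimensional continuation moduli space is a single point, contributing $+1$ because the relevant orientation operators have trivial kernel and cokernel (cf.~\pref{sc:HW}), and $\frakc(\theta^m_{\bn,i})=\theta^{m'}_{\bn,i}$ on the chain level, hence on $\HF^*$.

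The step I expect to be the main obstacle is this confinement. Unlike the base-admissible Hamiltonians of \pref{sc:aWFT}, a Liouville-admissible $H_m(x)=h_m(r_Y)$ does not obviously project through $\pi_{\NCx}$, so one cannot simply push the curve down to $\NCx$; one must instead run the intersection-number bookkeeping above, which relies on the assumptions that the Hamiltonian flow and the perturbation data preserve $\pi_{\NCx}^{-1}(Z)$ and on the choice $J\in J(Y_R,\Ebar)$ preserving $\Ebar$. One also has to invoke Gromov compactness for $\Lc_0$ from \pref{sc:Comp} to exclude sphere bubbling and, in particular, breaking along the divisor chords of \pref{sc:aWFT}, whose relative grading $\reldeg{-}$ is too large to appear in a zero-dimensional moduli space for $\lambda_m\gg\lambda_0$. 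With the curve trapped in the product region, the rest is the standard energy computation.
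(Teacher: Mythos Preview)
Your argument is essentially correct, but it takes a considerably more elaborate route than the paper. The paper's proof is three sentences: since all generators sit in degree zero, the Floer differential vanishes and the continuation map on cohomology is independent of the specific Liouville-admissible Hamiltonians of the given slopes; one therefore replaces $H_m$ and $H_{m'}$ by Hamiltonians that are literally $r_Y^2$ for $r_Y<a_{\lambda_m}$ and linear beyond, so that the chord of period $\lambda_0$ lives in the common quadratic region, and then invokes the standard fact that the continuation map is the identity on chords in that region. No intersection-number bookkeeping, no appeal to $J(Y_R,\Ebar)$, no confinement to the product region is used.

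Your approach transplants the machinery of \pref{pr:cont} and \pref{lm:rel_cpt1} into the Liouville setting: positivity of intersection with $E$ and $F$ forces $A_\bx\cdot\Ebar=0$, hence curves live in $(\NCx\setminus Z)\times\bCx$, hence the finer homotopy class pins down $i'=i$, and then the energy argument finishes. This works, but two remarks are in order. First, your invocation of \pref{lm:Gcompactness} to keep curves off $D$ is misplaced: that lemma is for fibration-admissible data, whereas here the ordinary Liouville maximum principle on $r_Y$ already confines solutions to a compact set disjoint from $D$. Second, and more to the point, your sentence ``$\theta^m_{\bn,i}$ and $\theta^{m'}_{\bn,i}$ are literally the same geometric chord\ldots the two Hamiltonian profiles coincide'' is precisely the quadratic-approximation move the paper makes; once you grant yourself that freedom, the entire confinement argument becomes redundant, since the energy computation alone (with a monotone homotopy constant near the chord) forces the solution to be constant without ever needing to know that curves avoid $\Ebar$. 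In effect, your first two paragraphs establish $i'=i$ by geometric confinement, while the paper gets it for free from the choice of Hamiltonians.

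What your approach buys is a more self-contained argument that does not rely on the folklore ``continuation is the identity on the quadratic part''; what the paper's approach buys is brevity and the avoidance of any hypothesis on $J$ beyond Liouville-admissibility.
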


\begin{proof}
Because $d=0$ for grading reasons,
we may use any Hamiltonians of slope $\lambda_m$ and $\lambda_m'$
to compute this continuation map.
We again approximate by quadratic Hamiltonians,
e.g. by a Hamiltonian which is
\begin{itemize}
\item
$r_Y^2$ for $r_Y<a_{\lambda_{m}}$
\item
$\lambda' r_Y$ for $r_Y \geq a_{\lambda_{m}} +d_{\lambda_{m}}.$
\end{itemize}
Then the continuation maps are the identity on the Reeb chords
which arise in the quadratic part.
\end{proof}

\begin{corollary}
$\HW^*(L_0)$ is freely generated by
the images $\theta_{\bn,i}$
of $\theta^{m}_{\bn,i}$
for sufficiently high $m$.
\end{corollary}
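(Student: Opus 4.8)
The plan is to deduce the corollary formally from Lemma~\ref{lm:AScontinuation} together with the description of the Reeb and Hamiltonian chords of $L_0$ given in the discussion preceding Definition~\ref{df:Liouvadmis}. First I would record that, for the Liouville-admissible Hamiltonians $H_m$ under consideration, the Floer complex $\CF^*(L_0;H_m)$ is concentrated in degree $0$: after the small Hamiltonian perturbation supported near the families $\cF_{(\bn,i)}^{0}$ of Reeb chords, each homotopy class $(\bn,i) \in N \times \bZ$ whose chord has period at most $\lambda_m$ contributes exactly one non-degenerate chord $\theta^m_{\bn,i}$, and this chord has index $0$. Consequently the Floer differential vanishes identically, so $\HF^*(L_0;H_m) = \CF^*(L_0;H_m)$ has the $\theta^m_{\bn,i}$ as a $\bC$-basis, the index $(\bn,i)$ ranging over a finite subset $S_m \subset N \times \bZ$ which exhausts $N \times \bZ$ as $m \to \infty$ since $\lambda_m \to \infty$ and the period of the chord in a fixed class is a fixed finite number.

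Next I would invoke Lemma~\ref{lm:AScontinuation}: for $m' \gg m$ large enough that both chords exist, the continuation map $\kappa_{m,m'} \colon \HF^*(L_0;H_m) \to \HF^*(L_0;H_{m'})$ carries $\theta^m_{\bn,i}$ to $\theta^{m'}_{\bn,i}$. Thus the directed system $(\HF^*(L_0;H_m), \kappa_{m,m'})$ is, after relabelling bases, nothing but the directed system of finite-rank free $\bC$-modules on the sets $S_m$ with the inclusion-induced maps $\bC^{S_m} \hookrightarrow \bC^{S_{m'}}$. Passing to the colimit in \pref{eq:HW_def}, $\HW^*(L_0) = \varinjlim_m \HF^*(L_0;H_m)$ is free on the classes $\theta_{\bn,i} \coloneqq \lim_{m} \theta^m_{\bn,i}$, indexed by all of $N \times \bZ \cong \bigcup_m S_m$, and is concentrated in degree $0$.

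The corollary is therefore essentially formal once Lemma~\ref{lm:AScontinuation} is in hand; the only points requiring attention, both already contained in the earlier discussion, are (a) that the perturbations can be chosen compatibly as $m$ varies so that ``one index-$0$ chord per homotopy class'' holds simultaneously for all $m$, and (b) that the continuation maps genuinely preserve homotopy classes, so that the labelling $\theta^m_{\bn,i}$ is stable along the directed system. The only real bookkeeping obstacle is verifying that the finite index sets $S_m$ are nested (or at least cofinally nested) and that their union is all of $N \times \bZ$; this follows from $\lambda_m \to \infty$ together with the fact that, once the chord in class $(\bn,i)$ appears, its period is a fixed number independent of $m$, so it persists in $S_{m'}$ for every $m' \ge m$.
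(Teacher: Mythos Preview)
Your proposal is correct and matches the paper's approach: the paper states this corollary immediately after Lemma~\ref{lm:AScontinuation} without giving any separate proof, treating it as a formal consequence of that lemma together with the preceding description of the chords. The details you fill in (degree-zero concentration, vanishing differential, exhaustion of $N \times \bZ$ as $\lambda_m \to \infty$, and the colimit computation) are exactly what the paper leaves implicit.
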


Assume for the rest of this section that all Liouville-admissible Hamiltonians satisfy \eqref{eq:controlLiouville}. We now turn to the Hamiltonians which are constant outside of a compact set following \cite{MR2497314}. Choose a generic almost complex structure which is Liouville-admissible in a neighborhood of $r_Y=2$. For any $\lambda_m$, define $\mu(\lambda_m)>0$ to be some constant smaller than the distance between $\lambda_m$ and the action spectrum.
Define
$
 A(\lambda_m)\coloneqq3\lambda_m/\mu(\lambda_m).
$
By choosing $\mu(\lambda_m)$ small enough,
we may assume that $A(\lambda_m) \geq 4$.
We let $H_m^{c}$ be a function
satisfying the following conditions.
For $r_Y \geq 1$, $H_m^{c}$ depends only on $r_Y$ and is a (smoothing of a) function which satisfies
\begin{itemize}
\item for $r_Y \leq A-1$, $H_m^{c}=H_{m}$,
\item for $r_Y>A$ we assume that $H_m^{c}=\lambda_m(A-1)$
\item for $r_Y \geq A-1$, $(H_m^{c})'' \leq 0$.
\end{itemize}
Recall that the Floer differential increases the action.
We therefore have a subcomplex
$
 \CF^*(L_0; H_m^{c})_{(0,\infty)}
$
of generators whose action is greater than zero.
Define
\begin{align}
\CF^*(L_0; H_m^{c})_{p}
 \coloneqq \CF^*(L_0;H_m^{c})/\CF^*(L_0; H_m^{c})_{(0,\infty)}.
\end{align}
We may choose $H_{m}$ so that all time one chords $x$ satisfy
\begin{align}
 A_{H_{m}}(x) \leq 0.
\end{align}
An elementary calculation shows that the action of any time one chord
when $r_Y \geq 2$ all satisfy $A_{H_m^{c}}(x) \geq \lambda_m$.
It therefore follows that:

\begin{lemma}
 We have an isomorphism of cochain-complexes
\begin{align}
 \CF^*(L_0; H_{m}^{c})_{p} \cong \CF^*(L_0;H_m).
\end{align}
In the limit,
we obtain an isomorphism
\begin{align}
 \HW^*(L_0)  \cong \varinjlim_{m} \HF^*(L_0; H_m^{c})_{p}
\end{align}
of vector spaces.
\end{lemma}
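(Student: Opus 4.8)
The plan is to exhibit an explicit isomorphism of cochain complexes
$
 \CF^*(L_0; H_m^c)_p \cong \CF^*(L_0; H_m)
$
for each fixed $m$, compatible with the continuation maps, and then pass to the direct limit. First I would set up the generators on both sides. By the discussion preceding \pref{df:Liouvadmis}, the Hamiltonian chords of $H_m$ for $L_0$ are organized into contractible families in each homotopy class $(\bn,i)$ with $|\bn|,|i|$ bounded in terms of the slope $\lambda_m$; after the small perturbation already fixed, each such family contributes a single generator $\theta^m_{\bn,i}$. Since $H_m^c$ agrees with $H_m$ on the region $r_Y \le A-1$ and is $C^0$-close to the constant $\lambda_m(A-1)$ beyond $r_Y \ge A$, the chords of $H_m^c$ split into two groups: those lying in $r_Y \le A-1$, which are literally the chords of $H_m$, and those created in the region $r_Y \ge A-1$ (of which there may be new ones, but all of them will have large action). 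The action computation is the crux: one checks by the standard estimate (e.g.\ as in \cite[Section 5]{MR2497314}) that every time-one chord of $H_m^c$ supported in $r_Y \ge 2$ has $A_{H_m^c}(x) \ge \lambda_m > 0$, while the chords coming from $H_m$ inside $Y_R$ have $A_{H_m}(x) \le 0$ by the normalization \pref{eq:adHams} we imposed. Hence the subcomplex $\CF^*(L_0;H_m^c)_{(0,\infty)}$ of strictly positive action is spanned precisely by the ``outer'' chords, and the quotient $\CF^*(L_0;H_m^c)_p$ is spanned by exactly the chords of $H_m$.

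Next I would check that this vector-space identification respects the differentials. The Floer differential increases action, so $\CF^*(L_0;H_m^c)_{(0,\infty)}$ is indeed a subcomplex and the quotient differential is well-defined. On the quotient, a rigid Floer strip between two chords of $H_m$ (which have action $\le 0$) cannot pass through the region $r_Y \ge 2$: the integrated maximum principle for Liouville-admissible data — valid because we chose $J$ Liouville-admissible near $r_Y = 2$ and $(H_m^c)'' \le 0$ there — forces any such strip to stay in $r_Y \le 2$, where $H_m^c \equiv H_m$. Therefore the quotient differential literally coincides with the Floer differential of $H_m$, giving the isomorphism of cochain complexes
\begin{align}
 \CF^*(L_0;H_m^c)_p \cong \CF^*(L_0;H_m).
\end{align}
Taking cohomology yields $\HF^*(L_0;H_m^c)_p \cong \HF^*(L_0;H_m)$.

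Finally I would pass to the colimit. The continuation maps for the $H_m^c$ respect the action filtration (continuation solutions for a monotone homotopy do not decrease action, once the homotopy is chosen monotone as in \pref{sc:Domain}), so they descend to maps on the quotient complexes $\CF^*(L_0;H_m^c)_p$; by the same maximum-principle argument these agree with the continuation maps for the $H_m$ (compare \pref{lm:AScontinuation}). Hence
\begin{align}
 \HW^*(L_0) = \varinjlim_m \HF^*(L_0;H_m) \cong \varinjlim_m \HF^*(L_0;H_m^c)_p
\end{align}
as graded vector spaces. The main obstacle I anticipate is the action bookkeeping: one must verify carefully that \emph{all} the newly created chords of $H_m^c$ in the region $r_Y \ge A-1$, including possibly degenerate families requiring a further small perturbation, genuinely have action bounded below by $\lambda_m$, and that this bound is uniform enough to survive the perturbation; this is exactly where the choice $A(\lambda_m) = 3\lambda_m/\mu(\lambda_m) \ge 4$ and the concavity condition $(H_m^c)'' \le 0$ on $r_Y \ge A-1$ are used, following McLean's argument in \cite[Theorem 5.5]{MR2497314}.
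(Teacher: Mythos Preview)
Your proposal is correct and follows essentially the same approach as the paper. The paper's proof is terser: it invokes the action discussion preceding the lemma for the identification of generators, then cites the ``no-escape lemma'' \cite[Lemma 7.2]{MR2602848} for the identification of differentials and continuation maps, which is precisely the integrated maximum principle you describe. One small imprecision: the relevant hypothesis near $r_Y = 2$ is not that $(H_m^c)'' \le 0$ but that $H_m^c$ is \emph{linear} there (it equals $\lambda_m r_Y$ by \eqref{eq:controlLiouville}), which is what the no-escape lemma actually uses; the concavity $(H_m^c)'' \le 0$ enters only for $r_Y \ge A-1$ and is used in the action estimate for the outer chords, not in the confinement argument.
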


\begin{proof}
The above discussion gives the identification on generators,
so it only remains to give the identification of differentials.
Since all generators of both complexes lie in the region
where $r_Y \leq 2$,
this follows from the ``no-escape lemma''
in \cite[Lemma 7.2]{MR2602848}.
This lemma also applies to continuation solution
to the result passes to the limit as claimed.
\end{proof}

\subsection{The main comparison theorem}

This section compares the ring structure on the two flavours
of wrapped Floer cohomology.
For this section,
we choose all of our almost complex structures to lie in $J(Y_R,\Ebar)$.
As we will need to vary $\mu_0$ in this section,
for clarity we use the notation $Y_{R,\mu_0}$
to indicate the dependence on $\mu_0$.
Notice that for $\mu_0'<\mu_0''$ the manifolds $Y_{R,\mu_0'}$
are Liouville isomorphic in a way which preserves $L_0$.
As a result, there are continuation isomorphisms
\begin{align}
 \HW^*_{Y_{R,\mu_0'}}(L_0) \to \HW^*_{Y_{R,\mu_0''}}(L_0)
\end{align}
between the wrapped Floer cohomologies.
In this case,
the continuation maps are given
by interpolating between $\mu_0'$ and $\mu_0''$
so that for every $s$ in the continuation family,
the Hamiltonian preserves the divisor $E$.
It follows that continuation map solutions have intersection number zero
with the divisor $E$ and thus the isomorphism
respects the free homotopy class of the chord
in $Y \setminus \pi_{\NCx}^{-1}(U_{Z})$.
We finally are in a position to state and prove
our main comparison theorem.

\begin{theorem} \label{thm: isofloer}
There is an isomorphism
\begin{align}
 \HW^*(L_0) \cong \HWad (L_0)
\end{align}
of $\bC$-algebras.
\end{theorem}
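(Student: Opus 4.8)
The plan is to construct a chain-level comparison between the two models for the wrapped Floer complex of $L_0$ and to show that it intertwines the two product structures, following the strategy of McLean's \cite[Theorem 5.5]{MR2497314} but exploiting the extra rigidity coming from the conic bundle structure. First I would fix the Liouville domain $Y_{R,\mu_0}$ of \pref{sc:Domain}, its deformed Liouville form $\theta_\epsilon'$, and a generic family of Liouville-admissible Hamiltonians $H_m^c$ which are constant outside a compact set, so that by the lemmas of \pref{sc:ASFloertheory} one has $\HW^*(L_0) \cong \varinjlim_m \HF^*(L_0;H_m^c)_p$ with generators $\theta_{\bn,i}$ labelled by $(\bn,i) \in N \times \bZ \cong \pi_1(Y,L_0) \times \bZ$. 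On the other side, the adapted theory of \pref{sc:aWFT} and \pref{sc:HW} has generators $p_{\bn,i}$ indexed by the same set. The first step is therefore to set up the tautological bijection $\theta_{\bn,i} \leftrightarrow p_{\bn,i}$ on the level of generators, and to observe (as in \pref{lm:AScontinuation} and \pref{pr:cont}) that in both models all complexes lie in a single degree, so the differentials vanish and the continuation maps are identity maps on generators. This already gives a vector space isomorphism $\HW^*(L_0) \simto \HWad^*(L_0)$; the content is that it is a ring map.

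The core of the argument is the comparison of the two triangle products. Both products are computed by counting rigid Floer triangles with boundary on the images of $L_0$ under the relevant Hamiltonian flows, and in both cases the projection $\pi_{\NCx}$ sends such a triangle to a triangle in $\NCx$ with boundary on the tropical Lagrangian sections; since there is a unique chord in each homotopy class on $\NCx$, the projected triangle has a fixed homotopy class and hence a fixed intersection number $\jmax = y\cdot\Ebar$ with $\Ebar$, determined by $\bn,\bn'$ via \pref{lm:enum} and the last lemma of \pref{sc:HW} to be $\ell_2(\bn,\bn')$. The key point is that for the almost complex structures in $J(Y_R,\Ebar)$, which by \pref{lm:flow} and \pref{sc:Reeb_orbits} are simultaneously Liouville-admissible near $r_Y = 2$ and preserve $\Ebar$, the same enumerative input applies: one degenerates the domain and reduces to the split picture on $\NCx \times \bC$ exactly as in the proof of \pref{lm:enum}, so that the triangle contributing $q_{\bn+\bn',i+i'+j}$ is counted with multiplicity $\binom{\jmax}{j}$. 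Hence both products are given by the formula \pref{eq:product2}, and the bijection $\theta_{\bn,i}\mapsto p_{\bn,i}$ is an algebra isomorphism. Combined with the isomorphism $\HWad^0(L_0)\cong H^0(\cO_\Yv)$ of \pref{th:main2} this also identifies $\HW^0(L_0)$ with the coordinate ring, which is the assertion used in \pref{th:main0}.

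The step I expect to be the main obstacle is the compactness/transversality bookkeeping needed to know that the triangle count in the Liouville-admissible model really does agree with the adapted one, i.e. that no holomorphic curves escape either in the base direction or in the fibre direction and that no breaking along Reeb chords of $\partial Y_R$ or along divisor chords in $D$ contaminates the count. For the base direction this is the maximum principle of \pref{lm:max1}; for the fibre direction one must use \pref{lm:Gcompactness} together with \pref{cr:estimate} (the analogue of \cite[Lemma 5.7]{MR2497314}) to control curves near $D$, and the relative grading argument of \pref{lm:rel_cpt1} and \pref{lm:rel_cpt2} to exclude breaking along high-index divisor chords $\pd$; the families of Reeb orbits listed in \pref{sc:Reeb_orbits} are handled by the small perturbations making a unique index-$0$ chord in each class. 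Once these are in place, the identification of products is a homotopy-of-perturbation-data argument: one interpolates between the perturbation data defining the adapted product and that defining the Liouville-admissible product through fibration-admissible data, using \pref{cr:estimate} to keep all curves in a fixed compact set and the constancy of $y\cdot\Ebar$ in a homotopy class to keep the grading estimates of Assumptions A, B and C in force, so that the relevant one-dimensional moduli spaces furnish the required chain homotopy.
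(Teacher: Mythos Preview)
Your proposal takes a different route from the paper and has a genuine gap. You want to redo the enumerative calculation of \pref{lm:enum} directly in the Liouville-admissible framework, so that the $\HW$ product is seen to obey the same binomial formula \pref{eq:product2}. But that calculation rests on projecting Floer triangles along the blowdown $p \colon \Ybar \to \NCx \times \bC$ and analyzing them with split almost complex structures and Hamiltonians of the form $\Hba + \Hv$; this is precisely what integrably fibration-admissible data provides. Liouville-admissible Hamiltonians are functions of the Liouville coordinate $r_Y$, the Reeb flow on $\partial Y_R$ mixes base and fibre directions (\pref{lm:TR_Reeb}, \pref{sc:Reeb_orbits}), and the chords $\theta_{\bn,i}$ sit where the normal to $\partial D_R$ has the prescribed slope in all three directions simultaneously, not where $\mu$ and $\rho$ are separately controlled as in the adapted theory. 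So the proper-transform correspondence and the degeneration-of-domain argument do not transfer, and the fallback ``homotopy through fibration-admissible data'' is circular since the Liouville data is not itself fibration-admissible. Relatedly, the ``tautological bijection'' $\theta_{\bn,i} \leftrightarrow p_{\bn,i}$ is just a relabelling, not a Floer-theoretic map; \pref{lm:AScontinuation} and \pref{pr:cont} are continuation statements internal to each theory and do not by themselves furnish a comparison map between the two.

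The paper never computes the Liouville product directly. Instead, following McLean, it constructs for each $m$ a single \emph{fibration-admissible} Hamiltonian $G_{h,m} = G_A + H_m^c$ which agrees with the constant-at-infinity Liouville Hamiltonian $H_m^c$ on the region $r_Y < A$; the grafted piece $G_A$ is scaled to have slope $\sqrt{\lambda'_m}$, so its chords have action bounded below by $-\sqrt{\lambda'_m}(A+BA)$, forcing every chord of $G_{h,m}$ with $r_Y \ge 2$ to have positive action. Passing to the action-truncated quotient $(\cdot)_p$ then identifies $\HFad^*(L_0; G_{h,m})_p$ with $\HF^*(L_0; H_m^c)_p \cong \HF^*(L_0; H_m)$. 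A sandwich of fibration-admissible continuation maps
\[
\HFad^*(L_0; G_m) \longrightarrow \HFad^*(L_0; G_{h,m})_p \longrightarrow \HFad^*(L_0; G'_m),
\]
whose composite is an isomorphism in the colimit (via \pref{lm:invarianceadp} and \pref{cr:estimate}), forces the middle group to agree with $\HWad$. Since continuation maps are ring maps, the algebra structure comes for free and no triangle count is needed on the Liouville side.
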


\begin{proof}
This is a modification of the proof of \cite[Theorem 5.5]{MR2497314}.
In this proof, we will have to discuss both fibered admissible and Liouville-admissible Hamiltonians. For this proof we will adopt the convention that fibered admissible Hamiltonians will be denoted by $G$  and Liouville-admissible Hamiltonians are denoted by $H$. We may consider a family of fibered admissible Hamiltonians
$G_{m}$
which has slope $\lambda_m$ in both directions,
which vanish on $Y_{R,\mu_0}$ and
such that for any time-one chord $x$, $A_{G_{m}}(x) \leq 0$.

Next, by shrinking $\mu_0$ to $\mu_{0,m}$,
we may ensure that $c_{\mu_{0,m}}$ above is arbitrarily large.
Thus, for any slope $\lambda'_m$ and for sufficiently small $\mu_{0,m}$,
the Hamiltonian $H_{{m}}^{c}$ makes sense
as a Hamiltonian on $Y$.
Given $H_{m}^{c}$,
we will construct a fibered admissible Hamiltonian $G_{h,m}$
which for $r<A$ agrees with $H_m^{c}$ and
for which all chords in $r_Y \geq 2$ have very positive action.
To do this,
we consider a shifted version of a standard fibered admissible Hamiltonian
$G_A$ which vanishes when
\begin{align}
 \lb C \abs{\mu-\epsilon} \le A + 1 \rb
  \cup \lb \rho \le B \cdot A + 1 \rb
\end{align}
We also require that whenever $\rho-B\cdot A \geq 1$ or $C \abs{\mu-\epsilon}-A \geq 1$, $G_A$ takes the form:
\begin{align}
 v(C \abs{\mu-\epsilon}-A)+ h(\rho-B\cdot A)
\end{align}
where $h$ and $v$ are suitable base-admissible and
admissible vertical Hamiltonians.
Finally, we require that $v(C|\mu-\epsilon|-A)$ and $h(\rho-B\cdot A)$ are linear
of slope $\sqrt{\lambda'_m}$
whenever $\rho-B\cdot A \geq 2$ or $C|\mu-\epsilon|-A \geq 2$.
It follows as in McLean Lemma 5.6 that the chords of the function $G_A$
 have action have action $A_{G_{A}}(x) \geq -\sqrt{\lambda'_m}(A+B\cdot A)$. In fact, the argument here is even easier because the chords live
over the part of the manifold where the symplectic fibration is trivial.
Now the function $G_{h,m}=G_A+H_m^{c}$ is fibered admissible. If follows that for any chord which lies in the region where $r_Y \geq 2$
\begin{align}
 A_{G_{h,m}}(x) \geq \lambda'_m(A-1) -\sqrt{\lambda'_m}(A+B\cdot A)>0.
\end{align}

Furthermore, it is a consequence of \pref{cr:estimate} that
for any $\lambda_m$, we may choose $\lambda'_m$, $\mu_{0,m}$ and
$G_{h,m}$ such that $G_{h,m}>G_{m}$ and
the slopes in both direction of $G_{h,m}$ are bigger than
$G_{m}$.
Next, we can construct fibered admissible Hamiltonians
$G'_{m}$
which have slope $\lambda''_m$ in both directions,
which vanish on $Y_{R,\mu_{0,m}}$ and
such the the action of all chords is $\leq 0$.
We assume that $\lambda''_m \gg \lambda'_m$.
We therefore have,
after choosing suitable functions $\ell^{\sharp}$,
continuation maps
\begin{align}
 \HFad^*(L_0;G_{m})
  \to \HFad^*(L_0;G_{h,m})_p
  \to \HFad^*(L_0;G'_{m}).
\end{align}
In the limit,
the composition of these two maps becomes an isomorphism and,
as a result,
the second map is an isomorphism as well.
Since these maps are ring maps,
this concludes the proof.
\end{proof}

\section{Homological Mirror Symmetry for McKay quivers}
 \label{sc:hms}

\subsection{Computation of $\SH^0(Y_R)$}

Fix a Liouville domain $\Xin$ and
a Liouville-admissible family of Hamiltonians $\lb H_m \rb_{m=1}^\infty$
on the Liouville completion $X$
of slope $\lambda_m$ with $\lambda_m \to \infty$
satisfying \pref{eq:adHams} and for simplicity also  \pref{eq:controlLiouville}.
A fixed point of a time-one flow
of the Hamiltonian vector field $X_H$
associated with a Hamiltonian function $H$
is called a \emph{time-one Hamiltonian orbit} of $H$.
The set of time-one Hamiltonian orbits
of $H$ will be denoted by $\scrX(X; H)$.
We may choose a small $\bS^1$-dependent perturbation
$
 H_m^\pert \colon \bS^1 \times X \to \bR
$
supported in a neighborhood of the union of all time-one Hamiltonian orbits of $H_m$,
so that any time-one Hamiltonian orbit
of the perturbed Hamiltonian
$
 H_m+ H_m^\pert,
$
is non-degenerate.
The Floer cochain complex is defined by
\begin{align} \label{eq:symp_coh_cpx}
 \CF^*(X;H_m+ H_m^\pert)
  \coloneqq \bigoplus_{x \in \scrX( X; H_m+ H_m^\pert)} \abs{\frako_x},
\end{align}
where
$\abs{\frako_x}$ is the orientation line
on the real vector space $\frako_x$
of rank one associated with
$x \in \scrX( X; H_m+ H_m^\pert)$.
The degree of $x \in \scrX( X; H_m+ H_m^\pert)$ is
the dimension of $X$ minus the Conley--Zehnder index of $x$;
\begin{align}
 \deg x = \dim X - \operatorname{CZ}(x).
\end{align}
The differential $\partial$ on the Floer complex
\pref{eq:symp_coh_cpx}
is given by counting solutions to Floer's equation
\begin{align}
 (d u - X_{H_m+ H_m^\pert} \otimes dt)^{0,1} = 0
\end{align}
for maps
$
 u \colon \bR \times \bS^1 \to X
$
from the cylinder,
with respect to an $\bS^1$-dependent almost complex structure $J$,
modulo translation in the $\bR$-direction.
A monotone homotopy from $H_m+ H_m^\pert$
to $H_{m+1}+ H_{m+1}^\pert$
gives the continuation map,
and the symplectic cohomology is defined as the colimit
\begin{align}
 \SH^*(X)
  \coloneqq \varinjlim_m \HF^*(X,H_m+ H_m^\pert).
\end{align}

For any conical spin Lagrangian $L$,
there is the \emph{closed-open map}
\begin{align} \label{eq:COmap}
 \CO \colon \SH^*(\Xin) \to \HW^*(L)
\end{align}
from the symplectic cohomology
to the wrapped Floer cohomology
(see e.g., \cite[Section 5]{MR2737980}).
This is defined by counting solutions to Floer's equation,
from a disk $\Sigma$
with one marked point on the boundary
(with a negative strip-like end)
and one marked point in the interior
(with a positive cylindrical end)
to $X$,
which asymptotes to a time-one Hamiltonian chord
in the negative strip-like end
and a time-one Hamiltonian orbit
in the positive cylindrical end.

The goal of this section is prove the following theorem:

\begin{theorem} \label{th:OCisomorphism}
The closed-open map
\begin{align}
 \CO: \SH^0(Y_R) \to \HW^0(L_0)
\end{align}
is an isomorphism.
\end{theorem}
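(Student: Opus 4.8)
\textbf{Proof strategy for \pref{th:OCisomorphism}.}
The plan is to prove that $\CO \colon \SH^0(Y_R) \to \HW^0(L_0)$ is an isomorphism by establishing injectivity and surjectivity separately, exploiting the fact that away from the discriminant locus the symplectic fibration $\pi_{\NCx} \colon Y \to \NCx$ is trivial and all relevant chain complexes are concentrated in degree $0$. First I would set up the relevant Hamiltonians: by \pref{thm: isofloer} we may compute $\HW^*(L_0)$ using the fibration-admissible Hamiltonians $G_m = \Hbam + \Hvm$ of \pref{sc:HW}, and for $\SH^*(Y_R)$ we use the analogous Hamiltonians on the Liouville completion, arranging that the Hamiltonian flow preserves $\pi_{\NCx}^{-1}(U_Z)$ and the divisor $\Ebar$ as in the rest of the paper. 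As a preliminary step, I would identify $\SH^0$ as a vector space: Hamiltonian orbits come in families indexed by homotopy classes, and just as for the Lagrangian case one perturbs to pick out one generator $\alpha^m_{\bn,i}$ per class, so $\SH^0(Y_R)$ is freely generated by orbits $\alpha_{\bn,i}$ indexed by $(\bn,i) \in N \times \bZ$ — matching the index set for $\{p_{\bn,i}\}$ generating $\HW^0(L_0)$.

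For injectivity I would follow the argument of Pascaleff \cite{1304.5298}: since the fibration is trivial over $\NCx \setminus Z$, the ``low-energy'' or ``local'' part of $\CO$ can be computed by reduction to the case $Y = (\bCx)^3$, where $\CO \colon \SH^0((\bCx)^3) \to \HW^0(L_0)$ is the standard isomorphism $\bC[N \times \bZ] \xrightarrow{\sim} \bC[N \times \bZ]$. Concretely, a solution to the $\CO$-equation with input orbit $\alpha_{\bn,i}$ that has intersection number zero with $E$ lies entirely in $\pi_{\NCx}^{-1}(\NCx \setminus U_Z) \times \bCx$, and by the relative-index arguments of \pref{sc:aWFT} (weighting curves by $t^{y \cdot E}$) the leading term of $\CO(\alpha_{\bn,i})$ is $\pm p_{\bn,i}$; higher-$E$ corrections only add terms $p_{\bn,i+j}$ with $j > 0$. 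The resulting ``triangular'' structure of $\CO$ with respect to the $i$-grading forces injectivity.

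Surjectivity is the main obstacle, and here I would again borrow from \cite{1304.5298}. The image of $\CO$ is a unital subalgebra of $\HW^0(L_0) \cong H^0(\cO_\Yv)$, and the ring structure on $\HW^0(L_0)$ computed in \pref{sc:HW} (formulas \pref{eq:product1} and \pref{eq:product2}) shows $H^0(\cO_\Yv) = H^0(\cO_{X_\Sigma})[p^{-1}]$ is generated over the image of $\CO$ once one knows enough explicit elements are hit. The strategy is to make partial computations of the Floer differential in symplectic cohomology to show that certain ``primitive'' orbits survive to $\SH^0$ and map to the monomials $p$, $p^{-1}$, and the characters $\chi_{-\bn,\ell_1(\bn)}$ generating $H^0(\cO_\Yv)$. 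For the generator $p$ (and $p^{-1}$) this uses the local computation of the BV-operator in \cite{1405.2084} together with the triviality of the fibration, while for the toric characters one rules out higher-energy terms in the differential of the corresponding primitive cochain by the trick in \cite{1304.5298}, exactly as indicated in the introduction. Once we know $p$, $p^{-1}$, and the $\chi_{-\bn,\ell_1(\bn)}$ all lie in the image of the ring map $\CO$, surjectivity follows since these generate $H^0(\cO_\Yv)$ as an algebra. The hard part is controlling the Floer differential in $\SH^*(Y_R)$ precisely enough near the discriminant locus; everything away from $\pi_{\NCx}^{-1}(U_Z)$ reduces to $(\bCx)^3$ and is standard, so the delicate estimates are confined to a neighborhood of $E$ where the BV-operator computation of \cite{1405.2084} provides the needed input.
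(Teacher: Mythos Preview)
Your broad strategy matches the paper's---injectivity via local reduction to the toric model, surjectivity via the BV/Pascaleff trick combined with multiplicativity---but there is a structural gap in how you organize the argument.

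The problematic step is your ``preliminary'' identification of $\SH^0(Y_R)$ as freely generated by $\alpha_{\bn,i}$. In the Lagrangian case this works because the entire complex $\CF^*(L_0;H_m)$ sits in degree $0$, so there are no differentials to worry about. But in the closed-string case the Morse--Bott families of Reeb orbits are tori (or intervals times tori) of dimension up to $4$, and after perturbation (the paper's \pref{lm:perturbations}) each family contributes generators in a \emph{range} of degrees. There is a unique index-$0$ generator $\alpha_{(\bn,i)}$ per family, but nothing a priori prevents $d\alpha_{(\bn,i)} \in \CF^1$ from being nonzero. So you cannot compute $\SH^0$ as a vector space before running the argument; the identification $\SH^0 \cong \HW^0$ is the \emph{conclusion}, not a premise.

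The paper avoids this circularity. Injectivity (\pref{lm:injective}) is a chain-level statement: for small enough perturbation parameter $\epsilon$, a Gromov compactness argument forces all curves contributing to the coefficient of $\theta_{(\bn,i)}$ in $\CO_m(\alpha_{(\bn,i)})$ to lie in a neighborhood $W^T_{(\bn,i)}$ where the geometry is identical to the toric domain $T_R$; the toric computation then gives $\pm 1$. This is different from your triangularity-in-$i$ argument, and in particular does not require first knowing that the $\alpha_{(\bn,i)}$ survive to cohomology. For surjectivity, the paper's \pref{lm:closedelements} is exactly the step you describe with the BV operator and the Pascaleff trick---but its role is to show that the highest-action degree-$0$ cochain in each homotopy class is \emph{closed}, producing explicit classes in $\SH^0$ whose images under $\CO$ one can then compute locally. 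Combined with the ring structure (\pref{rm:multsurj}) and the regular-class computation (\pref{lm:regular}), this bootstraps to full surjectivity. You have the right ingredients; the correction is that \pref{lm:closedelements} is not a lemma used to compute $\SH^0$ beforehand, but the mechanism by which one produces cocycles to feed into $\CO$.
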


In view of \pref{eq:adHams},
time-one Hamiltonian orbits on the cylindrical end are in bijection
with Reeb orbits of period less than $\lambda_m$.
We must begin by dealing with the fact that our Reeb flow is degenerate.
For each family $\cF_{(\bn,i)}$ of time-one Hamiltonian orbits
corresponding to the family $\cFbar_{(\bn, i)}$ of Reeb orbits,
we choose an isolating neighborhood $U_{(\bn,i)} \subset \Yhat_R$,
which contains only the orbit set $\cF_{(\bn,i)}$ and no others.
If the set $\cF_{(\bn,i)}$ occurs at a level $r_Y=r_0$,
then we may assume that the neighborhood
is the product
\begin{align}
 U_{(\bn,i)} = (r_0-\delta,r_0+\delta) \times \Ubar_{(\bn,i)},
\end{align}
of a small interval and a small open set
$
 \Ubar_{(\bn,i)} \subset \partial Y_R
$
containing $\cF_{(\bn,i)}$
and invariant under the natural local $\bS^1$-action
which extends the Reeb flow action on $\cFbar_{(\bn,i)}$.
Let
\begin{align}
 \mu_{(\bn,i)} \colon U_{(\bn,i)} \to \bR
\end{align}
denote the local moment map
for this $\bS^1$-action.
For definiteness,
let us fix choices of these open sets $\Ubar_{(\bn,i)}$: 
\begin{itemize}
 \item
Recall from \pref{sc:Reeb_orbits}
that
the family $\cFbar_{(\bn,0)}$
for regular $\bn$ is a $T^3$-fibration
over the product $(\text{point}) \times I$
of a point in $N_\bR$
and a closed interval in $\bR$
under the map $\pi_{\Bhat}$
given in \pref{eq:pi_Bhat}.
We choose the set $\Ubar_{(\bn,i)}$
to be the preimage by $\pi_{\Bhat}|_{\partial T_R}$
of the product of a small ball of the point in $N_\bR$
and an open interval in $\bR$ which is slightly larger than $I$.
 \item
When $\bn$ is regular and $i \ne 0$,
the family $\cFbar_{(\bn,i)}$ is the fiber of $\pi_{\Bhat}$
over a point in $N_\bR \times \bR$, and
we can choose $\Ubar_{(\bn,i)}$ to be the preimage
of a ball around this point.
 \item
When $\bn$ is not regular,
we choose the tubular neighborhood
to be where  $ \mu$ lies in $[\mu_{0,\bn,i}-\delta_{0,\bn,i}',\mu_0+\delta_{1,\bn,i}']$
and $|r_{\alpha-\beta}| \leq r_{\alpha-\beta,0}$ for appropriate intervals.
 \item
For $\cFbar_{(0,i)}$, we choose the neighborhood $\Ubar_{(\bn,i)}$
to be defined by $\rho < R-\delta'_R$ for $\delta'_R$ slightly smaller than $\delta_R$. 
\end{itemize}
We let $W^T_{(\bn,i)}$ be an open set in $U_{(\bn,i)}$
containing $\cF^0_{(\bn,i)}$ and
contained in the region where $\Yhat_R$ agrees with $\hat{T}_R$.

\begin{lemma} \label{lm:perturbations}
There is a function
$
 h_{t,(\bn,i)} \colon \bS^1 \times \Yhat_R \to \bR
$
supported in $U_{(\bn,i)}$, such that:
\begin{itemize}
 \item All Hamiltonian orbits of $H_{t,m, \epsilon}=H_m+\epsilon h_{t,(\bn,i)}$ inside of $U_{(\bn,i)}$ for $\epsilon$ sufficiently small are in bijection with the critical points $\alpha_j$ of an outward pointing Morse function
$
 \htilde \colon \cF_{(\bn,i)} \to \bR.
$
 \item
For $\epsilon$ sufficiently small, the gradings of the critical points coincide with the Morse-index of $\alpha_j$.
 \item
There is a unique orbit of index zero $\alpha_{(\bn,i)}$ which lies in $W^T_{(\bn,i)}$. 
\end{itemize}
\end{lemma}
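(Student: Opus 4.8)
The plan is to construct the perturbing function $h_{t,(\bn,i)}$ by a Morse--Bott argument localized to the isolating neighborhood $U_{(\bn,i)}$, exploiting the $\bS^1$-invariance of the Reeb flow there. First I would recall that on $U_{(\bn,i)}$ the Hamiltonian $H_m$ has the form $h_m(r_Y)$ (by \eqref{eq:adHams}), so that the time-one orbits of $X_{H_m}$ inside $U_{(\bn,i)}$ sweep out exactly the Morse--Bott manifold-with-corners $\cF_{(\bn,i)}$, which by the discussion in \pref{sc:Reeb_orbits} is an oriented $\bS^1$-bundle $\cF_{(\bn,i)} \to \cB_{(\bn,i)}$ over a manifold with corners ($\cB_{(\bn,i)}$ being a point, an interval $I$, $I^2$, or $D^2$ depending on the type). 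The standard Morse--Bott-to-Morse perturbation procedure for Floer theory (as in, e.g., the treatment of Morse--Bott symplectic cohomology) says: choose a Morse function $\bar h$ on the quotient $\cB_{(\bn,i)}$ which is \emph{outward-pointing} along the corner strata, pull it back using the local $\bS^1$-action's connection to a function $\htilde$ on $\cF_{(\bn,i)}$, extend $\htilde$ off $\cF_{(\bn,i)}$ into $U_{(\bn,i)}$ cut off by a bump function, and take the product form $h_{t,(\bn,i)}$ to also include a small perturbation in the $\bS^1$-direction killing the one-dimensional degeneracy along the fibers. For $\epsilon$ small the implicit function theorem applied to the time-one return map shows the orbits of $H_m + \epsilon h_{t,(\bn,i)}$ inside $U_{(\bn,i)}$ are in bijection with $\Crit(\htilde)$, which by construction equals $\Crit(\bar h)$ lifted, and these are non-degenerate.

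For the second bullet, the grading statement follows from the standard index computation for Morse--Bott perturbations: the Conley--Zehnder index (equivalently the degree in our conventions) of the perturbed orbit at a critical point $\alpha_j$ is the Conley--Zehnder index of the Morse--Bott family plus the Morse index of $\alpha_j$ with respect to $\htilde$; since we have arranged by the small-perturbation set-up and the choice of trivialization that the Morse--Bott contribution is normalized to zero on the index-zero stratum (this is exactly the role of "outward-pointing" — it forces the perturbed orbits to sit at the "bottom" of the family in the Liouville coordinate), the degree of the perturbed orbit at $\alpha_j$ coincides with $\ind_{\htilde}(\alpha_j)$ for $\epsilon$ sufficiently small. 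One has to be slightly careful that the corner strata of $\cF_{(\bn,i)}$ do not contribute spurious low-index orbits; this is precisely why the Morse function is required to be outward-pointing, so no critical point is created on $\partial \cF_{(\bn,i)}$ and all critical points lie in the interior. For the third bullet, I would note that for each family type (I through V), $\cF^0_{(\bn,i)} = \partial L_0 \cap \cF_{(\bn,i)}$ is contractible (a point, $I$, $I^2$, or $D^2$ as listed at the end of \pref{sc:Reeb_orbits}), so a Morse function on it has a unique minimum, which we place in the open subset $W^T_{(\bn,i)}$ where $\Yhat_R$ agrees with $\hat T_R$; since $W^T_{(\bn,i)}$ was chosen to contain $\cF^0_{(\bn,i)}$, we may simply choose $\htilde$ to attain its minimum there, giving the unique index-zero orbit $\alpha_{(\bn,i)}$ inside $W^T_{(\bn,i)}$.

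I expect the main obstacle to be the careful bookkeeping of the corner strata and of orientations: Morse--Bott perturbations over manifolds-\emph{with-corners} are not entirely standard, and one must check that the outward-pointing condition on $\bar h$ (i) genuinely prevents critical points from appearing on lower-dimensional strata and (ii) is compatible with the action filtration so that no orbit "escapes" the isolating neighborhood $U_{(\bn,i)}$ as $\epsilon$ varies. The latter is where the product structure $U_{(\bn,i)} = (r_0 - \delta, r_0 + \delta) \times \Ubar_{(\bn,i)}$ and the invariance of $\Ubar_{(\bn,i)}$ under the local $\bS^1$-action (guaranteed by the explicit choices made just before the lemma) do the work: since $h_{t,(\bn,i)}$ is supported in $U_{(\bn,i)}$ and the unperturbed orbits are already confined there, a sufficiently small $\epsilon$ keeps all perturbed orbits inside. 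The grading normalization in the second bullet, while conceptually routine, requires pinning down the trivialization of the canonical bundle consistently with the one used elsewhere in the paper, and I would cite the index conventions of \cite[Section (11e)]{MR2441780} for this.
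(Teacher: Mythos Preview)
Your approach is essentially correct but differs from the paper's main proof in one key technical device. The paper first \emph{unwraps} the Hamiltonian flow so that all orbits in $\cF_{(\bn,i)}$ become constant; it then chooses a single outward-pointing Morse function $\tilde h$ on a ``thickening'' $\cF_{(\bn,i)}^t$ (a same-dimensional enlargement of $\cF_{(\bn,i)}$ with prescribed product behavior near each boundary face), pulls it back to $U_{(\bn,i)}$ via a projection and a cutoff, and then \emph{re-wraps}, which is what makes the resulting perturbation time-dependent. Your two-step route --- pull back a Morse function from the quotient $\cB_{(\bn,i)} = \cF_{(\bn,i)}/\bS^1$ and then break the residual $\bS^1$-symmetry by a further small time-dependent term --- is exactly the variant the paper records in the remark immediately following the lemma (where it is noted to be useful for partial computations of the BV operator). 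Both routes yield the same conclusion; the unwrapping trick has the advantage that the bijection with $\Crit(\tilde h)$ is literal in one step, whereas in your version the pulled-back $\htilde$ is not itself Morse on $\cF_{(\bn,i)}$ (it is degenerate along the $\bS^1$-fibers), so the bijection only emerges after the second perturbation.

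One genuine slip: in your argument for the third bullet you invoke the contractibility of $\cF^0_{(\bn,i)} = \partial L_0 \cap \cF_{(\bn,i)}$, but that is the family of Reeb \emph{chords}, whereas the lemma concerns the unique index-zero \emph{orbit}. The spaces $\cF_{(\bn,i)}$ themselves (e.g.\ $T^3$ or $I\times T^3$) are not contractible, so a Morse function on them need not have a unique minimum a priori. The correct argument is simply by choice: since $W^T_{(\bn,i)}$ is open in $U_{(\bn,i)}$ and meets $\cF_{(\bn,i)}$, one picks $\tilde h$ to have exactly one interior minimum and places it there. The paper handles this in one line (``the last point can be achieved by choosing $\tilde h$ to have a unique critical point of index $0$ which furthermore lies in $W^T_{(\bn,i)}$'').
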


\begin{proof} To do this, we apply elementary Morse-Bott theory for Reeb flows which generate $\bS^1$-actions (for a nice review with references to the original papers, we recommend \cite{MR3483060}), taking care of the fact that our Reeb orbits come in families which are manifolds with boundary. A standard trick allows us to ``unwrap" the Hamiltonian flow and thereby allows us to work in an equivalent Hamiltonian system where all of the orbits along $\cF_{(\bn,i)}$ of the flow are constant (and these are still the only orbits in $U_{(\bn,i)}$). We temporarily work in this equivalent Hamiltonian system.

  The sets $\Ubar_{(\bn,i)}$ naturally project to certain "thickenings" $\cF_{(\bn,i)}^t$ of $\cF_{(\bn,i)}$ of the same dimension. When the set $\cF_{(\bn,i)}$ are codimension zero in $\partial Y_R$, these are just $\Ubar_{(\bn,i)}$ and in the case when $\cF_{(\bn,i)}=T^3$, then $\cF_{(\bn,i)}^t=\cF_{(\bn,i)}$. In the case, when $\bn$ is regular, $\cF_{(\bn,0)}^{t}$ is the locus where $\mu \in [\mu_{\bn,0}-\delta_{0,\bn}',\mu_{\bn,0}+\delta_{1,\bn}']$ and $r_i=r_{i,\bn}$. Finally in the case when $\bn$ is not regular and $i$ is regular, $\cF_{(\bn,i)}^t$ is the locus where $\mu=\mu_0$ and $|r_{\alpha-\beta}| \leq r_{\alpha-\beta,0}$. Choose an outward pointing Morse-function $\tilde{h}: \cF_{(\bn,i)}^t  \to \bR$ such that all critical points lie in the interior of $\cF_{(\bn,i)}$. When $\cF_{(\bn,i)}^t$ is a manifold with boundary we postulate that on $\cF_{(\bn,i)}^t \setminus \cF_{(\bn,i)}$ near a boundary component cut out by an equation $f=c$ (either $\mu=c, r_{\alpha-\beta}=c$, or $\rho=c$) that $\tilde{h}=\tilde{h}(f)$. In the case when $\bn$ is not regular and $\cF_{(\bn,0)}^t$ is a manifold with corners we similarly require that near a corner $\tilde{h}=\tilde{h}(r_{\alpha-\beta},\mu)$ along an open smooth piece of the boundary that $h=h(r_{\alpha-\beta})$ or $h=h(\mu)$.

 Denote by $\pi_{\bn,i}: U_{(\bn,i)} \to \cF_{(\bn,i)}^t$. Choose a cutoff function $ \psi:\Yhat_R \to \bR$ supported in $U_{(\bn,i)}$ and such that $\psi=1$ on some smaller open set $V_{(\bn,i)}$ containing $\cF_{(\bn,i)}$. Finally consider the function $\psi\pi_{\bn,i}^*(\tilde{h})$. We take this function and translate it back to the orginal Hamiltonian system to obtain $h_{t,(\bn,i)}$ (Undoing the unwrapping process makes this function time dependent). The first bullet and second bullets now follow from standard Morse-Bott theory together with the fact that our Hamiltonian perturbations are chosen so that new orbits cannot appear near the boundary and hence all critical points occur when the flow is transversely non-degenerate. Finally the last point can be achieved by choosing $\tilde{h}$ to have a unique critical point of index $0$ which furthermore lies in $W^T_{(\bn,i)}$.
\end{proof} 

\begin{remark} \label{rm:eqperturb}
There is a variant construction of a perturbation $h_{t,(\bn,i)}$
satisfying the hypotheses of the lemma which proceeds in two steps
but allows one to make partial computations of the BV operator in symplectic cohomology.
Namely, one may without unwrapping consider a perturbation function
$\tilde{h}_{eq}$ on $\cF_{\bn,i}^t$
which is the pull-back of a Morse-function on $\cF_{\bn,i}^t/\bS^1$
satisfying the same properties near the boundary strata as $\tilde{h}_{eq}$ did above.
Extend this function to the neighborhoods $U_{\bn,i}$ by the formula $\psi\pi_{\bn,i}^*(\tilde{h}_{eq})$.
Consider the Hamiltonians
\begin{align}
 H_{m,eq}=H_m+\epsilon\psi\pi_{\bn,i}^*(\tilde{h}_{eq}).
\end{align}
For small $\epsilon$, we obtain a transversely non-degenerate $\bS^1$ family of Reeb orbits for every crticial point on $\cF_{\bn,i}^t/\bS^1$. Finally, we use a small time dependent Hamiltonian perturbation in a neighborhood of the orbits to obtain non-degeneracy. This construction can be done so that the third bullet point of the lemma is satisfied.   \end{remark}

We set
\begin{align}
 H^Y_{m, \pert}
  \coloneqq \sum_{(\bn,i), \cF_{\bn,i} \in \scrX(X,H_m)} h_{t,(bn,i)}. 
\end{align}
For certain purposes,
it makes sense to view
\begin{align}
 H^Y_{m,\pert}
  = H^\reg_{m,\pert} + H^\irr_{m,\pert},
\end{align}
where the first term is the sum over regular $(\bn,i)$ and
the other term is the sum over irregular $(\bn,i)$.
We set
\begin{align}
 H^Y_m \coloneqq H_m + \epsilon H^Y_{m,\pert}.
\end{align}
Similarly
we can take a small (time-independent) perturbation of
\begin{align}
 H^L_m \coloneqq H_m + \epsilon H^L_{m, \pert}
\end{align}
(the perturbation can be chosen to be supported in $W^T_{(\bn,i)})$,
so that there is a Reeb chord of $H^L_m$
for every suitable component of $\cF^0_{\bn,i}$. 

\begin{lemma} \label{lm:injective}
The map
\begin{align}
 \CO \colon \SH^0(Y_R) \to \HW^*(L_0)
\end{align}
is injective.
\end{lemma}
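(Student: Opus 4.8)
The plan is to follow Pascaleff's strategy \cite{1304.5298}: after refining $\CO$ by the grading relative to $E$, identify its leading (lowest-energy) part with a classical isomorphism computed in the product model $\NCx\times\bCx\cong(\bCx)^3$, and deduce injectivity by a leading-term argument.

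First I would split both sides by free homotopy class. Since $L_0$ is contractible, $\pi_1(Y,L_0)\cong\pi_1(Y)\cong N$ by \pref{lm:pi_1}, so $\SH^0(Y_R)=\bigoplus_{\bn\in N}\SH^0(Y_R)_\bn$; a disk with one interior and one boundary puncture records a homotopy between the homotopy class of its interior asymptote and the relative homotopy class of its boundary chord, so $\CO$ preserves $\bn$, and it suffices to treat each $\CO_\bn\colon\SH^0(Y_R)_\bn\to\HW^0(L_0)_\bn$. By the computation in \pref{sc:ASFloertheory}, $\HW^0(L_0)_\bn$ has basis $\{ \theta_{\bn,i} \}_{i\in\bZ}$. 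On the symplectic side, \pref{sc:Reeb_orbits} organizes the time-one Hamiltonian orbits into families $\cF_{(\bn,i)}$, and \pref{lm:perturbations} replaces $\cF_{(\bn,i)}$, by a perturbation supported in $W^T_{(\bn,i)}$, by orbits graded by the Morse index of a chosen Morse function, with a unique orbit $\alpha_{(\bn,i)}$ of index $0$; since fibre winding is topologically trivial in $Y$ these all sit in degree $0$, and the degree-$0$ part of $\CF^*(Y_R;H^Y_m)$ in class $\bn$ is spanned by the $\alpha_{(\bn,i)}$.

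The heart of the argument is to filter both complexes by $\reldeg{-}$, the grading relative to $E$, which within class $\bn$ is the index $i$ up to an additive constant. Every curve $y$ contributing to the $\SH$-differential or to $\CO$ has $y\cdot E\ge0$, and an index formula analogous to \pref{eq:relindex} forces the relative gradings of input and output to differ by $2(y\cdot E)$ up to a fixed shift; hence $\CO_\bn(\alpha_{(\bn,i)})$ is a finite combination of the $\theta_{\bn,i'}$ whose extreme term arises precisely from curves with $y\cdot E=0$. Such curves remain where the conic bundle is trivial: $H^Y_m$, $H^L_m$ and their perturbations preserve $\pi_{\NCx}^{-1}(U_Z)$, the generators lie in $W^T_{(\bn,i)}\subset(\NCx\setminus U_Z)\times\bCx$, and, by the maximum principles of \pref{lm:max1} and \pref{lm:Gcompactness} together with integrable fibration-admissibility of $J$, a curve with $y\cdot E=0$ projects to a holomorphic curve in $\NCx\times\bC$ disjoint from $Z\times\bC$, i.e.\ a curve in $\NCx\times\bCx$. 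The extreme coefficient is therefore the closed-open coefficient for the split Lagrangian $L_0\subset\NCx\times\bCx\cong(\bCx)^3$, which by the string-topology identifications \pref{eq:stringtop1} and \pref{eq:stringtop2} (with their $\NCx$-analogue and the TQFT structure, exactly as in the proof of \pref{lm:enum}) is the canonical isomorphism $\bC[N\times\bZ]\simto\bC[N\times\bZ]$; in particular the leading coefficient is $\pm1$. Hence on the associated graded of the relative-grading filtration the $\SH$-differential reduces to the vanishing differential of $(\bCx)^3$ and $\CO$ induces this isomorphism, and the usual leading-term argument — in a hypothetical kernel element $\sum_ic_i\alpha_{(\bn,i)}$, the extreme $\theta_{\bn,i'}$ coming from the extremal $\alpha_{(\bn,i_0)}$ cannot be cancelled — shows $\CO_\bn$ is injective; compatibility of the filtration with the continuation maps passes this to the colimit over $m$.

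I expect the main obstacle to be the reduction of the $y\cdot E=0$ coefficient to the product model: one must verify carefully, using the maximum principle and the triviality of the fibration away from the discriminant, that these curves really project into $\NCx\times\bCx$, and that the resulting count in $(\bCx)^3$ is exactly $1$. The filtration bookkeeping and the leading-term conclusion are then routine.
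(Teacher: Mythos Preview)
Your overall architecture --- show that the coefficient of $\theta_{(\bn,i)}$ in $\CO(\alpha_{(\bn,i)})$ is $\pm 1$ and deduce injectivity --- matches the paper's, but the mechanism you use to isolate that coefficient does not work in this setting, and the paper's actual argument is quite different.

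The gap is that you are importing the machinery of \pref{sc:aWFT} into \pref{sc:hms}. Here we are on the Liouville domain $Y_R$ with Liouville-admissible Hamiltonians $H_m=h_m(r_Y)$ and almost complex structures in $J(Y_R,\Ebar)$; neither $\pibar_{\NCx}$ nor $p$ is pseudoholomorphic for these $J$, so \pref{lm:max1} and \pref{lm:Gcompactness} are unavailable, and there is no way to ``project a curve with $y\cdot E=0$ to a holomorphic curve in $\NCx\times\bCx$''. Even granting a well-defined relative-grading filtration (which needs the index formula \pref{eq:relindex} to be re-derived for this Floer package), the identification of the graded piece with the closed--open map on $(\bCx)^3$ fails: the Hamiltonian $h_m(r_Y)$ does not correspond to any split Hamiltonian on $(\bCx)^3$, and the Liouville coordinate $r_Y$ has no meaning there. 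So the computation you reduce to is not the one you can actually carry out.

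The paper instead localises by \emph{energy}, not by intersection with $E$. One takes the perturbation parameter $\epsilon$ in $H_{t,m,\epsilon}$ small; then a Gromov compactness argument (as in \cite[proof of Lemma 2.3]{MR2994824}) forces any curve contributing to the coefficient of $\theta_{(\bn,i)}$ in $\CO_m(\alpha_{(\bn,i)})$ to lie entirely in the small neighbourhood $W^T_{(\bn,i)}$. The point of introducing the toric Liouville domain $T_R$ in \pref{sc:Domain} is precisely that $Y_R$ and $T_R$ are \emph{canonically identified} on $W^T_{(\bn,i)}$, including their Liouville coordinates, so $h_m(r_Y)=h_m(r_T)$ there. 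The same localisation argument applied to $T_R$ shows that this coefficient equals the corresponding coefficient of $\CO^T_m$, which is $\pm 1$ by the standard computation of $\SH^*(T_R)$ and $\HW^*_{T_R}(L_0)$. No projection, no relative-grading filtration, and no comparison with $(\bCx)^3$ is used.
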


\begin{proof}
Observe that an element of the kernel $y$ must lie in some finite $\HF^*(\Yhat_R,H_{t,m,\epsilon})$. Because the maps $\HF^*(L_0,H_{m,\epsilon}) \to \HW^*(L_0)$ are injective, the map must further lie in the kernel of a map
\begin{align}
 \CO_m \colon \HF^*(\Yhat_R,H_{t,m,\epsilon}) \to \HF^*(L_0,H_{m,\epsilon})
\end{align}
This map depends only on the slope $\lambda_m$ at infinity and is hence independent of $\epsilon$ on homology. Let $\frako_{\alpha_{(\bn,i)}}$ denote the orientation space in $\CF^*(L_0,H_{m,\epsilon})$ corresponding to orbit $\alpha_{(\bn,i)}$. Choosing arbitrarily a generator for this space and giving it the same name by abuse of notation, the statement will follow from the fact that for $\epsilon$ small, on the cochain level the coefficient of $\theta_{(\bn,i)}$ in $\CO_m (\frako_{\alpha_{(\bn,i)}})$ is $\pm 1$. 

To see this, a Gromov compactness argument
(\cite[proof of Lemma 2.3]{MR2994824}) shows that for sufficiently small $\epsilon$ any curve contributing to this coefficient will necessarily lie in $W^T_{(\bn,i)}$. Observe that we may view the Hamiltonians $H_m$ as being Hamiltonians on $\hat{T}_R$ simply by replacing the Liouville coordinate $r_Y$ with $r_T$. We denote the resulting Hamiltonians by $H_m^{T}=H_m(r_T)$ for clarity. Similarly, we may construct perturbations $H_{m,\epsilon}^{T}$ and $H_{t,m,\epsilon}^T$ which agree over $W^T_{(\bn,i)}$ with the previously constructed perturbations over $W^{T}_{(\bn,i)}$ and so that for sufficiently small $\epsilon$ the orbit corresponding under this equivalence to $\alpha_{(\bn,i)}$ is a unique Hamiltonian orbit of degree 0 in homotopy class $(\bn,i)$. Again, for sufficiently small $\epsilon$, all curves contributing to the
\begin{align}
 \CO_m^{T}: \HF^*(\hat{T}_R,H_{t,m,\epsilon}^T) \to \HF^*(L_0,H_{m,\epsilon}^T)
\end{align}
lie in $W^T_{(\bn,i)}$. This map must be $\pm 1$ along the diagonal by the well-known calculations of $\SH^*(T_R)$ and $\HW_{T_{R}}^*(L_0)$. The result follows. 
\end{proof}

For any homotopy class $\bn$,
let $\SH^0(Y_R)_{\bn}$ and $\HW^*(L_0)_{\bn}$
denote the subvector space generated by orbits in class $\bn$.
We have a direct sum decomposition
\begin{align}
 \SH^0(Y_R) \coloneqq \bigoplus_{\bn} \SH^0(Y_R)_{\bn}
\end{align}
and similarly for wrapped Floer theory.
The map $\CO$ respects these direct sum decompositions.

\begin{lemma} \label{lm:regular}
The map
\begin{align}
 \CO_{\bn} \colon \SH^0(Y_R)_{\bn} \to \HW^*(L_0)_{\bn}
\end{align}
is an isomorphism for $\bn \neq 0$ regular.
Furthermore for any $\bn$ regular, let $\alpha_{(\bn,i)}$ be the Hamiltonian orbit on the cylindrical end of $H_{t,m,\epsilon}$ (as usual $\epsilon$ is sufficiently small) of index 0. Choose a generator, $\alpha^0_{(\bn,i)}$, of the corresponding orientation line. For $H_{\pert,n}$ sufficiently close to zero, $\CO(\alpha^0_{(\bn,i)})=\pm \theta_{(\bn,i)}$.
\end{lemma}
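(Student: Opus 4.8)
\textbf{Proof proposal for Lemma \ref{lm:regular}.}

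The plan is to reduce the computation of $\CO_\bn$ for regular $\bn \neq 0$ to the known case of the toric Liouville domain $T_R$, exactly as in the proof of injectivity in \pref{lm:injective}, but now using the fact that for regular $\bn$ \emph{every} Reeb orbit (and Reeb chord) in class $\bn$ projects entirely into the trivial part of the fibration, so that the relevant Floer-theoretic count takes place over $W^T_{(\bn,i)} \subset \Yhat_R$. First I would invoke \pref{eq:D3}: a Reeb orbit of $\partial Y_R$ whose homotopy class $\bn$ is regular projects by $\pi_{N_\bR} \circ \pi_{\Bhat}$ to the chamber $C_{\alpha_0}$, hence stays away from $\pi_{\NCx}^{-1}(U_Z)$, and in particular lies in the region where $\Yhat_R$ is canonically identified with $\hat T_R$. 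The same holds for the Hamiltonian orbits of $H^Y_m = H_m + \epsilon H^Y_{m,\pert}$ and the Hamiltonian chords of $H^L_m$ in class $\bn$ once $\epsilon$ is sufficiently small, since the perturbations $h_{t,(\bn,i)}$ are supported in the isolating neighborhoods $U_{(\bn,i)}$ and the distinguished index-$0$ orbit $\alpha_{(\bn,i)}$ was arranged to lie in $W^T_{(\bn,i)}$ by \pref{lm:perturbations}.

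Next I would run the Gromov-compactness argument of \cite[proof of Lemma 2.3]{MR2994824}, as in \pref{lm:injective}: for $\epsilon$ small, any Floer solution defining the closed--open map with input $\alpha_{(\bn,i)}$ (or more generally any solution contributing to the matrix coefficient between class-$\bn$ generators) must have both asymptotics in class $\bn$, hence both ends land in the union of the $U_{(\bn,i)}$; a further energy/action estimate confines the whole curve to $W^T_{(\bn,i)}$, where the ambient geometry, almost complex structure, Hamiltonian $H^T_m = H_m(r_T)$, and Lagrangian $L_0$ all agree with the corresponding data on $\hat T_R$. Therefore the map
\begin{align}
 \CO_{m,\bn} \colon \HF^0(\Yhat_R, H^Y_{t,m,\epsilon})_\bn \to \HF^*(L_0, H^L_{m,\epsilon})_\bn
\end{align}
is identified, for every finite $m$ and $\epsilon$ small, with the corresponding map
\begin{align}
 \CO^T_{m,\bn} \colon \HF^0(\hat T_R, H^{T}_{t,m,\epsilon})_\bn \to \HF^*(L_0, H^{T}_{m,\epsilon})_\bn
\end{align}
for the toric model. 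Passing to the colimit over $m$ gives $\CO_\bn = \CO^T_\bn$ on the class-$\bn$ summands.

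Finally, since $T_R$ is (an open subset of) $\NCx \times \bCx$ with its standard structure, both $\SH^0(T_R)$ and $\HW^*_{T_R}(L_0)$ are computed by the well-known toric calculation: $\SH^0(T_R) \cong \bC[N \times \bZ]$, $\HW^*(L_0)$ is concentrated in degree $0$ and is also $\bC[N \times \bZ]$, and the closed--open map is the canonical isomorphism sending each Hamiltonian orbit of class $(\bn,i)$ to the Hamiltonian chord of class $(\bn,i)$ up to sign. In particular $\CO^T_\bn$ is an isomorphism for $\bn \neq 0$ regular, and on the distinguished degree-$0$ generators one has $\CO(\alpha^0_{(\bn,i)}) = \pm\, \theta_{(\bn,i)}$ once $H^Y_{m,\pert}$ is close enough to zero, since the only curve contributing to this coefficient is the constant one in $W^T_{(\bn,i)}$. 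This proves the lemma. The main obstacle, as in \pref{lm:injective}, is making the Gromov-compactness confinement precise enough that \emph{all} contributing curves — not just those with trivial topology — are forced into $W^T_{(\bn,i)}$ for $\epsilon$ small; this relies on a uniform energy bound coming from the action filtration together with the regularity assumption \pref{eq:D3} that isolates the class-$\bn$ orbits from the discriminant.
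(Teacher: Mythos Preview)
Your proposal is correct and follows essentially the same route as the paper: the paper's proof simply says to take the perturbation $H_{\pert,n}$ near $\pi_{\NCx}^{-1}(Z)$ to zero and then invokes ``a similar argument to \pref{lm:injective}'', and what you have written is precisely a detailed unpacking of that argument---using \pref{eq:D3} to keep the class-$\bn$ orbits and chords away from the discriminant, the Gromov-compactness confinement of \cite[Lemma 2.3]{MR2994824} to localize contributing curves to $W^T_{(\bn,i)}$, and then the identification with the toric model $\hat T_R$ where $\CO$ is known to be the canonical isomorphism.
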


\begin{proof}
We may assume that the Hamiltonians $H_m$ are actually unperturbed near $\pi_{\NCx}^{-1}(Z)$ (i.e. $H_{\pert,n}$ is sent to zero) so that the contribution to the differential is local. The result then follows from a similar argument to Lemma \ref{lm:injective}.
\end{proof}

\begin{remark} \label{rm:multsurj}
Using the fact that
the open-closed map is a ring homomorphism,
\pref{lm:regular} implies that
many other elements are hit by the map $\CO$ as well.
To be more precise,
let $\alpha \in SH^0$ be an element such that
$\CO(\alpha)=\theta_{\bn,i}$ and
let $\beta \in SH^0$ be an element such that
$\CO(\beta)=\theta_{\bn',i'}$ with both $\bn$ and $\bn'$ regular.
Then
\begin{align}
\CO(\alpha\cdot \beta)
 = \sum_{j=0}^{\ell_2(\bn,\bn')}
 \binom{\ell_2(\bn,\bn')}{j}
  \theta_{\bn+\bn',i+i'+j}
 =(1+\theta_{0,1})^{\ell_2(\bn,\bn')} \theta_{\bn+\bn',i+i'} 
\end{align}
is in the image.
By choosing $\bn$ and $\bn'$ appropriately,
this implies that for any $\bn''$,
the element $(1+\theta_{0,1})^{d(\bn'')} \theta_{\bn'',i}$ is
the image of $\CO$ for some $d(\bn'') \ge 1$ and all $i$.
\end{remark}

We now prove the following lemma,
which allows us to compute some differentials in symplectic cohomology and which uses a technique in \cite{1304.5298}.
For the remainder of this subsection,
we use the perturbations of Remark \ref{rm:eqperturb} on the cylindrical end.
We also assume that our data is chosen so that
the restriction $H_{t,m,\epsilon}|_{Y_{R}}$ is Morse and time-independent
with unique critical point of index 0.

\begin{lemma} \label{lm:closedelements}
Let $\cF_{\bn,i}$ be the family of Hamiltonian orbits of $H_m$ of highest action in the homotopy class $\bn$.
Let $\alpha_{(\bn,i)}$ be the Hamiltonian orbit on the cylindrical end of $H_{t,m,\epsilon}$
(as usual $\epsilon$ is sufficiently small)
of index 0.
Choose a generator, $\alpha^0_{(\bn,i)}$,
of the corresponding orientation line.
Then
\begin{align}
 \partial(\alpha^0_{(\bn,i)})=0 \in \CF^*(\Yhat_R, H_{t,m,\epsilon}).
\end{align}
\end{lemma}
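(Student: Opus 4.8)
The plan is to show that $\alpha^0_{(\bn,i)}$ is a cycle by an action/energy argument that rules out every term in the differential $\partial(\alpha^0_{(\bn,i)})$. First I would observe that, since $\cF_{\bn,i}$ is chosen to be the family of Hamiltonian orbits of $H_m$ of \emph{highest} action in the homotopy class $\bn$, any Floer trajectory $u$ contributing to $\partial(\alpha^0_{(\bn,i)})$ must be asymptotic at its negative end to an orbit $x$ with the \emph{same} homotopy class $\bn$ (Floer trajectories preserve free homotopy classes) and with strictly smaller action (the differential increases action, so the output orbit has action $> A(\alpha_{(\bn,i)})$, wait --- recall that with these conventions $\partial$ increases action, so the target orbit has action strictly larger; but $\alpha_{(\bn,i)}$ already sits in the highest-action family within class $\bn$, so there is no orbit of strictly larger action in class $\bn$ available as an output). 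Consequently the only possible outputs are orbits in class $\bn$ lying in the \emph{same} Morse--Bott family $\cF_{\bn,i}$, differing from $\alpha_{(\bn,i)}$ only via the small perturbation.

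With the output confined to the isolating neighborhood $U_{(\bn,i)}$ of the single family $\cF_{(\bn,i)}$, the second step is a Gromov-compactness/locality argument in the spirit of \pref{lm:injective}: for $\epsilon$ sufficiently small, any rigid trajectory from $\alpha^0_{(\bn,i)}$ to another index-one-higher orbit in the same family must have arbitrarily small energy, hence (for small enough $\epsilon$) must be contained in $U_{(\bn,i)}$ and in fact in the region where $\Yhat_R$ agrees with $\hat{T}_R$, so that the count reduces to a Morse-theoretic count for the outward-pointing Morse function $\htilde$ (using \pref{rm:eqperturb}, in two steps: an $\bS^1$-equivariant count on $\cF_{\bn,i}^t/\bS^1$ followed by the Morse differential for the time-dependent perturbation). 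By construction $\alpha_{(\bn,i)}$ is the \emph{unique} index-$0$ critical point, and it lies in $W^T_{(\bn,i)}$; since it is a minimum of $\htilde$ restricted to the relevant stratum, the Morse differential out of it vanishes. One must also check there is no outgoing contribution from the $\bS^1$-direction (the BV-type term), but the index-$0$ orbit sits at the bottom of its $\bS^1$-tower, so this contribution is zero as well.

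Finally I would assemble: the differential increases action and preserves homotopy class, so the only candidate targets lie in $\cF_{(\bn,i)}$; within $U_{(\bn,i)}$ the count is Morse-theoretic and vanishes on the index-$0$ minimum; hence $\partial(\alpha^0_{(\bn,i)}) = 0$ in $\CF^*(\Yhat_R, H_{t,m,\epsilon})$, as claimed. The main obstacle I expect is the locality/Gromov-compactness step: one needs to be careful that, for the \emph{specific} perturbations of \pref{rm:eqperturb} and for $\cF_{(\bn,i)}$ being a family with boundary or corners, no low-energy trajectory can escape the isolating neighborhood or pick up a contribution from a neighboring family or from the boundary strata of $\cF_{(\bn,i)}^t$ --- this is exactly where the condition that $\htilde$ is \emph{outward-pointing} near the boundary and of the separated form $\htilde(f)$ near the walls is used, and where one invokes the no-escape lemma of \cite[Lemma 7.2]{MR2602848} (or the Morse-Bott compactness of \cite{MR3483060}) to confine the curve. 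Everything else is a standard action-filtration bookkeeping argument.
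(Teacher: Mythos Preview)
Your argument is essentially what the paper does for $\bn \neq 0$, and indeed the paper treats that case as the easy one. The genuine gap is in the contractible case $\bn = 0$, which the paper singles out as ``the more difficult case''. Your first step asserts that, since $\cF_{\bn,i}$ is the highest-action family in class $\bn$, there is no orbit of strictly larger action available as an output of $\partial$. For $\bn = 0$ this fails: the Hamiltonian is $C^2$-small and positive on $Y_R$, so the \emph{constant} orbits at interior Morse critical points are contractible and have strictly higher action than any cylindrical-end family (whose action $h_m(r_Y) - r_Y h_m'(r_Y)$ is non-positive by convexity). The action filtration therefore does \emph{not} confine $\partial(\alpha^0_{(0,i)})$ to the isolating neighborhood $U_{(0,i)}$; the paper's proof in fact begins by allowing $\partial(\alpha^0_{(0,i)}) = \beta^1_{(0,0)}$ with $\beta^1_{(0,0)}$ supported on interior critical points, and the whole difficulty is to show this term vanishes.

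To kill $\beta^1_{(0,0)}$ the paper uses an idea your proposal does not contain: the BV operator $\Delta$ on the symplectic cochain complex. With the two-step perturbation of \pref{rm:eqperturb}, a local computation (from \cite{1405.2084}) produces a degree-one cochain $\alpha^1_{(0,i)}$ with $\Delta(\alpha^1_{(0,i)}) = \alpha^0_{(0,i)} + c\,\alpha^0_{(0,0)}$ and with $\partial(\alpha^1_{(0,i)})$ again supported in the interior. Since the Hamiltonian is chosen time-\emph{independent} on the interior, $\Delta$ vanishes there on the chain level; combining this with the chain-map identity $\partial\Delta = \Delta\partial$ forces $\partial(\alpha^0_{(0,i)} + c\,\alpha^0_{(0,0)}) = 0$, hence $\partial(\alpha^0_{(0,i)}) = 0$. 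Your brief mention of a ``BV-type term'' inside the local Morse model is not this argument: the point is a \emph{global} chain-level identity that trades the unknown interior contribution for a quantity one can see is zero. This is exactly the Pascaleff trick from \cite{1304.5298} flagged in the introduction.
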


\begin{proof}
We focus on the more difficult case where $\bn=0$
and we want to prove that the corresponding $\alpha^0_{(0,i)}, i \neq 0$ is closed.
Choose a generator of the orientation line
corresponding to the Morse critical point in the interior which we will denote by $\alpha^0_{(0,0)}$.

For suitable such $H_{t,m,\epsilon}$ with $\epsilon$ sufficiently small and complex structures $J_t$,
we use the Morse-Floer correspondence to identify trajectories between orbits contained in $U_{(0,i)}$ with differentials of the Morse-complex of a time independent $\tilde{h}$ for the restriction of an almost-Kahler metric corresponding to a time-independent complex structure $J_0$. The pair $(h_t,J_t)$ and $(\tilde{h},J_0)$ are again related by unwrapping. Again, the only difference between this case and the classical case is that we must ensure that Floer trajectories do not escape to the boundary of $\cF_{\bn,i}$, which is easily arranged.
The upshot is that we have
\begin{align}
 \partial(\alpha^0_{(0,i)})=\beta^{1}_{(0,0)}
\end{align}
where $\beta^1_{(0,0)}$ is a sum of generators corresponding to critical points in the interior of $Y_R$. 
Let $\Delta$ denote the BV operator on symplectic cohomology.
Using the local analysis in \cite{1405.2084},
we can see that there is a cochain $\alpha^1_{(0,i)}$ in $\CF^1(\Yhat_R, H_{t,m,\epsilon})$ such that
\begin{align}
 \Delta(\alpha^1_{(0,i)})=\alpha^0_{(0,i)} + c \cdot \alpha^0_{(0,0)}
\end{align}
for some $c$ and such that
\begin{align}
 \partial(\alpha^1_{(0,i)}) = \beta^2_{(0,0)}
\end{align}
where $\beta^2_{(0,0)}$ is a sum of generators corresponding to critical points in the interior of $Y_R$.
Now $\Delta$ is a chain map,
so $\partial \circ \Delta=\Delta \circ \partial$.
The assumption that $H_{t,m,\epsilon}$ is time-independent when restricted to the interior implies that
$\Delta(\beta^2_{(0,0)})=0$ on the chain level.
Combining these statements implies the required result.
\end{proof}

We finally complete the proof of \pref{th:OCisomorphism}.

\begin{proof}[Proof of \pref{th:OCisomorphism}]
We can assume that
the contractible orbit of \pref{lm:closedelements} is $\alpha_{(0,\pm 1)}$.
Assume it is $\alpha_{(0,1)}$ as the other case is similar.
Then this implies that $\theta_{0 ,1}$ is in the image of $\CO$.
It proves that $\theta_{0,i}$ is in the image of $\CO$ for $i \geq 0$.
Combining this with \pref{rm:multsurj} implies that
the map $\CO_{[0]}$ is surjective.
Applying \pref{lm:closedelements},
this implies that for any $\bn$,
there is an $i_{0}$ for which $\theta_{\bn,i_{0}}$ is in the image of $\CO_{\bn}$. This combined with the surjectivity of $\CO_{[0]}$ implies that $\theta_{\bn,i}$ is in the image of $\CO_{\bn}$ for any $i$.  \end{proof}

\subsection{Generation for the mirror of the complement
of an anti-canonical divisor in $\bC^3$}

Let $Y$ be
the conic bundle defined by the equation 
\begin{align}
 1+w_1+w_2= uv.
\end{align}
The mirror in this case is the affine space
\begin{align}
 \Yv \coloneqq \Spec \lb \bC[x,y,z] \ld (xyz-1)^{-1} \rd \rb.
\end{align}
The identification between this ring and $H^0(\cO_\Yv)$ sends
$y$ to $\chi_{(0,-1),0}$,
$x$ to $\chi_{(-1,0),0}$ and
$z$ to $\chi_{(1,1),0}$.
The goal of this section is to prove the following theorem:

\begin{theorem} \label{th:generationL0}
There is an equivalence of categories
\begin{align}
 \psi \colon D^b \cW(Y_{R}) \to D^b \coh \Yv
\end{align}
sending $L_0$ to $\cO_{\Yv}$.
\end{theorem}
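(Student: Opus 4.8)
\textbf{Proof proposal for \pref{th:generationL0}.}
The plan is to invoke Abouzaid's split-generation criterion \cite{MR2415388}: if the open-closed map $\OC \colon \HH_*(\cW(Y_R)) \to \SH^*(Y_R)$ hits the unit $[1] \in \SH^0(Y_R)$, then $L_0$ split-generates $\cW(Y_R)$. In our situation the entire theory is concentrated in degree $0$, so $\HH_*(\cW(Y_R))$ is computed from the bar complex of the ring $\HW^0(L_0) \cong H^0(\cO_\Yv) = \bC[x,y,z][(xyz-1)^{-1}]$, and the image of $\OC$ is an ideal $I \subset \SH^0(Y_R)$ (using that $\OC$ is a map of $\SH^0$-modules, which again follows since $\CO$ is a ring homomorphism and the module structure is the obvious one). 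By \pref{th:OCisomorphism} we identify $\SH^0(Y_R) \cong \HW^0(L_0)$, so $I$ becomes an ideal in $\bC[x,y,z][(xyz-1)^{-1}]$. First I would observe that to show $I$ is the unit ideal it suffices to exhibit three elements $f_1, f_2, f_3 \in I$ whose only common divisors in this ring are units: indeed, in the localized polynomial ring $\bC[x,y,z][(xyz-1)^{-1}]$ the variety $V(f_1,f_2,f_3)$ would then be empty (any component has codimension $\le 3$, and a nonempty one would force a common factor or a common point lying on $xyz = 1$, which we rule out by the explicit choice below), so the $f_i$ generate $I = (1)$.

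Next I would produce the elements $f_i$ by partial computation of $\OC$, following the trick of Pascaleff \cite{1304.5298}. The point is that although $\OC$ is hard to compute in general, the ``low-energy'' part of $\OC$ applied to the length-one Hochschild chains $[p_{\bn,i}^{-1}] \otimes p_{\bn,i}$ can be computed by reduction to the trivial fibration $(\bCx)^2 \times \bCx$ away from the discriminant locus, exactly as the injectivity and low-energy parts of $\CO$ were computed in \pref{lm:injective} and \pref{lm:regular}. Concretely, I would take $\bn$ ranging over the three lattice vectors $(-1,0)$, $(0,-1)$, $(1,1)$ (which correspond under $\Mirad^{L_0}$ to $x$, $y$, $z$), each of which is \emph{regular} in the sense of \pref{df:Liouvadmis}, and argue that the corresponding classes $\theta_{\bn,0}$ — equivalently $x$, $y$, $z$ — each lie in the image of $\OC$ up to multiplication by a unit. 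The crucial input is that for $\bn$ regular the Reeb orbits and Hamiltonian chords in class $\bn$ all live in the region where the conic fibration is trivial (Type I/II families of \pref{sc:Reeb_orbits}), so the relevant moduli count is the standard one on $(\bCx)^3$, where the open-closed map is known to be surjective onto each graded piece. Thus $f_1 = x$, $f_2 = y$, $f_3 = z$ (or unit multiples thereof) all belong to $I$, and these three monomials have no common divisor other than a unit and no common zero in $\{xyz \ne 1\}$ — in fact no common zero at all in $\bC^3$ — so $I = (1)$ and $[1]$ is hit.

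Given split-generation, the final equivalence is assembled as follows. The $A_\infty$-algebra $\CW^*(L_0)$ is formal and concentrated in degree $0$, isomorphic as an algebra to $H^0(\cO_\Yv)$ by \pref{th:main0} (i.e.\ the combination of \pref{th:main2} and \pref{th:main3}). Hence $\perf \CW^*(L_0) \simeq \perf H^0(\cO_\Yv) = D^b \coh \Yv$ (the last identification because $\Yv = X_\Sigma \setminus K$ is affine, so $D^b \coh \Yv$ is generated by $\cO_\Yv$ and is equivalent to the perfect complexes over its coordinate ring). On the symplectic side, split-generation of $\cW(Y_R)$ by $L_0$ gives $D^b \cW(Y_R) \simeq \perf \CW^*(L_0)$ via the Yoneda embedding into the triangulated envelope, where $D^b\cW(Y_R)$ denotes the smallest full triangulated subcategory of $D^\pi \cW(Y_R)$ containing $L_0$ (and split-generation upgrades this to all of $D^\pi\cW(Y_R)$, but we only need the subcategory generated by $L_0$ for the statement). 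Composing these equivalences yields the desired $\psi \colon D^b \cW(Y_R) \simto D^b \coh \Yv$ with $\psi(L_0) = \cO_\Yv$.

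The main obstacle I anticipate is the partial computation of $\OC$ in the second paragraph: one must set up the moduli spaces of half-pants (one interior positive cylindrical end, one boundary negative strip-like end) with fibration-admissible perturbation data, prove a compactness statement analogous to \pref{lm:Gcompactness} ruling out bubbling into $E$ for the relevant low-energy classes, and then localize the count to the trivial region $\pi_{\NCx}^{-1}(N_\bR \setminus U_Z) \cong (\bCx \setminus Z_{\mathrm{trop}}) \times \bCx$ times the conic direction, reducing to the known surjectivity of the open-closed map for $(\bCx)^3$. The degeneration-of-domain and energy arguments here are close cousins of those in \pref{lm:enum} and in \cite{1304.5298, 1405.2084}, but bookkeeping the signs and the relative grading $\reldeg{-}$ so that one genuinely lands on the expected generators — rather than generators with extra divisor-chord corrections — is the delicate part, and is where I would spend the bulk of the effort.
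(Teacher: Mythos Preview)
Your overall architecture matches the paper's: invoke Abouzaid's criterion, identify the image of $\OC$ as an ideal in $\SH^0(Y_R) \cong H^0(\cO_\Yv)$, and exhibit elements in the image with no common nonunit divisor. But two points in your execution are genuine gaps.

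First, your reduction ``no common divisors $\Rightarrow$ $I=(1)$'' is not valid as stated. You claim $x,y,z$ have no common zero in $\Yv$, but the origin $(0,0,0)$ is a point of $\Yv$ (since $xyz-1=-1\ne 0$) and is a common zero. What actually makes the argument work in the paper is that $\HH_3(\HW^*(L_0))\cong H^0(\Omega^3_\Yv)$ via HKR, and this is \emph{free of rank one} over $H^0(\cO_\Yv)$ because $\Yv$ carries a nowhere-vanishing volume form $\Omega_\Yv=\frac{dx\wedge dy\wedge dz}{xyz-1}$. Hence the image of the $\SH^0$-module map $\OC$ is a \emph{principal} ideal $(f)$, and then three elements of $(f)$ with no common nonunit divisor force $f$ to be a unit. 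You need this principality; a Nullstellensatz argument with your three elements does not suffice.

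Second, and more seriously, your proposed Hochschild cycles are in the wrong degree. The relevant map is $\OC\colon \HH_3(\HW^*(L_0))\to \SH^0(Y_R)$; the domain consists of (classes of) length-three chains, i.e., four-fold tensors, and the underlying curves are disks with four boundary inputs and one interior output. Your ``length-one chains $[p_{\bn,i}^{-1}]\otimes p_{\bn,i}$'' live in $\CC_1$ and would map to $\SH^2$, not $\SH^0$. The paper instead takes an HKR antisymmetrization
\[
\Omega_{\mathrm{HKR}}=\epsilon_3\bigl(p_{(-1,-1),-i}\otimes p_{(-1,-1),-i}\otimes p_{(-1,-2),-i}\otimes p_{(-2,-1),-i}\bigr),
\]
with all four factors (and their pairwise sums) lying in a single chamber $C_\alpha$, so that the relevant $\OC$-curves can be confined by a barrier argument to the product region and computed as on $(\bCx)^3$. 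The output is not $x,y,z$ but $(xy)^5,(yz)^5,(xz)^5$ up to units (the fifth powers come from the Jacobian-type factor in the HKR computation), and these three have no common nonunit divisor, finishing the argument via principality.
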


It suffices to prove that
the Lagrangian $L_0$ split-generates
the wrapped Fukaya category of $Y_R$.
Then it follows that $L_0$ generates $D^\pi \cW(Y_R)$
without the need to pass to direct summands.
The proof of this theorem works by checking a generation criterion
due to Abouzaid \cite{MR2737980}
(for an implementation with linear Hamiltonians,
see \cite{1201.5880v3}).
We will now describe it in the setting that we need.
Let $\HH_3(\HW^*(L_0))$ denote the third Hochschild homology of the wrapped Floer algebra.
This is of course isomorphic to $\HH_3(H^0(\cO_\Yv))$,
which in this case admits an explicit description via the HKR theorem as
\begin{align} \label{eq:HH3Yv}
 \operatorname{HKR} \colon \HH_3 \lb H^0 \lb \cO_\Yv \rb \rb
  \simto H^0 \lb \Omega^3_{\Yv} \rb.
\end{align}
Since $\Yv$ is
equipped with a natural choice of holomorphic volume form
\begin{align}
 \Omega_{\Yv} \coloneqq \frac{dx\wedge dy \wedge dz}{xyz-1},
\end{align}
the module
$H^0 \lb \Omega^3_{\Yv} \rb$ is free of rank one over $H^0 \lb \cO_\Yv \rb$
generated by $\Omega_{\Yv}$.

Assume that $\lambda_m=m$ for all $m$ and that \pref{eq:controlLiouville} holds.
Then parallel to \pref{eq:telescope},
the construction of the symplectic cohomology may be promoted
to a cochain complex $\SC^* \lb Y_R \rb$
whose cohomology is $\SH^* \lb Y_R \rb$.
Central to Abouzaid's criterion is the map of complexes
\begin{align}
 \scrOC \colon \CC_{3-*} (\CW^*(L_0,L_0)) \to \SC^*(Y_R)
\end{align}
from the Hochschild chain complex of the $A_\infty$-algebra.
The induced map on cohomology is denoted by
\begin{align} \label{eq:OCmap}
 \OC \colon \HH_3(\HW^*(L_0)) \to \SH^0(Y_R).
\end{align} 

\begin{theorem}[{\cite[Theorem 1.1]{MR2737980}}] \label{thm: generationcrit}
$L_0$ split-generates the wrapped Fukaya category
if the unit $[1]$ is in the image of \pref{eq:OCmap}.
\end{theorem}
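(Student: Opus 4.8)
The plan is to deduce this from Abouzaid's generation criterion \cite[Theorem~1.1]{MR2737980}, of which it is a direct instance: one applies that theorem to the Liouville domain $(Y_R,\theta_\epsilon')$ of \pref{sc:Domain}, the wrapped Fukaya category $\cW(Y_R)$ constructed in \pref{sc:sWFT}, and the full subcategory on the single object $L_0$, with the chain-level open--closed map $\scrOC$ and its cohomology-level version $\OC$ of \pref{eq:OCmap} being precisely the ones of \emph{loc.\ cit.}; all of the admissibility and genericity hypotheses are built into the constructions of \pref{sc:sWFT}--\pref{sc:Comp}, so the theorem applies verbatim. It therefore remains only to recall why the criterion holds and to note that nothing in the present setting obstructs Abouzaid's argument.

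First I would recall the algebraic characterisation of split-generation. Fix an arbitrary object $K$ of $\cW(Y_R)$, set $\mathcal{A} \coloneqq \CW^*(L_0,L_0)$, and regard $M \coloneqq \CW^*(K,L_0)$ as a right $\mathcal{A}$-module and $N \coloneqq \CW^*(L_0,K)$ as a left $\mathcal{A}$-module. There is a canonical composition map
\begin{align}
 \mathrm{ev}_K \colon M \otimes^{\mathbf{L}}_{\mathcal{A}} N \longrightarrow \CW^*(K,K),
\end{align}
and the module-theoretic lemma underlying Abouzaid's criterion guarantees that if the identity class $e_K \in \HW^0(K,K)$ lies in the image of $H^*(\mathrm{ev}_K)$, then $K$ lies in the split-closure $\langle L_0 \rangle^{\mathrm{split}}$, being then exhibited as a retract of a twisted complex of copies of $L_0$.

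Second I would recall the Cardy relation, which is the geometric core of the argument. The closed--open map $\CO_K \colon \SH^*(Y_R) \to \HW^*(K,K)$ is a unital ring homomorphism, so $\CO_K([1]) = e_K$. Abouzaid constructs a chain map $\gamma_K \colon \CC_*(\mathcal{A}) \to M \otimes^{\mathbf{L}}_{\mathcal{A}} N$ by counting discs whose boundary alternates between $K$ and $L_0$ and carries the Hochschild-chain inputs, and then uses a one-parameter family of moduli of annuli --- one boundary component on $L_0$ with the Hochschild inputs, the other on $K$ with a single output --- whose two ends realise $\mathrm{ev}_K \circ \gamma_K$ and $\CO_K \circ \scrOC$ respectively. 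Passing to cohomology yields the identity $H^*(\mathrm{ev}_K) \circ H^*(\gamma_K) = \CO_K \circ \OC$ of maps $\HH_3(\HW^*(L_0)) \to \HW^*(K,K)$. Granting this, suppose $[1] = \OC(\eta)$ with $\eta \in \HH_3(\HW^*(L_0))$; then
\begin{align}
 e_K = \CO_K([1]) = \bigl(\CO_K \circ \OC\bigr)(\eta) = H^*(\mathrm{ev}_K)\bigl(H^*(\gamma_K)(\eta)\bigr)
\end{align}
lies in the image of $H^*(\mathrm{ev}_K)$, so $K \in \langle L_0 \rangle^{\mathrm{split}}$. As $K$ was arbitrary, $L_0$ split-generates $\cW(Y_R)$, which is the assertion.

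The step I expect to be the main obstacle in a self-contained account is the Cardy relation: the transversality and Gromov compactness analysis for the one-parameter families of annuli degenerating at the two ends, together with the sign bookkeeping and the compatibility of $\gamma_K$ with $\scrOC$. This is exactly the technical heart of \cite{MR2737980}, and it uses nothing particular about $Y_R$ beyond the fact that all the Lagrangians, Hamiltonians and almost complex structures involved are Liouville-admissible in the standard sense of \pref{sc:sWFT}; accordingly I would cite it rather than reprove it. I would add one remark specific to the present paper: by \pref{sc:HW} the complex $\CW^*(L_0,L_0)$ has vanishing differential and is concentrated in degree $0$, so $\mathcal{A}$ is simply the commutative ring $\HW^0(L_0) \cong H^0(\cO_\Yv)$ and the module-theoretic bookkeeping above simplifies considerably --- although we need, and use, only the general form of Abouzaid's statement.
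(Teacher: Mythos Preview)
Your proposal is correct, but note that the paper itself gives no proof of this statement: it is stated purely as a citation of \cite[Theorem~1.1]{MR2737980} and used as a black box. You correctly identify it as a direct instance of Abouzaid's criterion and then go further by sketching the argument (the evaluation map, the annulus/Cardy relation, and the module-theoretic characterisation of split-generation); this sketch is accurate, but from the paper's standpoint the entire content is the citation, so your first paragraph alone already matches what the paper does.
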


In fact,
given our choice of data,
the wrapped Floer complex is concentrated in degree zero.
This allows for a considerable simplification
in the description of the map $\OC$
essentially because $\mu^{d,\bsp}=0$ for $d=1, |F|=0$ and $d>2$.
The underlying domains used for constructing this map will be Riemann surfaces $\Sigma_{4,1}$
with four positive strip like ends and one negative cylindrical end.
Let $\cRbar^{4,1}$ denote the moduli space of these structures. 

As before,
we consider perturbing functions of the form
$
 H^Y_\pert \colon \bS^1 \times \Yhat_R \to \bR
$
and
$
 H^L_\pert \colon \Yhat_R \to \bR.
$
Along the positive ends,
we consider Floer data
$
 H_i=H_{m_{i}}+\epsilon H^L_\pert
$
and setting $\mathbf{m}=\sum_i m_i$.
Along the negative cylindrical end,
we consider Floer data
$
 H_0=H_ {\mathbf{m}}+ \epsilon H^Y_\pert.
$
As usual,
we choose perturbation $K$ and admissible surface-dependent complex structure
compatible with the Floer data along the ends
over the moduli space and consistent with boundary strata.
Given chords $x_i$ and a Hamiltonian orbit $x_0$,
we let $\cRbar^{4,1} \lb \bx \rb$
denote the moduli space of solutions.
For generic data,
these moduli spaces have the expected dimension.
In particular,
for $x_i$ such that $\vdim(\cRbar^{4,1}(\bx))=0$,
every rigid solution $u$ gives rise to an isomorphism
\begin{align}
 \OC_u \colon \abs{\frako_{x_{4}}}
  \otimes \cdots \otimes \abs{\frako_{x_{1}}}
  \simto \abs{\frako_{x_{0}}}.
\end{align}
As usual,
the map $\OC$ is defined as the sum of $\OC_u$.

Since $\HW^*(L_0)$ is concentrated in degree zero and hence formal,
we may avoid the telescope complex and
realize the Hochschild chains as a direct limit
\begin{align}
 \CC_3(\HW^*(L_0))
  \cong \varinjlim_{m_{i}}  \HF^*(L_0,H_{m_{1}}+\epsilon H^{L}_{\pert})
   \otimes \cdots \otimes \HF^*(L_0,H_{m_{4}}+\epsilon H^{L}_{\pert})
\end{align}
In particular,
after writing any cycle $\Omega$ as a finite sum of simple tensors,
we obtain a cycle $\OC(\Omega) \in \SH^0(Y_R)$.
Standard arguments show that this is compatible with direct limits and
is independent of class $[\Omega] \in \HH_3(\HW^*(L_0))$.
This map coincides with the map in \pref{eq:OCmap}. 

Crucial for us is the property that the map $\OC$ is a module map over $\SH^0(Y_R)$,
where the module structure on the left hand side is via the map \pref{eq:COmap}.
Since $\HH_3(\HW(L_0))$ is a free $\SH^0(Y_R)$-module of rank one,
the image of \pref{eq:OCmap} is necessarily a principal ideal $(f) \subset \SH^0(Y_R)$. \pref{th:generationL0} therefore follows immediately from \pref{thm: generationcrit} if we can show that there are a set of non-zero elements $f_1, \cdots, f_n$ in the image of $\OC$ which have no common divisors (other than units). More precisely,
let $\alpha \in A$ be one of the three monomials
in the defining equation for $Y$ and
$\bn$ be a weight corresponding
to a monomial in the chamber $C_\alpha$.

\begin{definition}
Let
$
 \CC_*(\HW^*(L_0))_\bn
$
be the subcomplex of
$
 \CC_*(\HW^*(L_0))
$
of total weight $\bn$;
\begin{align} \label{eq:HHdecomp}
 \CC_d(\HW^*(L_0))_\bn
  \coloneqq \bigoplus_{\sum_{i=0}^{d} \bn_i=\bn}
   \HW^*(L_0)_{\bn_{0}} \otimes \cdots \HW^*(L_0)_{\bn_d}.
\end{align}
The subspace of
$
 \CC_d(\HW^*(L_0))_\bn
$
generated by
$
 \HW^*(L_0)_{\bn_{0}} \otimes \cdots \HW^*(L_0)_{\bn_d}
$
for $(\bn_0, \ldots, \bn_d) \in N^{d+1}$
satisfying
\begin{itemize}
 \item
all $\bn_i$ are in the chamber corresponding to $\alpha$, and
 \item
for any two neighboring $\bn_i, \bn_{i+1}$
with respect to the cyclic ordering,
$\bn_i +\bn_{i+1}$ lies in the chamber corresponding to $\alpha$.
\end{itemize} 
is denoted by
$
 \CC^{\alpha}_*(\HW^*(L_0))_\bn.
$
\end{definition} 

We have maps
\begin{align} \label{eq:classbn}
 \OC_{\bn} \colon \HH_3(\HW^*(L_0))_{\bn} \to \SH^0(Y_R,G_m)_{\bn}.
\end{align}
Our approach to making partial computations of the open closed map
is again via the relationship with fibration-admissible Floer theory.
We therefore temporarily assume that
when constructing our perturbations of Liouville-admissible Hamiltonians $H_m$,
we take all of the terms  $H^\irr_{m,\pert}$ to be zero. 
This is possible because we will only be making computations
in regular homotopy classes from now on.
Notice that in particular the map \pref{eq:classbn} can be defined with such Floer data.
Let $G$ be an admissible Hamiltonian in the sense of \pref{sc:aWFT}, and
$\bn$ be a regular homotopy class satisfying the following condition:

\begin{itemize} \label{it: homotopycondition}
 \item
There is no Hamiltonian orbit of $G$ in the homotopy class $\bn$
which intersects $\pi_{\NCx}^{-1}(U_Z)$.
\end{itemize} 

Then after making small time-dependent perturbations
$G_t=G+G_{\pert}$
in the neighborhood of the orbits in homotopy class $\bn$, we may define $\HFad^*(Y,G_t)_{\bn}$ analogously to \pref{sc:aWFT} (In fact, for the $G$ we need and since we never need to consider multiplicative structures on $\HF$, we can avoid altogether the use of the functions $\ell^{\sharp}$). Let $G_m$ now be an admissible family of such Hamiltonians and $\bn$ a homotopy class such that  
\ref{it: homotopycondition} is satisfied for all $m$. Then we may define the $\SH^*_{\ad}(Y,G_m)_{\bn}$ to be the direct limit of these Floer groups. 

Continue to fix a regular $\bn$.
The argument of \pref{thm: isofloer} shows that
we can find an admissible sequence of Hamiltonians $G_m$
together with an isomorphism
\begin{align}
 \phi_{\SH} \colon \SH^0(Y_R)_{\bn} \simto \SH^0_{\ad}(Y,G_m)_{\bn} 
\end{align}
for all such $\bn$.
For any
$
 \Omega_{\ad} \in \HH_3^{\alpha}(\HW_{\ad}(L_0))_{\bn},
$
we may define an element $\OC_{\ad,\bn}(\Omega_{\ad})$
just as in the case of the ordinary (i.e., not adapted) theory.
Moreover, given a cycle $\Omega \in \CC^{\alpha}(\HW(L_0))_{\bn}$, applying the isomorphisms of \pref{thm: isofloer} to each factor of the tensors defining the Hochschild complex gives rise to a cycle $\phi_{\CC}(\Omega) \in \CC^{\alpha}(\HW(L_0))_{\bn}$ (the use of $\CC^{\alpha}$ is again for simplicity so that we only multiply elements in the same chamber and hence can avoid using the functions $\ell^{\sharp}$ from \pref{sc:aWFT}). 

\begin{lemma}
For any such $\Omega$,
we have
\begin{align}
 \OC_{\ad,\bn}(\Omega_{\ad})=\phi_{\operatorname{SH}} \circ \OC(\Omega).
\end{align}
\end{lemma}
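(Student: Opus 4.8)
The plan is to run the interpolation argument from the proof of \pref{thm: isofloer}, now at the level of the open--closed map rather than the product. Recall that the isomorphism $\phi_{\SH}$ on $\SH^0(Y_R)_{\bn}$, like the isomorphism $\phi_{\CC}$ applied to each tensor factor of the Hochschild complex, is built as a composition of continuation maps passing through the fibration-admissible Hamiltonians $G_{h,m}$, which are constant outside a compact region and agree with the Liouville-admissible model $H_m^c$ for $r_Y < A$. The structural input established along the way --- in \pref{thm: isofloer} and \pref{lm:regular} --- is that every continuation solution defining these maps has intersection number zero with $E$, hence with $\Ebar$, because the Hamiltonians in the interpolating families preserve $\Ebar$ and $\pi_{\NCx}^{-1}(U_Z)$; consequently they preserve the free homotopy class of chords and orbits in $Y \setminus \pi_{\NCx}^{-1}(U_Z)$ and preserve the relative grading $\reldeg{-}$.

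First I would set up the moduli spaces. Both sides of the claimed identity are computed by counting rigid maps from the surfaces $\Sigma_{4,1} \in \cRbar^{4,1}$ used to define $\OC$, pre-composed with continuation strips along the four positive ends and post-composed with a continuation cylinder along the negative end (this is how $\phi_{\CC}$ and $\phi_{\SH}$ enter). As usual it then suffices to produce a one-parameter family of fibration-admissible perturbation data on the relevant glued domains interpolating between the datum computing $\phi_{\SH}\circ\OC(\Omega)$ and the datum computing $\OC_{\ad,\bn}(\Omega_{\ad})$, and to read off the chain-level identity from the boundary of the associated one-dimensional moduli space. Since $\HW^*(L_0)$ is concentrated in degree zero, there are no operations $\mu^{d,\bsp}$ with $d=1$, $|F|=0$ or $d>2$ to account for, so the only boundary contributions are the two endpoints of the family together with strip- and cylinder-breakings.

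The real content is compactness. Working inside $\CC^{\alpha}(\HW(L_0))_{\bn}$ with $\bn$ regular guarantees, by the chamber hypotheses in the definition of $\CC^\alpha$ together with the homotopy-class condition~\ref{it: homotopycondition} imposed on $G$, that every chord and orbit at the inputs, the output, and every intermediate breaking avoids $\pi_{\NCx}^{-1}(U_Z)$ and carries a well-defined regular homotopy class. The maximum principle of \pref{lm:max1} applied to $S\circ\pibar_{\NCx}$ controls escape in the base direction; the $1/|u|$ argument of \pref{lm:Gcompactness}, together with the intersection-number bookkeeping of \pref{lm:rel_cpt1} and \pref{lm:rel_cpt2}, forbids any limiting component from meeting $D$ along its boundary and forbids breaking along divisor chords, since every Floer curve in these homotopy classes has intersection number zero with both $E$ and $F$. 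A comparison of $\reldeg{-}$ on the two ends, exactly as in \pref{pr:cont} and \pref{pr:contII}, then forces the interpolating continuation pieces to be constant over the trivial region of the fibration, leaving only the expected broken configurations; this produces the chain homotopy and hence the identity on homology.

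The main obstacle I anticipate is organizing all of these compactness inputs simultaneously on the four-punctured-disc-with-cylindrical-end domains and their degenerations: one must propagate intersection numbers with \emph{both} components $E$ and $F$ of $\Ebar$ through every stratum, check that the relative-grading inequality coming from \pref{eq:relindex} --- now in the presence of the perturbing functions $H^L_{\pert}$ and $H^Y_{\pert}$ from \pref{lm:perturbations} and \pref{rm:eqperturb} --- still excludes the unwanted degenerations, and verify that the interpolating perturbation data can be kept fibration-admissible throughout, so that $\Sigma\times E$ and $\Sigma\times D$ remain codimension-two almost complex submanifolds in the Gromov-trick picture. Once this is arranged, the argument is a routine, if lengthy, adaptation of the proof of \pref{thm: isofloer}.
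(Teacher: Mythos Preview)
Your proposal is correct and follows essentially the same approach as the paper: a degeneration-of-domain/cobordism argument over a one-parameter family of perturbation data interpolating between the Liouville-admissible and fibration-admissible models. The paper's own proof is much terser: it observes that one may arrange $\CF^{-1}(Y,G_m)=0$ for all $m$, after which the degeneration-of-domain argument is declared standard, since no chain-homotopy terms can appear at the output and the input side is already concentrated in degree zero. Your writeup unpacks the compactness analysis (via \pref{lm:max1}, \pref{lm:Gcompactness}, \pref{lm:rel_cpt1}--\pref{lm:rel_cpt2}, and the relative grading) that the paper's phrase ``standard degeneration of domain argument'' suppresses; this is fine, but you could shorten matters considerably by invoking the vanishing of $\CF^{-1}$ on the closed-string side up front.
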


\begin{proof}
We may assume that $\CF^{-1}(Y,G_m)=0$ for all $m$.
The result then follows by a standard degeneration of domain argument.
\end{proof} 

We are now in a position to make the final partial computation of the map $\OC$.

\begin{lemma}
For some $i>0$,
the elements
$\alpha^0_{((-5,-5),-4i)}$,
$\alpha^0_{((5,0),-4i)}$, and
$\alpha^0_{((0,5), -4i)}$
are in the image of $\OC_{\ad}$,
where here we are identifying $\SH^0_{\ad}$ and $\SH^0$.
\end{lemma}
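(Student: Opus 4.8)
The plan is to exhibit, for each of the three monomials $\alpha \in A$ in $1+w_1+w_2$, an explicit Hochschild cycle $\Omega_\alpha \in \CC_3^{\alpha}(\HW_{\ad}(L_0))_{\bn_\alpha}$ with $\bn_\alpha$ a large negative multiple of the primitive weight in the chamber $C_\alpha$, and to compute $\OC_{\ad,\bn_\alpha}(\Omega_\alpha)$ directly by a local model calculation. The three weights $(-5,-5)$, $(5,0)$, $(0,5)$ are chosen so that each lies deep in the interior of one of the three maximal chambers $C_{(-1,-1)}$, $C_{(1,0)}$, $C_{(0,1)}$ of $N_\bR \setminus \Piloc$; this ensures that all the intermediate weights appearing in the tensor factors of $\Omega_\alpha$ can be taken to stay inside the same chamber, so that the $\CC^\alpha$ formalism applies and we never have to invoke the cutoff functions $\ell^\sharp$. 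First I would choose, for each $\alpha$, a decomposition of the weight into three or four pieces all living in $C_\alpha$ (for instance writing $(5,0) = (2,0)+(1,0)+(1,0)+(1,0)$), and form the simple tensor $p_{\bn_1,i_1} \otimes p_{\bn_2,i_2} \otimes p_{\bn_3,i_3} \otimes p_{\bn_4,i_4}$ with the winding numbers $i_k$ arranged so that $\sum i_k = -4i$ for a suitable $i>0$; the point of the overall winding $-4i$ is exactly to match the number $4$ of positive ends of $\Sigma_{4,1}$ (each Hamiltonian chord on the cylindrical end, under our conventions, contributes $c_-\lambda$-many rotations, and the total must be consistent with a degree-zero output orbit).

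The heart of the computation is then to identify $\OC_{\ad}$ on such a cycle with a count of holomorphic curves in the trivial region $(\NCx \setminus Z) \times \bCx$, where the fibration $\pibar_{\NCx}$ is a product and both the Lagrangian $L_0$ and the Hamiltonians split. Here I would invoke exactly the package developed in \pref{sc:HW}: since all chords avoid $\pi_{\NCx}^{-1}(U_Z)$ and the relevant weights are regular, any contributing curve has intersection number zero with $\Ebar$ (hence with $E$), so it descends to a genuine count in $\NCx \times \bCx$, and by the splitting of Lagrangians and complex structures this factors as a product of a count in the $\NCx$-factor and a count in the $\bCx$-factor. The $\NCx$-factor count is the classical open-closed computation for the cotangent-bundle-like piece $\NCx \times \bRx$ (equivalently $T^*\bT^2$), which is well-known to send the natural Hochschild cycle to the corresponding monomial $\chi_{\bn,\ell}$ in $\SH^0 \cong \bC[N]$ up to a unit; the $\bCx$-factor count is the string-topology computation $\SH^0(\bCx) \cong \bC[\alpha^{\pm 1}]$ already recalled in \pref{lm:enum}, which pins down the winding index of the output orbit. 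Combining these via the domain-degeneration argument (exactly as in \pref{lm:enum}, replacing the triangle $\Sigma$ by $\Sigma_{4,1}$ and the pieces $S$, $S_2$, $S_3$ by their evident four-input analogues) shows that $\OC_{\ad,\bn_\alpha}(\Omega_\alpha)$ equals $\pm \alpha^0_{(\bn_\alpha,-4i)}$ plus possibly terms of strictly higher winding, and then the argument of \pref{lm:closedelements} (the trick from \cite{1304.5298} combined with the BV computation of \cite{1405.2084}) upgrades this to the statement that the precise element $\alpha^0_{(\bn_\alpha,-4i)}$ is in the image.

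The main obstacle I anticipate is the bookkeeping of signs and the verification that the moduli spaces $\cRbar^{4,1}(\bx)$ genuinely localize into the trivial region: Gromov compactness must be combined with the intersection-number-zero argument to rule out sphere bubbling into $E$ and to force every component to avoid $D$, and one must check that the split, fibration-admissible almost complex structures are generic enough to cut out the four-input moduli spaces transversally — this is the direct analogue of \pref{lm:splittrans} but now for surfaces with four strip-like ends and one cylindrical end, and for the larger curves one may need the index-of-zeroes control of \cite[Proposition 11.5]{MR2441780} once more on the line bundle $y_2^*(T\bC_u)$. Granting that, the computation reduces to the product of two model calculations each of which is standard, and the three outputs $\alpha^0_{((-5,-5),-4i)}$, $\alpha^0_{((5,0),-4i)}$, $\alpha^0_{((0,5),-4i)}$ correspond under $\Mirad^{L_0}$ (composed with the identifications of \pref{th:OCisomorphism}) to the functions $z^{-5}$ (up to lower order), $x^{-5}$, $y^{-5}$ on $\Yv$ — powers of the three toric coordinates — which manifestly have no common divisor in $H^0(\cO_\Yv) = \bC[x,y,z][(xyz-1)^{-1}]$, completing the input needed for \pref{th:generationL0}.
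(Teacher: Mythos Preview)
Your proposal has a genuine gap at the very first step: the simple tensor
\[
p_{\bn_1,i_1}\otimes p_{\bn_2,i_2}\otimes p_{\bn_3,i_3}\otimes p_{\bn_4,i_4}
\]
is not a Hochschild cycle. The Hochschild boundary of such a tensor involves products of cyclically adjacent factors and has no reason to vanish; without a cycle, $\OC$ is not defined on it as a cohomology class. The paper instead takes the antisymmetrization
\[
\Omega_{\mathrm{HKR}}=\epsilon_3\bigl(p_{(-1,-1),-i}\otimes p_{(-1,-1),-i}\otimes p_{(-1,-2),-i}\otimes p_{(-2,-1),-i}\bigr),
\]
which is a cycle because under the HKR isomorphism \eqref{eq:HH3Yv} it corresponds to the $3$-form
\[
\frac{xy}{(xyz-1)^{i}}\,
d\!\left(\frac{xy}{(xyz-1)^{i}}\right)\wedge
d\!\left(\frac{xy^{2}}{(xyz-1)^{i}}\right)\wedge
d\!\left(\frac{x^{2}y}{(xyz-1)^{i}}\right)
=-i\,\frac{(xy)^{5}}{(xyz-1)^{4i}}\,\Omega_{\Yv}.
\]
This computation simultaneously verifies the cycle is nonzero and identifies its class as $-i\,p_{(-5,-5),-4i}\cdot\Omega_{\Yv}$; the winding $-4i$ is a consequence of this HKR calculation, not a numerological match with the four positive ends.

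The localization step is also different from what you describe. Intersection number zero with $\Ebar$ only keeps the curve away from $\pi_{\NCx}^{-1}(Z)$; it does not prevent the curve from leaving the region where the identification with $(\bCx)^{3}$ holds in the fibre direction. The paper arranges the input chords and the output orbit to all lie in a band $c_{m,0}-\delta\le\mu\le c_{m,1}$ on which $H_{\mathrm v,m}=\tfrac12\mu^{2}$, $\mu=\tfrac12|u|^{2}$, and the almost complex structure splits, and then uses a \emph{barrier argument} (cf.\ \cite[Appendix~B]{1011.2478}) to confine all solutions to $\mu>c_{m,0}-\delta$. Once confined, the count is literally the $\OC$ map for $(\bCx)^{3}$, which is known; homotopy-class restriction then forces the output to be a nonzero multiple of $\alpha^{0}_{((-5,-5),-4i)}$. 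No explicit curve enumeration of the sort you outline, no analogue of \pref{lm:splittrans} for $\Sigma_{4,1}$, and no invocation of \pref{lm:closedelements} or the BV operator are needed here---those belong to the $\CO$ computation, not $\OC$. The other two cases $((5,0),-4i)$ and $((0,5),-4i)$ are obtained by the symmetric choice of chamber.
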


\begin{proof}
The conclusion is invariant under suitable deformations of the functions $G_m$.
We may therefore assume this sequence satisfies
$
 G_m=H_{\operatorname{ba},m}+ H_{\operatorname{v},m}.
$
We may assume that
$
 H_{\operatorname{v},m}=1/2\mu^2
$
for
$
 c_{m,0}-\delta \geq \mu \geq c_{m,1}
$
with $\delta>0$ and
$
 c_{m,1} \gg c_{m,0} \gg \epsilon
$
so that
$
 \mu=1/2|u|^2.
$
Assume that all of our complex structures respect
the standard product structure for all $\mu$
in an open neighborhood of $c_{m,0}-\delta$.
We can assume that for some $i,m>0$,
the chords $p_{(-1,-1),-i}$, $p_{(-1,-2),-i}$, $p_{(-2,-1),-i}$ all lie in the region
where
$
 c_{m,0}-\delta \geq \mu \geq c_{m,1}.
$
Consider the Hochschild cycle
given by the anti-symmetrization
\begin{align}
 \Omega_{\operatorname{HKR}}
  = \epsilon_3(p_{(-1,-1),-i} \otimes p_{(-1,-1),-i} \otimes p_{(-1,-2),-i} \otimes p_{(-2,-1),-i}).
\end{align}
Under the HKR map,
this corresponds to
\begin{align}
 \frac{x y}{(xyz-1)^i}
 &d \lb \frac{x y}{(xyz-1)^i} \rb
 \wedge d \lb \frac{x y^2}{(xyz-1)^i} \rb
 \wedge d \lb \frac{x^2 y}{(xyz-1)^i} \rb \\
 &= -i \frac{(xy)^5}{(xyz-1)^{4i}} \frac{dx\wedge dy \wedge dz}{xyz-1} \\
 &= -i p_{(-5,-5),-4i} \cdot \Omega_{\Yv}.
\end{align}
\begin{comment}
\begin{align}
 Q dQ\wedge d(xQ) \wedge d(yQ)
  &= Q^3dQ \wedge dx \wedge dy \\
  &= Q^3d(xy(xyz-1)^{-i})dx \wedge dy \\
  &= -i Q^3 (xy)^2(xyz-1)^{-i-1} dz\wedge dx \wedge dy \\
  &=-i (xy)^5 (xyz-1)^{-4i}\frac{dx\wedge dy \wedge dz}{xyz-1}
\end{align}
where $Q= xy(xyz-1)^{-i}$.
\end{comment}
By homotopy class restrictions,
the output of
$
 \Omega_{\operatorname{HKR}}
$
must be a non-trivial multiple of
$
 \alpha^0_{((-5,-5),-4i)}.
$
We assume that this orbit lies in the region where
$
 c_{m,0}-\delta \geq \mu \geq c_{m,1}
$
as well.
A barrier argument
(see e.g.~\cite[Appendix B]{1011.2478})
then shows that solutions for the map
$
 \OC_{\ad}(\Omega_{\operatorname{HKR}})
$
lie in the region where $\mu>c_{m,0}-\delta$
and hence must agree with the $\OC$ map
defined on $(\bCx)^3$.
The fact that $\alpha^0_{((-5,-5),-4i)}$ is in the image of $\OC_{\ad}$ follows directly from this.
The other cases are symmetric and hence the lemma is proved.
\end{proof} 

Now we return to considering the map $\OC$ and
we assume for the following proposition that
$\epsilon H_{\pert,n}$ is chosen sufficiently small
so that the conclusion of \pref{lm:regular} holds.

\begin{lemma}
Identify $\SH^0(Y_R)$ with $H^0(\cO_{\Yv})$
via the map $\CO$.
Then $\theta_{((-5,-5),0)}$, $\theta_{((5,0,)0)}$, $\theta_{((0,5),0)}$ are all
in the image of \pref{eq:OCmap}.
\end{lemma}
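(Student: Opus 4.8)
\emph{Proof proposal.} The plan is to reduce the statement to the preceding lemma (which puts the orbits $\alpha^0_{((-5,-5),-4i)}$, $\alpha^0_{((5,0),-4i)}$, $\alpha^0_{((0,5),-4i)}$ in the image of $\OC_{\ad}$ for some fixed $i>0$) and then to ``correct the power of $\theta_{(0,1)}$'' using the ideal property of $\operatorname{im}(\OC)$. Throughout I identify $\SH^0(Y_R)$ with $\HW^0(L_0) \cong H^0(\cO_\Yv)$ via the isomorphism $\CO$ of \pref{th:OCisomorphism}; under this identification the image of \pref{eq:OCmap} is an ideal $I \subseteq \HW^0(L_0)$, since $\OC$ is a module map over $\SH^0(Y_R)$ (this is exactly the principal ideal $(f)$ discussed in the text).

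First I would observe that each of the classes $(-5,-5)$, $(5,0)$, $(0,5)$ in $N$ is regular: in the case $h(w_1,w_2)=1+w_1+w_2$ the three legs of $\Pi_\infty$ run in the directions $(0,-1)$, $(-1,0)$ and $(1,1)$, and none of $(-1,-1)$, $(1,0)$, $(0,1)$ is a positive multiple of one of these. Hence \pref{lm:regular} applies in these homotopy classes, and (with $\epsilon H_{\pert,n}$ small as assumed) it identifies $\CO(\alpha^0_{(\bn,-4i)}) = \pm\theta_{(\bn,-4i)}$ for $\bn \in \{(-5,-5),(5,0),(0,5)\}$. Combining the compatibility $\OC_{\ad,\bn}(\Omega_{\ad}) = \phi_{\SH}\circ\OC(\Omega)$ established above with the identifications $\SH^0_{\ad}\cong\SH^0$ and $\CO$, the preceding lemma then shows that $\theta_{((-5,-5),-4i)}$, $\theta_{((5,0),-4i)}$, $\theta_{((0,5),-4i)}$ all lie in $I$.

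Next I would invoke the explicit product \pref{eq:product2}. Since $\ell_1(\bszero)=0$ one has $\ell_2(\bszero,\bn)=0$ for every $\bn$, so $\theta_{(0,1)}\cdot\theta_{(\bn,j)} = \theta_{(\bn,j+1)}$; in particular $\theta_{(0,1)}$ is a unit of $\HW^0(L_0)$ with inverse $\theta_{(0,-1)}$, and iterating gives $\theta_{(0,1)}^{4i}\cdot\theta_{(\bn,-4i)} = \theta_{(\bn,0)}$. Because $I$ is an ideal it is stable under multiplication by the unit $\theta_{(0,1)}^{4i}$, so $\theta_{((-5,-5),0)}$, $\theta_{((5,0),0)}$, $\theta_{((0,5),0)}$ all lie in $I$, i.e.\ in the image of \pref{eq:OCmap}, which is the claim.

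The only delicate point — and the one I would write out with care — is the bookkeeping in the second paragraph: one must verify that the elements produced by the preceding lemma through the adapted theory really do correspond, under $\phi_{\SH}$ and $\CO$, to the index-zero generators $\alpha^0_{(\bn,-4i)}$ in the stated regular homotopy classes, so that \pref{lm:regular} is genuinely applicable. Everything else is a formal consequence of $\operatorname{im}(\OC)$ being an ideal and of the product formula for $\HW^0(L_0)$.
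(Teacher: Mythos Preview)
Your proof is correct and follows essentially the same route as the paper: the preceding lemma places $\theta_{((-5,-5),-4i)}$, $\theta_{((5,0),-4i)}$, $\theta_{((0,5),-4i)}$ in the image of $\OC$, and then the $\SH^0$-module property of $\OC$ lets you multiply by $\theta_{(0,1)}^{4i}$ to reach the desired elements. Your added verification of regularity and the explicit use of \pref{eq:product2} are more detailed than the paper's two-line proof, but the argument is the same; note also that you do not actually need $\theta_{(0,1)}$ to be a unit, only that $\operatorname{im}(\OC)$ is an ideal.
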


\begin{proof} The previous lemma implies that $\theta_{((-5,-5),-4i)}$, $\theta_{((5,0),-4i)}$, $\theta_{((0,5),-4i)}$ are in the imafge of $\OC$. Because $\OC$ is a module homomorphism, the result follows.  \end{proof}

Notice that under the isomorphism described at the beginning of this section, these elements correspond to the polynomials $(xy)^5, (yz)^5, (xz)^5$ and hence have no common divisors (other than units). This completes the proof of \pref{th:generationL0}.   

\subsection{Finite covers and applications} 

\pref{th:generationL0} has interesting applications to the symplectic topology of $Y$.
For example, we have the following:

\begin{proposition} \label{pr:embeddings}
Let $L$ be a compact oriented Maslov zero exact Lagrangian submanifold in $Y$.
Then $L$ has the cohomology group of a torus;
\begin{align}
 H^*(L;\bC) \cong H^*(T^3;\bC).
\end{align}
\end{proposition}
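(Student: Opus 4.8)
The plan is to deduce \pref{pr:embeddings} from \pref{th:generationL0} by a now-standard argument: once a Lagrangian split-generates the wrapped Fukaya category, the self-Floer cohomology of any other closed exact Lagrangian is controlled by the homological algebra of the mirror. First I would observe that $L_0$ split-generates $\cW(Y_R)$ by \pref{th:generationL0}, so the wrapped Fukaya category is quasi-equivalent to the category of perfect modules over the endomorphism algebra $\HW^*(L_0) \simeq H^0(\cO_\Yv)$, which is concentrated in degree zero and is just the commutative ring $R \coloneqq H^0(\cO_\Yv) = \bC[x,y,z][(xyz-1)^{-1}]$. Under this equivalence, a closed exact Lagrangian $L$ goes to a perfect complex $\cE$ of $R$-modules, and since $L$ is closed its Floer cohomology $\HF^*(L,L)$ is finite-dimensional over $\bC$; moreover the open-closed/closed-open formalism identifies $\HF^*(L,L)$ with $\operatorname{Ext}^*_R(\cE,\cE)$ (the derived self-Hom in $D^b\coh\Yv$). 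So $\cE$ is a perfect complex on the smooth affine threefold $\Yv$ with finite-dimensional total self-Ext; such a complex must be supported on a finite set of closed points, hence (being perfect over a regular ring) is a finite direct sum of shifts of skyscraper-type complexes, and in fact, since $L$ is connected and a sphere-like non-unital summand would contradict the ring structure on $\HF^*$, its support is a single reduced point $\Yv$. Then $\operatorname{Ext}^*_R(\cE,\cE)$ at a smooth point of a threefold is the Koszul algebra $\Lambda^* \bC^3$, i.e. $H^*(T^3;\bC)$.

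Concretely, the key steps in order: (1) invoke \pref{th:generationL0} to get $D^b\cW(Y_R) \simeq D^b\coh\Yv$ as enhanced triangulated categories sending $L_0 \mapsto \cO_\Yv$; (2) note $L$ is an object of $D^b\cW(Y_R)$ with $\HF^*(L,L)$ finite-dimensional (closedness) and carrying a unital ring structure with $\dim \HF^0(L,L) \ge 1$; (3) transport $L$ to a perfect complex $\cE$ on the smooth affine $\Yv$, with $\operatorname{Ext}^*_{\Yv}(\cE,\cE)$ finite-dimensional, forcing the support of $\cE$ to be a finite set of closed points; (4) use that the Maslov class of $L$ vanishes and $\Yv$ is Calabi--Yau of dimension three, so $\HF^*(L,L)$ satisfies Poincar\'e duality of dimension three; combined with step (3), $\cE$ must be supported at a single point and, after the appropriate shift, the local Ext-algebra is $\operatorname{Ext}^*_{\cO_{\Yv,\mathfrak p}}(k(\mathfrak p),k(\mathfrak p)) \cong \Lambda^\bullet T_{\mathfrak p}\Yv \cong \Lambda^\bullet \bC^3$; (5) identify $\Lambda^\bullet\bC^3 \cong H^*(T^3;\bC)$ as graded vector spaces, and conclude via the (co)homology of $L$: the Floer cohomology of a closed exact Lagrangian in a Liouville manifold with vanishing Maslov class agrees with its singular cohomology $H^*(L;\bC)$ (there is no disk bubbling and no wrapping for compact $L$ against itself), hence $H^*(L;\bC) \cong H^*(T^3;\bC)$.

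The main obstacle I expect is step (3)--(4): pinning down that $\cE$ is supported at a \emph{single} closed point and that the local self-Ext really is the full exterior algebra rather than some proper graded subspace or a nontrivial $A_\infty$-deformation thereof. On the algebraic side this is the statement that a perfect complex on a smooth threefold with $3$-dimensional-Poincar\'e-duality finite self-Ext must be (a shift of) the structure sheaf of a reduced point; one rules out thickened or higher-length structures because their self-Ext algebras fail Poincar\'e duality or have the wrong total dimension, and one rules out disconnected support because $L$ is connected so $\HF^0(L,L)$ is local (a single idempotent). One also needs that the Calabi--Yau structure on $\Yv$ (via the volume form $\Omega_\Yv$) matches the one underlying the Poincar\'e duality on $\HF^*$ coming from the grading of $Y$ by the holomorphic volume form; this compatibility is built into the mirror equivalence. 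A minor additional point is that the theorem as stated concerns Lagrangians in $Y$ rather than $Y_R$, but $L$ is compact so it is contained in $Y_R$ for $R$ large and the inclusion $Y_R \hookrightarrow Y$ is a Liouville subdomain inclusion, under which exact closed Lagrangians and their Floer cohomology are unchanged.
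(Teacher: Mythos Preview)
Your overall strategy coincides with the paper's: transport $L$ across the equivalence of \pref{th:generationL0}, argue that $\psi(L)$ must be a shifted skyscraper at a smooth point of $\Yv$, and read off $\Ext^*(\cO_y,\cO_y)\cong H^*(T^3;\bC)$. Your step~(5), identifying $\HF^*(L,L)$ with $H^*(L;\bC)$ for a closed exact Lagrangian, is correct and is left implicit in the paper.

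The gap is in your step~(4), which you rightly flag as the main obstacle but do not actually resolve. Your proposed mechanism---Poincar\'e duality plus a dimension count---cannot work: on a Calabi--Yau threefold Serre duality gives $\Ext^i(\cE,\cE)\cong\Ext^{3-i}(\cE,\cE)^\vee$ for \emph{every} object with proper support, so three-dimensional Poincar\'e duality is automatic and distinguishes nothing; likewise $\chi(\cE,\cE)=0$ identically, so no Euler-characteristic argument singles out $\cO_y$. The paper instead exploits the two constraints you mention but do not use here, namely $\Ext^0(\psi(L),\psi(L))\cong\bC$ (connectedness of $L$) and $\Ext^{<0}(\psi(L),\psi(L))=0$. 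From the first, the annihilator ideal in $H^0(\cO_\Yv)$ is maximal, and a lemma of Bridgeland--Maciocia then forces each cohomology sheaf $\cH^i(\psi(L))$ to be an $\cO_y$-module, hence isomorphic to $\cO_y^{\oplus n_i}$. If cohomology lived in two degrees $k<l$, the hyperext spectral sequence with $E_2^{p,q}=\bigoplus_i\Ext^p(\cH^i,\cH^{i+q})$ would have a surviving term $E_2^{0,k-l}\ne 0$, contradicting $\Ext^{k-l}=0$. Hence $\psi(L)\cong\cO_y^{\oplus n}$ in a single degree, and $\Ext^0=\bC$ forces $n=1$.
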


\begin{proof}
The mirror of $L$ is an object $\psi(L)$ of $D^b \coh(\Yv)$
satisfying
\begin{align}
 \Ext^0(\psi(L), \psi(L)) \cong \bC
\end{align}
and
\begin{align} \label{eq:ext-vanishing}
 \Ext^i(\psi(L),\psi(L))=0
  \quad \text{for any $i < 0$}.
\end{align}
Let $\frakm$ denote the annihilator of 
$\Ext^0(\psi(L),\psi(L))$ as a module over $\Gamma(\cO_{\Yv})$.
It is a maximal ideal of $\Gamma(\cO_{\Yv})$
since $\Ext^0(\psi(L),\psi(L)) \cong \bC$ is a field.
Let $y$ be the corresponding closed point of $\Yv$.
It follows from \cite[Lemma 4.8]{MR2198807}
that all cohomology sheaves $\cH^i(\psi(L))$
are $\cO_y$-modules.
As a consequence,
we have an isomorphism of all cohomology sheaves
$
 \cH^i(\psi(L)) \cong (\cO_y)^{\oplus n_i}
$
as $\cO_{\Yv}$-modules.

It suffices to prove that $\psi(L)$ has cohomology in a single degree,
since this implies that $\psi(L)$ is isomorphic to a sheaf,
which by the previous paragraph must be $\cO_y$.
Assume for contradiction that
\begin{align}
 l \coloneqq
  \max \lc i \in \bZ \relmid \cH^i(\psi(L)) \ne 0 \rc
\end{align}
is greater than
\begin{align}
 k \coloneqq
  \min \lc i \in \bZ \relmid \cH^i(\psi(L)) \ne 0 \rc,
\end{align}
and consider the cohomological spectral sequence
with second page
\begin{align}
 E_2^{p,q}=\bigoplus_i \Ext^{p}( \cH^i(\psi(L)), \cH^{i+q}(\psi(L)))
\end{align}
and converging to $\Ext^{p+q}(\psi(L), \psi(L))$.
Then one has
\begin{align}
 E_2^{0,k-l}
  \cong \operatorname{Hom}_{\cO_{y}}(\cO_y^{n_l}, \cO_y^{n_k})
  \ne 0,
\end{align}
which would survive the spectral sequence and
lead to a non-vanishing $\Ext^{k-l}$.
This contradicts \pref{eq:ext-vanishing},
and the conclusion follows.
\end{proof}

We conclude with the observation
that the generation criterion is easily seen to be compatible
with passage to finite covers
(this is a very easy special case of \cite[Corollary 1.4]{1201.5880v3}).
To be more precise,
let $N_0 \subset N$ be a finite index subgroup of the fundamental group of $Y$.
Denote the quotient group by $G=N/N_0$.
Let $Y_{N_0}$ be the corresponding finite cover and
$Y_{R,N_0}$ the corresponding Liouville domain.
There is a canonical lift of $L_0$ to this finite cover
and hence we may parameterize all possible lifts as $L_{g}$ with $g \in G$.
Observe that there is a natural action of $\Gv \coloneqq \Hom(G,U(1))$ on $\Yv$
coming from the SYZ construction.
In local coordinates,
these correspond to diagonal linear symmetries on $\Yv$
which preserve the holomorphic volume form.
As a result, we obtain a Calabi--Yau orbifold $\ld \Yv/\Gv \rd$.
Now we prove Corollaries
\ref{cr:hms_McKay}
and
\pref{cr:compact_embedding}:


\begin{proof}[Proof of \pref{cr:hms_McKay}]
The generation criterion shows that
the Lagrangians $L_{g}$ generated the wrapped Fukaya category of $Y_{R,N_0}$.
The objects  $\cO_{\Yv} \otimes V_g$ generate the category $D^b \coh(\Yv/\Gv)$.
The endomorphism algebras of these objects are easily seen
to agree with the crossed product algebra $\cO_{\Yv} \rtimes \Gv$.
\end{proof}

%

\begin{proof}[Proof of \pref{cr:compact_embedding}]
Given any Lagrangian sphere $S$,
consider the object $\psi_{N_{0}}(S)$ and suppose it is not supported at the origin. The object $\psi_{N_{0}}(S)$ viewed as a module over $\Gamma(\cO_{\Yv})^{\Gv}$ must then be supported at a single, regular, point. A variant of the argument from \pref{pr:embeddings} then shows that $\Ext^*(\psi_{N_{0}}(S), \psi_{N_{0}}(S))$ must be $H^*(T^3)$, which is impossible.
\end{proof}

\pref{pr:disjoin2} below is a slight strengthening
of \pref{pr:disjoin1}:

\begin{proposition} \label{pr:disjoin2}
Let $S_1, \cdots, S_r$ be a collection of Lagrangian spheres in $Y_{N_0}$
which are pairwise disjoint.
Then $r$ is less than or equal to the maximal dimension
of an isotropic subspace of the Grothendieck group of $\coh_0 \ld \Yv / \Gv \rd$
with respect to the Mukai pairing.
\end{proposition}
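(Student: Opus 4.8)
The plan is to push the configuration of spheres across the equivalences of \pref{cr:hms_McKay} and \pref{cr:compact_embedding} and then read the bound off the $K$-theory of $\coh_0 \ld \Yv/\Gv \rd$. First I would set $E_i \coloneqq \psi_{N_0}(S_i) \in D^b \coh_0 \ld \Yv/\Gv \rd$. Full faithfulness of the functor in \pref{cr:compact_embedding}, together with compactness of the $S_i$, identifies $\Ext^*(E_i, E_j)$ with the Floer cohomology $HF^*(S_i, S_j)$. Since the $S_i$ are pairwise disjoint, $HF^*(S_i, S_j) = 0$ for $i \ne j$; and $S_i$, being a Lagrangian $3$-sphere, satisfies $HF^*(S_i, S_i) \cong H^*(S^3;\bC)$. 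Hence each $E_i$ is a spherical object and the $E_i$ form a pairwise orthogonal family. Because $\ld \Yv/\Gv \rd$ is a Calabi--Yau orbifold threefold, the category $D^b \coh_0 \ld \Yv/\Gv \rd$ of complexes with support at the origin is Hom-finite with Serre functor $[3]$, so the Euler form descends to an antisymmetric pairing on $K_0(\coh_0 \ld \Yv/\Gv \rd)$; this is the Mukai pairing of the statement.

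Isotropy is then immediate: $\chi([E_i],[E_j]) = 0$ for $i \ne j$ since all $\Ext$-groups between $E_i$ and $E_j$ vanish, and $\chi([E_i],[E_i]) = 0$ since $E_i$ is spherical and $H^*(S^3;\bC)$ has zero Euler characteristic. Thus $V \coloneqq \operatorname{span}_\bQ\{[E_1],\dots,[E_r]\}$ is an isotropic subspace of $K_0(\coh_0 \ld \Yv/\Gv \rd)\otimes\bQ$, and it suffices to show that $[E_1],\dots,[E_r]$ are linearly independent, so that $r = \dim V$ is at most the maximal dimension of an isotropic subspace.

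The linear independence is the crux and the main obstacle, precisely because the Mukai pairing restricted to $V$ vanishes identically and hence cannot by itself separate the $[E_i]$. The approach I would take is to test the classes against the generators rather than against one another: by \pref{cr:hms_McKay} the objects $\cO_\Yv \otimes V_g$, $g \in G$, generate $D^b \coh \ld \Yv/\Gv \rd$, so if $\sum_i a_i [E_i] = 0$ in $K_0(\coh_0 \ld \Yv/\Gv \rd)\otimes\bQ$ then $\sum_i a_i\, \chi\big(\cO_\Yv \otimes V_g, E_i\big) = 0$ for every $g$. Transporting back through $\psi_{N_0}$, the entries $\chi(\cO_\Yv \otimes V_g, E_i)$ are computed on the symplectic side as $\chi\big(HF^*(L_g, S_i)\big)$, which is finite since $S_i$ is compact. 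Therefore it is enough to prove that the integer matrix $\big(\chi(HF^*(L_g, S_i))\big)_{g,i}$ has rank exactly $r$.

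Establishing that rank is where the real work lies. I expect to obtain it by combining the explicit description of $\psi_{N_0}$ from \pref{cr:hms_McKay} with the structure of spherical objects supported at the origin: under the abelian McKay correspondence such objects are assembled out of the fractional skyscrapers $\cO_0 \otimes \rho$, whose classes form a $\bZ$-basis of the numerical Grothendieck group of $\coh_0 \ld \Yv/\Gv \rd$, and a pairwise orthogonal collection of $r$ of them must have independent classes there; the matrix above, being the Gram matrix of these classes against the generators $[\cO_\Yv\otimes V_g]$, then has rank $r$. Once independence is in hand the bound $r = \dim V \le (\text{max isotropic dimension})$ is formal, and since $K_0^{\mathrm{num}}(\coh_0 \ld \Yv/\Gv \rd)$ has rank $|\Gv|$ this simultaneously recovers the cruder estimate $r \le |\Gv|$ of \pref{pr:disjoin1}. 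The same argument, pairing against $[\cO_\Yv \otimes V_g]$ directly, also yields the statement of \pref{pr:disjoin2} as written.
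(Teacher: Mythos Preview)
Your setup is sound and you have correctly isolated the crux: isotropy of the span of the $[E_i]$ is immediate, and the entire content of the proposition is the linear independence of the classes $[E_i]$ in $K_0(\coh_0 \ld \Yv/\Gv \rd)$. The problem is that your argument for independence is circular. You reduce it to the matrix $\big(\chi(\cO_\Yv \otimes V_g, E_i)\big)_{g,i}$ having rank $r$, and then justify that rank by asserting that ``a pairwise orthogonal collection of $r$ of them must have independent classes there''---which is exactly the statement in question. Orthogonality of spherical objects in a CY3 category does \emph{not} formally imply $K$-theoretic independence: since the Mukai pairing vanishes identically on their span, there is no Gram-matrix obstruction to a linear relation among the $[E_i]$, and nothing in the McKay description of $\coh_0 \ld \Yv/\Gv \rd$ supplies one for free. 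Pairing against the generators $\cO_\Yv \otimes V_g$ only tests whether a relation holds; it does not manufacture a reason why none exists.

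The paper fills this gap with an additional structure that your argument lacks. Any one-dimensional subtorus of the torus acting on $\bC^3$ which has weight one on the holomorphic volume form induces a dilating $\bGm$-action on $D^b \coh_0 \ld \Yv/\Gv \rd$ in Seidel's sense. Seidel's technique (his Proposition~15.8, with the $K$-theory variant of his Remark~15.16) then shows that in a category equipped with such a dilation, pairwise orthogonal spherical objects have linearly independent $K$-classes, and hence their number is bounded by the dimension of a maximal isotropic. This dilation input is the substantive ingredient; without it, or some equivalent device, the bound does not follow from the categorical equivalences alone.
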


\begin{proof}
We follow the ideas of \cite[Section 15]{Seidel_CDST}.
Any one-dimensional subtorus acting on $\bC^3$
which acts with weight one
on the holomorphic volume form
induces a dilating $\bGm$-action
on the category $D^b \coh_0 \Yv / \Gv $
in the sense of \cite[Definition 15.4]{Seidel_CDST}.
The result then follows via the same technique
as \cite[Proposition 15.8]{Seidel_CDST},
replacing Hochschild homology with $K$-theory
as in \cite[Remark 15.16]{Seidel_CDST}.
\end{proof}

\section{Appendix} 

\subsection{A priori energy bounds}

%
The \emph{geometric energy}
and the \emph{topological energy}
of solutions to Floer's equation
\eqref{eq:Floer}
are respectively defined by
\begin{align}
 \Etop(u) \coloneqq \int_{\Sigma} u^* \omega - d(u^*K)
\end{align}
and
\begin{align}
 \Egeom(u) \coloneqq \frac{1}{2} \int_{\Sigma} \norm{du - X_{K}}.
\end{align}
Using $\omega(X, X) = 0$
and $d H = \iota_X \omega$, a standard calculation in local holomorphic coordinates
$z = s + \sqrt{-1} t$ on $\Sigma$ yields
\begin{align}
\label{eq:energy1}
 \Egeom(u)
  &= \Etop(u)+ \int_{\Sigma} \Omega_K
\end{align}
where the curvature $\Omega_K$ of a perturbation datum $K$ is the exterior derivative of $K$ in the $\Sigma$ direction.
\begin{definition}
We say that a perturbation datum $K$ is \emph{monotone} if
$
 \Omega_K \leq 0.
$
\end{definition}
In practice, the most typical monotone perturbation data are
of the form $H \otimes \gamma$
for $H \colon Y \to \bR^{\ge 0}$
and $\gamma$ a sub-closed one form on $\Sigma$.
Whenever the perturbation datum is chosen to be monotone, we have
\begin{align} \label{eq:energy_bound}
 \Egeom(u) \leq \Etop(u).
\end{align}
Note that the topological energy
is determined by the action
\begin{align}
 A_{H_{i}}(x) = \int_0^1 \big( - x^* \theta
  + H_i(x(t)) d t \big) + h(x(1)) - h(x(0))
\end{align}
as
\begin{align}
 \Etop(u)
  &= A_{H_{0}}(x_0) - \sum_{k=1}^d A_{H_{k}}(x_k),
\end{align}
where $h \colon L \to \bR$ is defined by
$\theta|_L = d h$.
Throughout this paper, all Floer data $K$ are chosen to be monotone
outside of a compact set in $\Sigma \times Y$.
This gives an a priori bound
for the geometric energy of a solution to Floer's equation \eqref{eq:Floer}
of the form
$
 \Egeom(u) \le \Etop(u)+C
$
for some constant $C$ which is independent of the given solution $u$.

\subsection{Maximum principle}

Let
$u \colon \Omega \to \bR^{\ge 0}$
be a smooth function
on a connected open set
$\Omega \subset \bR^n$
satisfying
$
 L u \ge 0
$
for an elliptic second order differential operator
\begin{align} \label{eq:elliptic_op}
 L = \sum_{i,j=1}^n a_{ij}(x) \partial_i \partial_j
  + \sum_{i=1}^n b_i(x) \partial_i
\end{align}
such that
\begin{itemize}
 \item
the matrix-valued function $\lb a_{ij}(x) \rb_{i,j=1}^n$
is locally uniformly positive-definite, and
 \item
the functions $b_i(x)$ are locally bounded
for $i=1, \ldots, n$.
\end{itemize}
The \emph{strong maximum principle} states that
if $u$ attains a maximum in $\Omega$,
then it must be constant.
\emph{Hopf's lemma} states that if
\begin{itemize}
 \item
$u$ extends smoothly to the boundary, and
 \item
there exists $x_0 \in \partial \Omega$
that $u(x_0) > u(x)$ for any $x \in \Omega$,
\end{itemize}
then one has
$
 d u( \nu) > 0,
$
where
$\nu$ is the unit outward normal vector to $\partial \Omega$
at $x_0$.
Proofs of the strong maximum principle
and Hopf's lemma can be found, e.g., in \cite[Section 6.4.2]{MR2597943}.

\bibliographystyle{amsalpha}
\bibliography{bibs}

\end{document}